%
%
%
%
%
%
%
\documentclass[12pt]{amsart}

\usepackage[margin=1in]{geometry}
\usepackage{amssymb,amsmath,exscale,latexsym,amsthm,graphics,graphics,mathtools,overpic,subcaption}
\usepackage{epsfig}    
\usepackage{epic}      
\usepackage{eepic}     
\usepackage[matrix,arrow,curve,cmtip]{xy} 
\usepackage{amsfonts,amsbsy}
\usepackage{enumerate} 
\usepackage{paralist}
\usepackage{xcolor}
\usepackage{mathrsfs}
\usepackage{enumitem}
\usepackage{verbatim} 
\usepackage{tikz-cd}
\usepackage{MnSymbol}  
\usepackage{hyperref}
\usepackage[normalem]{ulem}
\hypersetup{%
  colorlinks=false,
  urlbordercolor=cyan, linkbordercolor=cyan}


%
%

\pagestyle{myheadings}

\theoremstyle{plain}
        \newtheorem{theorem}{Theorem}[section]
        \newtheorem*{theorem*}{Theorem}
        \newtheorem*{maintheorem*}{Main Theorem}
        \newtheorem*{conj*}{Conjecture}
        \newtheorem{lemma}[theorem]{Lemma}
        \newtheorem{cor}[theorem]{Corollary}
        \newtheorem{prop}[theorem]{Proposition}

\theoremstyle{definition}
        \newtheorem{definition}[theorem]{Definition}
        \newtheorem*{definition*}{Definition}

\theoremstyle{remark}

          \newtheorem{rem}[theorem]{Remark}

        \newtheorem*{claim}{Claim}

\numberwithin{equation}{section}

\def\reminder #1 {{\sf #1}}
\def\hide #1 {}
\long\def\longhide #1 {}


\captionsetup[subfigure]{labelfont=rm}


\newcounter{mylistnum}

\makeatletter
\newcommand{\mylabel}[2]{#2\def\@currentlabel{#2}\label{#1}}
\makeatother

\definecolor{anisred}{rgb}{1, 0, 0.58} 
\definecolor{anisblue}{rgb}{0,0,0.82} 
\definecolor{anisgray}{rgb}{0.7, 0.7, 0.7} 
\definecolor{anisgreen}{rgb}{0.42, 0.56, 0.15} 
\newcommand{\halpha}{\widehat{\alpha}}
\newcommand{\talpha}{\widetilde{\alpha}}

\newcommand{\tgam}{\widetilde{\gamma}}
\newcommand{\tU}{\widetilde{U}}
\newcommand{\hU}{\widehat{U}}
\newcommand{\fhat}{\widehat{f}}   


\newcommand{\inte}  {\operatorname{int}}
\newcommand{\cl}  {\operatorname{cl}}

\newcommand{\id} {\operatorname{id}}
%
\newcommand{\R}{\mathbb{R}}      
\newcommand{\C}{\mathbb{C}}      
\newcommand{\N}{\mathbb{N}}      
\newcommand{\Z}{\mathbb{Z}}      
\newcommand{\Q}{\mathbb{Q}}      
\newcommand{\CDach}{\widehat{\mathbb{C}}}
\newcommand{\D}{\mathbb{D}}      

\newcommand{\AC}{\mathcal{A}}

\newcommand{\HC}{\mathcal{H}}

\newcommand{\TC}{\mathcal{T}}

\newcommand{\GC}{\mathcal{G}}

\newcommand{\spH}{\textcolor{anisred}{$\mathcal{H}$}}     

%

\renewcommand{\:}{\colon}   
\newcommand{\ra}{\rightarrow} 
\newcommand{\sub}{\subset}

%

\newcommand{\Sp}{S^2}


\newcommand{\CC}{\mathcal{C}}
\newcommand{\LL}{\mathcal{L}}

\newcommand{\PP}{\mathbb{P}}
\newcommand{\PPh}{\widehat{\mathbb{P}}} 
\newcommand{\Phat}{\widehat{\mathbb{P}}}

\newcommand{\La}{\mathcal{L}}



\newcommand{\I}{\mathbb{I}}











  \newcommand{\postf}{P_f}
   \newcommand{\critf}{C_f}

    \newcommand{\al}{\alpha}
     \newcommand{\be}{\beta}
    \newcommand{\eps}{\epsilon}
  \newcommand{\gam}{\gamma}
   \newcommand{\ga}{\gamma}
    \newcommand{\om}{\omega}
  \newcommand{\ins}{\mathrm{i}} 

\newcommand{\inter}{\operatorname{int}}




\hyphenation{
pa-ram-e-trized
com-ple-men-ta-ry
in-ter-est-ing
}

\begin{document}
\title[Thurston obstructions and dynamics on curves]{Eliminating  Thurston obstructions and controlling dynamics on curves}

\emph{ }

 \author{Mario Bonk}
\address {Department of Mathematics, University of California,   Los Angeles, CA 90095, USA}
\email{mbonk@math.ucla.edu}

\author{Mikhail Hlushchanka}
\address {Aix--Marseille Universit\'{e}, Institut de Math\'{e}atiques de Marseille, 
13009 Marseille, France}

\address{Mathematisch Instituut, Universiteit Utrecht,
 3508 TA Utrecht, The Netherlands}
\email{m.hlushchanka@uu.nl}

\author{Annina Iseli}
\address{Department of Mathematics, University of Fribourg,  
 1700 Fribourg, Switzerland}
\address {Department of Mathematics, University of California,   Los Angeles, CA 90095, USA}
\email{annina.iseli@math.ucla.edu}

\date{May 14, 2021}

\keywords{Thurston maps, obstructions, Latt\`es maps, intersection numbers, curve attractor.}
\subjclass[2010]{Primary 37F20, 37F10} 

\thanks{M.B.\ was partially supported by NSF grant DMS-1808856. M.H.\ was partially supported by the ERC advanced grant ``HOLOGRAM''. A.I.\ was partially supported by the Swiss National Science Foundation (project no.\ 181898).}

\begin{abstract} Every Thurston map $f\: S^2\ra S^2$ on a $2$-sphere $S^2$ induces a pull-back operation on Jordan curves $\alpha\sub S^2\setminus \postf$, where $\postf$ is the postcritical set of $f$.   Here the isotopy class $[f^{-1}(\alpha)]$ (relative to $\postf$) only depends on the isotopy class $[\alpha]$. We study this operation for Thurston maps with four postcritical points.  In this case a Thurston obstruction for the map $f$ 
can be seen as a fixed point of the pull-back operation. 

We show that if a Thurston map $f$ with a hyperbolic orbifold and four postcritical points has 
 a Thurston obstruction, then one can ``blow up" suitable arcs in the  underlying $2$-sphere and construct a  new Thurston map $\widehat f$ for which 
this obstruction  is eliminated. We prove that  no other obstruction arises 
and so $\widehat f$ is realized by a rational map.  In particular, this allows for the combinatorial construction of a large class of rational Thurston maps with 
four postcritical points. 

We also study the dynamics of the pull-back operation under iteration. We exhibit a subclass of our rational Thurston maps with four postcritical  points  for which we can give  positive answer to the global curve attractor problem. 
\end{abstract}
\maketitle
\setcounter{equation}{0}

\tableofcontents

\section{Introduction}
\label{sec:introduction}

In this paper, we consider Thurston maps and the dynamics 
of the induced pull-back operation on Jordan curves on the underlying $2$-sphere.  By definition, a  {\em Thurston map}  is a branched covering map $f\: S^2\ra S^2$ on a topological $2$-sphere $S^2$ such  that $f$ is not a homeo\-morphism and  every critical point of $f$ (points where $f$ is not a local homeomorphism)  
has a finite orbit under iteration of $f$. These maps  
are named after William Thurston who introduced them  in his quest for a better 
 understanding of   the dynamics of postcritically-finite rational maps  on the Riemann sphere.  We refer to \cite[Chapter 2]{THEBook} for general background on Thurston maps and related concepts.

 For a branched covering map $f\: S^2\ra S^2$ we denote by $\critf$ the set of critical points of $f$ and by 
$f^n$ the $n$-th iterate of $f$ for $n\in \N$. Then the postcritical set of $f$ is defined as 
$$ \postf=\bigcup_{n\in \N} \{ f^n(c): c\in \critf\}. $$
For a Thurston map $f$ this set has finite cardinality $2\le \#\postf<\infty$ (for the first inequality see \cite[Corollary 2.13]{THEBook}). 

A Thurston map $f$ often admits a description in purely combinatorial-topological terms. In this context,
 it is an interesting question whether $f$ can be {\em realized} (in a suitable sense) by a rational map with the same combinatorics.  Roughly speaking, this means that $f$ is conjugate to  a rational map ``up to isotopy" (see Section \ref{sec:thurston-maps} for the precise definition).

It is not hard to see  that each Thurston map with two or three postcritical  points is realized.  The situation  is much more complicated for Thurston maps $f$ with $\#\postf\ge 4$.   
William Thurston  found a necessary and sufficient condition when a Thurston map can be realized by a rational map \cite{DH_Th_char}. Namely, if $f$ has an associated hyperbolic orbifold 
 (this is always true apart from some well-understood exceptional  maps), then $f$ is realized if and only if $f$ has no 
 \emph{(Thurston) obstruction}.
 Such an obstruction is given by a finite collection of disjoint Jordan curves in $\Sp\setminus \postf$ (up to isotopy) with certain invariance properties  (see 
 Section~\ref{subsec:thurston-char} for more discussion).   
 
  The ``if part'' of this statement gives a positive criterion for $f$ to be realized, but it is  very hard to apply in practice, because, at least  in principle,   it involves the  verification of infinitely many conditions for the 
 map $f$.  For this reason, in each
individual case a successful verification  for a map, or a class of maps, is difficult and usually constitutes an interesting result in its own right. 

We mention two  results  in this direction.  The first one  is   the ``arcs intersecting obstructions'' theorem by   Kevin Pilgrim and Tan Lei \cite[Theorem 3.2]{PT} that gives  control on the position of an obstruction and has many applications in holomorphic dynamics (see, for instance, \cite{PT,Newton}).
The other one is the ``mating criterion'' by Tan Lei, Mary Rees, and Mitsuhiro Shishikura that addresses the question when two postcritically-finite quadratic polynomials can be topologically glued together  to form a rational map (see \cite{TanLeiMatings,Rees_Degree_2,ShishikuraMatings}).

 The investigation of obstructions of a Thurston map $f\: S^2\ra S^2$ is closely related to 
 the study of the \emph{pull-back operation} on Jordan curves. It is easy to show that if  $\alpha  \sub S^2 \setminus \postf$ is a Jordan curve, then the isotopy class $[f^{-1}(\alpha)]$ (rel.\ $\postf$) only depends on the isotopy class $[\alpha]$ (see Lemma~\ref{lem:pull}).  Intuitively, the number of postcritical points of a Thurston map can be seen as a measure of its combinatorial complexity.  In this paper, we focus on the simplest non-trivial case, namely Thurston maps $f$ with $\#\postf=4$. In this case the pull-back operation gives rise to a well-defined map, the {\em slope map}, on these isotopy classes $[\alpha]$ (we will discuss this in more detail below). The search for obstructions of $f$ amounts to understanding the fixed points of the slope map.
 
There exist various natural constructions that allow one to \emph{combine} or \emph{modify}  given (rational) Thurston maps to obtain  a new dynamical system. The most studied constructions are \emph{mating} (see  \cite{TanLeiMatings,Mating}), \emph{tuning} (see  \cite{Rees_Degree_2}), and  \emph{capture} (see \cite{Captures, LeiNewton}). 
In this paper, we study the operation of \emph{blowing up arcs}, originally introduced by Kevin Pilgrim and Tan Lei in \cite{PT}. This operation can  be applied to an arbitrary Thurston map $f$ and results in a new Thurston map $\widehat f$ that is of higher degree, but combinatorially closely related to the original map $f$.   In particular, $f$ and $\widehat f$  have the same set of postcritical points and the same dynamics on them. Nevertheless, the dynamical behavior of Jordan curves under the pull-back operation for the original map $f$ and the new  map $\widehat f$ may differ drastically.

We show that, if a Thurston map $f\:S^2\ra S^2$ with $\#\postf=4$ has  an obstruction $\alpha $, then one can naturally modify $f$ by blowing up certain arcs to produce a new Thurston map $\widehat f$ for which this  obstruction $\alpha $ is eliminated.   The main result of this paper is the fact that then no new obstructions arise for 
 $\widehat f$ and so it is realized by a rational map.

\begin{theorem}\label{thm:blow-up-obstr}
Let $f\:\Sp\to\Sp$ be a Thurston map with $\#\postf=4$ and a hyperbolic orbifold. Suppose that $f$ has an obstruction represented by a Jordan curve $\alpha\sub S^2\setminus \postf$, and $E\ne \emptyset$ is a finite set of arcs in $(\Sp,f^{-1}(\postf))$ that satisfy the $\alpha$-restricted blow-up conditions. 

Let $\widehat{f}$ be a Thurston map obtained from $f$ by blowing up arcs in $E$ (with some multiplicities) so that $\lambda_{\widehat{f}}(\alpha)<1$. Then $\widehat{f}$ is realized by a rational map.
\end{theorem}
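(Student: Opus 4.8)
The plan is to use Thurston's characterization theorem directly: the map $\widehat f$ has four postcritical points and (since $f$ and $\widehat f$ share the same postcritical dynamics, and blowing up arcs preserves the orbifold being hyperbolic) a hyperbolic orbifold, so it suffices to show that $\widehat f$ has no Thurston obstruction. Because $\#\post_{\widehat f} = 4$, any obstruction is a single essential Jordan curve $\gamma \subset S^2 \setminus \post_{\widehat f}$ whose isotopy class is fixed by the slope map $\sigma_{\widehat f}$ and with $\lambda_{\widehat f}(\gamma) \ge 1$. The strategy is therefore to assume, for contradiction, that such a $\gamma$ exists and to derive constraints on it so strong that it must coincide (up to isotopy) with $\alpha$ — at which point the hypothesis $\lambda_{\widehat f}(\alpha) < 1$ gives the contradiction.

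The first main step is to invoke the Pilgrim--Tan Lei ``arcs intersecting obstructions'' theorem (\cite[Theorem 3.2]{PT}, cited in the introduction). Since $\widehat f$ is obtained from $f$ by blowing up the arcs in $E$, the blow-up construction attaches the arcs of $E$ to the ``new'' preimage structure; the Pilgrim--Tan Lei theorem then controls how an obstruction of $\widehat f$ can intersect these arcs. Concretely, I would use the $\alpha$-restricted blow-up conditions (whose precise content is set up earlier in the paper) to show that an $\widehat f$-obstruction $\gamma$ cannot be disjoint from all the blown-up arcs in $E$, and hence, via the intersection-number estimates built into those conditions, that $\gamma$ must be isotopic to $\alpha$ itself — these are the only two ``regimes'' the combinatorics allows. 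This is the step I expect to be the crux: marrying the intersection-number bookkeeping of the blow-up with the position control from \cite{PT} to pin down $\gamma = \alpha$ up to isotopy.

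The second step is then immediate: by construction of $\widehat f$ via blowing up arcs in $E$ with the chosen multiplicities, we have $\lambda_{\widehat f}(\alpha) < 1$, so $\alpha$ is not a fixed multicurve with eigenvalue $\ge 1$, contradicting the assumption that $\gamma \simeq \alpha$ is an obstruction. Hence $\widehat f$ has no obstruction. By Thurston's characterization theorem (the ``if'' direction, valid since the orbifold of $\widehat f$ is hyperbolic), $\widehat f$ is realized by a rational map. One supporting point to check along the way is that $\alpha$ remains essential (non-peripheral) for $\widehat f$ and that the slope map $\sigma_{\widehat f}$ is well-defined on the relevant isotopy classes; both follow from $\post_{\widehat f} = \post_f$ having four points together with Lemma~\ref{lem:pull} applied to $\widehat f$. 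If the direct ``$\gamma \simeq \alpha$'' argument turns out to be too rigid, a fallback is to run a degree/transversality count: show that any candidate obstruction $\gamma$ for $\widehat f$ would have to have a preimage under $\widehat f$ essentially intersecting some arc of $E$, and then estimate $\lambda_{\widehat f}(\gamma)$ from below (or above) using the contribution of those intersections to conclude that either $\gamma$ is inessential or $\lambda_{\widehat f}(\gamma) < 1$, which again rules it out. Either way, the punchline is that the only potential obstruction was the original $\alpha$, and blowing up the arcs in $E$ with the prescribed multiplicities was engineered precisely to destroy it.
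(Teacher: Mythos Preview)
Your proposal has a genuine structural gap: the target of the contradiction is wrong. You aim to show that any obstruction $\gamma$ for $\widehat f$ must be isotopic to $\alpha$, and then invoke $\lambda_{\widehat f}(\alpha)<1$. But this is ruled out from the very first step: since $\lambda_{\widehat f}(\alpha)<1$, the curve $\alpha$ is \emph{not} an obstruction for $\widehat f$, so any hypothetical obstruction $\gamma$ is automatically \emph{not} isotopic to $\alpha$ (hence $\ins(\alpha,\gamma)>0$). The entire difficulty of the theorem is precisely to exclude all the \emph{other} isotopy classes, and there is no mechanism in your sketch that does this. The Pilgrim--Tan Lei ``arcs intersecting obstructions'' theorem is not sharp enough here; the paper says so explicitly in the introduction, and in any case that theorem constrains how an obstruction meets an invariant arc system, not that the obstruction must coincide with a given curve.

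The paper's actual argument runs in a different direction. Fix core arcs $a,c$ of $\alpha$ and study the graphs $\GC=f^{-1}(a\cup c)$ and $\widehat\GC=\widehat f^{-1}(a\cup c)$; the $\alpha$-restricted blow-up conditions guarantee (Lemma~\ref{lem:blowup/annuli}) that the essential complementary annuli and their \emph{essential circuit lengths} are unchanged in passing from $f$ to $\widehat f$. Using the refined counting Lemma~\ref{lem:preimage-bound-refined} together with Lemma~\ref{lem:preimage-bound}, one obtains the two-sided squeeze
\[
1\le \lambda_f(\alpha)\le \frac{n}{k}, \qquad 1\le \lambda_{\widehat f}(\gamma)\le \frac{k}{n},
\]
where $n$ is the number of essential $\alpha$-pullbacks under $f$ and $k$ the number of essential $\gamma$-pullbacks under $\widehat f$. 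This forces $k=n$ and equality everywhere, in particular that every essential annulus $U_j$ has circuit length equal to its essential circuit length equal to $2n$. A further case analysis shows $\alpha$ has no peripheral pullbacks under $f$. These rigidity conditions feed into Lemma~\ref{lem:parabolicity}, which concludes that $f$ has a \emph{parabolic} orbifold --- contradicting the hypothesis. So the contradiction lands on the orbifold of $f$, not on the identity $\gamma\simeq\alpha$; your proposal is missing both the essential-circuit-length machinery of Section~\ref{sec:esscirc} and the parabolicity criterion that closes the argument.
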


The technical verbiage and the notation in this formulation will be explained in subsequent sections (see in particular  \eqref{eq:Thurst_coeff} for the definition of the ``eigenvalue" 
$\lambda_{\widehat{f}}(\alpha)$ and Definition~\ref{def:blowup_conditions} for  $\alpha$-restricted blow-up conditions).

  Recently, Dylan Thurston provided a positive characterization when a Thurston map is realized, at least in the case when each critical point eventually lands in a critical cycle under iteration.  He proved that such a Thurston map $f$ is realized by a rational map if and only if there is an ``elastic spine'' (that is, a planar embedded graph in $\Sp\setminus\postf$ with a suitable metric on it) that gets ``looser'' under backwards iteration (see \cite{DylanReport, Dylan_Positive} for more details).  In concrete cases, especially for   Thurston maps  that should be realized by rational maps with Julia sets homeomorphic to the Sierpi\'{n}ski carpet, the application of Dylan Thurston's criterion is not so straightforward.  Moreover,  his criterion is only valid for Thurston maps with periodic critical points.   In contrast, 
 for some  maps for which Dylan Thurston's 
 criterion  is  not applicable or hard to apply,  Theorem~\ref{thm:blow-up-obstr}  can  be used 
 to verify that the maps are  realized. In particular, many maps obtained  by blowing up Latt\`es maps (see below) are of this type.

 \subsection{Blowing up Latt\`es maps} \label{sec:blowupL} 
 We will now discuss a special case of Theorem~\ref{thm:blow-up-obstr} in   detail to give the reader 
 some intuition for  the  geometric ideas behind this statement and its proof.  
 
 Let $\PP$ be  a {\em pillow} obtained from two copies 
 of the unit square $[0,1]^2\sub \R^2\cong \C$ glued together along their boundaries. 
 We consider the two copies of $[0,1]^2$ in $\PP$ as the front and  back side  of $\PP$ and call them the {\em tiles of level $0$} or simply 
 $0$-{\em tiles}.  We  denote by $A\coloneq(0,0)\in \PP$ the lower left corner of $\PP$ (see the right part of Figure \ref{fig_Lattes}).
The pillow $\PP$ is a topological $2$-sphere. Actually, if we consider $\PP$ as an abstract  polyhedral surface, then $\PP$ carries a conformal structure making $\PP$  conformally equivalent to the Riemann sphere $\CDach$. See Section~\ref{subsec:pillow} for more discussion.

\begin{figure}[t]
\def\svgwidth{0.70\columnwidth}
\begingroup%
  \makeatletter%
  \providecommand\color[2][]{%
    \errmessage{(Inkscape) Color is used for the text in Inkscape, but the package 'color.sty' is not loaded}%
    \renewcommand\color[2][]{}%
  }%
  \providecommand\transparent[1]{%
    \errmessage{(Inkscape) Transparency is used (non-zero) for the text in Inkscape, but the package 'transparent.sty' is not loaded}%
    \renewcommand\transparent[1]{}%
  }%
  \providecommand\rotatebox[2]{#2}%
  \newcommand*\fsize{\dimexpr\f@size pt\relax}%
  \newcommand*\lineheight[1]{\fontsize{\fsize}{#1\fsize}\selectfont}%
  \ifx\svgwidth\undefined%
    \setlength{\unitlength}{501.07694893bp}%
    \ifx\svgscale\undefined%
      \relax%
    \else%
      \setlength{\unitlength}{\unitlength * \real{\svgscale}}%
    \fi%
  \else%
    \setlength{\unitlength}{\svgwidth}%
  \fi%
  \global\let\svgwidth\undefined%
  \global\let\svgscale\undefined%
  \makeatother%
  \begin{picture}(1,0.35101893)%
    \lineheight{1}%
    \setlength\tabcolsep{0pt}%
    \put(0,0){\includegraphics[width=\unitlength,page=1]{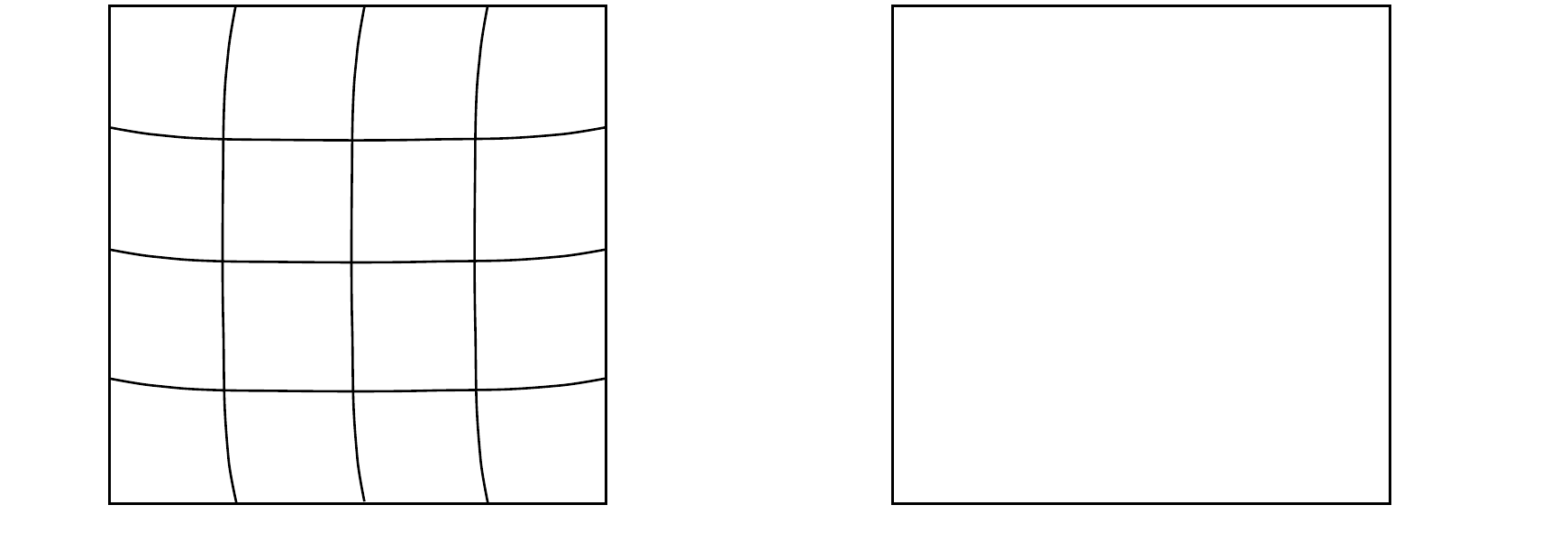}}%
    \put(0.45911869,0.2219748){\color[rgb]{0,0,0}\makebox(0,0)[lt]{\lineheight{1.25}\smash{\begin{tabular}[t]{l}$\LL_4$\end{tabular}}}}%
    \put(0,0){\includegraphics[width=\unitlength,page=2]{pillow_checker.pdf}}%
    \put(0.52600946,0.00547333){\color[rgb]{0,0,0}\makebox(0,0)[lt]{\lineheight{1.25}\smash{\begin{tabular}[t]{l}$A$\end{tabular}}}}%
    \put(0,0){\includegraphics[width=\unitlength,page=3]{pillow_checker.pdf}}%
    \put(0.02596719,0.00547333){\color[rgb]{0,0,0}\makebox(0,0)[lt]{\lineheight{1.25}\smash{\begin{tabular}[t]{l}$A$\end{tabular}}}}%
    \put(-0.00188748,0.18305839){\color[rgb]{0,0,0}\makebox(0,0)[lt]{\lineheight{1.25}\smash{\begin{tabular}[t]{l}$\PP$\end{tabular}}}}%
    \put(0.92854364,0.18305839){\color[rgb]{0,0,0}\makebox(0,0)[lt]{\lineheight{1.25}\smash{\begin{tabular}[t]{l}$\PP$\end{tabular}}}}%
  \end{picture}%
\endgroup%

\caption{The $(4\times 4)$-Latt\`es map.}\label{fig_Lattes}
\end{figure}

We now fix $n\in \N$ with $n\ge 2$.  
We subdivide each of the two $0$-tiles of $\PP$ into $n^2$ small squares of sidelength $1/n$, called the $1$-{\em tiles}.
We color these $1$-tiles in a checkerboard fashion black and white so that the $1$-tile  in the front  $0$-tile  that contains the  vertex $A$ on its boundary is colored white (see the left part of Figure \ref{fig_Lattes}). We map this white $1$-tile to the front $0$-tile  of the right-hand pillow by an orientation-preserving Euclidean similarity  that fixes the vertex 
$A$. This similarity scales distances by the factor $n$.  We can uniquely extend the similarity by a successive Schwarz  reflection process to the whole pillow $\PP$ to obtain a continuous map $\La_n\: \PP\ra \PP$. Then 
on each $1$-tile $S$   the map $\La_n$ is a Euclidean similarity 
that sends $S$ to the   front or back $0$-tile  of $\PP$ depending on whether $S$ is  white  or black. 
We call  $\La_n$ the $(n\times n)$-{\em Latt\`{e}s map}, because 
under a suitable conformal equivalence $\PP\cong \CDach$, the map $\La_n$ is conjugate to a rational map 
obtained from $n$-multiplication of a  Weierstrass $\wp$-function.
See  Figure~\ref{fig_Lattes} for an illustration of the map $\La_4$.  Here, the marked points on the left  pillow $\PP$ (the domain of the map) correspond to the preimage points $\La_4^{-1}(A)$.  Note that there is exactly one preimage of $A$ in the interior of the back side of the pillow.

It is easy to see that the  $(n\times n)$-Latt\`{e}s map $\La_n\: \PP\ra \PP$ is  a Thurston map
with four postcritical points, namely, the four corners of the pillow $\PP$. The map $\La_n$ is realized by a  rational map, because it is even conjugate to such a map. 

We now modify the map $\La_n$ by gluing in vertical or horizontal {\em flaps} to $\PP$.
This is a special case of  the  more general construction of  blowing up arcs mentioned above. We will describe this in detail 
  in Section~\ref{sec:blow-up}, but will  illustrate 
  the procedure in  Figure~\ref{fig_flap}, where we  show how to glue in one horizontal flap.


We cut the pillow $\PP$ open along a horizontal side $e$ of one of the $1$-tiles. Note that in this process $e$ is ``doubled" into two arcs $ e'$ and $ e''$ with common endpoints.   We then take two disjoint copies of the Euclidean square $[0,1/{n}]^2$ and  identify them along three corresponding sides to obtain a {\em flap} $F$. It has two ``free" 
sides on its boundary. We glue each free side to one of the arcs  $ e'$ and $ e''$ of the cut in the obvious way.  

\begin{figure}[t]
\def\svgwidth{0.90\columnwidth}
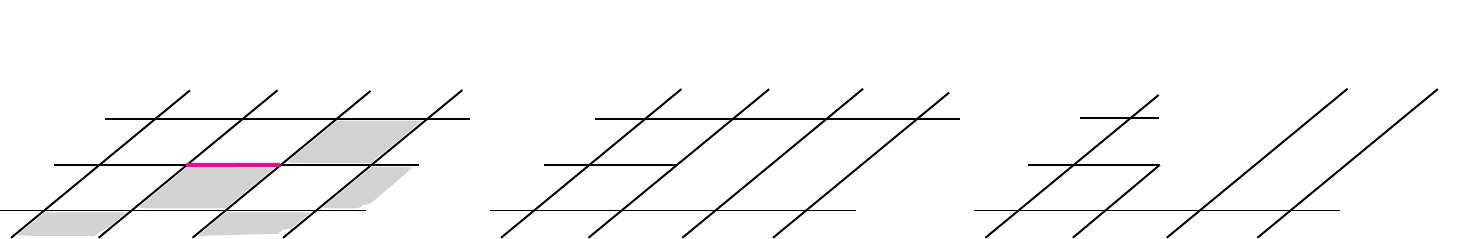
\caption{Gluing in a flap.}\label{fig_flap}
\end{figure}

In general, one can repeat this construction and glue several flaps at the location given by the  arc $e$. We assume that  this has been done  simultaneously for $n_h\ge 0$ flaps along horizontal  edges and  $n_v\ge 0$ flaps along vertical 
edges. By this procedure  we obtain a ``flapped" pillow~$\Phat$, which is still a topological $2$-sphere (see the left part of Figure \ref{fig_Lattesblowup}). 
By construction it is  tiled by $2n^2+2(n_h+n_v)$ squares 
of sidelength $1/n$, which we consider as the $1$-tiles of  $\Phat$.  The  checkerboard coloring  of the base surface $\PP$ extends in a unique way to  the new surface $\widehat{\PP}$. The original  $(n\times n)$-Latt\`{e}s map 
$\La_n\: \PP\ra \PP$ can  naturally be ``extended" to a continuous map $\widehat{\La}\:\widehat{\PP}\to \PP$ so that each $1$-tile  $S$ 
of  $\widehat{\PP}$ is mapped to  the front or back $0$-tile  of $\PP$ (depending on the color of $S$)  by a Euclidean similarity scaling  distances by the factor $n$. 
See Figure~\ref{fig_Lattesblowup} for an illustration of a map 
$\widehat{\La}$ obtained from the Latt\`es map $\La_4$ by gluing in flaps at a vertical and a horizontal edge.   Similarly  as in Figure \ref{fig_Lattes}, on the left we marked  the preimages of $A$ under $\widehat \La$.

\begin{figure}[b]
\def\svgwidth{0.7\columnwidth}
\begingroup%
  \makeatletter%
  \providecommand\color[2][]{%
    \errmessage{(Inkscape) Color is used for the text in Inkscape, but the package 'color.sty' is not loaded}%
    \renewcommand\color[2][]{}%
  }%
  \providecommand\transparent[1]{%
    \errmessage{(Inkscape) Transparency is used (non-zero) for the text in Inkscape, but the package 'transparent.sty' is not loaded}%
    \renewcommand\transparent[1]{}%
  }%
  \providecommand\rotatebox[2]{#2}%
  \newcommand*\fsize{\dimexpr\f@size pt\relax}%
  \newcommand*\lineheight[1]{\fontsize{\fsize}{#1\fsize}\selectfont}%
  \ifx\svgwidth\undefined%
    \setlength{\unitlength}{497.53122298bp}%
    \ifx\svgscale\undefined%
      \relax%
    \else%
      \setlength{\unitlength}{\unitlength * \real{\svgscale}}%
    \fi%
  \else%
    \setlength{\unitlength}{\svgwidth}%
  \fi%
  \global\let\svgwidth\undefined%
  \global\let\svgscale\undefined%
  \makeatother%
  \begin{picture}(1,0.35321195)%
    \lineheight{1}%
    \setlength\tabcolsep{0pt}%
    \put(0,0){\includegraphics[width=\unitlength,page=1]{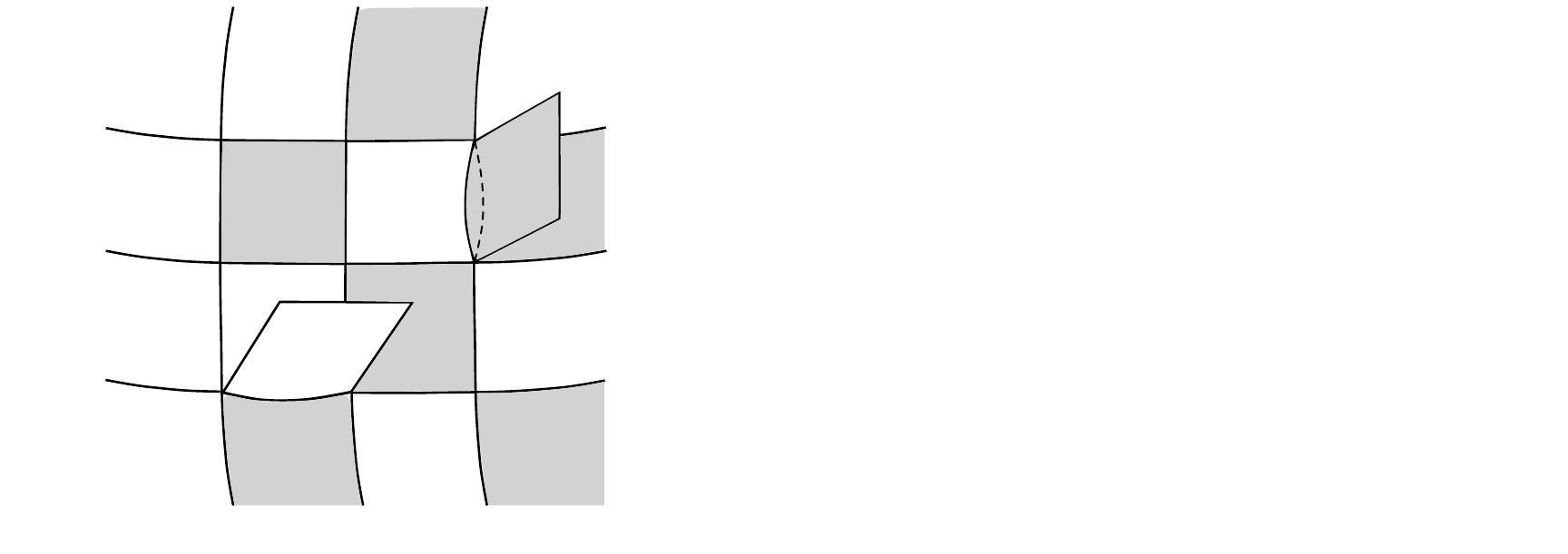}}%
    \put(0.46873106,0.22077917){\color[rgb]{0,0,0}\makebox(0,0)[lt]{\lineheight{1.25}\smash{\begin{tabular}[t]{l}$\widehat{\LL}$\end{tabular}}}}%
    \put(0,0){\includegraphics[width=\unitlength,page=2]{blowupwice_intro.pdf}}%
    \put(0.52334102,0.00551233){\color[rgb]{0,0,0}\makebox(0,0)[lt]{\lineheight{1.25}\smash{\begin{tabular}[t]{l}$A$\end{tabular}}}}%
    \put(0,0){\includegraphics[width=\unitlength,page=3]{blowupwice_intro.pdf}}%
    \put(0.02273029,0.00551233){\color[rgb]{0,0,0}\makebox(0,0)[lt]{\lineheight{1.25}\smash{\begin{tabular}[t]{l}$A$\end{tabular}}}}%
    \put(0,0){\includegraphics[width=\unitlength,page=4]{blowupwice_intro.pdf}}%
    \put(0.9280344,0.18035344){\color[rgb]{0,0,0}\makebox(0,0)[lt]{\lineheight{1.25}\smash{\begin{tabular}[t]{l}$\PP$\end{tabular}}}}%
    \put(-0.00190093,0.18290889){\color[rgb]{0,0,0}\makebox(0,0)[lt]{\lineheight{1.25}\smash{\begin{tabular}[t]{l}$\PPh$\end{tabular}}}}%
  \end{picture}%
\endgroup%

\caption{An example of a  map $\widehat{\La}$ obtained from $\La_4$ by gluing in flaps.}\label{fig_Lattesblowup}
\end{figure}

 In order to obtain a Thurston map $f\: \PP\ra \PP$  from this construction, we need to choose a  homeomorphism $\phi\:  \widehat{\PP}\ra \PP$.
Roughly speaking, $\phi$ is a homeomorphism that  identifies $ \widehat{\PP}$ with $\PP$ and fixes each corner of the pillow.
The precise choice of
$\phi$ is  somewhat technical and so we refer to Section~\ref{subsec:blownup-lattes} for the details.  
 Then, one easily observes that the postcritical set $\postf$ consists of the four corners of the pillow $\PP$. 
 The map $f$  is uniquely determined up to Thurston equivalence 
 (see Definition~\ref{def:Thurston-equiv-and-realized})   independently of the choice of $\phi$ under suitable  restrictions. 
 We refer to $f$  as a Thurston map {\em obtained from the
 $(n\times n)$-Latt\`es  map  by gluing $n_h$ horizontal 
 and $n_v$   vertical flaps to $\PP$}.

 Now the following statement is true. As we will explain, it   can be seen as a special case of our main result.

\begin{theorem}\label{thm:flapped_intro} Let $n\in \N$ with 
$n\ge 2$ and  $f\colon \PP\to \PP$ be a Thurston map obtained  from the $(n\times n)$-Latt\`es map $\La_n$ by gluing $n_h\ge 0$ horizontal and $n_v\ge 0$ vertical flaps to $\PP$, where  $n_h+n_v>0$. Then the map $f$ has a hyperbolic orbifold. It 
 has an obstruction if and only if $n_h=0$ or $n_v=0$.
In particular, if $n_h>0$ and $n_v>0$, then $f$ is realized by a rational map.  
\end{theorem}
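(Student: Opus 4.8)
The plan is to derive Theorem~\ref{thm:flapped_intro} as a special case of Theorem~\ref{thm:blow-up-obstr}, together with an explicit analysis of the pull-back operation for the Latt\`es map $\La_n$ on the pillow $\PP$. First I would recall that $\La_n$ has exactly four postcritical points, the corners of $\PP$, and a parabolic orbifold (signature $(2,2,2,2)$); correspondingly the isotopy classes of essential Jordan curves in $\PP\setminus\post_{\La_n}$ are indexed by slopes $p/q\in\widehat{\Q}$, and since $\La_n$ is (conjugate to) an affine torus map, its slope map is the standard action $p/q\mapsto p/q$ up to the $n$-scaling: every slope is fixed and the associated Thurston eigenvalue is $\lambda_{\La_n}(\alpha)=1$ for every $\alpha$ (this is the classical fact that Latt\`es maps are ``at the boundary'' — each curve family of $\La_n$ is a degenerate Levy-type obstruction). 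In particular the two curve classes of slope $0$ (``horizontal'', separating the two pairs of corners one way) and slope $\infty$ (``vertical'') are fixed with eigenvalue $1$.

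Next I would pass to the flapped map $f$. Blowing up $n_h$ horizontal and $n_v$ vertical flaps is exactly a blow-up of $\La_n$ along arcs $E$ lying in $\La_n^{-1}(\post)$ (the grid edges), so $f$ is a Thurston map obtained from $\La_n$ by blowing up arcs in the sense of Section~\ref{sec:blow-up}. The first claim, that $f$ has a hyperbolic orbifold, I would get from the fact that blowing up strictly increases the degree while keeping $\post_f=\post_{\La_n}$ fixed setwise with the same dynamics; a Thurston map with four postcritical points has a non-hyperbolic (hence parabolic $(2,2,2,2)$) orbifold only in the rigid Latt\`es case, and increasing the degree by a blow-up destroys the arithmetic that forces that signature — more concretely, one checks that $f$ is no longer an affine map on the pillow because the flaps introduce local degree behavior incompatible with the Euclidean-affine structure, so the orbifold must be hyperbolic.

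For the ``only if'' direction of the obstruction statement, suppose $n_v=0$ (the case $n_h=0$ is symmetric). Then I would show the horizontal curve $\gamma_0$ of slope $0$ remains an obstruction for $f$: because all the flaps are glued along horizontal edges, one can isotope $f^{-1}(\gamma_0)$ so that it consists of parallel copies of slope-$0$ curves plus inessential/peripheral pieces, and a direct count of preimage components and local degrees (using that the flaps do not cross $\gamma_0$ transversally in an essential way) gives $\lambda_f(\gamma_0)=1$, i.e.\ $\gamma_0$ is a Thurston obstruction (a Levy cycle, in fact). This is the computation I expect to be the most delicate bookkeeping: one has to track exactly which components of $f^{-1}(\gamma_0)$ are essential and compute $\sum 1/\deg$ over them, and it must come out to $1$ rather than $<1$.

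Finally, for the ``if'' direction — the heart of the theorem — assume $n_h>0$ and $n_v>0$. Now I invoke Theorem~\ref{thm:blow-up-obstr} with $f_{\mathrm{old}}=\La_n$ and obstruction $\alpha=\gamma_0$ (slope $0$): the set $E$ of horizontal and vertical flap-edges is a nonempty finite set of arcs in $(\PP,\La_n^{-1}(\post))$ and one checks it satisfies the $\alpha$-restricted blow-up conditions of Definition~\ref{def:blowup_conditions} (the vertical flaps cross $\gamma_0$ essentially, which is precisely what is needed to decrease its eigenvalue). Since $n_v>0$, blowing up at least one vertical flap strictly lowers the count above, so $\lambda_f(\gamma_0)=\lambda_{\widehat f}(\alpha)<1$; by Theorem~\ref{thm:blow-up-obstr}, $f$ is then realized by a rational map, and in particular has no obstruction at all. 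Combining the two directions with the orbifold claim yields the full statement. The main obstacle, as noted, is the explicit verification that $\lambda_f$ on the relevant slopes is $=1$ when one flap-direction is absent and drops below $1$ as soon as both directions are present — everything else is either an application of the main theorem or standard Latt\`es/orbifold facts.
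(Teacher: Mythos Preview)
Your overall strategy — the two-step decomposition and the explicit eigenvalue computation for $\alpha^h$ and $\alpha^v$ — is close in spirit to how the paper suggests Theorem~\ref{thm:flapped_intro} can be recovered from Theorem~\ref{thm:blow-up-obstr}, but as written the proposal has a genuine gap.

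The problem is your application of Theorem~\ref{thm:blow-up-obstr} with $f_{\mathrm{old}}=\La_n$. That theorem requires the base map to have a \emph{hyperbolic} orbifold, and $\La_n$ has a parabolic one (you say so yourself in the first paragraph). This hypothesis is not cosmetic: the proof of Theorem~\ref{thm:blow-up-obstr} ends by deriving that the base map must have a parabolic orbifold and then obtains a contradiction precisely from the hyperbolicity assumption. So the invocation as you have it is vacuous. A related issue is your choice of $E$: you take $E$ to consist of \emph{both} horizontal and vertical $1$-edges, but horizontal $1$-edges are mapped by $\La_n$ onto the edges $a$ or $c$, which are core arcs of $\alpha^h$; hence $\ins(\La_n(e),\alpha^h)=0$ for those $e$ and the $\alpha$-restricted blow-up conditions of Definition~\ref{def:blowup_conditions} fail.

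Both issues are fixed by the two-stage approach the paper outlines in the introduction: first glue only the $n_h>0$ horizontal flaps to obtain an intermediate map $f_1$; this $f_1$ has a hyperbolic orbifold (your argument here is fine, and the paper verifies it directly in Section~\ref{subsec:blownup-lattes} by checking that some ramification value becomes $\ge 3$) and $\alpha^h$ is an obstruction for $f_1$ with $\lambda_{f_1}(\alpha^h)=1$ (your computation). Now apply Theorem~\ref{thm:blow-up-obstr} to $f_1$ with $\alpha=\alpha^h$ and $E$ consisting only of the vertical $1$-edges where the $n_v>0$ vertical flaps are to be glued; these do satisfy the $\alpha^h$-restricted conditions, and blowing them up gives $\lambda_{\widehat f}(\alpha^h)<1$. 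The paper's own proof of Theorem~\ref{thm:flapped_intro} takes a different, self-contained route (Corollary~\ref{cor:the horizontal curve} plus the direct intersection-number argument of Theorem~\ref{thm:flapped_proven}), avoiding Theorem~\ref{thm:blow-up-obstr} altogether.
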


If $n_h=n_v=0$, then no flaps were glued to $\PP$ and the map $f$
coincides with the original $(n\times n)$-Latt\`es map $\La_n$ (strictly speaking, only if we choose the homeomorphism $\phi$ used in the construction above to be the identity on $\PP$, as we may). 
Then $f=\La_n$ has a parabolic orbifold.
Therefore, Thurston's criterion  as formulated in Section~\ref{subsec:thurston-char} does not  apply. 

If  $n_h=0$ or $n_v=0$, but $n_h+n_v>0$ as in Theorem~\ref{thm:flapped_intro}, then it is immediate  to see that $f$ has an   obstruction (see Section~\ref{subsec:obstructed pillow})  and therefore $f$ cannot be realized by  a rational map.  So the interesting part of Theorem~\ref{thm:flapped_intro} is the claim  that if $n_h>0$ and $n_v> 0$, then $f$ has no obstruction. 

Even though Theorem~\ref{thm:flapped_intro} follows from our more  general statement formulated in Theorem~\ref{thm:blow-up-obstr}, we will give  a complete proof. We will argue by contradiction and assume  that a map 
$f$ with $n_h>0$ and $n_v> 0$ has an obstruction. 
In principle, there are infinitely many candidates represented by 
essential isotopy classes of Jordan curves $\alpha \sub \PP\setminus \postf$. These isotopy classes in turn are distinguished by different  slopes in $\widehat \Q=\Q\cup\{\infty\}$ (as will be explained in Section~\ref{subsec:isotopies}). For such an isotopy class represented by $\alpha $ to be an  obstruction, it has to be $f$-invariant 
in the sense that $f^{-1}(\alpha )$ should contain a component $\widetilde \alpha $ isotopic to $\alpha $ rel.\ $\postf$. It seems to be a very intricate problem to find all slopes in $\widehat \Q$ that give an 
invariant isotopy class for $f$. Since we have been able to decide this question only for very  simple maps $f$, we proceed 
in a more indirect manner. 

We assume that the Jordan curve $\alpha  \sub \PP\setminus \postf$ is $f$-invariant and  gives  an   obstruction. We then   
investigate  the mapping degrees of $f$ on components of $f^{-1}(\alpha )$ and   consider intersection numbers of some relevant curves together with a careful counting  argument. We  heavily use the fact  that the {\em horizontal}  and {\em vertical} Jordan curves  (see \eqref{eq:hori+vert}) are $f$-invariant. 
Ultimately, we arrive at a contradiction. See Section~\ref{sec:realizing-blownup-lattes} for the details of this argument.  

Our idea to use intersection numbers (as in  Lemma~\ref{lem:preimage-bound}) to control possible locations of obstructions and dynamics  on curves is not new (see, for example,  \cite[Theorem 3.2]{PT}, \cite[Section 8]{WanderingCurves}, and \cite{ParrySlope}). However, the previously available results  do not provide sharp enough estimates applicable in our situation.

One can think of Theorem~\ref{thm:flapped_intro} in the following way. Suppose that instead of directly passing from the Latt\`es map $\La_n$ to a map, let us now call it  $\widehat f$, obtained  by gluing $n_h>0$ horizontal 
and $n_v>0$ vertical flaps to $\PP$, we first create an intermediate map $f$ obtained by gluing $n_h>0$ horizontal, but  no vertical flaps. Then $f$ has a hyperbolic orbifold and an  obstruction given  by a ``horizontal" Jordan curve $\alpha$. In the passage from $f$ to $\widehat f$ we kill this obstruction, because we glue additional vertical flaps that serve as obstacles and increase the mapping degree on some pullbacks of $\alpha$.  Theorem~\ref{thm:blow-up-obstr} then
says that no other obstructions arise for 
$\widehat f$. Therefore, Theorem~\ref{thm:blow-up-obstr}  generalizes Theorem~\ref{thm:flapped_intro} if we interpret it in the way just described. The proof of Theorem~\ref{thm:blow-up-obstr} is based on a  refinement and extension  of the ideas that we use to establish Theorem~\ref{thm:flapped_intro}. 
The proof of Theorem~\ref{thm:blow-up-obstr} is based on the ideas that we use to establish Theorem~\ref{thm:flapped_intro},  but substantial refinements and extensions
are required.

\subsection{The global curve attractor problem} \label{subsec:intro_attractor}
The mapping properties of Jordan curves play an important role in Thurston's characterization of rational maps. 
The original proof of this statement associates with a given 
Thurston map $f\:\Sp\to\Sp$ with a hyperbolic orbifold a certain Teichm\"{u}ller space $\TC_f$ and a real-analytic map $\sigma_f\: \TC_f\to\TC_f$, called \emph{Thurston's pullback map}. One can  show that  the map $f$ is realized by a rational map if and only if 
$\sigma_f$ has a fixed point \cite{DH_Th_char}.  This reduction to a fixed point problem in a Teich\-m\"uller space has also been successfully  applied by Thurston in the other contexts such as uniformization problems and the theory of $3$-manifoldsDH
(there is a rich literature on the subject; see, for example,  \cite{ThurstonSurfaces, Thurston3Manifolds, OtalHyperbolization,  SurfacesHomeo,  HubbardBook2}).  

In recent years, the pullback map $\sigma_f$ and its dynamical properties  have been subject to deeper investigations (see, for example, \cite{Pilgrim_Alg_Th,Selinger,Lodge_Boundary,Pullback}). In particular, Nikita Selinger showed in \cite{Selinger} that $\sigma_f$ extends to the \emph{Weil-Petersson boundary} of $\TC_f$. 
The behavior of $\sigma_f$ on this boundary is closely related to the 
behavior of Jordan curves under  pull-back by $f$. 
This in turn leads  to the following difficult open question in holomorphic dynamics, called the \emph{global curve attractor problem} (see \cite[Section 9]{Lodge_Boundary}).


\begin{conj*}\label{conj:Curve_Attr}
Let $f\: S^2\to S^2$ be a Thurston map  with a hyperbolic orbifold that is realized by a rational map. Then there exists a finite set $\mathscr{A}(f)$ of  Jordan curves in $S^2\setminus\postf$ such that for every   Jordan  curve $\gamma\sub  S^2\setminus\postf$,  all  pullbacks $\widetilde \gamma$ of $\gamma$ under $f^n$ are contained in $\mathscr{A}(f)$ up to isotopy 
rel.\ $\postf$  for all sufficiently large $n\in\N$.
\end{conj*}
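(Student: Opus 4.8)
I would attack the conjecture through Thurston's pullback map $\sigma_f\colon \TC_f\to\TC_f$ and its continuous extension to the augmented Teichm\"{u}ller space $\overline{\TC}_f$ due to Selinger. Since $f$ has a hyperbolic orbifold and is realized, $\sigma_f$ has a unique fixed point $\tau_\ast\in\TC_f$ (the marked surface $(\widehat{\C},\postf)$ carrying the realizing rational map $R$), every orbit $\sigma_f^n(\tau)$ converges to $\tau_\ast$ (Selinger; Pilgrim's canonical obstruction), and by the strict contraction of $\sigma_f$ for hyperbolic orbifolds the derivative $D\sigma_f(\tau_\ast)$ has spectral radius $<1$, so there is a compact neighborhood $V\ni\tau_\ast$ with $\sigma_f(V)\Subset V$. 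On the fixed hyperbolic surface $X_{\tau_\ast}$, for every $L$ there are only finitely many isotopy classes of essential simple closed curves of extremal length $\le L$ (Bers). Hence it would suffice to produce a constant $L=L(f)$, \emph{independent of} $\gamma$, such that every pullback $\widetilde\gamma$ of $\gamma$ under $f^n$ that is essential rel.\ $\postf$ has $\operatorname{EL}_{\tau_\ast}(\widetilde\gamma)\le L$ for all large $n$; the set $\mathscr{A}(f)$ would then be the finitely many classes of extremal length $\le L$ on $X_{\tau_\ast}$.

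\textbf{Transport of extremal length.} The next step is the basic estimate relating the pull-back operation on curves to $\sigma_f$. For $\tau\in\TC_f$ with $\gamma$ avoiding the critical values of $f$ in $X_\tau$, each component $\delta$ of $f^{-1}(\gamma)$ that is essential rel.\ $\postf$ and maps to $\gamma$ with degree $k_\delta$ satisfies $\operatorname{EL}_{\sigma_f(\tau)}(\delta)\le k_\delta\operatorname{EL}_\tau(\gamma)$: pulling back the extremal annulus around $\gamma$ by the degree-$k_\delta$ holomorphic covering gives an embedded annulus around $\delta$ whose modulus is divided by $k_\delta$, and the forgetful map $\mathcal{T}(\Sp,f^{-1}(\postf))\to\TC_f$ only decreases extremal length. (When $\#\postf=4$ all essential components of $f^{-1}(\gamma)$ are mutually isotopic, so this reads $\operatorname{EL}_{\sigma_f(\tau)}(\mu_f(\gamma))\le(\deg f)\operatorname{EL}_\tau(\gamma)$ for the slope map $\mu_f$.) Iterating from a structure $\tau_\gamma$ adapted to $\gamma$, and using that Teichm\"{u}ller distance bounds extremal-length ratios together with $\sigma_f^n(\tau_\gamma)\to\tau_\ast$, one obtains $\operatorname{EL}_{\tau_\ast}(\gamma^{(n)})\le e^{2\varepsilon_n}(\deg f)^n\operatorname{EL}_{\tau_\gamma}(\gamma)$ along any pull-back chain $\gamma^{(n)}$, with $\varepsilon_n\to 0$.

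\textbf{The obstacle.} The factor $(\deg f)^n$ is the crux: shrinking $\operatorname{EL}_{\tau_\gamma}(\gamma)$ controls only finitely many steps for a fixed $\gamma$, and letting $\tau_\gamma$ depend on $n$ destroys the single-orbit convergence. The contraction of $\sigma_f$ near $\tau_\ast$ controls $\operatorname{EL}_{\sigma_f^n(\tau)}(\delta)$ for a \emph{fixed} $\delta$, but here $\delta=\gamma^{(n)}$ changes at each step, so it does not compensate. To finish, one needs — once the orbit of complex structures has entered $V$ — a genuine contraction of the \emph{composed} quantity $\operatorname{EL}_{\sigma_f^n(\tau_\gamma)}(\gamma^{(n)})$ at a definite rate $<1$; equivalently, a uniform spectral gap forcing the Thurston eigenvalue $\lambda_f$ of every $\mu_f$-cycle met inside $V$ to be bounded away from $1$, together with a bound on the number of steps any slope takes to reach such a cycle that is \emph{uniform in the slope}. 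The eigenvalue bound is essentially realizability (Thurston: no obstruction $\Leftrightarrow$ no curve with $\lambda_f\ge 1$); the uniform transit-time bound is the open core of the conjecture, since an expanding slope-type map need not have it, as $z\mapsto z^2$ on $\Q/\Z$ shows, so one must exploit structure special to slope maps of realized Thurston maps rather than mere expansion.

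\textbf{A route at the obstacle.} Two angles seem most promising. First, descend to moduli space: for $\#\postf=4$, $\sigma_f$ induces an algebraic correspondence (often a rational map) $g_f\colon\mathcal{M}_{0,4}=\widehat{\C}\setminus\{0,1,\infty\}\dashrightarrow\mathcal{M}_{0,4}$ whose punctures correspond to the three ``short'' multicurves and for which realizability gives an attracting fixed point; one would then identify $\mu_f$ with the action of $g_f$ on the combinatorial boundary data and argue that iterated preimages $g_f^{-n}(\{0,1,\infty\})$ stabilize to finitely many configurations, using that $g_f$ is postcritically finite with an attracting fixed point. Second, use the canonical orbifold metric $\rho$ of $R$, in which $R$ expands: components of $R^{-n}(\gamma)$ then have $\rho$-length controlled by $(\deg R)^n/\lambda^n$, which is decisive when $R$ expands by a uniform factor $\lambda$ (e.g.\ when the Julia set is not the whole sphere, as for the Sierpi\'{n}ski-carpet maps produced by Theorem~\ref{thm:blow-up-obstr}), combined with an intersection-number argument in the spirit of Lemma~\ref{lem:preimage-bound} bounding $i(\gamma^{(n)},\eta)$ for a fixed finite test family $\eta$. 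Turning either heuristic into an estimate genuinely uniform over all curves $\gamma$ — and, for the full conjecture, over all $f$ — is where I expect the decisive difficulty to lie.
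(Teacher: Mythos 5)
This statement is labeled as a \emph{Conjecture} in the paper, and the paper does not prove it: it explicitly says that for general postcritically-finite rational maps the global curve attractor problem remains wide open. Your proposal, to its credit, is honest about this — you set up the Selinger/Teichm\"{u}ller framework correctly, derive the right transport estimate $\operatorname{EL}_{\sigma_f(\tau)}(\delta)\le k_\delta\operatorname{EL}_\tau(\gamma)$, and then correctly identify that the $(\deg f)^n$ factor cannot be absorbed because the curve changes at every step while the contraction of $\sigma_f$ near $\tau_\ast$ only controls a fixed curve. That is exactly the open core of the problem, and nothing in the paper closes it. So there is a genuine gap, but it is the gap inherent in the conjecture itself, not an error you introduced; no complete proof should be expected here.

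What the paper actually does is resolve the conjecture only for a special family (Theorem~\ref{thm:finite-curve-attr1}, via Theorem~\ref{thm:finite-curve-attr2}): Thurston maps obtained from the $(2\times 2)$-Latt\`es map by gluing at least one horizontal and one vertical flap. The method there is entirely elementary and bypasses Teichm\"{u}ller theory: one defines the complexity $\|r/s\|=|r|+s=\tfrac12\ins(\gamma,\alpha^h)+\tfrac12\ins(\gamma,\alpha^v)$ of a slope and proves (Proposition~\ref{prop:complexity-decreases}) that $\|\mu_f(x)\|\le\|x\|$ always, with strict decrease whenever $\|x\|>8$. The proof counts intersections of an $\eps$-boundary of a face of $f^{-1}(\xi)$ (for a core arc $\xi$) with the pullbacks of $\alpha^h$ and $\alpha^v$, and gets the strict decrease by showing that sufficiently complicated curves must cross a flap and hence meet a peripheral pullback. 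This is much closer to the second of your two proposed routes (intersection numbers against a fixed finite test family, in the spirit of Lemma~\ref{lem:preimage-bound}) than to the moduli-space correspondence route; the lesson from the paper is that the monotone quantity is an intersection number with a finite $f$-invariant family of curves, not an extremal length, and that even this only works for $n=2$ — the paper reports numerical evidence that for $n\ge 3$ the complexity can increase along orbits, so any general argument would need a subtler Lyapunov function.
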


A  set of Jordan curves $\mathscr{A}(f)$ as in this conjecture  is called a \emph{global curve attractor} of $f$.  We will give a solution of this  problem for  maps as in Theorem~\ref{thm:flapped_intro} with $n=2$ and $n_h, n_v\ge 1$.  
 Unfortunately, our methods only apply for $n=2$ and not for $n\ge 3$.

\begin{theorem}\label{thm:finite-curve-attr1}
 Let $f\: \PP\ra \PP$ be a Thurston map obtained from the  $(2\times 2)$-Latt\`{e}s map by gluing $n_h\geq 1$ horizontal and $n_v\geq 1$ vertical flaps to the pillow $\PP$. Then $f$ has a global curve attractor $\mathscr{A}(f)$. \end{theorem}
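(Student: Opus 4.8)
The plan is to reduce Theorem~\ref{thm:finite-curve-attr1} to a \emph{contraction statement for the slope correspondence} of $f$, and then to prove that statement by the intersection-number bookkeeping used for Theorem~\ref{thm:flapped_intro}. Fix $n=2$ and write $P:=\postf$ (the four corners of $\PP$). We may realize $f$ as $f=\widehat{\La}\circ\phi^{-1}$, with $\widehat{\La}\colon\widehat{\PP}\to\PP$ affine on each $1$-tile and $\phi\colon\widehat{\PP}\to\PP$ the homeomorphism of Section~\ref{subsec:blownup-lattes}; let $\alpha_h,\alpha_v\sub\PP\setminus P$ be the horizontal and vertical curves of \eqref{eq:hori+vert}. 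By Theorem~\ref{thm:flapped_intro}, $f$ has a hyperbolic orbifold, is realized, and has $\alpha_h,\alpha_v$ as $f$-invariant curves; moreover $\lambda_f(\alpha_h)<1$ and $\lambda_f(\alpha_v)<1$, since by the same theorem neither curve is an obstruction. Now every essential Jordan curve in $\PP\setminus P$ is, up to isotopy rel.\ $P$, the curve $\gamma_s$ of a unique slope $s\in\widehat{\Q}$, and every inessential one lies in one of five fixed isotopy classes (trivial and peripheral); by Lemma~\ref{lem:pull} the pull-back operation descends to these classes. Since the preimage of a once-punctured disk is a union of disks (Riemann--Hurwitz) and $\#\bigl(f^{-1}(p)\cap P\bigr)\le 3$ for each $p\in P$, only finitely many essential classes occur as isotopy classes of components of $f^{-1}(\gamma')$ for $\gamma'$ inessential; all other pull-back components of inessential curves are again inessential. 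It therefore suffices to produce a \emph{finite} set $S_0\sub\widehat{\Q}$, closed under passing to essential pull-back components, that contains $0$ and $\infty$ and absorbs the forward orbit of every slope under this (multivalued) operation; then $\mathscr{A}(f):=\{\gamma_s:s\in S_0\}$ together with the five inessential classes is a global curve attractor, after enlarging $S_0$, if necessary, by the finitely many essential slopes met along the pull-back orbits of the peripheral classes.

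The second step is to compute this slope correspondence. Fix a normalization in which $\|s\|:=\abs p+\abs q$ (for $s=p/q$ in lowest terms) is comparable to $i(\gamma_s,\alpha_h)+i(\gamma_s,\alpha_v)$, where $i(\cdot,\cdot)$ is the geometric intersection number rel.\ $P$. Computing $f^{-1}(\gamma_s)$ tile by tile from the affine structure of $\widehat{\La}$ and the explicit form of $\phi$, I would establish two facts: (i) $f^{-1}(\alpha_h)$ is a union of curves isotopic to $\alpha_h$ together with inessential curves, and likewise for $\alpha_v$, so the slopes $0$ and $\infty$ have only themselves as essential pull-back slopes; and (ii) there exist $\theta\in(0,1)$ and $C=C(f)$ so that for every $s\notin\{0,\infty\}$ and every essential component $\widetilde{\gamma}$ of $f^{-1}(\gamma_s)$ one has $i(\widetilde{\gamma},\alpha_h)\le\theta\,i(\gamma_s,\alpha_h)+C$ and $i(\widetilde{\gamma},\alpha_v)\le\theta\,i(\gamma_s,\alpha_v)+C$, equivalently $\|s'\|\le\theta\|s\|+C'$ whenever $\gamma_{s'}$ is isotopic to a component of $f^{-1}(\gamma_s)$. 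For (ii) I would combine two inputs. First, since $\alpha_h$ is isotopic to a component of $f^{-1}(\alpha_h)$, the transfer bound
\[
\sum_{\widetilde{\gamma}} i(\widetilde{\gamma},\alpha_h)\;=\;i\bigl(f^{-1}(\gamma_s),\alpha_h\bigr)\;\le\;i\bigl(f^{-1}(\gamma_s),f^{-1}(\alpha_h)\bigr)\;\le\;\deg(f)\cdot i(\gamma_s,\alpha_h)
\]
holds over all (necessarily essential) components $\widetilde{\gamma}$ of $f^{-1}(\gamma_s)$, and likewise with $\alpha_v$. Second, a direct combinatorial analysis of how these components wind around $\PP$ --- a refinement of the counting argument of Section~\ref{sec:realizing-blownup-lattes} (cf.\ Lemma~\ref{lem:preimage-bound}), now applied to an arbitrary curve rather than to a hypothetical obstruction --- shows that, precisely because \emph{both} $n_h\ge 1$ and $n_v\ge 1$, each essential component maps onto $\gamma_s$ with degree at least $2$ and that no single one of them can carry more than (roughly) half of these intersection numbers.

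Given (i) and (ii), the conclusion is routine. Iterating the estimate along any sequence of essential pull-backs starting from $s$, a slope reached after $N$ steps has height at most $\theta^N\|s\|+C'/(1-\theta)$, hence lies in the finite set $S_0:=\{s\in\widehat{\Q}:\|s\|\le C'/(1-\theta)+1\}$ once $N$ is large. This $S_0$ contains $0$ and $\infty$ and is closed under passing to essential pull-backs, since $\theta\|s\|+C'<C'/(1-\theta)+1$ whenever $\|s\|\le C'/(1-\theta)+1$. A pull-back orbit that meets an inessential class stays inessential thereafter, or feeds into the finite list of essential slopes coming from the peripheral classes, whose own orbits reach $S_0$ by (ii). Adjoining those finitely many slopes to $S_0$ and the five inessential classes to $\{\gamma_s:s\in S_0\}$ yields the desired finite global curve attractor $\mathscr{A}(f)$.

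The crux --- and the only genuinely hard point --- is assertion (ii): showing, uniformly over \emph{all} slopes, that every essential pull-back component of $f$ winds at most about half as much around $\alpha_h$ and around $\alpha_v$ as the curve it covers. This is exactly where the coarseness of the $2\times 2$ subdivision and the presence of both families of flaps are indispensable: when $n_v=0$ the vertical curve is pulled back without any such loss, which is precisely the obstruction exhibited in Theorem~\ref{thm:flapped_intro}; and it is also the point that does not survive the passage to $n\ge 3$, where a finer subdivision gives essential curves enough room to interact with the flaps only in ways that no longer force a definite contraction. It is for this reason that the argument, like the statement of Theorem~\ref{thm:finite-curve-attr1}, is limited to $n=2$.
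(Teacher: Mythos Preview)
Your overall plan---encode isotopy classes by slopes, define a complexity $\|s\|$ comparable to $\ins(\gamma_s,\alpha^h)+\ins(\gamma_s,\alpha^v)$, and show that pull-back eventually drives every slope into a finite set---is exactly the paper's strategy (Theorem~\ref{thm:finite-curve-attr2} and Proposition~\ref{prop:complexity-decreases}). But your step~(ii) has a genuine gap.

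First, the claim that ``each essential component maps onto $\gamma_s$ with degree at least $2$'' is \emph{false}. The paper itself records a counterexample in Section~\ref{subsec:twists}: for the blown-up $(2\times 2)$-Latt\`es map with one horizontal and one vertical flap at the vertex $B$ (Figure~\ref{fig:flabsatcorner}), the curve $\gamma=\wp(\ell_{3/13})$ has an essential pullback $\gamma_1\sim\wp(\ell_{1/3})$ with $\deg(f\colon\gamma_1\to\gamma)=1$. So your proposed mechanism for the contraction cannot work as stated, and your displayed ``transfer bound'' $\sum_{\widetilde\gamma}\ins(\widetilde\gamma,\alpha^h)\le\deg(f)\cdot\ins(\gamma_s,\alpha^h)$ is far too coarse to compensate.

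Second, and more structurally, the paper does \emph{not} prove a $\theta$-contraction with $\theta<1$. It proves the sharp inequality $\|\mu_f(x)\|\le\|x\|$, with strict inequality only once $\|x\|>8$; equality occurs precisely at fixed slopes (Remark~\ref{rem:mufix}). The mechanism is not a degree bound on the cover $\widetilde\gamma\to\gamma$, but an analysis of the bipartite graph $\GC=f^{-1}(\xi\cup\xi')$ for geodesic core arcs $\xi,\xi'$ of $\gamma$. The crucial $n=2$-specific fact is property~(P2) in the proof of Proposition~\ref{prop:complexity-decreases}: every $1$-vertex of type $A$ with $\deg_\GC\ge 2$ is already postcritical (because $\La_2$ sends all four corners to $A$, and flaps create only degree-$1$ preimages of $A$). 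A short case analysis (Cases~1, 2a, 2b) then shows that the boundary component $\partial_\xi\widetilde U$ always contains a connected subgraph $\HC$ of length~$2$ whose $\epsilon$-boundary $\beta$ is isotopic to $\widetilde\gamma$; combined with the fact that $\alpha^h$ has exactly two essential pullbacks (Lemma~\ref{lem:blown-lattes-horizontal}), Lemma~\ref{lem:choicebd} gives $\ins(\widetilde\gamma,\alpha^h)\le\ins(\gamma,\alpha^h)$ with coefficient exactly~$1$. The strict drop for $\|x\|>8$ comes from a separate geometric argument (Lemma~\ref{lem:curves-on-flapped-pillow}): once $|r|+s>8$, the set $\HC$ is forced to enter a flap, so $\beta$ meets a peripheral pullback of $\alpha^h$ or $\alpha^v$, making the second inequality in~\eqref{eq:compl-decr-H} strict.

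A minor point: your handling of peripheral curves is muddled. By Corollary~\ref{cor:pull}~\ref{pull1}, every pullback of a peripheral curve is peripheral, so no essential classes arise there and there is nothing to ``feed back'' into $S_0$.
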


One can show that the Julia set of a rational map as provided by Theorem~\ref{thm:flapped_intro} 
is either  a Sierpi\'nski carpet or the  whole Riemann sphere depending on whether the map has periodic critical points or not  (see  Proposition~\ref{prop:Julia}). 
Accordingly,  Theorem~\ref{thm:finite-curve-attr1} provides the first examples of maps with  Sierpi\'{n}ski carpet Julia set for which an answer to the global curve attractor problem is known. In fact, we obtain  such maps  with arbitrary large degrees.

Recently, Belk-Lanier-Margalit-Winarski  proved the existence of a finite global curve attractor for  all postcritically-finite polynomials \cite{LifitingTrees}. The conjecture is also  known to be true for all \emph{critically fixed rational maps} (that is, rational maps for which each critical point is fixed) and some \emph{nearly Euclidean Thurston maps} (that is, Thurston maps with exactly four postcritical points and only simple critical points); see \cite{H_Tischler} and \cite{Lodge_Boundary,Origami}. In \cite{KelseyLodge} Gregory Kelsey and Russell Lodge verified the conjecture for all quadratic non-Latt\`{e}s maps with four postcritical points. However, for general postcritcally-finite rational maps the conjecture remains wide open.

%


Since the maps we consider have four postcritical points, it is convenient to reformulate the  global curve attractor problem by introducing 
the {\em slope map} (it is closely related to the Thurston pull-back map $\sigma_f$ on the Weil-Petersson boundary of $\TC_f$).  To define it in the special case relevant for us, 
we  consider the {\em marked} pillow $(\PP,V)$, where  $V$ is  the set consisting of the four corners of $\PP$, and  assume that $f\:\PP\to \PP$ is a Thurston map with $\postf=V$. 
Up to topological conjugacy, every Thurston map with four postcritical points can be assumed to have this form. 

As we already mentioned, there is a bijective correspondence 
between isotopy classes $[\alpha]$ of essential Jordan curves $\alpha$ in $(\PP,V)$ and slopes $r/s\in  \widehat{\Q}$ (see Lemma~\ref{lem:isoclassesP}). We introduce the additional symbol  $\odot$ to represent peripheral Jordan curves in $(\PP,V)$. 
We now define the {\em slope map} $\mu_f\: 
\widehat{\Q}\cup \{\odot\} \ra  \widehat{\Q}\cup \{\odot\} $ associated with $f$ as follows. We set $\mu_f(\odot)\coloneq \odot$. This corresponds to the fact that each pullback  of a peripheral Jordan curve $\alpha$ in $(\PP,V)$ under $f$ is peripheral (see Corollary~\ref{cor:pull}~\ref{pull1}).  
If $r/s\in  \widehat{\Q}$ is an arbitrary slope, then we choose a Jordan curve $\alpha $ in  $(\PP,V)$ whose isotopy class $[\alpha]$ 
is represented by $r/s$. If all pullbacks of $\alpha$ under $f$ are peripheral, we set $\mu_f(r/s)\coloneq \odot$. Otherwise,
there exists an essential pullback $\widetilde \alpha$  
of $\alpha$ under $f$. Then the isotopy class  $[\widetilde \alpha]$
is independent of the choice of the essential pullback $\widetilde \alpha$ (see Corollary~\ref{cor:pull}~\ref{pull2}) and so it is represented by a unique slope
$r'/s'\in  \widehat{\Q}$. In this case, we set $\mu_f(r/s)\coloneq r'/s'$. In this way,  $\mu_f(x)\in  \widehat{\Q}\cup \{\odot\}$ is defined for all 
$x\in   \widehat{\Q}\cup \{\odot\}$.
Since the map $\mu_f$ has the same source and target, we can iterate it. If $n\in \N_0$, then we denote by $\mu^n_f$ the $n$-th iterate of $\mu_f$.  We will then prove  the following statement.

\begin{theorem}\label{thm:finite-curve-attr2}
 Let $f\: \PP\ra \PP$ be a Thurston map obtained from the  $(2\times 2)$-Latt\`{e}s map by gluing $n_h\geq 1$ horizontal and $n_v\geq 1$ vertical flaps to the pillow $\PP$. Then there exists   a finite set 
 $S\sub  \widehat{\Q}\cup \{\odot\}$ with the following 
 property: for each $x\in  \widehat{\Q}\cup \{\odot\}$ there exists
$N\in \N_0$  such that 
$  \mu^n_f(x)\in S$ for all $n\ge N$.\end{theorem}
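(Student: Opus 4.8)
The plan is to reduce Theorem~\ref{thm:finite-curve-attr2} to Theorem~\ref{thm:finite-curve-attr1} by using the bijective correspondence between isotopy classes of essential Jordan curves in $(\PP,V)$ and slopes in $\widehat{\Q}$, plus the symbol $\odot$ for peripheral curves (Lemma~\ref{lem:isoclassesP}). Under this correspondence, a global curve attractor $\mathscr{A}(f)$ for $f$ — a finite set of Jordan curves in $\PP\setminus\postf$ absorbing all pullbacks under $f^n$ for $n$ large — translates directly into a finite set $S\sub\widehat{\Q}\cup\{\odot\}$ of the slopes (and possibly $\odot$) realized by the curves in $\mathscr{A}(f)$. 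The key observation relating the two formulations is that the slope map $\mu_f$ records exactly the isotopy class of an essential pullback of a curve under one application of $f$: by Corollary~\ref{cor:pull}\ref{pull2} this class is independent of which essential pullback is chosen, and iterating $n$ times tracks the class of essential pullbacks under $f^n$. Hence $\mu_f^n(x)$ is precisely the slope of an essential pullback of a representative of $x$ under $f^n$ (or $\odot$ if all such pullbacks have become peripheral).

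First I would fix the global curve attractor $\mathscr{A}(f)$ furnished by Theorem~\ref{thm:finite-curve-attr1} and define $S\sub\widehat{\Q}\cup\{\odot\}$ to be the (finite) set consisting of $\odot$ together with the slopes of all essential Jordan curves among the elements of $\mathscr{A}(f)$. Next, given $x\in\widehat{\Q}\cup\{\odot\}$, I treat two cases. If $x=\odot$, then $\mu_f(\odot)=\odot$ by definition, so $\mu_f^n(x)=\odot\in S$ for all $n\ge 0$. If $x=r/s\in\widehat{\Q}$, I pick a Jordan curve $\gamma$ in $(\PP,V)$ with $[\gamma]$ represented by $r/s$. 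By Theorem~\ref{thm:finite-curve-attr1} there is $N\in\N_0$ such that for all $n\ge N$ every pullback of $\gamma$ under $f^n$ is isotopic rel.\ $\postf$ to a curve in $\mathscr{A}(f)$. Now I split further according to whether, for a given $n\ge N$, $\gamma$ has an essential pullback under $f^n$: if it does, that pullback is isotopic to some essential element of $\mathscr{A}(f)$, so its slope lies in $S$, and by Corollary~\ref{cor:pull}\ref{pull2} this slope equals $\mu_f^n(r/s)$; if it does not, then all pullbacks of $\gamma$ under $f^n$ are peripheral, which forces $\mu_f^n(r/s)=\odot\in S$. Either way $\mu_f^n(x)\in S$ for all $n\ge N$, which is the claim.

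The one genuinely technical point — and the step I would expect to demand the most care — is the precise identification $\mu_f^n = \mu_{f^n}$ on the level of slopes, i.e.\ that iterating the one-step slope map agrees with taking pullbacks under the $n$-th iterate, \emph{including} the bookkeeping of when and how curves become peripheral. Concretely, one needs: (i) a pullback of $\gamma$ under $f^n$ is essential if and only if there is a chain $\gamma=\gamma_0,\gamma_1,\dots,\gamma_n$ with each $\gamma_{i+1}$ an essential pullback of $\gamma_i$ under $f$; and (ii) once some pullback becomes peripheral, all its further pullbacks stay peripheral (Corollary~\ref{cor:pull}\ref{pull1}), so the ``$\odot$'' state is absorbing and matches $\mu_f(\odot)=\odot$. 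Both follow from the multiplicativity of the pull-back operation under composition (Lemma~\ref{lem:pull} and Corollary~\ref{cor:pull}) together with the elementary fact that a component of $f^{-n}(\gamma)$ is a component of $f^{-1}$ of some component of $f^{-(n-1)}(\gamma)$; the subtlety is only in handling the case where $\gamma_i$ has both essential and peripheral pullbacks, which is why the correspondence is stated at the level of isotopy classes of \emph{essential} pullbacks via Corollary~\ref{cor:pull}\ref{pull2}. Once this identification is in place, the theorem is an immediate transcription of Theorem~\ref{thm:finite-curve-attr1}, and no new estimates or combinatorial arguments are needed.
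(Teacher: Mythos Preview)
Your proposal is circular in the logical structure of the paper. You want to deduce Theorem~\ref{thm:finite-curve-attr2} from Theorem~\ref{thm:finite-curve-attr1}, but in the paper Theorem~\ref{thm:finite-curve-attr1} is proved \emph{from} Theorem~\ref{thm:finite-curve-attr2} (see the short proof immediately following the statement of Theorem~\ref{thm:finite-curve-attr2} in the introduction). There is no independent proof of Theorem~\ref{thm:finite-curve-attr1} available; the two theorems are essentially equivalent reformulations, and the actual content lies in establishing one of them directly. So your argument, while formally correct as a reduction, does not constitute a proof: it simply shifts the burden to a statement that itself awaits proof.

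The paper's own proof of Theorem~\ref{thm:finite-curve-attr2} is direct and does not invoke Theorem~\ref{thm:finite-curve-attr1}. It relies on Proposition~\ref{prop:complexity-decreases}, which shows that the complexity $\|x\| = |r|+s$ of a slope $x=r/s$ satisfies $\|\mu_f(x)\|\le\|x\|$ always, with strict inequality whenever $\|x\|>8$. One then takes $S=\{u\in\widehat{\Q}\cup\{\odot\}:\|u\|\le 8\}$, a finite set, and observes that the orbit of any $x$ under $\mu_f$ must reach $S$ after finitely many steps (since complexity strictly decreases outside $S$) and stay there (since complexity never increases). This is where the substantive work happens---the intersection-number estimates in Section~\ref{sec:attractor} that control how pullbacks interact with the horizontal and vertical curves---and your proposal bypasses it entirely. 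Your discussion of the identification $\mu_f^n=\mu_{f^n}$ is fine as far as it goes, but it addresses a bookkeeping issue rather than the core difficulty.
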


Note that $\postf=V$ in this case; so our previous considerations apply and the map $\mu_f$ is defined. It is clear that the previous theorem leads to the solution of the global curve attractor problem for the maps $f$ considered:

\begin{proof}[Proof of Theorem~\ref{thm:finite-curve-attr1}
based on Theorem~\ref{thm:finite-curve-attr2}]
To obtain a finite attractor $\mathscr{A}(f)$, pick a Jordan curve in  each isotopy 
class represented by a slope in $S$ and add five Jordan curves 
that represent the  isotopy classes of peripheral Jordan curves in 
$(\PP,V)$ (one for null-homotopic curves and one for each corner of $\PP$).  
\end{proof}

For the proof of Theorem~\ref{thm:finite-curve-attr2} we will establish 
a certain monotonicity property of  the slope map $\mu_f$ for a  map $f$ as in the statement (see Proposition~\ref{prop:complexity-decreases}). Roughly speaking, this monotonicity means 
 that  up to isotopy 
rel.\   $\postf=V$ complicated  essential Jordan curves 
in $(\PP, V)$ get ``simpler" and ``less twisted" if we take 
 successive preimages under $f$  and eventually end up in the global curve attractor.

Our  methods again rely   on the consideration of intersection numbers.   The algebraic methods for solving the global curve attractor problem developed in \cite{Pilgrim_Alg_Th} (specifically,  \cite[Theorem 1.4] {Pilgrim_Alg_Th}) do  not apply  in general  for the maps considered in Theorem \ref{thm:finite-curve-attr1} (see the discussion in Section \ref{subsec:twists}). 

Some of our ideas  can also be used  for the study of the global dynamics of the slope map for 
Thurston maps  
that are not covered by  Theorem~\ref{thm:finite-curve-attr2}.
In particular, we are able to describe the iterative behavior of $\mu_f$ for a specific  obstructed Thurston map $f$ obtained by blowing up the $(2\times 2)$-Latt\`{e}s map (see Section \ref{sec:numerics} for the details). This provides an answer to a question by Kevin Pilgrim (see \cite[Question~4.4]{Pilgrim_Pullback}).

While it is straightforward to compute $\mu_f(x)$ for individual values $x\in  \widehat{\Q}\cup \{\odot\}$, we have been unable 
  to give an  explicit formula for  $\mu_f$  for the maps  $f$ we consider.  In general, these slope maps show very complicated behavior. Currently, very few explicit computations of slope maps  are known in the literature. Except for some very special situations (for example, when the slope map is constant, that is, when $\mu_f(x)=\odot$ for all $x\in  \widehat{\Q}\cup \{\odot\}$), we are only aware of  computations of slope maps  for nearly Euclidian Thurston maps in  \cite[Section 5]{Cannon_Nearly} and 
   \cite[Section 6]{Lodge_Boundary}.  See also \cite{NET_Decidability} for some general properties of the slope map $\mu_f$.

An undergraduate student at UCLA, Darragh Glynn, performed some computer experiments to compute 
$\mu_f$ for maps $f$ as in Theorem~\ref{thm:flapped_intro}  
for $n\ge 3$ (and $n_h,n_v\ge 1$ corresponding to the rational case). His results show that in these cases the map $\mu_f$ does not have the monotonicity property as for $n=2$,  but indicate that 
these maps $f$ still have a global curve attractor (see Section~\ref{sec:numerics} for more discussion).

\subsection{Organization of this paper} 
Our paper is organized as follows. In the next two  sections we review some background.  In Section 2, we fix notation and
state some basic definitions. We also 
  discuss  isotopy classes of Jordan curves  in  spheres with four marked points, how isotopy classes of such curves correspond to slopes in $\widehat \Q$, as well as   some relevant facts about intersection numbers.  Even though all of this is fairly standard, we give complete  proofs in the appendix,  because it is hard to track down this material in the literature with a detailed  exposition. 

 In Section~\ref{sec:thurston-maps} we  recall  some basics about Thurston maps and the relevant concepts for a precise formulation of  Thurston's characterization of rational maps for Thurston maps with four postcritical points---the only case relevant for us  (see Section~\ref{subsec:thurston-char}).  

We explain the blow-up procedure for arcs in  Section~\ref{sec:blow-up} and relate this to the procedure of gluing flaps to the pillow $\PP$ (see 
Section~\ref{subsec:blownup-lattes}). The proof of Theorem~\ref{thm:flapped_intro} is then given in 
Section~\ref{sec:realizing-blownup-lattes}. 

The proof of our main result, Theorem~\ref{thm:blow-up-obstr}, requires more preparation. This is the purpose of 
Section~\ref{sec:esscirc}. There we introduce the concept of {\em essential circuit length} that will allow us to formulate tight estimates for the number of essential pullbacks of a Jordan curve 
 under a Thurston map with four postcritical points. This is formulated in the rather technical Lemma~\ref{lem:preimage-bound-refined} which is of crucial importance 
though.  The proof of Theorem~\ref{thm:blow-up-obstr} is then given in Section~\ref{sec:elim-obstr}. 

 Section~\ref{sec:attractor} is devoted to the proof of Theorem \ref{thm:finite-curve-attr2}. In 
 Section~\ref{sec:Further_discussion},  we discuss some further directions related to this work.   As we already mentioned, the appendix is devoted to the discussion of isotopy classes and intersection numbers of Jordan curves in spheres with four marked points.

\medskip\noindent
{\bf Acknowledgments.} The authors would like to thank Kostya Drach, Dima Dudko,  Daniel Meyer, Kevin Pilgrim, and Dylan Thurston for various useful comments and remarks.  We  are grateful to  Darragh Glynn for allowing us to incorporate some of his numerical findings  in this paper. 

\section{Preliminaries} \label{sec:prelim}

In this section, we discuss  background relevant  for the rest of the paper. 

\subsection{Notation and basic concepts}
\label{subsec:basics}

We denote by $\N=\{1,2,\dots\}$ the set of natural numbers and  by 
$\N_0=\{0,1, 2, \dots \}$
the set of natural numbers
including $0$. 
The sets 
of integers, real numbers, and complex numbers are denoted by $\Z$, 
 $\R$, and $\C$, respectively. We write $i$ for  the imaginary 
unit in $\C$, and $\text{Im}(z)$ for the imaginary part of a complex number $z\in \C$.

   As usual, 
 $\R^2\coloneq \{(x,y):x,y\in \R\}$ is the Euclidean plane and $\CDach\coloneq \C\cup\{\infty\}$  is
the Riemann sphere. Here and elsewhere, we write $A\coloneq B$
for emphasis when an object $A$ is defined to be another object $B$.  When we consider two objects $A$ and $B$, and there is a natural
identification between them that is clear from the context, we
write 
$A\cong B$.\index{$\cong$} 
For example, $\R^2\cong \C$ if we identify a point $(x,y)\in \R^2$ with $x+ iy  \in \C$.
  We will freely switch back and forth between  these different 
  viewpoints of $\R^2\cong \C$.  
    
   We use the notation  $\I \coloneq [0,1]\subset\R$ for  the closed unit interval,  $\D\coloneq  \{z\in\C: |z|<1\}$ for the open unit disk in $\C$,  and $\Z^2\coloneq \{ x+iy: x,y\in \Z\}$ for  the square lattice in $\C$. 
If $z,w\in \C$, then we write $[z,w]\coloneq \{ z+t(w-z): t\in \I\}$ for the line segment in $\C$ joining $z$ and $w$.  We also use the notation $[z_0,w_0)\coloneq [z_0, w_0]\setminus\{w_0\}$ and $(z_0, w_0)\coloneq
[z_0, w_0]\setminus \{z_0, w_0\}$.

The cardinality of a set $X$ is denoted by $\#X\in \N_0\cup\{\infty\}$ and the identity map on $X$ by $\id_X$.  If $X$ is a topological space and $M\sub X$, then $\cl(M)$ denotes the closure, $\inte(M)$ the interior, and $\partial M$ the boundary  of $M$ in $X$. 

Let $f\colon X\to Y$ be a map between sets $X$ and $Y$. If
$M\sub  X$, then 
$f|M$
stands for the restriction of $f$ to $M$. If $N\sub Y$, then 
$f^{-1}(N)\coloneq \{x\in X : f(x)\in N\}$ is the preimage of
$N$ in $X$. Similarly,  $f^{-1}(y)\coloneq
\{x\in X : f(x)=y\}$ is the preimage of a point $y\in Y$. 

Let $f\: X\ra X$ be a map.  For $n\in \N$ we denote by 
\[f^n\coloneq \underbrace{f\circ \dots \circ f}_{\text{$n$ factors} }\] the $n$-th iterate of $f$. It is convenient to define $f^0\coloneq \id_X$.  For $n\in \N_0$ we denote by   $f^{-n}(M)\coloneq \{x\in X
: f^n(x) \in M\}$ and $f^{-n}(p)\coloneq \{x\in X : f^n(x)=p\}$ the preimages of a set $M\subset X$ and a point $p\in X$ under  $f^n$, respectively.

A {\em surface} $S$ is a connected and oriented topological $2$-manifold. We denote its {\em Euler characteristic} by $\chi(S)$. 
Note that $\chi(S)\in \{2, 1,0, -1, \dots\} \cup \{-\infty\}$. 
Throughout this paper, we use the notation $\Sp$ for a {\em (topological) $2$-sphere}, that is, $\Sp$ indicates a surface homeomorphic to the Riemann sphere~$\CDach$. An 
{\em annulus} is a surface homeomorphic to $\{ z\in \C: 1<|z|<2\}$.

A \emph{Jordan curve $\alpha$}  in a surface $S$  is the image $\alpha =\eta(\partial
\D)$ of a (topological)  embedding $\eta\: \partial \D \ra S$ of 
 the unit circle $\partial\D=\{z\in\C: |z|=1\}$ into  $S$.
An \emph{arc $e$} in~$S$ is the image $e=\iota(\I)$ of an 
embedding $\iota\:\I\to S$.  Then $\iota(0)$ and $\iota(1)$ are the {\em endpoints} of $e$, and we define  $\partial e \coloneq 
\{\iota(0),\iota(1)\}$. The set  $\inter(e)\coloneq e \setminus \partial e$ is called the \emph{interior} of $e$. The notions of endpoints and interior of $e$ only depend on $e$ and not on the choice of the embedding $\iota$.  Note that the notation $\partial e$ and 
$\inte(e)$ is ambiguous, because it should not be confused with the boundary and interior of $e$ as a subset of $S$. For arcs $e$ in a surface $S$, we will only use $\partial e$ and 
$\inte(e)$ with the meaning just defined. 

A subset $U$ of  a surface $S$ is called an {\em open} or {\em closed  Jordan region} if there exists a topological  embedding 
$\eta\: \cl(\D)=\{z\in \C: |z|\le 1\}\ra S$ such that $U=\eta(\D)$ or  
$U=\eta(\cl(\D))$, respectively. 
In both cases, $\partial U=\eta(\partial \D)$ is a Jordan curve in $S$. 
A {\em crosscut} $e$ in an open or closed Jordan region $U$ is an arc 
$e\sub \cl(U)$ such that $\inte(e)\sub \inte(U)$ and $\partial e\sub \partial U$.

A {\em path} $\ga$ in a surface $S$ is a continuous map $\ga\: [a,b]\ra S$, where $[a,b]\sub \R$ is a compact (non-degenerate) interval. As is common, we will use the same notation $\ga$ for the image $\ga([a,b])$ of the path if no confusion can arise. 
The path $\ga$ {\em joins} two sets $M,N\sub S$ if $\ga(a) \in M$ and $\ga(b)\in N$, or vice versa. 
 A {\em loop} in $S$  {\em based at $p\in S$} is a path 
$\ga\: [a,b] \ra S$ such that $\ga(a)=\ga(b)=p$. The loop $\ga$ is called {\em simple} if  $\ga$ is injective on $[a,b)$. So essentially a simple loop is a Jordan curve run through with some parametrization.    

 Let $M,N,K$ be subsets of a surface  $S$. We say  that {\em $K$ separates $M$ and $N$} if every path in $S$ joining 
  $M$ and $N$  meets  $K$. Note that here $K$ is not necessarily disjoint from $M$ or $N$. We say that $K$ separates a point 
  $p\in S$ from a set $M\sub S$ if $K$ separates $\{p\}$ and $M$.

Let  $Z\sub S$ be  a finite set of points in a surface $S$.  Then we refer to the  pair $(S, Z)$  as a \emph{marked surface}, and the points in $Z$ as the  \emph{marked points} in $S$. The most important  case for us will be when $S=S^2$ is a $2$-sphere and $Z\sub S^2$ consists of four points.

 A \emph{Jordan curve $\alpha $ in a marked surface $(S,Z)$} is a Jordan curve $\alpha \sub S\setminus Z$. An \emph{arc $e$ in $(S,Z)$} is an arc $e\sub S$ with 
$\partial e\sub Z$ and  $\inter(e)\sub S\setminus Z$. We say that a Jordan curve $\alpha$ in a marked sphere $(\Sp, Z)$ is  \emph{essential} if each of the two connected components of $\Sp\setminus \alpha$ contains at least two points of $Z$; otherwise,  we say that  $\alpha$ is \emph{peripheral}. 

Let $(\Sp,Z)$ be a marked sphere with $\#Z = 4$. A \emph{core arc} of an essential Jordan curve $\alpha$ in $(\Sp,Z)$ is an arc in $(\Sp,Z)$ that is contained in one of the two  connected 
components of $\Sp\setminus \alpha$ and joins the two points in $Z$ that lie in this  component. 

Let $A$ be an annulus. Then a {\em core curve}  of $A$ is a Jordan curve $\beta\sub A$ such that under some homeomorphism $\varphi\: A\ra A'$ the curve $\beta'=\varphi(\beta)$ separates the boundary components of $A'=\{z\in \C: 1<|z|<2\}$.

\subsection{Branched covering maps}
\label{subsec:brcovmaps}
Let $X$ and $Y$ be  surfaces. Then a continuous map $f\colon X\to Y$ is called a {\em branched covering map} if for each point 
$q\in Y$ there exists an open set  $V\sub Y$ homeomorphic to $\D$ with $q\in V$ that is {\em evenly covered} in the following sense: for some index set $J\ne \emptyset$ we can write $f^{-1}(V)$ as a disjoint union 
\begin{equation}\label {eq:evencov}
 f^{-1}(V)= \bigcup_{j\in J}U_j
 \end{equation} 
of open sets $U_j\sub X$ such that $U_j$ contains precisely one point $p_j\in f^{-1}(q)$. Moreover, we require that for each
 $j\in J$ there exists $d_j\in \N$ and  orientation-preserving 
homeomorphisms $\varphi_j\: U_j\ra \D$ with $\varphi_j(p_j)=0$ and   $\psi_j\: V\ra \D$ with $\psi_j(q)=0$ such that 
$$ (\psi_j \circ f \circ \varphi_j^{-1})(z)=z^{d_j}$$ 
for all $z\in \D$ (see \cite[Section A.6]{THEBook} for more background on branched covering maps). For given $f$, the number 
$d_j$ is uniquely determined by $p=p_j$, and called the {\em local degree} of $f$ at $p$ and denoted by $\deg(f,p)$. A point $p\in X$ with $\deg(f,p) \geq 2$ is called a \emph{critical point} of $f$. The set of all critical points of $f$ is a discrete set in $X$  and denoted by $\critf$. 
If  $f$ is a branched covering map, then it is a covering map 
 (in the usual sense) from $X\setminus
f^{-1}\big(f(\critf)\big)$ onto $Y\setminus f(\critf)$.

In the following,  suppose $X$ and $Y$ are compact surfaces, and  $f\: X\ra Y$ is a branched covering map.  Then $\critf\sub X$ is a finite set. Moreover, if  
 $\deg(f)\in \N$ denotes  the topological degree of $f$, then $$\sum_{p\in f^{-1}(q)} \deg(f,p)=\deg(f)$$
for each  $q\in Y$.

If $\ga\:[a,b]\ra Y$ is a path, then we call a path $\widetilde \ga\: [a,b]\ra X$ 
a {\em lift} of $\ga$ (under $f$) if $f\circ  \widetilde \ga=\ga$. 
Every path $\ga$ in $Y$ has a lift $\widetilde \ga$ in $X$ (see \cite[Lemma A.18]{THEBook}), but in general $\widetilde \ga$ is not unique. If $\ga([a,b))\sub 
Y\setminus f(\critf)$ and   $x_0\in f^{-1}(\ga(a))$, then there exists a unique 
lift  $\widetilde \ga\: [a,b]\ra X$ of $\ga$ under $f$ with $\widetilde \ga(a)=x_0$. This easily follows from standard existence and uniqueness theorems for lifts under covering maps (see \cite[Lemma A.6]{THEBook}). 

If $e\sub Y$ is an arc, then an arc $\widetilde e\sub X$ is called 
a {\em lift} of $e$ (under $f$) if $f|\widetilde e$ is a homeomorphism 
of $\widetilde e$ onto $e$. It easily follows from the existence and uniqueness statements for lifts of paths just discussed that if 
$e$ is an arc in $(Y, f(\critf))$, $y_0\in \inte(e)$, and $x_0\in f^{-1}(y_0)$, then there exists a unique lift $\widetilde e\sub X$ of $e$ with $x_0\in \widetilde e$.

Let $V\sub Y$ be an open and connected set and $U\sub f^{-1}(V)$ be a (connected) component of $f^{-1}(V)$. Then $f|U\: U \ra V$ is also a branched covering map. Each point $q\in V$ has the same number $d\in \N$ of preimages under $f|U$ counting local degrees.  We set $\deg(f|U)\coloneq d$. 
If the Euler characteristic $\chi(V)$ is finite, then  $\chi(U)$ is also 
finite and we have the 
{\em Riemann-Hurwitz formula}
\begin{equation}\label{eq:RH}
\chi(U)+\sum_{p\in U \cap \critf}(\deg(f,p)-1)=\deg(f|U)\cdot \chi(V). 
\end{equation}

\subsection{Planar embedded graphs}
\label{subsec:graphs}

 A \emph{planar embedded graph} in a sphere $S^2$ is a pair $G=(V,E)$, where $V$ is a finite set of points in  $\Sp$ and $E$ is a finite set of arcs in $(\Sp,V)$ with pairwise disjoint interiors. The sets $V$ and $E$ are called the \emph{vertex} and \emph{edge sets} of $G$, respectively. Note that our notion of a planar embedded graph does not allow \emph{loops}, that is, edges that connect a vertex to itself, but it does allow \emph{multiple edges}, that is, distinct edges that join the same pair of vertices. The \emph{degree} of a vertex $v$ in $G$, denoted $\deg_G(v)$, is the number of edges of $G$ incident to~$v$. Note that $2\cdot\#E = \sum_{v\in V} \deg_G(v)$.

The \emph{realization} of $G$ is the subset $\GC$ of $\Sp$ given by 
\[\GC \coloneq V \cup \bigcup_{e\in E} e.\]
A \emph{face} of $G$ is a connected component of $\Sp\setminus\GC$.
Usually, we conflate a  planar embedded graph $G$ with its 
realization $\GC$. Then it is understood that $\GC$ contains a finite set $V\sub \GC$ of distinguished points that  are the vertices 
of the graph. Its  edges are  the closures  of the components of  $\GC\setminus V$.

A \emph{subgraph} of a planar embedded graph $G=(V,E)$ is a planar embedded graph $G'=(V',E')$ with $V'\subset V$ and $E'\subset E$.   A \emph{path of length $n$ between vertices $v$ and $v'$} in $G$ is a sequence $v_0,e_0,v_1,e_1,\dots,e_{n-1},v_n$, where $v_0=v$, $v_n=v'$, and $e_k$ is an edge incident to the vertices $v_{k}$ and $v_{k+1}$ for  $k=0,\dots,n-1$. A path that does not repeat vertices is called a \emph{simple path}.

A path $v_0,e_0, v_1,e_1,\dots,e_{n-1},v_n$ with $v_0=v_n$ and $n\geq 2$ is called a \emph{circuit of length $n$} in $G$ and is denoted by $(e_0,e_1,\dots,e_{n-1})$. 
Such a circuit is called a \emph{simple cycle} if all vertices $v_k$, $k = 0,\dots,n-1$, are distinct.

A planar embedded  graph $G$ is called \emph{connected} if any two distinct vertices of $G$ can be joined  by a path in $G$. Equivalently,  $G$ is connected if its realization $\GC$ is connected as a subset of $\Sp$. Note that if $G$ is connected, then each face of $G$ is simply-connected.

As follows from \cite[Lemma 4.2.2]{DiestelGraph}, the topological boundary $\partial U$ of each face $U$ of $G$ may be viewed as the realization of a subgraph of $G$. Moreover, a walk around any connected component of the boundary $\partial U$ traces a circuit $(e_0,e_1,\dots,e_{n-1})$ in $G$ such that each edge of $G$ appears zero, one, or two times in the sequence $e_0,e_1,\dots,e_{n-1}$. We will say that the circuit $(e_0,e_1,\dots,e_{n-1})$ \emph{traces} (a connected component of) the boundary $\partial U$.  If $U$ is simply-connected, then $\partial U$ is connected, and the length of the (essentially unique) circuit that bounds $U$ is called the \emph{circuit length} of $U$ in $G$. 

A planar embedded graph $(V,E)$ is called {\em  bipartite} if we can split $V$ into two disjoint subsets $V_1$ and $V_2$ such that 
each edge $e\in E$ has one endpoint in $V_1$ and one in $V_2$.

\subsection{The Euclidean square pillow} \label{subsec:pillow}
As discussed in the introduction, we consider a {\em square pillow}  $\PP$ obtained from gluing two identical copies of the unit square $\I^2\subset \R^2$  along their boundaries by the identity map. Then  $\PP$ is a topological $2$-sphere.  We equip $\PP$  with the induced path metric that agrees with the Euclidean metric on each of the two copies of the unit square. We call this metric space $\PP$ the \emph{Euclidean square pillow}.  The vertices and edges of the unit square $\I^2$ in $\PP$ are called the \emph{vertices} and \emph{edges} of $\PP$. One copy of $\I^2$ in $\PP$ is called the {\em front} and the other copy the {\em back side} of $\PP$.   In a dynamical context, we also refer to these two copies of 
$\I^2$ as the $0$-{\em tiles} of $\PP$. We color the  front side of $\PP$ white, and its  back side black. Finally, we equip $\PP$ with the  orientation that agrees with the standard orientation 
on the front side $\I^2$ of $\PP$ (represented by the 
positively-oriented standard flag
$((0,0), \I\times \{0\}, \I^2)$; see  \cite[Appendix~A.4]{THEBook}).

We label the vertices and edges of $\PP$ in  counterclockwise order by $A,B,C,D$ and $a,b,c,d$, respectively, so that $A\in \PP$ corresponds to the vertex $(0,0)\in \I^2$ and the edge $a\subset \PP$ corresponds to  $[0,1]\times\{0\}\subset \I^2$. 
 Then $a$ has the endpoints $A$ and $B$. We can view the boundary 
 $\partial \I^2$ of $\I^2$ as a planar embedded graph in $\PP$ with the vertex set $V\coloneq \{A, B, C, D\}$ and the edge set $E\coloneq \{a,b,c,d\}$. We call $a$ and $c$ the \emph{horizontal} edges,  and $b$ and $d$ the  \emph{vertical} edges of $\PP$; see Figure~\ref{fig_pillownotation}.

\begin{figure}[t]
\centering
\def\svgwidth{0.25\columnwidth}
\begingroup%
  \makeatletter%
  \providecommand\color[2][]{%
    \errmessage{(Inkscape) Color is used for the text in Inkscape, but the package 'color.sty' is not loaded}%
    \renewcommand\color[2][]{}%
  }%
  \providecommand\transparent[1]{%
    \errmessage{(Inkscape) Transparency is used (non-zero) for the text in Inkscape, but the package 'transparent.sty' is not loaded}%
    \renewcommand\transparent[1]{}%
  }%
  \providecommand\rotatebox[2]{#2}%
  \newcommand*\fsize{\dimexpr\f@size pt\relax}%
  \newcommand*\lineheight[1]{\fontsize{\fsize}{#1\fsize}\selectfont}%
  \ifx\svgwidth\undefined%
    \setlength{\unitlength}{138.52847365bp}%
    \ifx\svgscale\undefined%
      \relax%
    \else%
      \setlength{\unitlength}{\unitlength * \real{\svgscale}}%
    \fi%
  \else%
    \setlength{\unitlength}{\svgwidth}%
  \fi%
  \global\let\svgwidth\undefined%
  \global\let\svgscale\undefined%
  \makeatother%
  \begin{picture}(1,0.92596021)%
    \lineheight{1}%
    \setlength\tabcolsep{0pt}%
    \put(0,0){\includegraphics[width=\unitlength,page=1]{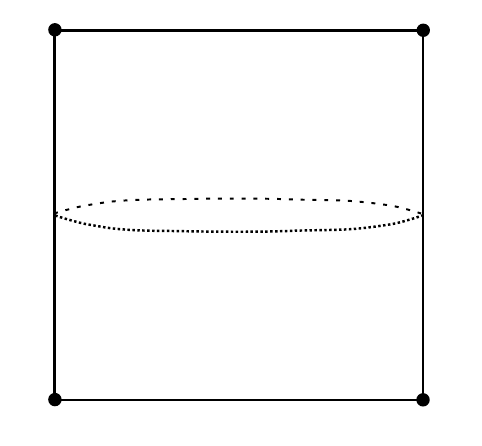}}%
    \put(0.91770271,0.86453135){\color[rgb]{0,0,0}\makebox(0,0)[lt]{\lineheight{1.25}\smash{\begin{tabular}[t]{l}$C$\end{tabular}}}}%
    \put(0.91156374,0.0473318){\color[rgb]{0,0,0}\makebox(0,0)[lt]{\lineheight{1.25}\smash{\begin{tabular}[t]{l}$B$\end{tabular}}}}%
    \put(0.49178598,0.00911037){\color[rgb]{0,0,0}\makebox(0,0)[lt]{\lineheight{1.25}\smash{\begin{tabular}[t]{l}$a$\end{tabular}}}}%
    \put(0.49178598,0.88618752){\color[rgb]{0,0,0}\makebox(0,0)[lt]{\lineheight{1.25}\smash{\begin{tabular}[t]{l}$c$\end{tabular}}}}%
    \put(0.92490862,0.46389186){\color[rgb]{0,0,0}\makebox(0,0)[lt]{\lineheight{1.25}\smash{\begin{tabular}[t]{l}$b$\end{tabular}}}}%
    \put(-0.00311208,0.86453135){\color[rgb]{0,0,0}\makebox(0,0)[lt]{\lineheight{1.25}\smash{\begin{tabular}[t]{l}$D$\end{tabular}}}}%
    \put(-0.00311208,0.0473318){\color[rgb]{0,0,0}\makebox(0,0)[lt]{\lineheight{1.25}\smash{\begin{tabular}[t]{l}$A$\end{tabular}}}}%
    \put(0.02617914,0.46389186){\color[rgb]{0,0,0}\makebox(0,0)[lt]{\lineheight{1.25}\smash{\begin{tabular}[t]{l}$d$\end{tabular}}}}%
  \end{picture}%
\endgroup%

\caption{The Euclidean square pillow $\PP$.} \label{fig_pillownotation}
\end{figure}

The pillow $\PP$ is an example of a \emph{Euclidean polyhedral surface}, that is, a surface obtained by gluing Euclidean polygons along boundary edges by using isometries. Note that the metric on $\PP$ is locally flat except at its vertices, which are Euclidean conic  singularities. So $\PP$ is an \emph{orbifold} (see, for example, \cite[Appendix E]{Milnor_Book} and \cite[Appendix~A.9]{THEBook}).

An alternative description for the pillow $\PP$ can be given as follows.  We consider the unit square $\I^2\subset \R^2\cong\C$ and map it to the upper half-plane in $\CDach$ by a conformal map, normalized
so that the vertices $0,1,1+i, i$ are mapped to $0, 1, \infty, -1$, respectively. By  Schwarz reflection, this map can be extended to a meromorphic function $\wp\:\C\to  \CDach$. Then $\wp$ is a  \emph{Weierstrass $\wp$-function} (up to a postcomposition with a M\"{o}bius transformation) that  is doubly periodic with respect to the lattice $2\Z^2\coloneq \{2k+2ni: k,n\in \Z\} \subset\C$.  Actually, 
 for $z, w\in \C$ we have 
 \begin{equation}\label{eq:wpeq} 
\wp(z)=\wp(w) \text{ if and only if } z-w\in 2\Z^2 \text { or } z+w\in 2\Z^2. 
\end{equation} 

We can push forward the Euclidean metric on $\C$  to the Riemann sphere $\CDach$ by $\wp$. With respect to this metric, called the \emph{canonical orbifold metric} for $\wp$, the sphere $\CDach$ is isometric to the  Euclidean square pillow $\PP$. In the following, we identify the pillow $\PP$ with $\CDach$ by the orientation-preserving isometry that maps the vertices $A,B,C,D$ to 
$0, 1, \infty, -1$, respectively. Then  we can consider
$\wp\:\C\to  \CDach \cong \PP$  as a map onto the pillow $\PP$.  Actually,
 $\wp$ is  the \emph{universal orbifold covering map} for $\PP$ (see \cite[Section A.9] {THEBook} for more background). A very intuitive description of this map can be  given if we 
 color the  squares  $[k,k+1]\times [n,n+1]$, $k,n\in \Z$,
 in checkerboard manner black and white so that $[0,1]\times [0,1]$ is white.  Restricted to such a square $S$, the map  $\wp$ is an  isometry  that sends
 $S$ to the white $0$-tile  of $\PP$ if $S$ is white, and to the  black  
 $0$-tile $\PP$ if $S$ is black; 
 see Figure~\ref{fig_Weierstrass} for an illustration. Here, the  points in the complex plane $\C$ marked by a black dot (on the left) are mapped to $A$ by $\wp$ and are elements of  $\wp^{-1}(A)= 2\Z^2$.

\begin{figure}[t]
\centering
\def\svgwidth{0.81\columnwidth}
\begingroup%
  \makeatletter%
  \providecommand\color[2][]{%
    \errmessage{(Inkscape) Color is used for the text in Inkscape, but the package 'color.sty' is not loaded}%
    \renewcommand\color[2][]{}%
  }%
  \providecommand\transparent[1]{%
    \errmessage{(Inkscape) Transparency is used (non-zero) for the text in Inkscape, but the package 'transparent.sty' is not loaded}%
    \renewcommand\transparent[1]{}%
  }%
  \providecommand\rotatebox[2]{#2}%
  \newcommand*\fsize{\dimexpr\f@size pt\relax}%
  \newcommand*\lineheight[1]{\fontsize{\fsize}{#1\fsize}\selectfont}%
  \ifx\svgwidth\undefined%
    \setlength{\unitlength}{459.40224387bp}%
    \ifx\svgscale\undefined%
      \relax%
    \else%
      \setlength{\unitlength}{\unitlength * \real{\svgscale}}%
    \fi%
  \else%
    \setlength{\unitlength}{\svgwidth}%
  \fi%
  \global\let\svgwidth\undefined%
  \global\let\svgscale\undefined%
  \makeatother%
  \begin{picture}(1,0.32085516)%
    \lineheight{1}%
    \setlength\tabcolsep{0pt}%
    \put(0,0){\includegraphics[width=\unitlength,page=1]{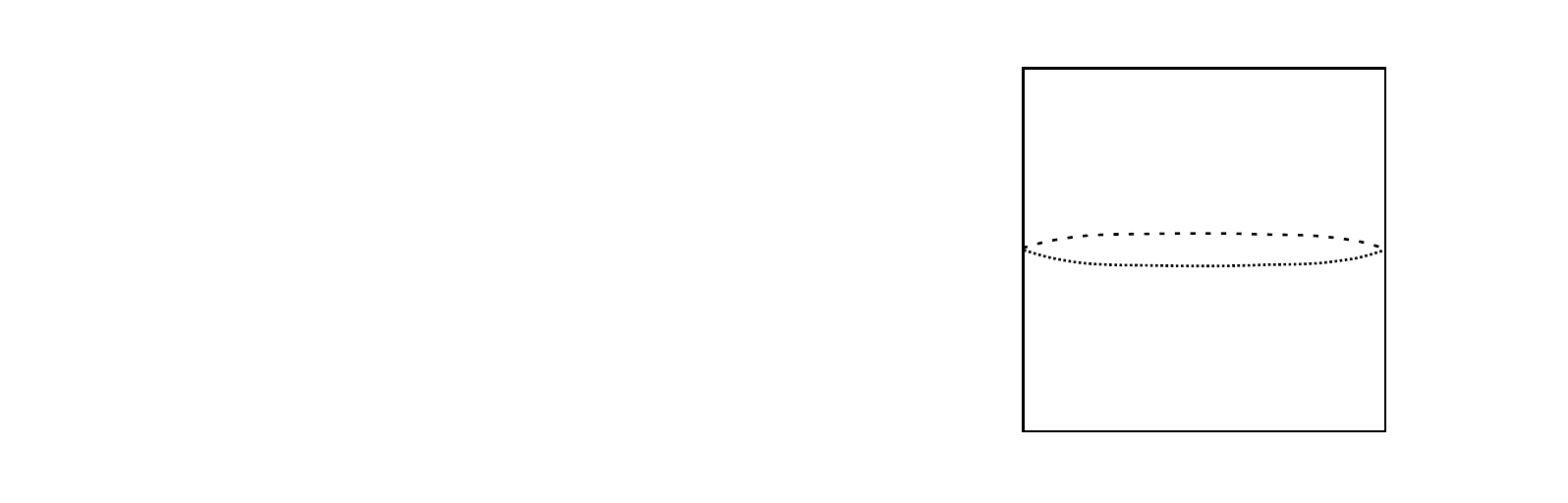}}%
    \put(0.90296098,0.14917373){\color[rgb]{0,0,0}\makebox(0,0)[lt]{\lineheight{1.25}\smash{\begin{tabular}[t]{l}$\PP$\end{tabular}}}}%
    \put(0,0){\includegraphics[width=\unitlength,page=2]{covering.pdf}}%
    \put(0.53577959,0.19443477){\color[rgb]{0,0,0}\makebox(0,0)[lt]{\lineheight{1.25}\smash{\begin{tabular}[t]{l}$\wp$\end{tabular}}}}%
    \put(0,0){\includegraphics[width=\unitlength,page=3]{covering.pdf}}%
    \put(0.61338751,0.02622958){\color[rgb]{0,0,0}\makebox(0,0)[lt]{\lineheight{1.25}\smash{\begin{tabular}[t]{l}$A$\end{tabular}}}}%
    \put(0,0){\includegraphics[width=\unitlength,page=4]{covering.pdf}}%
    \put(-0.00256323,0.15133545){\color[rgb]{0,0,0}\makebox(0,0)[lt]{\lineheight{1.25}\smash{\begin{tabular}[t]{l}$\C$\end{tabular}}}}%
    \put(0,0){\includegraphics[width=\unitlength,page=5]{covering.pdf}}%
  \end{picture}%
\endgroup%

\caption{The map $\wp\colon  \C\to \PP$.}\label{fig_Weierstrass}
\end{figure}

\subsection{Isotopies and  intersection numbers}
\label{subsec:isotopies}

Let $X$ and $Y$ be topological spaces. 
Then a continuous map $H\: X\times\I\to Y$ is called 
a {\em homotopy} from $X$ to $Y$. For $t\in \I$ we denote by $H_t\coloneq 
H(\cdot,t)\: X  \ra Y$ the {\em time}-$t$ map of the homotopy. The homotopy $H$ is called an {\em isotopy} if $H_t$ is a homeomorphism from $X$ onto $Y$ for each $t\in \I$. If $Z\sub X$, then a homotopy   $H\: X\times\I\to Y$ is said to be a  \emph{homotopy relative to $Z$} (abbreviated ``$H$ is a homotopy rel.\ $Z$'') if $H_t(p) = H_0(p)$ for all $p\in Z$ and $t\in\I$. In other words, the image of each point in $Z$ remains fixed during the homotopy $H$. Isotopies rel.\ $Z$ are defined in a similar way.

Two homeomorphisms $h_0, h_1 \: X \ra  Y $ are called \emph{isotopic} (\emph{rel.\ $Z\subset X$}) if there exists an isotopy $H\: X\times\I\to Y$ (rel.\ $Z$) with $H_0 = h_0$ and $H_1 = h_1$. 
Given $M,N,Z\subset X$, we say that \emph{$M$ is isotopic to $N$ rel.\ $Z$} (or \emph{$M$ can be isotoped into $N$ rel.~$Z$}), denoted by $M\sim N$ rel.\ $Z$,  if there exists an isotopy $H\:X\times\I \to X$ rel.\ $Z$ with $H_0 = \id_{X}$ and $H_1(M) = N$. Recall that  $\id_{X}$ is the identity map on $X$.

Let $(S,Z)$ be a marked surface (with a finite, possibly empty set $Z\sub S$ of marked points). If  $\alpha$ is a Jordan curve in 
$(S,Z)$,  then 
its {\em isotopy class} $ [\alpha]$ (with $(S,Z)$ understood) consists of all Jordan curves $\beta$ in $(S,Z)$ such that $\alpha \sim \beta$  rel.\ $Z$.

The following statement gives a sufficient  condition for  two Jordan curves in 
 $(S,Z)$ to be isotopic  rel.\ $Z$. 

\begin{lemma}\label{lem:isocrit} Let $\alpha$ and $\beta$ be disjoint 
Jordan curves in a marked surface  $(S,Z)$. Suppose there is an annulus $U\sub S \setminus Z$ such that $\partial U=\alpha \cup \beta$. Then 
$\alpha$ and $\beta$ are isotopic rel.\ $Z$. 
\end{lemma}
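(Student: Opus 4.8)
The plan is to use the annulus $U$ to build an explicit isotopy of $S$ that is supported in a slightly enlarged neighborhood of $\clos(U)$, moving $\alpha$ onto $\beta$ while fixing everything outside — in particular fixing $Z$, which lies in $S\setminus\clos(U)$. The key point is a standard ``collar'' description of an annulus: any annulus is homeomorphic to $\{z\in\C:1<|z|<2\}$, and on such a standard model the radial flow gives an isotopy pushing one boundary circle toward the other. I would transport this picture back to $S$ and then extend it to all of $S$ using a collar on the outer side so that it becomes the identity near the frontier of the region being modified.

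First I would set up the model. Since $\partial U=\alpha\cup\beta$ and $U$ is an annulus in $S\setminus Z$, the closed set $\clos(U)=U\cup\alpha\cup\beta$ is a compact annulus-with-boundary in $S$, disjoint from $Z$. Choose slightly larger disjoint closed annular collars $N_\alpha$ and $N_\beta$ in $S\setminus Z$ of $\alpha$ and $\beta$ respectively, one on each side, so that $W\coloneq N_\alpha\cup\clos(U)\cup N_\beta$ is a closed annulus neighborhood of $\clos(U)$ still contained in $S\setminus Z$; such collars exist by the collar neighborhood theorem for the boundary of the surface-with-boundary $\clos(U)$. Fix an orientation-preserving homeomorphism $\Phi\colon W\ra \{z\in\C: 1/2\le |z|\le 5/2\}$ carrying $\alpha$ to $\{|z|=1\}$, $U$ to $\{1<|z|<2\}$, and $\beta$ to $\{|z|=2\}$. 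Here I am using that any two compact annuli with a prescribed correspondence of their two boundary components and core are homeomorphic, which is the classification of compact surfaces with boundary in the genus-zero, two-boundary-component case.

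Next I would construct the isotopy in the model. On $\{1/2\le|z|\le 5/2\}$ define a homeomorphism $g_t$ (for $t\in\I$) that is the identity on $|z|=1/2$ and on $|z|=5/2$, equals the identity at $t=0$, and at $t=1$ carries the circle $|z|=1$ onto the circle $|z|=2$: concretely, take $g_t$ to be radial, $g_t(re^{i\theta})=\rho_t(r)e^{i\theta}$, where $\rho_t\colon[1/2,5/2]\ra[1/2,5/2]$ is a continuously varying family of increasing homeomorphisms fixing the endpoints with $\rho_0=\id$ and $\rho_1(1)=2$. Pulling this back by $\Phi$ gives an isotopy $H_t$ of $W$ fixing $\partial W$ pointwise and with $H_0=\id_W$, $H_1(\alpha)=\beta$. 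Because $H_t$ is the identity near $\partial W$, it extends by the identity to an isotopy $H\colon S\times\I\ra S$ with $H_0=\id_S$, $H_1(\alpha)=\beta$, and $H_t$ fixing $S\setminus W$ pointwise; since $Z\sub S\setminus W$, this is an isotopy rel.\ $Z$. Hence $\alpha\sim\beta$ rel.\ $Z$, as claimed.

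The main obstacle, and the only place real care is needed, is the first two steps: producing the collars $N_\alpha,N_\beta$ inside $S\setminus Z$ and the model homeomorphism $\Phi$ with the prescribed boundary behavior. This rests on the collar neighborhood theorem and on the (genus-zero) classification of compact surfaces with boundary, both standard but worth citing precisely; everything after that is the routine radial-flow construction. One should also note the orientation bookkeeping is harmless here, since we only need \emph{a} homeomorphism of the model annulus realizing the boundary correspondence, and reflections of the annulus are available if needed. I would present the model-annulus step as the crux and keep the extension-by-identity step brief.
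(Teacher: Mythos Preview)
Your proof is correct and follows essentially the same approach as the paper's: enlarge the closed annulus $\clos(U)$ to a slightly larger annulus $W\sub S\setminus Z$ using collars (the paper phrases this as ``Jordan curves in surfaces are tame''), build an isotopy supported in $W$ that pushes $\alpha$ onto $\beta$ and is the identity near $\partial W$, and extend by the identity to all of $S$. The paper gives only a two-sentence sketch of this; you have filled in the explicit radial-flow model, which is exactly the construction the sketch is gesturing at.
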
 

\begin{proof}This is standard and we will only give a sketch of the proof. Since Jordan curves in  surfaces are {\em tame}, one can slightly enlarge the annulus $U$ to an annulus $U'\sub S^2\setminus Z$ that contains $\alpha$ and $\beta$. Then $\alpha$ can be isotoped into  $\beta$ by an isotopy on $U'$ that is the identity near $\partial U'$. This isotopy on $U'$  can be extended to an isotopy on $S^2$ rel.\ $Z$ that isotopes $\alpha$ into $\beta$.   
\end{proof} 

%

Let $(S,Z)$ be a marked surface.
If $\alpha$ and $\beta$ are arcs or Jordan curves in $(S, Z)$, we define 
their (unsigned)  \emph{intersection number}
as 
\[\ins(\alpha, \beta)\coloneq 
\inf\{\#(\alpha'\cap \beta'): \text{$\alpha\sim \alpha'$ rel.\ $Z$ and $\beta\sim \beta'$ rel.\ $Z$}\}.\]
The relevant marked surface  $(S,Z)$ here will be understood from the context, and we suppress it from our notation for intersection numbers. If we want to emphasize it, we will say that we consider intersection numbers in $(S,Z)$. 
The intersection number is always finite, because  we can always reduce to the case when  $\alpha$ and $\beta$ are piecewise geodesic with respect to some Riemannian metric on $S$ (see \cite[Lemma A.8]{BuserGeometry}). 
 If $\alpha$ and $\beta$
satisfy $\ins(\alpha \cap \beta) =\# (\alpha\cap \beta)$, then we say that  $\alpha$ and $\beta$ are in {\em minimal position} (in their isotopy classes rel.\ $Z$).

Suppose $\alpha$  and $\beta$ are arcs or Jordan curves 
 in  $(S, Z)$.  
Then we say that  $\alpha$ and $\beta$   {\em meet  transversely
at a point} $p\in \alpha \cap \beta\cap (S\setminus Z)$  (or $\alpha$
 {\em crosses} $\beta$ at $p$) if $p$ is an isolated point in $\alpha\cap \beta$ and  if the following 
condition is  true for 
 a (small) arc $\sigma\sub
\alpha$ containing $p$ as an interior point  such that $\sigma\cap \beta=\{p\}$: let $\sigma^L$ and $\sigma^R$ be  the two subarcs of $\sigma$ into which $\sigma$ is split by $p$, then with suitable orientation of $\beta$ near $p$ the arc $\sigma^L$ lies to the left and $\sigma^R$ to the right of $\beta$. 
We say that $\alpha$ and $\beta$ {\em meet transversely} or have {\em transverse intersection} if the set $\alpha \cap \beta$ is finite and if $\alpha$ and $\beta$   meet  transversely
at each point $p\in \alpha \cap \beta\cap (S\setminus Z)$.

\begin{lemma} \label{lem:transverse} 
Suppose $\alpha$ and $\beta$ are  Jordan curves or arcs in 
a marked surface $(S,Z)$.
  If $\alpha$ and $\beta$ are in minimal position, then $\alpha$ and $\beta$ meet transversely. 
\end{lemma}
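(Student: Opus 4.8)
The plan is to prove the contrapositive: if $\alpha$ and $\beta$ do \emph{not} meet transversely, then they are not in minimal position. Transversality can fail in two ways. First, $\alpha\cap\beta$ could be infinite; but then $\#(\alpha\cap\beta)=\infty$, while $\ins(\alpha,\beta)<\infty$ always, so $\alpha$ and $\beta$ are not in minimal position, and we are done in this case. Hence we may assume $\alpha\cap\beta$ is finite, so each of its points is isolated, and the only remaining way to fail transversality is that there is a point $p\in\alpha\cap\beta$ at which $\alpha$ and $\beta$ do not cross transversely. Note that automatically $p\in S\setminus Z$, since $\alpha\subset S\setminus Z$, and that both $\alpha$ and $\beta$ pass through $p$ as an \emph{interior} point, because endpoints of arcs in $(S,Z)$ lie in $Z$; thus the local picture at $p$ is the same whether $\alpha$ and $\beta$ are arcs or Jordan curves.

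The main step is to eliminate the intersection point $p$ by a local isotopy. Using the tameness of arcs and Jordan curves in surfaces (as in the proof of Lemma~\ref{lem:isocrit}), together with the fact that $p$ is isolated in $\alpha\cap\beta$, one first straightens $\beta$ near $p$ and then shrinks to obtain a closed Jordan region $D\subset S$ with $p\in\inte(D)$, $D\cap Z=\emptyset$, and such that: $\beta\cap D$ is a single crosscut $\tau$ of $D$ passing through $p$ (with $p\in\inte(\tau)$), separating $D\setminus\tau$ into two components $H^L$ and $H^R$; $\alpha\cap D$ is a single arc $\sigma$ through $p$; and $\alpha\cap\beta\cap D=\{p\}$, so in particular $\sigma\cap\tau=\{p\}$ and $p$ is an interior point of $\sigma$. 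Then each of the two components of $\sigma\setminus\{p\}$, being connected and disjoint from $\tau$, lies in one of $H^L$, $H^R$. The failure of transversality at $p$ means precisely that both components lie in the \emph{same} half, say $H^L$; equivalently, $\sigma\subset\overline{H^L}$ and $\sigma$ meets $\tau$ only at $p$. One can then push $\sigma$ off $\tau$ into the interior of $H^L$ by an isotopy of $D$ fixing $\partial D$, and extend it by the identity to an ambient isotopy $H$ of $S$ supported in $D$ (hence rel $Z$) with $H_0=\id_S$. Setting $\alpha'\coloneq H_1(\alpha)$, this isotopy only moves the sub-arc $\sigma$, and moves it off $\tau=\beta\cap D$ without touching $\partial D$ or the endpoints of $\sigma$ (which are not on $\beta$); combined with $\alpha\cap\beta\cap(S\setminus D)=\alpha\cap\beta\setminus\{p\}$, this yields $\alpha'\cap\beta=(\alpha\cap\beta)\setminus\{p\}$, so $\#(\alpha'\cap\beta)=\#(\alpha\cap\beta)-1$.

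Since $\alpha'\sim\alpha$ rel $Z$, we get $\ins(\alpha,\beta)\le\#(\alpha'\cap\beta)<\#(\alpha\cap\beta)$, so $\alpha$ and $\beta$ are not in minimal position. This establishes the contrapositive and hence the lemma. The only delicate point in this argument is the construction of the normal-form region $D$ (which relies on tameness of arcs and Jordan curves in surfaces) and the verification that the local push-off creates no new intersection points; once $D$ is in place, this is immediate from the locality of the isotopy, and the remainder is routine bookkeeping with the definition of $\ins$.
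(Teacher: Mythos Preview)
Your proof is correct and follows essentially the same approach as the paper's: argue (by contradiction or contrapositive) that a non-transverse intersection point $p$ can be removed by a local isotopy pushing a small subarc of $\alpha$ off $\beta$, contradicting minimal position. Your version is more detailed (explicitly handling the infinite-intersection case and constructing the normal-form disk $D$), while the paper gives only an outline; one small slip is the claim ``$\alpha\subset S\setminus Z$,'' which fails when $\alpha$ is an arc, but your subsequent reasoning (that $p\in S\setminus Z$ forces $p$ to be an interior point of both curves since arc endpoints lie in $Z$) repairs this immediately.
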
 

\begin{proof} This  is  essentially a standard fact (see, for example, \cite[pp.~416--417]{BuserGeometry}), and we will only give an outline of the proof.

 Since $ \# (\alpha\cap \beta)=\ins(\alpha \cap \beta)$, the set 
$\alpha \cap \beta$ consists of finitely many isolated points. To reach a contradiction, suppose that $\alpha$ and $\beta$ do not meet transversely at some point  $p\in \alpha \cap \beta\cap (S\setminus Z)$. Then there exists an   arc $\sigma\sub
\alpha$ containing $p$ as an interior point  such that 
$\sigma\cap \beta=\{p\}$ and with the following property: if    $\sigma_1$ and $\sigma_2$   denote the two subarcs of $\sigma$ into which $\sigma$ is split by $p$, then $\sigma_1$ and $\sigma_2$ lie on the same side of $\beta$ (equipped with some orientation locally near $p$).
In other words, $\alpha$ touches $\beta$ locally near  $p$ from one side and does not cross $\beta$ at $p$. 

We can then modify the curve $\alpha$ near $p$ by an isotopy   that pulls the subarc $\sigma$ away from $\beta$ so that the new curve $\alpha$ does not have the intersection point $p$ with $\beta$ while no new intersection points  of $\alpha$ and $\beta$  arise.  This contradicts our assumption
that for the original curve $\alpha$ we have  
$\# (\alpha\cap \beta)=\ins(\alpha \cap \beta)$.
\end{proof}

\subsection{Jordan curves in spheres with four marked points}
\label{subsec:curvesonsph}

If   $(S^2,Z)$ is a marked sphere where $Z\sub S^2$ consists   of exactly four points, then,  up to homeomorphism, we may assume that $S^2$ is equal to the pillow $\PP$, and $Z=V=\{A,B,C,D\}$ consists of the four vertices of $\PP$.  
We will freely switch back and forth between a general marked sphere $(S^2, Z)$ with $\#Z=4$ and $(\PP,V)$.

We need  some statements about  isotopy classes of Jordan curves and arcs  in $(\PP,V)$ and their intersections numbers. They are  ``well known", but unfortunately we have been unable to  track down a comprehensive account  in the literature. Accordingly, we will provide a complete treatment. This may be  of independent interest apart from the main objective of the paper. We will give the statements in this section, but will provide the details of the proofs in the appendix.

As we will see,   there is a natural way to define a bijection between the set of  isotopy classes $[\ga]$ of essential  Jordan curves 
$\gamma$ in $(\PP, V)$ and  the set of \emph{extended rational numbers} $\widehat{\Q}\coloneq \Q\cup\{\infty\}$.  Throughout this paper, whenever we write $r/s\in  \widehat{\Q}$, we  assume that 
$r\in \Z$ and $s\in \N_0$ are two relatively
prime integers. We allow $s=0$ here, in which case we assume $r= 1$. Then 
$r/s= 1/0\coloneq \infty\in  \widehat{\Q}$.  

We say that a  (straight) line  $\ell\sub  \C$ has {\em 
slope}  $r/s\in\widehat{\Q}$ if it is given as 
$$\ell =\{ z_0+(s+ ir)t: t\in \R\}\sub \C$$ for some $z_0\in \C$. We use the notation $\ell_{r/s}(z_0)$ for the unique line in $\C$  with slope 
$r/s$ passing through $z_0\in \C$, and the notation  $\ell_{r/s}$ (when the point $z_0$ is not important) for any line in $\C$ with slope $r/s$.

   Let 
  $\ell_{r/s}\sub \C$ be any  line with slope $r/s \in \widehat{\Q}$.
If $\ell_{r/s}$   does not contain any  point in the lattice $\Z^2=\wp^{-1}(V)$ and so  $\ell_{r/s}\sub \C \setminus \Z^2$, then 
  $\tau_{r/s}\coloneq \wp(\ell_{r/s})$ is a Jordan curve   in $\PP\setminus V$. Actually,   $\tau_{r/s}$ is a simple closed geodesic in 
the Euclidean square pillow $\PP$ (see Figure~\ref{fig_line} for an illustration).  If  $\ell_{r/s}$  contains a point in $\Z^2$, then 
$\xi_{r/s} \coloneq \wp(\ell_{r/s})$ is a geodesic arc in 
$(\PP, V)$. 

It is easy to see that every simple closed geodesic or geodesic arc $\tau$ in $(\PP, V)$  has the form $\tau=\wp(\ell_{r/s})$ 
for a line $\ell_{r/s}\sub \C$ with some slope $r/s\in \widehat \Q$.
In the following, we use the  notation $\tau_{r/s}$ for a simple  closed geodesic and $\xi_{r/s}$ for a geodesic arc obtained in this way.

  It  follows from \eqref{eq:wpeq} that for fixed $r/s \in \widehat{\Q} $ we obtain precisely two distinct arcs $\xi_{r/s}$ and $\xi'_{r/s}$  
of the  form $\wp(\ell_{r/s}(z_0))$ depending on $z_0\in \Z^2$.  For each simple closed geodesic $\tau_{r/s}$ the arcs $\xi_{r/s}$ and $\xi'_{r/s}$ are core arcs of $\tau_{r/s}$ lying  in different components of $\PP\setminus \tau_{r/s}$ (see the appendix for more details). In particular, $\tau_{r/s}$ is always  an essential Jordan curve in $(\PP, V)$.

 It turns out that the  isotopy classes of essential 
Jordan curves  in $(\PP,V)$ are closely related to the  simple closed geodesics 
$\tau_{r/s}$.

\begin{figure}[t]
\centering
\def\svgwidth{0.82\columnwidth}
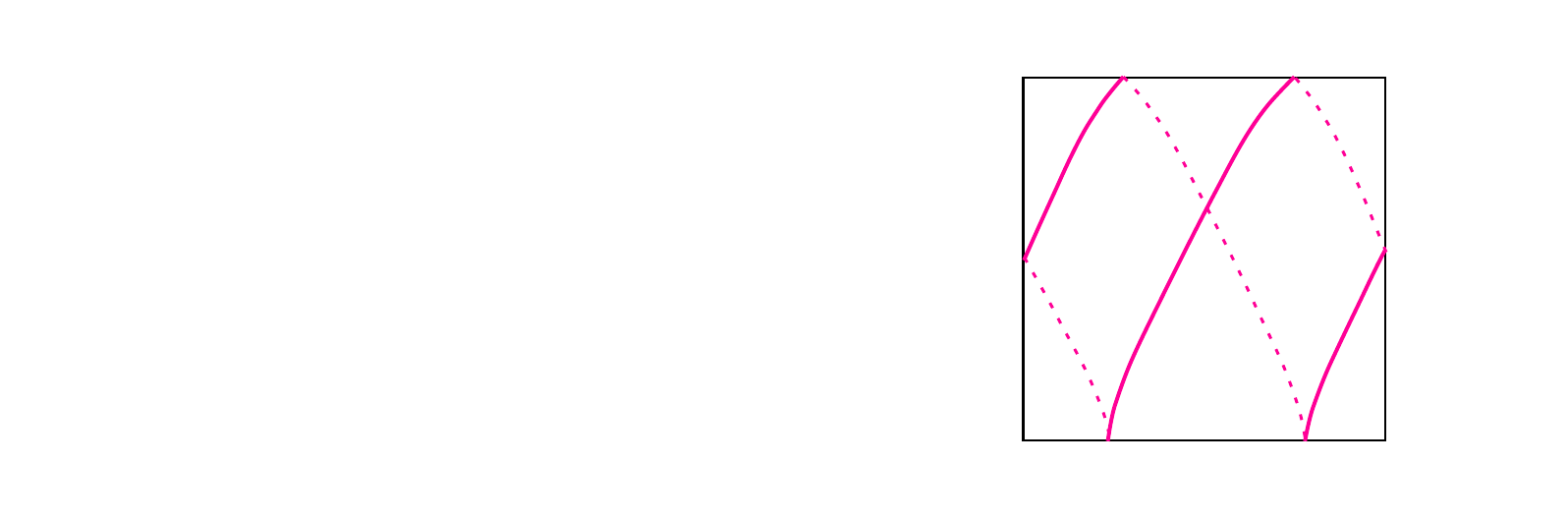 
\caption{A line $\ell_{2}$ and the corresponding  Jordan curve $\tau_2=\wp(\ell_2)$ in $\PP$.}\label{fig_line}
\end{figure}

\begin{lemma}\label{lem:isoclassesP} 
Let $\ga$ be  an essential Jordan curve in $(\PP,V)$. 
Then there exists a unique slope $r/s\in  \widehat{\Q}$ with the following property. Let $\ell_{r/s}$ be any  line in $\C$ with slope $r/s$ and  $\ell_{r/s}\sub \C\setminus \Z^2$, and set $\tau_{r/s}\coloneq \wp(\ell_{r/s})$. 
Then $\tau_{r/s}$ is an essential  Jordan curve in $(\PP,V)$ with $\ga\sim 
\tau_{r/s}$ rel.~$V$. Moreover, the map $[\ga]\mapsto r/s$ gives a bijection between  isotopy classes $[\ga]$ of essential Jordan curves $\ga$ in 
$(\PP,V)$ and  slopes   $r/s\in \widehat{\Q}$.
\end{lemma}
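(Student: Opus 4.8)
The plan is to reduce the statement to the classical classification of essential simple closed curves on a flat torus, via the branched double cover hidden in the Weierstrass map $\wp$. Write $\wp=\pi\circ q$, where $q\colon\C\to T\coloneq\C/2\Z^2$ is the quotient map and $\pi\colon T\to\PP$ is the degree-two branched covering obtained by passing to the quotient of $T$ by the involution $\rho\colon z\mapsto-z$; by \eqref{eq:wpeq} this $\pi$ is well defined, its four branch points are the images of the $2$-torsion points $\{0,1,i,1+i\}$ of $T$, and these map precisely to the four vertices $V$ of $\PP$. Under this factorization the closed geodesic $q(\ell_{r/s})$ on $T$ pushes down to $\tau_{r/s}=\wp(\ell_{r/s})$ whenever $\ell_{r/s}\sub\C\setminus\Z^2$.

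For existence, let $\ga$ be an essential Jordan curve in $(\PP,V)$ and write $\PP\setminus\ga=D_1\sqcup D_2$, two open Jordan regions each containing exactly two points of $V$. For $i=1,2$ the restriction $\pi\colon\pi^{-1}(D_i)\to D_i$ is a connected degree-two branched cover of a disk with two branch points, so \eqref{eq:RH} gives $\chi(\pi^{-1}(D_i))=0$; hence $\pi^{-1}(D_i)$ is an annulus, and $\pi^{-1}(\ga)=\partial\cl(\pi^{-1}(D_1))$ consists of exactly two disjoint Jordan curves $\tgam_1,\tgam_2$ in $T\setminus\pi^{-1}(V)$, each mapped homeomorphically onto $\ga$ by $\pi$ and interchanged by $\rho$. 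Gluing the annuli $\cl(\pi^{-1}(D_1))$ and $\cl(\pi^{-1}(D_2))$ along $\tgam_1\cup\tgam_2$ recovers $T$, and deleting $\tgam_1$ leaves a single annulus, so $\tgam_1$ is non-separating, hence essential, on $T$. By the classical classification of simple closed curves on a torus, $\tgam_1$ is freely isotopic on $T$ to a linear geodesic whose primitive homology class, written in the basis $\{[2],[2i]\}$ of $H_1(T;\Z)$, is $\pm(s,r)$ with $\gcd(r,s)=1$; set the associated slope to be $r/s\in\widehat{\Q}$. Straightening the $\rho$-invariant pair $\tgam_1\cup\tgam_2$ to its flat geodesic representative (two parallel geodesics of slope $r/s$) by a $\rho$-equivariant isotopy that keeps the curves off $\pi^{-1}(V)$, and projecting, one obtains $\ga\sim\tau$ rel $V$ for some closed geodesic $\tau$ of slope $r/s$ in $\PP$. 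Since the slope-$r/s$ leaves sweep out the single annulus $\PP\setminus(\xi_{r/s}\cup\xi'_{r/s})$, any two closed geodesics of slope $r/s$ avoiding $V$ cobound an annulus in $\PP\setminus V$, so Lemma~\ref{lem:isocrit} gives $\tau\sim\tau_{r/s}$ rel $V$ for every line $\ell_{r/s}\sub\C\setminus\Z^2$. This proves existence of the slope and its independence of the chosen line.

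For the bijection I would use intersection numbers, which is how this quantity is needed elsewhere in the paper anyway. Lifting as above, $\wp^{-1}(\tau_{r/s})$ and $\wp^{-1}(\tau_{r'/s'})$ project to two parallel flat geodesics of slope $r/s$, resp.\ $r'/s'$, on $T$, and straight geodesics on a flat torus are in minimal position; counting and using that $\pi$ is two-to-one over $\PP\setminus V$ yields $\ins(\tau_{r/s},\tau_{r'/s'})=2|rs'-r's|$ in $(\PP,V)$. Since $rs'-r's=0$ exactly when $r/s=r'/s'$ (using coprimality), distinct slopes give curves of positive intersection number, hence non-isotopic curves: this is injectivity of $[\ga]\mapsto r/s$, which combined with existence also gives well-definedness. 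Surjectivity is immediate because each $\tau_{r/s}$ is itself an essential Jordan curve realizing the slope $r/s$ (as recorded in the text).

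The main obstacle is the step where a free isotopy on $T$ must be upgraded to a $\rho$-equivariant one that descends to an isotopy rel $V$ on $\PP$. The cleanest way I see to handle this is to avoid equivariance machinery and normalize everything downstairs from the start: first isotope $\ga$ rel $V$ into minimal position with the four edges $a,b,c,d$ of $\PP$, so that $\ga$ meets the front and back $0$-tiles in disjoint families of crosscuts of squares; then lift this configuration to $T$, where minimal position is preserved and the flat geodesic straightening, being induced by the geodesic flow of the $\rho$-invariant flat metric, is automatically $\rho$-equivariant; and descend. This hands-on route also produces the combinatorial normal form of $\ga$ on the two squares that underlies the intersection-number count above, so I expect the appendix to proceed essentially along these lines.
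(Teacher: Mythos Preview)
Your approach via the torus double cover is genuinely different from the paper's. The paper never passes to $T=\C/2\Z^2$; instead it lifts a parametrization $\beta\colon\I\to\gamma$ directly to a path $\alpha\colon\I\to\C$ under $\wp$ and shows that $\alpha(1)-\alpha(0)$ is a primitive element $\pm 2(s+ir)$ of $2\Z^2$, well defined by $[\gamma]$ (Lemma~\ref{lem:prim}). The primitivity and nontriviality are obtained from a bigon/alternation argument (Lemmas~\ref{lem:nobigons}--\ref{lem:difference}) applied to $\gamma$ put in minimal position with a pair of disjoint arcs in $(\PP,V)$. Having defined the slope $r/s$, the paper then shows that $\gamma$ in minimal position with the geodesic core arcs $\xi_{r/s},\xi'_{r/s}$ must be disjoint from them, and concludes $\gamma\sim\tau_{r/s}$ via Lemma~\ref{lem:isocrit}. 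Injectivity is immediate once existence is known: two classes with the same slope are both isotopic to $\tau_{r/s}$.

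Your torus route is more conceptual, but the step you flag is a real gap: upgrading the free isotopy $\tgam_1\sim g$ on $T$ to a $\rho$-equivariant isotopy fixing the $2$-torsion is essentially a Birman--Hilden statement for the hyperelliptic involution, and without it you are just restating the problem on the quotient. Your proposed fix---normalize $\gamma$ downstairs into minimal position with the edges and then argue combinatorially---is exactly the kind of argument the paper runs, except the paper does it with two core arcs rather than all four edges and never leaves the pillow/$\C$ picture at all. For injectivity you do not need the full intersection-number formula: if $\tau_{r/s}\sim\tau_{r'/s'}$ rel $V$, then their lifts to $T$ are freely isotopic, hence homologous, forcing $r/s=r'/s'$; this avoids the minimal-position-on-$T$-versus-on-$(\PP,V)$ issue your sketch glosses over. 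What your approach buys is a direct link to the torus classification and a quick heuristic for intersection numbers; what the paper's approach buys is a self-contained elementary argument that also yields the alternation lemma (Lemma~\ref{lem:alternate8}) used elsewhere.
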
 

While this is  well known  (see, for example,  \cite[Proposition~2.6]{FarbMargalit}
or \cite[Pro\-po\-sition~2.1]{KeenSeries}), we find the available proofs too sketchy. This is the reason why we provide a  detailed proof  in the appendix.  Implicit in Lemma~\ref{lem:isoclassesP}  is the fact that the isotopy class $[\tau_{r/s}]$ of $\tau_{r/s} =\wp(\ell_{r/s})$ only depends on $r/s$ and not on the specific choice of the line $\ell_{r/s}$ with $\ell_{r/s} \sub  \C\setminus \Z^2$ (see Lemma~\ref{lem:twogeod} for an explicit statement).

Recall  that $a$, $c$ denote the horizontal,  and $b$, $d$  the vertical edges of  $\PP$.  In the following, we denote by $\alpha^h= \tau_{0}$ a {\em horizontal} essential Jordan curve  in $(\PP,V)$ (corresponding to slope $0$ and separating the 
edges $a$ and $c$ of $\PP$) and by $\alpha^v =\tau_{\infty}$ a 
{\em vertical} essential Jordan
curve  in $(\PP,V)$ (corresponding to slope $\infty$ and separating $b$ from $d$). To be specific,  
we set 
\begin{equation}\label {eq:hori+vert} 
\alpha^h\coloneq  \wp(\R\times\{1/2\}) \text{ and } 
 \alpha^v\coloneq\wp(\{1/2\}\times\R). 
\end{equation}

The following lemma summarizes the intersection properties of essential Jordan curves and arcs in $(\PP,V)$.

\begin{lemma}\label{lem:i-properties-curve}
Let $\alpha$ and $\beta$ be essential  Jordan curves in $(\PP,V)$ and $r/s, r'/s'\in \widehat \Q$ be the unique slopes such that $\alpha\sim \tau_{r/s}$ and $\beta \sim \tau_{r'/s'}$ rel.\ $V$, where 
$\tau_{r/s}$ and $\tau_{r'/s'}$ are   simple closed geodesics in $(\PP,V)$ with  slopes $r/s$ and $r'/s'$, respectively.  Let $\xi$ be a core arc of $\beta$, and $\xi_{r'/s'}$ be a geodesic arc in $(\PP, V)$ with slope $r'/s'$. 
  Then the following statements are true for intersection numbers in $(\PP, V)$: 
\begin{enumerate}[label=\text{(\roman*)},font=\normalfont,leftmargin=*]

\smallskip 
\item \label{item:i1} If $r/s=r'/s'$, then $\ins(\alpha,\beta)=0$, and if  $r/s\ne r'/s'$, then 
$\ins(\alpha, \beta )= \#(\tau_{r/s}\cap \tau_{r'/s'}) =2|rs'-sr'|>0$.  

\smallskip 
\item \label{item:i2} 
$\ins(\alpha, \xi)= \#(\tau_{r/s}\cap \xi_{r'/s'}) =\frac 12 \ins(\alpha, \beta) =|rs'-sr'|$. 

\smallskip 
\item  \label{item:i3} $\ins(\alpha,a)= 
\#(\tau_{r/s}\cap a )=|r|, \  \ins(\alpha,c)=
\#( \tau_{r/s}\cap c)=|r|$. 

\smallskip 
\item  \label{item:i4} $\ins(\alpha, b)=
\#(\tau_{r/s}\cap b)=s,\  \ins(\alpha, d)=\#(\tau_{r/s}\cap d)=s$.

\smallskip
\item  \label{item:i5} $\ins( \alpha, \alpha^h) =2|r|$ and 
$\ins( \alpha, \alpha^v) =2s$.
\end{enumerate}
\end{lemma}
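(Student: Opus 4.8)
The plan is to deduce everything from the universal covering map $\wp\colon\C\to\PP$ and the classification of simple closed geodesics, reducing all the intersection numbers to counting intersections of families of parallel lines in $\C$ with the lattice $\Z^2=\wp^{-1}(V)$ and with lifts of the relevant geodesics. Throughout I would replace $\alpha$ and $\beta$ by the geodesic representatives $\tau_{r/s}$ and $\tau_{r'/s'}$ in their isotopy classes; since geodesics are in minimal position (a standard fact, proved via the fact that a geodesic bigon in a flat surface bounds an embedded disk with corners, hence cannot exist), the intersection numbers equal the actual number of intersection points. So I first reduce the whole lemma to computing $\#(\tau_{r/s}\cap\tau_{r'/s'})$, $\#(\tau_{r/s}\cap\xi_{r'/s'})$, and $\#(\tau_{r/s}\cap e)$ for the edges $e\in\{a,b,c,d\}$, plus the special cases $\alpha^h=\tau_0$, $\alpha^v=\tau_\infty$ in (v), which are then immediate instances of (iii) and (iv).

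For \ref{item:i1}, the key computation is this: a geodesic $\tau_{r/s}$ is the $\wp$-image of a full line $\ell_{r/s}\subset\C\setminus\Z^2$, and by \eqref{eq:wpeq} the map $\wp$ is the quotient by the group $G$ of isometries $z\mapsto z+2\omega$ and $z\mapsto -z+2\omega$ with $\omega\in\Z^2$. Inside one fundamental domain for $G$ (a $2\times 2$ square, say $[-1,1]^2$, which double-covers $\PP$), the preimage of $\tau_{r/s}$ consists of finitely many parallel segments, and similarly for $\tau_{r'/s'}$. Two lines of slopes $r/s\ne r'/s'$ in $\C$ that are $2\Z^2$-translates cross in a pattern whose count per fundamental cell is $2|rs'-sr'|$: the lift of $\tau_{r/s}$ to the torus $\C/2\Z^2$ is a simple closed curve in homology class $\pm(s,r)$ (in the basis given by the two generators of $2\Z^2$, suitably scaled), the lift of $\tau_{r'/s'}$ is in class $\pm(s',r')$, and the algebraic (hence here also geometric, since geodesics on a flat torus realize minimal intersection) intersection number is $|rs'-sr'|$; passing from the torus double cover down to the pillow $\PP$ doubles this to $2|rs'-sr'|$. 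When $r/s=r'/s'$ the two geodesics are disjoint parallel curves, so $\ins(\alpha,\beta)=0$, and essentiality/positivity when $r/s\ne r'/s'$ is clear since $|rs'-sr'|\ge 1$. Part \ref{item:i2} then follows because a core arc $\xi$ of $\beta$ is (isotopic to) a geodesic arc $\xi_{r'/s'}$ which lifts to a segment joining two lattice points, i.e.\ ``half'' of the closed geodesic $\tau_{r'/s'}$ in an appropriate sense; the curve $\tau_{r/s}$ meets $\xi_{r'/s'}$ in exactly half as many points as it meets $\tau_{r'/s'}$, giving $|rs'-sr'|$. I would make the ``half'' precise by noting that $\tau_{r'/s'}$ together with one of its core arcs bounds one of the two complementary disks, and symmetry of the lattice picture under $z\mapsto -z$ distributes the intersection points evenly.

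Parts \ref{item:i3} and \ref{item:i4} are the concrete cases: the horizontal edge $a=\wp([0,1]\times\{0\})$ lifts to the union of segments $[k,k+1]\times\{2n\}$ and the line $\ell_{r/s}$ crosses the horizontal lines $\{y\in 2\Z\}$; counting crossings of a line of slope $r/s$ with a vertical period strip against these horizontal lines gives $|r|$, and the same count works for $c$, $b$, $d$ by symmetry (swapping roles of horizontal and vertical for the vertical edges gives $s$). Care is needed with the cases $r=0$ (then $\tau_{r/s}=\tau_0=\alpha^h$ is itself horizontal, disjoint from $a$ and $c$ after isotopy, consistent with $|r|=0$) and $s=0$, and with the fact that the relevant $\ell_{r/s}$ must avoid $\Z^2$ so none of these intersections is at a marked point. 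Finally \ref{item:i5} is just \ref{item:i3} with $\alpha^h=\tau_0$, giving $\ins(\alpha,\alpha^h)=2|r\cdot 0-s\cdot 1|$... more directly, $\alpha^h$ separates $a$ from $c$ and $\ins(\alpha,\alpha^h)=2|r|$ follows either from \ref{item:i1} with $r'/s'=0/1$ or by the edge count doubled.

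The main obstacle is making the torus/lattice bookkeeping genuinely rigorous rather than merely ``clear from the picture'': precisely, (a) proving that geodesics on the flat pillow are in minimal position (so that combinatorial intersection numbers equal infima over isotopy classes), and (b) correctly tracking the factor of $2$ that arises from the double cover $\C/2\Z^2\to\PP$ and the $\pm 1$ identification, since an off-by-two error here would corrupt every formula. I expect to handle (a) by a standard flat-geometry bigon argument and (b) by carefully choosing an explicit fundamental domain for $G$ and literally enumerating the segments of the lifts; since the paper defers these to the appendix, I would present the reduction cleanly in the main text and relegate the lattice counting to the appendix as promised.
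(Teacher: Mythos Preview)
Your plan is correct in outline and reaches the right formulas, but it takes a genuinely different route from the paper's proof, and it is worth seeing the contrast.

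The paper does \emph{not} invoke a ``geodesics are in minimal position'' principle via a bigon argument. Instead it fixes $\alpha=\tau_{r/s}$, describes $\wp^{-1}(\tau_{r/s})$ explicitly as the union of two families $\mathcal F_1,\mathcal F_2$ of equally spaced parallel lines of slope $r/s$ (differing by translation by $2\widetilde\omega$), and then lifts an \emph{arbitrary} representative $\beta$ in the isotopy class $[\tau_{r'/s'}]$ to a path $\widetilde\beta$ in $\C$. The key input is Remark~\ref{rem:rslift} (a consequence of Lemma~\ref{lem:prim}): the endpoints of $\widetilde\beta$ differ by exactly $2\omega'=2(s'+ir')$. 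Writing $\omega'=k\omega+n\widetilde\omega$ one finds $|n|=|rs'-sr'|=:N$, and hence the two endpoints of $\widetilde\beta$ are separated by exactly $N$ lines from each of $\mathcal F_1,\mathcal F_2$. Any path joining them must therefore meet $\wp^{-1}(\tau_{r/s})$ in at least $2N$ points, and the straight segment (i.e.\ the geodesic $\tau_{r'/s'}$) realizes equality. This gives minimality and the count simultaneously, with no appeal to the bigon criterion. Part~\ref{item:i2} is handled the same way using Corollary~\ref{cor:isoarcs}, which says that a lift of any arc isotopic to $\xi_{r'/s'}$ has endpoints differing by $\omega'$; one then counts that exactly $N$ lines of $\mathcal F_1\cup\mathcal F_2$ separate two such points.

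Your approach---assert minimal position of geodesics via Gauss--Bonnet/bigons, then count on the torus double cover by homology---is standard and would work, but note two things. First, for~\ref{item:i2}--\ref{item:i4} you are intersecting a closed curve with an \emph{arc}, so you need the bigon criterion in its arc version (half-bigons), and the ``half as many points by symmetry'' step for~\ref{item:i2} needs a genuine argument (your $z\mapsto-z$ idea is right but should be spelled out: the preimage of $\tau_{r/s}$ in the torus is two parallel curves, the preimage of $\xi_{r'/s'}$ is one closed curve, etc.). Second, your torus description is slightly off: the lift of $\tau_{r/s}$ to $\C/2\Z^2$ is \emph{two} disjoint simple closed curves (images of $\ell_{r/s}(z_0)$ and $\ell_{r/s}(-z_0)$), not one; the factor-of-two bookkeeping you flag as an obstacle really does require care. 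The paper's line-counting argument sidesteps both issues and is entirely self-contained within the appendix machinery already built (Lemmas~\ref{lem:prim}, \ref{lem:twogeod}, Corollary~\ref{cor:isoarcs}), which is why the authors chose it.
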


\begin{figure}[t]
\centering
\def\svgwidth{0.24\columnwidth}
\begingroup%
  \makeatletter%
  \providecommand\color[2][]{%
    \errmessage{(Inkscape) Color is used for the text in Inkscape, but the package 'color.sty' is not loaded}%
    \renewcommand\color[2][]{}%
  }%
  \providecommand\transparent[1]{%
    \errmessage{(Inkscape) Transparency is used (non-zero) for the text in Inkscape, but the package 'transparent.sty' is not loaded}%
    \renewcommand\transparent[1]{}%
  }%
  \providecommand\rotatebox[2]{#2}%
  \newcommand*\fsize{\dimexpr\f@size pt\relax}%
  \newcommand*\lineheight[1]{\fontsize{\fsize}{#1\fsize}\selectfont}%
  \ifx\svgwidth\undefined%
    \setlength{\unitlength}{149.79003534bp}%
    \ifx\svgscale\undefined%
      \relax%
    \else%
      \setlength{\unitlength}{\unitlength * \real{\svgscale}}%
    \fi%
  \else%
    \setlength{\unitlength}{\svgwidth}%
  \fi%
  \global\let\svgwidth\undefined%
  \global\let\svgscale\undefined%
  \makeatother%
  \begin{picture}(1,1.13856356)%
    \lineheight{1}%
    \setlength\tabcolsep{0pt}%
    \put(0,0){\includegraphics[width=\unitlength,page=1]{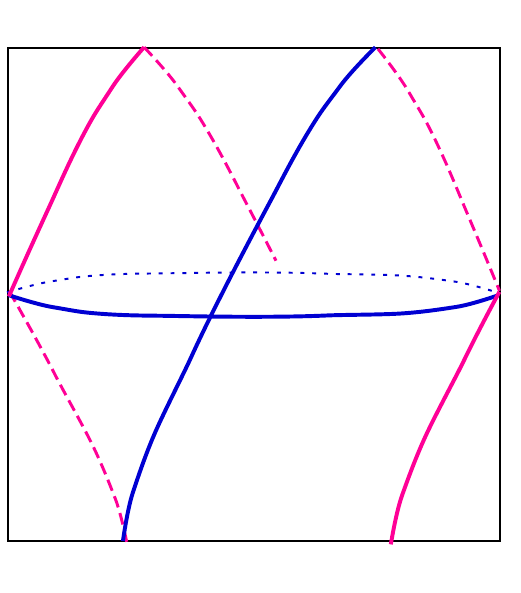}}%
    \put(0.14460725,0.74191551){\color[rgb]{0,0,0}\makebox(0,0)[lt]{\lineheight{1.25}\smash{\begin{tabular}[t]{l}\textcolor{anisred}{$\tau_2$}\end{tabular}}}}%
    \put(0,0){\includegraphics[width=\unitlength,page=2]{intersectionnumbers.pdf}}%
    \put(0.48867947,1.08945689){\color[rgb]{0,0,0}\makebox(0,0)[lt]{\lineheight{1.25}\smash{\begin{tabular}[t]{l}$c$\end{tabular}}}}%
    \put(0.49189278,0.01124842){\color[rgb]{0,0,0}\makebox(0,0)[lt]{\lineheight{1.25}\smash{\begin{tabular}[t]{l}$a$\end{tabular}}}}%
    \put(0,0){\includegraphics[width=\unitlength,page=3]{intersectionnumbers.pdf}}%
    \put(0.71356924,0.42776311){\color[rgb]{0,0,0}\makebox(0,0)[lt]{\lineheight{1.25}\smash{\begin{tabular}[t]{l}\textcolor{anisblue}{$\alpha^h$}\end{tabular}}}}%
    \put(0,0){\includegraphics[width=\unitlength,page=4]{intersectionnumbers.pdf}}%
  \end{picture}%
\endgroup%

\caption{Counting intersections of $\tau_2$ with the horizontal curve $\alpha^h$ and the  horizontal edges $a$ and $c$.}\label{fig_curveintersections}
\end{figure}

We will prove this lemma in the appendix.
Note that  Figure~\ref{fig_curveintersections}  illustrates   \ref{item:i3} and \ref{item:i5} when $\alpha=\tau_2$. It follows from the statement  that $\tau_{r/s}$ for 
$r/s\ne 0, \infty$ is in minimal position with each  of the curves $a,b,c,d, \alpha^h, \alpha^v$.

 Let $\ga$ be a Jordan curve or an arc in a  surface $S$,   and $M_1, M_2\sub S$ be two disjoint sets with $0<\#(\ga\cap M_j)<\infty $ for $j=1,2$. 
 We say that the points 
in $\ga\cap M_1\ne \emptyset $ and $\ga\cap M_2\ne \emptyset $ {\em alternate} on $\ga$ if any two points in one of the sets are separated  by the other, that is,  any subarc $\sigma\sub \ga$ with both endpoints in either  of the sets 
$\ga\cap M_1$ or $\ga\cap M_2$ must 
contain a point in the other set. 

More intuitively, this  situation when the points in  $\ga\cap M_1$ and $\ga\cap M_2$ alternate on an arc $\ga$ can be described as follows.  Suppose we traverse  $\ga$ in some (injective) parametrization starting from one of its endpoints.   
 Then we will   first meet a point in either  $M_1$ or 
  $M_2$, say  
in $M_1$. Then as we continue along $\ga$, we will meet a point in $M_2$,  then a point in $M_1$, etc. A similar remark applies when $\ga$ is a Jordan curve. Note that is this case $\#(\ga\cap M_1)=
 \#(\ga\cap M_2)$.

\begin{lemma} \label{lem:alter} 
Let $\tau=\wp(\ell_{r/s})$ be a simple closed geodesic or a geodesic arc in $(\PP,V)$  obtained from a line $\ell_{r/s}\sub \C$ with  slope $r/s\in \widehat \Q$.  If $r/s\ne 0$, then the sets $a\cap \tau$ and
 $c\cap\tau $ are non-empty and finite,  and the points in  $a\cap \tau$ and
 $c\cap\tau $  alternate on $\tau$.  
\end{lemma}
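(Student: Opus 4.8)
The plan is to transfer everything to the universal orbifold cover $\wp\colon\C\to\PP$, where the relevant combinatorics is explicit. From the checkerboard description of $\wp$ together with the identities $\wp(z)=\wp(z+2)=\wp(z+2i)=\wp(-z)$ (consequences of \eqref{eq:wpeq}) one computes $\wp^{-1}(a)=\R\times 2\Z$ and $\wp^{-1}(c)=\R\times(2\Z+1)$, so $\wp^{-1}(a\cup c)=\R\times\Z$ is the family of all horizontal grid lines, with the $a$-lines and the $c$-lines alternating row by row. Consistently, $\wp^{-1}(V)=\Z^2$, the even-height lattice points being the $\wp$-preimages of $A,B\in a$ and the odd-height ones those of $C,D\in c$.

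Write $\ell=\ell_{r/s}$, with direction vector $(s,r)$. The hypothesis $r/s\neq 0$ says $r\neq 0$, so $\ell$ is not horizontal and meets each horizontal grid line $\R\times\{n\}$ in exactly one point $p_n$; the points $p_n$ are encountered monotonically along $\ell$, with $p_n\in\wp^{-1}(a)$ exactly when $n$ is even. Hence the points of $\ell\cap\wp^{-1}(a)$ and of $\ell\cap\wp^{-1}(c)$ strictly interlace along $\ell$, and it remains to transport this interlacing to $\tau=\wp(\ell)$.

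If $\ell\cap\Z^2=\emptyset$, then $\tau$ is a simple closed geodesic and $\wp|_\ell\colon\ell\to\tau$ is a covering of the circle $\tau$; its deck group is generated by the translation $T$ of $\ell$ by $2(s+ir)$ (using that $t(s,r)\in 2\Z^2$ iff $t\in 2\Z$, for coprime $r,s$), so a half-open fundamental segment of $\ell$ is carried homeomorphically onto $\tau$. That segment contains exactly $2\abs r$ of the points $p_n$ --- an even number, alternating in parity --- and $T$ shifts heights by $2r$, hence preserves parity; so the cyclic sequence of the $p_n$ around $\tau$ continues to alternate across the seam, which gives that $\tau\cap a$ and $\tau\cap c$ alternate on $\tau$. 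Each of these sets has $\abs r\geq 1$ points, all in the interiors of $a$ and $c$ since $\ell$ avoids $\Z^2$, so they are nonempty and finite. If instead $\ell\cap\Z^2\neq\emptyset$, then $\tau=\xi$ is a geodesic arc, and I would use the segment $\sigma\subset\ell$ between two consecutive lattice points of $\ell$ (its interior free of lattice points, since consecutive lattice points on $\ell$ are spaced by $(s,r)$): from the identification rule \eqref{eq:wpeq} for $\wp$ one checks that $\wp|_\sigma$ is a homeomorphism of $\sigma$ onto $\xi$, carrying the endpoints of $\sigma$ to the two endpoints of $\xi$, which are vertices of $\PP$ lying on $a$ or $c$ according to the parity of their integer heights. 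Since $\sigma$ contains exactly $\abs r+1\geq 2$ of the points $p_n$ --- endpoints included --- alternating in parity, transporting along this homeomorphism shows that $\xi\cap a$ (even parity) and $\xi\cap c$ (odd parity) alternate along $\xi$ and are nonempty and finite.

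In both cases ``alternate'', in the sense defined just before the lemma, is then immediate from the alternating order of a finite cyclically- (resp.\ linearly-) ordered sequence: a subarc of $\tau$ whose endpoints both lie in one set runs through a consecutive block of the ordered sequence containing a neighbor of one endpoint, and that neighbor lies in the other set. I expect the bookkeeping in the arc case to be the main obstacle --- especially that the endpoints of $\xi$ are vertices of $\PP$ which must be counted among $a\cap\xi$ and $c\cap\xi$ --- along with the need, in the closed-geodesic case, to check that the parity pattern closes up consistently around the circle, which rests on the period $2(s+ir)$ shifting heights by an even amount and on there being an even number $2\abs r$ of grid-line crossings per period.
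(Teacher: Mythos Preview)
Your proof is correct and follows essentially the same approach as the paper: lift to $\C$ via $\wp$, identify $\wp^{-1}(a\cup c)$ as the alternating family of horizontal integer lines, take a fundamental segment on $\ell_{r/s}$ (of direction length $2\omega$ in the closed case, $\omega$ in the arc case), and read off the alternation there. Your explicit check that the parity pattern closes up across the seam (via the even height shift $2r$) is exactly what the paper encodes by the equality $\#(\wp^{-1}(a)\cap[z_0,w_0))=|r|=\#(\wp^{-1}(c)\cap[z_0,w_0))$.
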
 

A similar statement is true if $r/s\ne \infty$ and we replace $a,c$ with $b,d$, respectively. Lemma~\ref{lem:alter} is related to a similar  statement in a more general setting  that is the key  to proving  Lemma~\ref{lem:isoclassesP} 
 (see Lemma~\ref{lem:alternate8}). 

\begin{proof} Define $\om\coloneq s+ir\in \Z^2$. Suppose first that 
$\tau=\wp(\ell_{r/s})$ is a simple closed geodesic. Then 
$ \tau=\wp ([z_0, w_0])$, where  $z_0\in \ell_{r/s} \sub \C\setminus \Z^2$ and $w_0=z_0+2\om$, and  the map 
$u\in [0,1]\mapsto  \wp(uz_0+ (1-u)w_0)$  provides a parametrization of $\tau$ as a simple loop as follows from \eqref{eq:wpeq}.

Now the set  $\wp^{-1}(a\cup c)$ consists 
precisely of the lines $\ell_0(ni)=\{ z\in \C: \text{Im} (z)=n\}$, $n\in \Z$. These lines alternate  in the sense that $\wp$ maps  
$\ell_0(ni)$ onto $a$ or $c$  depending on whether  $n\in \Z$ is even or odd, respectively. 
Since $r/s\ne 0$, we have $r\in \Z\setminus\{0\}$, and so $\text{Im} (w_0-z_0)=\text{Im}(2\om)=2r$ is a non-zero even integer.  This implies  
that the line segment $ [z_0, w_0]\sub 
\ell_{r/s}$  has non-empty intersections (consisting of finitely many points)  with each of the sets 
$\wp^{-1}(a)$ and $\wp^{-1}(c)$. Moreover, the points in these intersections alternate on 
the segment $[z_0, w_0]$. From this together with the fact that  
 $$\#(\wp^{-1}(a)\cap [z_0, w_0))=|r|=\#(\wp^{-1}(c)\cap [z_0, w_0)) $$ 
 the statement follows (the latter fact is needed to argue that the points in $a\cap \tau$ and $c\cap \tau$ alternate on the simple {\em closed} geodesic  $\tau$).

 If $\tau$ is a geodesic arc, then there exists $z_0\in \Z^2$  such that $ \tau=\wp ([z_0, w_0])$, where  $w_0=z_0+\om $. The map $\wp$ sends  $[z_0, w_0]$ homeomorphically  onto $\tau$. Again, $ [z_0, w_0]$ has  non-empty intersections consisting of finitely many points  with each of the sets $\wp^{-1}(a)$ and $\wp^{-1}(c)$. Moreover, the points in the sets $\wp^{-1}(a)\cap [z_0, w_0]\ne \emptyset $ and $\wp^{-1}(c)\cap[z_0, w_0]\ne \emptyset$ alternate on 
the segment $[z_0, w_0]$.   The statement also follows in this case. 
\end{proof}

We conclude this section with a statement related to the previous considerations  formulated for an arbitrary sphere with four marked points.
\begin{lemma}\label{lem:i-properties-curve-sphere}
Let $(\Sp, Z)$ be a marked sphere with $\# Z = 4$, and 
$\alpha, \gamma$ be essential Jordan curves in $(\Sp, Z)$. Suppose that 
 $a_\alpha$ and $c_\alpha$ are core arcs of $\alpha$ that lie in different components of $\Sp\setminus\alpha$. Then the following statements are true: 
\begin{enumerate}[label=\text{(\roman*)},font=\normalfont,leftmargin=*]

\item \label{item:i1-sp} 
$\ins(\alpha, \gamma )= 2 \ins(a_\alpha, \gamma) = 2 \ins(c_\alpha, \gamma)$.

\smallskip 
\item \label{item:i2-sp} 
If  $\ins(\alpha, \gamma)>0$, then  
there exists  a Jordan curve $\gamma'$ in $(\Sp,Z)$ with $ \gamma' \sim \gamma$ rel.\ $Z$ such that $\gamma'$ is in minimal position with $\alpha$, $a_\alpha$, $c_\alpha$ and the points in 
$a_\alpha\cap \ga'\ne \emptyset$ and $c_\alpha\cap \ga'\ne \emptyset$ alternate on $\ga'$. 
\end{enumerate}
\end{lemma}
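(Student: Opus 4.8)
The strategy is to transport everything to the standard model $(\PP, V)$ and apply the already-established Lemma~\ref{lem:i-properties-curve} and Lemma~\ref{lem:alter}. Fix a homeomorphism $h\: (\Sp, Z)\to (\PP, V)$; this carries $\alpha$, $\gamma$, $a_\alpha$, $c_\alpha$ to essential Jordan curves and core arcs in $(\PP, V)$, and it preserves intersection numbers (since isotopies rel.\ $Z$ transport to isotopies rel.\ $V$ under $h$) as well as the combinatorial notion of alternation along a curve. So it suffices to prove both statements for an essential Jordan curve $\alpha$ in $(\PP, V)$ with its two core arcs $a_\alpha$, $c_\alpha$ lying in the two distinct components of $\PP\setminus\alpha$, and an arbitrary essential Jordan curve $\gamma$ in $(\PP, V)$.

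For part~\ref{item:i1-sp}, I would first normalize $\alpha$. By Lemma~\ref{lem:isoclassesP} there is a unique slope $r/s\in\widehat\Q$ with $\alpha\sim\tau_{r/s}$ rel.\ $V$, and after an ambient homeomorphism of $(\PP, V)$ (e.g.\ the one induced by an $\mathrm{SL}_2(\Z)$ change of coordinates on $\C$ descending through $\wp$) we may assume $\alpha=\alpha^h=\tau_0$, the horizontal curve, with core arcs isotopic to the horizontal edges $a$ and $c$ of $\PP$. Then $\ins(\alpha, \gamma)=2|r'|$ by Lemma~\ref{lem:i-properties-curve}~\ref{item:i5}, while $\ins(a,\gamma)=\ins(c,\gamma)=|r'|$ by \ref{item:i3}, where $r'/s'$ is the slope of $\gamma$. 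Since core arcs of $\alpha$ are isotopic rel.\ $V$ to $a$, $c$ respectively (they join the two points of $V$ in the corresponding component and any two such arcs in a twice-punctured disk are isotopic), the identity $\ins(\alpha,\gamma)=2\ins(a_\alpha,\gamma)=2\ins(c_\alpha,\gamma)$ follows. Alternatively, to avoid the normalization one can cite Lemma~\ref{lem:i-properties-curve}~\ref{item:i1} and \ref{item:i2} directly: with $\beta=\tau_{r'/s'}\sim\gamma$ and $\xi$ a core arc of $\gamma$, one has $\ins(\alpha,\beta)=2|rs'-sr'|$ and $\ins(\alpha,\xi)=|rs'-sr'|$; then I would observe (again using the twice-punctured-disk isotopy statement) that $a_\alpha\sim\xi_{r/s}$ and $c_\alpha\sim\xi'_{r/s}$ rel.\ $V$, so $\ins(a_\alpha,\gamma)=\ins(\xi_{r/s},\beta)=\tfrac12\ins(\alpha,\beta)$ by symmetry of the intersection number, and likewise for $c_\alpha$.

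For part~\ref{item:i2-sp}, assume $\ins(\alpha,\gamma)>0$, so $\gamma$ has slope $r'/s'\ne 0$ (in the normalized picture $\alpha=\alpha^h$). By Lemma~\ref{lem:isoclassesP} and the remark following it, $\gamma\sim\tau_{r'/s'}$ rel.\ $V$, and we may take $\gamma'=\tau_{r'/s'}$ to be the simple closed geodesic of slope $r'/s'$. Lemma~\ref{lem:i-properties-curve} then gives that $\tau_{r'/s'}$ realizes all the relevant intersection numbers, hence is in minimal position with $\alpha$, with $a$ (i.e.\ with $a_\alpha$ after transporting back), and with $c$; since minimal position implies no removable bigons, this is exactly the statement. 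Finally, Lemma~\ref{lem:alter} applied with the roles of $(a,c)$ and $\tau_{r'/s'}$ (using $r'/s'\ne 0$) says precisely that the points of $a\cap\tau_{r'/s'}$ and $c\cap\tau_{r'/s'}$ alternate on $\tau_{r'/s'}$, which transports to the alternation of $a_\alpha\cap\gamma'$ and $c_\alpha\cap\gamma'$ along $\gamma'$. One small point to address: after normalizing $\alpha$ to $\alpha^h$, the arcs $a_\alpha$, $c_\alpha$ become arcs isotopic to $a$, $c$ but perhaps not equal to them, so I would first further isotope $\gamma$ so that it is simultaneously in minimal position with $\alpha$, $a_\alpha$, $c_\alpha$ and then invoke the uniqueness of minimal-position representatives up to isotopy to identify the combinatorics with that of $\tau_{r'/s'}$ against $a$, $c$.

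The main obstacle is bookkeeping rather than a genuine difficulty: one must carefully justify that core arcs of $\alpha$ lying in a fixed component of $\PP\setminus\alpha$ form a single isotopy class rel.\ $V$ (so that replacing $a_\alpha$ by the standard edge is harmless), and that the normalization by an element of $\mathrm{SL}_2(\Z)$ genuinely extends to a homeomorphism of the marked pillow $(\PP, V)$ fixing the geodesic/arc structure — both are standard but need a sentence. Everything else is a direct translation of Lemma~\ref{lem:i-properties-curve} and Lemma~\ref{lem:alter}.
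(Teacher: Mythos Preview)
Your approach is essentially the same as the paper's: transport to $(\PP,V)$ and invoke Lemma~\ref{lem:i-properties-curve} and Lemma~\ref{lem:alter}. The one difference is that the paper chooses the identifying homeomorphism more efficiently---it sends $\alpha$, $a_\alpha$, $c_\alpha$ directly to $\alpha^h$, $a$, $c$ (such a homeomorphism exists by Sch\"onflies-type arguments, since any essential curve with two core arcs in $(\Sp,Z)$ is ambiently homeomorphic to the standard configuration). This eliminates at a stroke the ``small point to address'' you flag at the end: with $a_\alpha=a$ and $c_\alpha=c$ on the nose, part~\ref{item:i1-sp} is literally Lemma~\ref{lem:i-properties-curve}~\ref{item:i3} and~\ref{item:i5}, and for part~\ref{item:i2-sp} one simply takes $\gamma'=\tau_{r/s}$ and reads off minimal position from Lemma~\ref{lem:i-properties-curve}~\ref{item:i1},~\ref{item:i3} and alternation from Lemma~\ref{lem:alter}. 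Your route via an arbitrary homeomorphism followed by an $\mathrm{SL}_2(\Z)$ normalization is correct but creates the extra isotopy bookkeeping you then have to undo.
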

\begin{proof}
We may identify the marked sphere $(\Sp, Z)$ with the pillow $(\PP,V)$ by a homeomorphism that sends $\alpha$, $a_\alpha$, $c_\alpha$ to $\alpha^h$, $a$, $c$, respectively. Then, by Lemma~\ref{lem:isoclassesP}, the curve  $\ga$ is isotopic to a simple closed geodesic $\tau_{r/s}$ with slope $r/s\in \widehat \Q$. Statement \ref{item:i1-sp}
then follows from 
Lemma~\ref{lem:i-properties-curve}~\ref{item:i3} and~\ref{item:i5}.

If  $\ins(\alpha, \ga)=\ins(\alpha^h,\tau_{r/s})=2|r|>0$, then we can choose
$\ga'=\tau_{r/s}$ in \ref{item:i2-sp}. Indeed, then  $\ga'=\tau_{r/s}\sim \ga$ rel.\ $V=Z$, and  
 $\ga'=\tau_{r/s}$ is in minimal position with $\alpha^h=\tau_0$, $a_\alpha=a$, $c_\alpha=c$ as follows from Lemma~\ref{lem:i-properties-curve}~\ref{item:i1} and~\ref{item:i3}. Since $r/s\ne 0$ in this case, the statement about alternation follows  from Lemma~\ref{lem:alter}.
\end{proof}

\section{Thurston maps}
\label{sec:thurston-maps}

Here we provide a very brief summary of some relevant definitions and facts.   For more details we   refer the reader to
\cite[Chapter 2]{THEBook}.   

Let $f\colon S^2\to S^2$ be a branched covering map of a  topological $2$-sphere
$S^2$. A point $p\in\Sp$ is called \emph{periodic} (for $f$) if $f^n(p)=p$ for some $n\in\N$.   Recall that $\critf$ denotes the set of all critical points of $f$. The union 
$$\postf=\bigcup_{n\in \N} {f^{n}(\critf)}$$ of the orbits of critical points is called the \emph{postcritical set} of $f$. Note that
\[ f(\postf)\subset \postf \subset f^{-1}(\postf).\]
The map $f$ is said to be \emph{postcritically-finite} if its postcritical set $\postf$ is finite,  in other words, if every critical point of $f$ has a finite orbit under iteration.

\longhide{
Let $f\colon S^2\to S^2$ be a
\emph{branched covering map} of the topological $2$-sphere
$S^2$. That is, $f$ is a continuous surjective map and locally at each point $p\in\Sp$ the map $f$ can be written as $z\mapsto z^d$
for some $d\in \N$ in orientation-preserving
homeomorphic coordinates in domain and target. The integer
$d \geq 1$ is uniquely determined by $f$ and $p$, and called the
\emph{local degree} of the map $f$ at $p$, denoted by
$\deg(f,p)$. Let $\deg(f)\in \N$ be the topological degree of $f$. Then $$\sum_{q\in f^{-1}(p)} \deg(f,q)=\deg(f)$$
for every $p\in \Sp$. 
A point $c\in\Sp$ with $\deg(f,c) \geq 2$ is called a \emph{critical point} of $f$. The set of all critical points of $f$ is finite and denoted by $\critf$. The union 
$$\postf=\bigcup_{n=1}^{\infty} {f^{n}(\critf)}$$ of the orbits of critical points is called the \emph{postcritical set} of $f$. Note that
\[ f(\postf)\subset \postf \subset f^{-1}(\postf).\]
The map $f$ is said to be \emph{postcritically-finite} if its postcritical set $\postf$ is finite,  in other words,  the orbit of every critical point of $f$ is finite.
}

\begin{definition}
  \label{def:Thurston-map} 
  A \emph{Thurston map} is a   post\-criti\-cally-finite branched covering map 
  $f\colon S^2 \to S^2$ of topological degree $\deg(f)\geq 2$.
\end{definition}

Natural examples of Thurston maps are given by  \emph{rational Thurston maps}, that is, post\-critically-finite rational maps on the Riemann sphere $\CDach$.

The \emph{ramification function}
of a Thurston map $f\colon\Sp\to\Sp$ is a function
$\alpha_{f} \colon \Sp \to \N\cup\{\infty\}$ such that 
$\alpha_{f}(p)$ for $p\in S^2$ is the lowest common multiple of all local
degrees $\deg(f^n,q)$, where $q\in f^{-n}(p)$ and $n\in \N$ are
arbitrary. In particular,  $\alpha_{f}(p) = 1$ for  $p\in \Sp \setminus
\postf$ and $\alpha_{f}(p) \ge 2$ for $p\in \postf$.

\begin{definition}
  \label{def:Thurston-equiv-and-realized} 
Two Thurston maps $f\colon S^2\to S^2$ and $g\colon
 \widehat{S}^2\to \widehat{S}^2$, where $ \widehat{S}^2$ is
another topological $2$-sphere,
are called \emph{Thurston equivalent} if
there are homeomorphisms $h_0,h_1\colon S^2\to  \widehat{S}^2$
that are isotopic rel.\ $\postf$ such that $h_0 \circ f = g
\circ h_1$. 
\end{definition}

We say that a Thurston map is  \emph{realized} (by a rational map) if it is Thurston equivalent to a rational map. Otherwise, we say that it is \emph{obstructed}.

The \emph{orbifold} $\mathcal{O}_f$ associated with a Thurston map $f$ is the pair
$(S^2,\alpha_f)$. The \emph{Euler
  characteristic} of $\mathcal{O}_f$ is
\begin{equation} \label{eq:orb-char}
  \chi(\mathcal{O}_f) \coloneq 2 -
  \sum_{p\in\postf}\biggl(1-\frac{1}{\alpha_f(p)}\biggr). 
\end{equation}
Here we set $1/\infty\coloneq 0$. 

The Euler characteristic of the orbifold $\mathcal{O}_f$ satisfies $\chi(\mathcal{O}_f) \leq 0$. We call $\mathcal{O}_f$
\emph{hyperbolic} if $\chi(\mathcal{O}_f)<0$, and \emph{parabolic} if $\chi(\mathcal{O}_f)=0$.

If $f\: S^2\ra S^2$ is a Thurston map, then 
$f(C_f\cup P_f)\sub P_f$, which implies $C_f\cup P_f\sub f^{-1}(P_f)$. The reverse inclusion is related to the parabolicity  of $\mathcal{O}_f$.

\begin{lemma}\label{lem:DHLem2}
Let $f\:S^2\ra S^2$ be a Thurston map. If $f$ has a parabolic orbifold, then $f^{-1}(P_f)=C_f\cup P_f$.  Moreover,  conversely, if 
 $\#\postf\ge 4$ and   
$f^{-1}(\postf)\sub \critf \cup \postf$, then $f$ has a parabolic 
orbifold. \end{lemma}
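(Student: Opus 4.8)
The statement has two directions, and both are really computations with the orbifold Euler characteristic \eqref{eq:orb-char} together with the Riemann--Hurwitz formula \eqref{eq:RH} applied to $f$ itself (with $U=V=S^2$), which reads $2 + \sum_{p\in\critf}(\deg(f,p)-1) = 2\deg(f)$. The key auxiliary identity is the classical fact that, for a Thurston map, the ramification function $\alpha_f$ is the smallest among all functions $\nu\colon S^2\to\N\cup\{\infty\}$ with $\nu(p)\ge 2$ on $\postf$ and satisfying the divisibility condition $\nu(q)\deg(f,q)\mid \nu(f(q))$ for all $q$; equivalently, $\alpha_f(p) = \mathrm{lcm}\{\alpha_f(q)\deg(f,q)\colon q\in f^{-1}(p)\}$ is built up inductively along the orbits. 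I would first record this, then translate $\chi(\mathcal O_f)=0$ into an exact combinatorial constraint.

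\medskip
\emph{First direction.} Assume $f$ has a parabolic orbifold, so $\chi(\mathcal O_f)=0$, i.e. $\sum_{p\in\postf}\bigl(1-\tfrac1{\alpha_f(p)}\bigr)=2$. One knows (Thurston; see \cite[Chapter~2]{THEBook}) that the only parabolic signatures on a sphere with $\ge 2$ marked points are $(2,2,2,2)$, $(2,3,6)$, $(2,4,4)$, $(3,3,3)$, and $(2,2,\infty)$, $(\infty,\infty)$. In each case I would use the defining recursion for $\alpha_f$ to show that every point of $f^{-1}(\postf)$ must be either a critical point or lie in $\postf$: if $q\in f^{-1}(\postf)\setminus(\critf\cup\postf)$, then $\deg(f,q)=1$ and $q\notin\postf$ force $\alpha_f(q)=1$, yet the recursion $\alpha_f(f(q))=\mathrm{lcm}\{\dots,\alpha_f(q)\deg(f,q),\dots\}$ shows $\alpha_f(q)\deg(f,q)=1$ divides $\alpha_f(f(q))$, which is fine, so the divisibility alone is not enough — one must instead count. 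The clean way: combine Riemann--Hurwitz with the orbifold relation. Writing $\deg(f)=d$, a standard manipulation gives
\[
\chi(\mathcal O_f)\cdot d \;=\; \chi(\mathcal O_f)\cdot\chi(S^2)^{-1}\cdots
\]
— more precisely, the pullback orbifold $f^*\mathcal O_f$ has $\chi(f^*\mathcal O_f)=d\cdot\chi(\mathcal O_f)$, and $f^*\mathcal O_f$ refines $\mathcal O_f$ (its ramification values at points of $f^{-1}(\postf)$ are multiples of those of $\mathcal O_f$, when $\mathcal O_f$ is parabolic the refinement must be an equality of orbifolds). Then $\chi(\mathcal O_f)=0$ forces $\chi(f^*\mathcal O_f)=0$ as well, and equality of the two parabolic orbifolds forces the support of the marked points to coincide: $f^{-1}(\postf)=\postf\cup\{\text{ramification points}\}=\postf\cup\critf$. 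I would carry this out signature-by-signature, which is short since there are finitely many cases.

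\medskip
\emph{Second direction.} Now assume $\#\postf\ge 4$ and $f^{-1}(\postf)\subset\critf\cup\postf$. Since always $\critf\cup\postf\subset f^{-1}(\postf)$, we get $f^{-1}(\postf)=\critf\cup\postf$. The goal is $\chi(\mathcal O_f)=0$. Consider a point $p\in\postf$ and its preimages $q_1,\dots,q_k\in f^{-1}(p)$ with local degrees $d_1,\dots,d_k$, $\sum d_i=d$. By hypothesis each $q_i\in\critf\cup\postf$. The recursion for $\alpha_f$ gives $\alpha_f(p)$ divisible by each $\alpha_f(q_i)d_i$; using that points outside $\postf$ have $\alpha_f=1$, and comparing $\sum_i(1-\tfrac1{\alpha_f(q_i)d_i})$ with $1-\tfrac1{\alpha_f(p)}$ summed appropriately, the condition $f^{-1}(\postf)\subset\critf\cup\postf$ is exactly what makes the Riemann--Hurwitz deficiency term $\sum_{q}(\deg(f,q)-1)$ match the orbifold term, yielding $d\cdot\chi(\mathcal O_f)=\chi(\text{pullback})=\chi(\mathcal O_f)$ forcing $\chi(\mathcal O_f)(d-1)=0$, hence $\chi(\mathcal O_f)=0$. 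The role of $\#\postf\ge 4$ is to rule out the degenerate bookkeeping when $\#\postf\in\{2,3\}$ (where orbifolds can be parabolic or the Euler-characteristic identity degenerates), and to ensure $\mathcal O_f$ is not of an exceptional type; I would note that with $\#\postf\ge 4$ the only way $\chi(\mathcal O_f)\le 0$ can fail to be strictly negative is $(2,2,2,2)$, and the preimage condition is what selects it.

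\medskip
\emph{Main obstacle.} The delicate point is not the Euler-characteristic arithmetic but justifying the step ``a parabolic orbifold admits no proper orbifold refinement under $f$'', i.e. that $f^*\mathcal O_f=\mathcal O_f$ as orbifolds — equivalently that in the parabolic case the divisibility conditions on $\alpha_f$ are all equalities and no extra marked points appear. I expect to handle this by the explicit classification of parabolic signatures and a direct check, rather than an abstract argument; alternatively one can cite \cite[Chapter~2]{THEBook} where this ``no refinement'' property of parabolic orbifolds is established. Everything else is a finite case analysis plus \eqref{eq:RH} and \eqref{eq:orb-char}.
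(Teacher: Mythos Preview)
Your first direction is sound and essentially matches the paper: the ``no refinement'' property you isolate is exactly the identity $\alpha_f(q)\deg(f,q)=\alpha_f(f(q))$ that characterizes parabolic orbifolds (\cite[Proposition~2.14]{THEBook}), and once you have it the conclusion $f^{-1}(\postf)\subset\critf\cup\postf$ is a one-line check, as in the paper. Your orbifold-pullback phrasing (using $\chi(f^*\mathcal O_f)=d\,\chi(\mathcal O_f)=0=\chi(\mathcal O_f)$ together with the refinement inequality $\chi(f^*\mathcal O_f)\le\chi(\mathcal O_f)$ to force equality of orbifolds) is a legitimate alternative route to that identity.

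Your second direction, however, has a real gap. You assert that the hypothesis $f^{-1}(\postf)=\critf\cup\postf$ yields $\chi(f^*\mathcal O_f)=\chi(\mathcal O_f)$, but it does not: the refinement inequality $\chi(f^*\mathcal O_f)\le\chi(\mathcal O_f)$ always holds, and equality is \emph{equivalent} to $\alpha_f(q)\deg(f,q)=\alpha_f(f(q))$ for all $q$, which is equivalent to parabolicity --- precisely what you are trying to prove. The hypothesis tells you nothing about the \emph{values} $\alpha_f(q)\deg(f,q)$ versus $\alpha_f(f(q))$ at points of $\critf\cup\postf$, so there is no reason the slack terms should vanish. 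Combining $d\,\chi(\mathcal O_f)\le\chi(\mathcal O_f)$ just recovers $\chi(\mathcal O_f)\le 0$, which is always true.

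The paper's argument for the converse is entirely different and does not touch $\alpha_f$ until the last line. It is a bare counting argument: writing $d=\deg(f)$, one has $d\cdot\#\postf=\sum_{q\in f^{-1}(\postf)}\deg(f,q)=(2d-2)+\#f^{-1}(\postf)$ by Riemann--Hurwitz, and the hypothesis gives $\#f^{-1}(\postf)\le\#(\critf\setminus\postf)+\#\postf\le(2d-2)+\#\postf$. Together these force $\#\postf\le 4$, hence $\#\postf=4$ with all inequalities tight; tightness then forces every critical point to be simple and to lie outside $\postf$, so $\alpha_f\equiv 2$ on $\postf$ and the signature is $(2,2,2,2)$. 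This is where $\#\postf\ge 4$ is actually used --- not to rule out degenerate signatures, but to pin down $\#\postf=4$ exactly and trigger the equality cascade.
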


The second part  follows from \cite[Lemma 2]{DH_Th_char}, but  for the convenience of the reader we will  provide the simple proof. Here the assumption $\#\postf\ge 4$ cannot be omitted
as some examples  with $\#P_f=3$ show (such as the Thurston map arising from the ``barycentric subdivision rule"; see  \cite[Example 12.21]{THEBook}).

\begin{proof} Let $\alpha_f$ be the ramification function of $f$.
 Then 
$p\in P_f$ if and only if  $\alpha_f(p)\ge 2$.

First suppose that  $f$ has a parabolic orbifold. Then $\alpha_f(q)\cdot\deg(f,q)=
\alpha_f(f(q))$ for all $q\in S^2$ (see \cite[Proposition 2.14]{THEBook}). So if $q\in f^{-1}(P_f)$, then $f(q)\in P_f$ which implies
$$\alpha_f(q)\cdot\deg(f,q)=\alpha_f(f(q))\ge 2. $$ 
This is only possible if  $\alpha_f(q)\ge 2$ in which case $q\in P_f$, or if $\deg(f,q)\ge 2$ in which case $q\in C_f$. Hence $q\in C_f\cup P_f$, and so $f^{-1}(P_f)\sub C_f\cup P_f$. Since the reverse inclusion is true for all Thurston maps, we see that $f^{-1}(P_f)=C_f\cup P_f$ if $f$ has a parabolic orbifold. 

For the converse suppose that $f\: S^2\ra S^2$ is an arbitrary Thurston map with  $\#\postf\ge 4$ and   
$f^{-1}(\postf)\sub \critf \cup \postf$.   Let $d\coloneq\deg(f)\ge 2$. Note that $f^{-1}(P_f)\sub (C_f\setminus P_f)\cup P_f$ by our hypotheses. 

Each point $p\in S^2$ has precisely $d$ preimages counting multiplicities, that is, 
\[d=\sum_{q\in f^{-1}(p)}\deg(f,q).\]
Furthermore, since $C_f\subset f^{-1}(P_f)$ and $\deg(f,q)\ge 2$ for $q\in S^2$ if and only if $q\in  C_f$, the Riemann-Hurwitz formula implies
\begin{align*}
 \#(C_f\setminus P_f)  \le  \#C_f&\le \sum_{c\in C_f}(\deg(f,c)-1)
 =\sum_{q\in f^{-1}(P_f)}(\deg(f,q)-1) \\
 &= 
2d-2.
\end{align*}

It follows that 
\begin{align*}
 d\cdot \#P_f &= \sum_{q\in f^{-1}(P_f)}\deg(f,q)=  \sum_{q\in f^{-1}(P_f)}(\deg(f,q)-1)+ \#f^{-1}(P_f)\\
&= (2d-2)+  \#f^{-1}(P_f)\le (2d-2) +\# (C_f\setminus P_f)+\# P_f\\
&\le 4(d-1) +\#P_f. 
\end{align*} 
Hence $(d-1)\cdot \#P_f\le 4(d-1)$, and so $4\le \#P_f\le 4$. This implies  
$\#P_f=4$ and that all the previous inequalities must be equalities. 
In particular, $\# (C_f\setminus P_f)=2d-2$, which shows  that at all critical points the  local degree of $f$  is equal to $2$ and no critical points  belong to $P_f$.  

As a consequence,  under iteration of $f$ the orbit of any point $p\in S^2$  passes through at most one critical point.  It follows that  we have $\alpha_f(p)=1$ for  $p\in S^2\setminus P_f$ and $\alpha_f(p)= 2$ for  $p\in  P_f$. This implies that the Euler characteristic  
(see \eqref{eq:orb-char}) of the orbifold $\mathcal{O}_f$ associated with $f$ is equal to 
\begin{equation*}
  \chi(\mathcal{O}_f) = 2 - \sum_{p\in  P_f}\left(1-\frac{1}{\alpha_f(p)}\right) = 2-(1/2+1/2+1/2+1/2)=0. 
\end{equation*}
We conclude that  $f$ has a parabolic orbifold. \end{proof}

\subsection{The $(n\times n)$--Latt\`{e}s map}
 \label{subsec:lattes-maps}
 In general, a  \emph{Latt\`{e}s map} is a rational Thurston
map with parabolic orbifold that does not have periodic critical
points. Here we provide  the analytic definition for the  Latt\`{e}s maps that we use in this paper and interpret this from a more geometric perspective. See \cite{Milnor-Lattes} 
and \cite[Chapter 3]{THEBook} for a general discussion of Latt\`{e}s maps.

Let $\PP\cong \CDach$ be the  Euclidean square pillow 
and $\wp\colon \C\to \PP$ be its universal orbifold covering map,  as discussed  in Section \ref{subsec:pillow}. Fix a natural number $n\geq 2$. It follows from $\eqref{eq:wpeq}$ that there is a unique (and well-defined) map $\La_n\colon \PP\to \PP$ such that 
 \begin{equation}\label{eq:2Lattes} 
 \La_n(\wp(z))=\wp(nz) \text{ for } z\in \C. 
\end{equation} 
We call $\La_n$ the  \emph{$(n\times n)$-Latt\`{e}s map}. In fact, $\La_n$ is a rational map under the identification $\PP\cong\CDach$ as discussed in Section \ref{subsec:pillow}.



Alternatively, we can describe the map $\La_n$ in a combinatorial fashion as follows. Recall that the front side of $\PP$ is colored white, and  the back side black. These are the two $0$-tiles  of~$\PP$, and we subdivide each of them
 into $n^2$  squares of sidelength $1/n$. 
We refer to  these small squares as $1$-{\em tiles} (with $n$ understood), and  color them in a checkerboard fashion black and white so that the $1$-tile $S$ in the white side of $\PP$  with the vertex $A$ on its boundary is colored white. We map $S$ to the white side of the pillow $\PP$ by an orientation-preserving Euclidean similarity (that scales by the factor $n$) so that the vertex $A$ is fixed. If we extend this map by reflection to the whole pillow, we get the $(n\times n)$-Latt\`{e}s map $\La_n$ (see Figure \ref{fig_Lattes} for $n=4$). The map $\La_n$ sends each black or white $1$-tile homeomorphically (by a similarity) onto the $0$-tile in $\PP$ of the same color.

Based on this combinatorial description,  it is easy to see that each critical point of $\La_n$ has local degree $2$ and that the postcritical set of $\La_n$ coincides with the set of vertices of $\PP$, that is, $P_{\La_n}=\{A,B,C,D\}=V$. One can also check that for the ramification function of $\La_n$ we have $\alpha_{\La_n}(p)=2$ for each $p\in P_{\La_n}$. Substituting this into $\eqref{eq:orb-char}$, we see that $\chi(\mathcal{O}_{\La_n})=0$.  Thus, $\La_n$ has a parabolic orbifold.

\subsection{Thurston's characterization of rational maps}\label{subsec:thurston-char}

Thurston maps can  often be described  from  a combinatorial viewpoint as the Latt\`es map $\La_n$ in  Section~\ref{subsec:lattes-maps} (see, for instance,  \cite{CFKP} and \cite[Chapter~12]{THEBook}). The question whether a given Thurston map $f$ can be realized by a rational map is usually  difficult to answer except in  some special cases.  William Thurston provided a sharp, purely topological criterion that answers this question. The formulation and proof   of this celebrated result can be found in \cite{DH_Th_char}. In this section, we introduce the necessary concepts  and formulate the result only when $\#\postf=4$, which is the relevant case for  this paper.

In the following, let $f\:\Sp\to\Sp$ be a Thurston map. The map $f$ defines a natural pullback operation on  Jordan curves in $(\Sp,\postf)$: a \emph{pullback} of a Jordan curve $\gamma\subset\Sp\setminus\postf$ under $f$ is a connected component $\widetilde{\gamma}$ of $f^{-1}(\gamma)$. 
Since $f$ is a covering map from $S^2\setminus f^{-1}(\postf)$ 
onto $S^2\setminus \postf$, each pullback $\widetilde{\gamma}$
of $\ga$ is a Jordan curve in $(\Sp,\postf)$.  Moreover, 
$f|\widetilde{\gamma}\: \widetilde{\gamma}\ra \ga$ is a covering map. For some $k\in \N$ with $1\le k\le \deg(f)$ each point $p\in \ga$ has precisely $k$ distinct preimages in $ \widetilde{\gamma}$.  
Here $k$ is the (unsigned) mapping degree of 
$f|\widetilde{\gamma}$ which we denote  by $\deg(f\:\widetilde{\gamma} \to \gamma)$.

Recall that a Jordan curve  $\gamma\subset\Sp\setminus\postf$ is called \emph{essential} if each of the two connected components of $\Sp\setminus\gamma$ contains at least two points from $\postf$, and is called \emph{peripheral} otherwise.

We will need the following standard facts. 

\begin{lemma}\label{lem:pull} 
 Let $f\: S^2\ra S^2 $ be a Thurston map and let $\gamma$ and $\gamma'$   be Jordan curves in $(S^2, \postf)$  with $\gamma'\sim \gamma$  rel.\ $\postf$.  Then  there is a bijection $\widetilde \ga\leftrightarrow \widetilde \ga'$ 
between the pullbacks $\widetilde \ga$ of $\ga$ and the pullbacks
$\widetilde \ga'$  of $\ga'$ under $f$ such that for all pullbacks corresponding under this bijection we have 
$\widetilde \ga\sim  \widetilde \ga'$ rel.\ $\postf$ and 
$\deg(f\:\widetilde \ga\ra \ga)=\deg(f\:\widetilde \ga'\ra \ga')$.  
\end{lemma}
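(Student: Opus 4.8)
The plan is to build the bijection from an isotopy realizing $\gamma' \sim \gamma$ and lift it through $f$. First I would invoke the isotopy extension theorem: since $\gamma' \sim \gamma$ rel.\ $\postf$, there is an ambient isotopy $H\colon S^2\times \I \to S^2$ rel.\ $\postf$ with $H_0 = \id_{S^2}$ and $H_1(\gamma) = \gamma'$. Because $H$ fixes every point of $\postf$ for all $t$, and $f^{-1}(\postf) \supset \postf$, the restriction of $H$ to the complement of $\postf$ is an isotopy of $S^2 \setminus \postf$, and I can form the family of curves $\gamma_t \coloneq H_t(\gamma)$, each of which is a Jordan curve in $S^2 \setminus \postf$ (indeed each $\gamma_t$ avoids $\postf$).

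Next I would lift the isotopy under $f$. The key point is that $f$ restricts to a covering map $f\colon S^2 \setminus f^{-1}(\postf) \to S^2 \setminus \postf$. Since the isotopy $\{\gamma_t\}$ takes place entirely in $S^2 \setminus \postf$, and more care is needed because individual curves $\gamma_t$ might pass through points of $f^{-1}(\postf) \setminus \postf$ — so instead I would lift the ambient isotopy itself. Consider the map $G\colon (S^2\setminus f^{-1}(\postf)) \times \I \to S^2 \setminus \postf$ given by $G(x,t) = H_t(f(x))$; this is a homotopy starting from $f$ restricted to $S^2 \setminus f^{-1}(\postf)$. By the homotopy lifting property for the covering map $f\colon S^2\setminus f^{-1}(\postf) \to S^2\setminus\postf$, together with the initial lift $\id$ at time $0$, there is a unique lift $\widetilde{G}\colon (S^2 \setminus f^{-1}(\postf)) \times \I \to S^2 \setminus f^{-1}(\postf)$ with $\widetilde G_0 = \id$ and $f \circ \widetilde G_t = H_t \circ f$. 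Each $\widetilde G_t$ is then a homeomorphism of $S^2 \setminus f^{-1}(\postf)$ (its inverse is the lift of the reverse isotopy, again by uniqueness), and it extends continuously to $S^2$ fixing $f^{-1}(\postf)$ pointwise, hence to an ambient isotopy of $S^2$ rel.\ $f^{-1}(\postf)$, and in particular rel.\ $\postf$.

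Then $\widetilde G_1$ is a homeomorphism of $S^2$, isotopic to $\id_{S^2}$ rel.\ $\postf$, satisfying $f \circ \widetilde G_1 = H_1 \circ f$ on $S^2 \setminus f^{-1}(\postf)$ and hence everywhere by continuity. Therefore $\widetilde G_1$ maps $f^{-1}(\gamma)$ onto $f^{-1}(H_1(\gamma)) = f^{-1}(\gamma')$, carrying connected components to connected components; this yields the desired bijection $\widetilde \gamma \leftrightarrow \widetilde\gamma' \coloneq \widetilde G_1(\widetilde\gamma)$. Since $\widetilde G_1$ is isotopic to the identity rel.\ $\postf$, we get $\widetilde\gamma \sim \widetilde\gamma'$ rel.\ $\postf$ for corresponding pullbacks. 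Finally, the degree equality $\deg(f\colon \widetilde\gamma \to \gamma) = \deg(f\colon \widetilde\gamma' \to \gamma')$ follows from the intertwining relation $f \circ \widetilde G_1 = H_1 \circ f$: for a point $p \in \gamma$ with $k$ preimages on $\widetilde\gamma$, the homeomorphism $\widetilde G_1$ matches these bijectively with the preimages on $\widetilde\gamma'$ of the point $H_1(p) \in \gamma'$, and the mapping degree of a covering map between circles equals this preimage count.

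The main obstacle I anticipate is the bookkeeping around the set $f^{-1}(\postf) \setminus \postf$: one must lift the ambient isotopy rel.\ $f^{-1}(\postf)$ (not merely rel.\ $\postf$) to ensure the lifted homeomorphism extends over all of $S^2$ and genuinely permutes the pullbacks, and one must check that the lift of the reverse isotopy provides a two-sided inverse so that each $\widetilde G_t$ is in fact a homeomorphism. These are standard covering-space arguments but require a little care; everything else is routine.
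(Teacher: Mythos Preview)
Your proposal is correct and follows exactly the standard lifting-of-isotopy argument: lift the ambient isotopy $H$ through the covering $f\colon S^2\setminus f^{-1}(\postf)\to S^2\setminus\postf$, extend over $f^{-1}(\postf)$, and read off the bijection from $\widetilde G_1$. The paper does not actually prove this lemma but simply cites \cite[Lemma~6.9]{THEBook}, whose argument is precisely the one you outline; the only point you flag as needing care---the continuous extension of $\widetilde G_t$ across $f^{-1}(\postf)$---is handled there via the local model $z\mapsto z^d$ of the branched covering.
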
 
For the proof see \cite[Lemma 6.9]{THEBook}. 
A consequence of this statement is that the isotopy classes of curves in $f^{-1}(\gamma)$ rel.\  $\postf$ only depend on the isotopy class $[\ga]$ rel.\ $ \postf$ and not on the specific choice of $\ga$.

\begin{cor}\label{cor:pull} 
Let $f\: S^2\ra S^2 $ be a Thurston map, and $\gamma$ be Jordan curve in $(S^2, \postf)$.
 \begin{enumerate}[label=\text{(\roman*)},font=\normalfont]
\item \label{pull1} If $\ga$ is peripheral, then every pullback of $\gamma$ under $f$ is also peripheral. 

\smallskip 
\item \label{pull2} Suppose that $\#\postf = 4$ and let $\widetilde \gamma$ be a pullback of $\gamma$ under $f$. If $\ga$ and  $\widetilde \ga$ are essential, 
then the isotopy class $[\widetilde \ga]$ rel.\ $ \postf$ only depends on the isotopy class  $[\ga]$ rel.\ $ \postf$ and not on the specific  choice of $\ga$ and 
its essential pullback $\widetilde \ga$. 
\end{enumerate}
\end{cor}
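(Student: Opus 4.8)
The plan is to deduce both parts directly from Lemma~\ref{lem:pull} together with elementary facts about isotopy classes of Jordan curves in a sphere with at most four marked points. For part~\ref{pull1}, suppose $\gamma$ is peripheral, so one of the two components of $\Sp\setminus\gamma$ contains at most one point of $\postf$. Since $f$ is a branched covering with $f(\postf)\subset\postf$, the preimage of that component under $f$ consists of components of $S^2\setminus f^{-1}(\gamma)$, and any pullback $\widetilde\gamma$ bounds on one side a region $U$ that maps onto this "small" side of $\gamma$; because $f(U\cap\postf)$ lies in the small side and $\postf$ is totally invariant in the weak sense $\postf\subset f^{-1}(\postf)$, one checks that $U$ contains at most one point of $\postf$ as well. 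Hence $\widetilde\gamma$ is peripheral. Alternatively, and more cleanly, one can simply observe that if $\gamma$ is null-homotopic or peripheral then $\gamma\sim\gamma'$ rel.\ $\postf$ for a curve $\gamma'$ that misses all but at most one point of $\postf$ and bounds a disk $D$ with $D\cap\postf$ containing at most one point; lifting $D$ under $f$ gives disks, each containing at most one point of $\postf$ since $f$ restricted to each such disk is a branched cover of a disk with at most one postcritical value, and the boundary of each such disk is a pullback of $\gamma'$, hence peripheral. By Lemma~\ref{lem:pull}, every pullback of $\gamma$ is isotopic rel.\ $\postf$ to a pullback of $\gamma'$, and thus also peripheral.

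For part~\ref{pull2}, assume $\#\postf=4$. Let $\widetilde\gamma$ be an essential pullback of an essential curve $\gamma$. Lemma~\ref{lem:pull} already tells us that the isotopy class $[\widetilde\gamma]$ is unchanged if we replace $\gamma$ by an isotopic curve $\gamma'$: the bijection of pullbacks there carries $\widetilde\gamma$ to some $\widetilde\gamma'$ with $\widetilde\gamma\sim\widetilde\gamma'$ rel.\ $\postf$. So the only thing that needs proof is that $[\widetilde\gamma]$ does not depend on which essential pullback of the fixed curve $\gamma$ we choose. The key point is that distinct pullbacks of $\gamma$ are disjoint Jordan curves in $\Sp\setminus\postf$, and by the structure of a branched covering they are in fact freely homotopic to each other in $\Sp\setminus f^{-1}(\postf)$, hence a fortiori in $\Sp\setminus\postf$. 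Indeed, $f^{-1}(\gamma)$ is a disjoint union of Jordan curves which together separate $\Sp$ into the components of $f^{-1}(\Sp\setminus\gamma)$; each such component maps onto one of the two (annular, i.e.\ twice-punctured-disk) components of $\Sp\setminus\gamma$, so each is itself a planar surface whose ends are precisely copies of pullback curves. Two pullbacks $\widetilde\gamma_1,\widetilde\gamma_2$ that are essential cannot be separated from one another by a third essential pullback without forcing too many curves; one then argues that $\widetilde\gamma_1$ and $\widetilde\gamma_2$ cobound an annulus in $\Sp\setminus\postf$, and invokes Lemma~\ref{lem:isocrit} to conclude $\widetilde\gamma_1\sim\widetilde\gamma_2$ rel.\ $\postf$.

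The crux — and the step I expect to be the main obstacle — is the last claim: that any two essential pullbacks of a fixed essential $\gamma$ cobound an annulus disjoint from $\postf$. The clean way to see this uses the hypothesis $\#\postf=4$ decisively. Write $V_0,V_1$ for the two closed complementary disks of $\gamma$, each containing exactly two points of $\postf$ (since $\gamma$ is essential). Then $f^{-1}(V_0)$ and $f^{-1}(V_1)$ are the closures of the components of $\Sp\setminus f^{-1}(\gamma)$, and the essential pullbacks of $\gamma$ are exactly those boundary curves $\widetilde\gamma$ such that each side of $\widetilde\gamma$ contains two points of $f^{-1}(\postf)\cap\postf=\postf$ (using $\postf\subset f^{-1}(\postf)$). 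In a four-punctured sphere, any two disjoint essential simple closed curves that are not isotopic must together separate the four punctures into pairs in incompatible ways, which is impossible for two disjoint curves splitting the same two-two partition; since all essential curves in $(\Sp,\postf)$ with $\#\postf=4$ that are disjoint are either isotopic or bound a peripheral region between them, and a peripheral region cannot lie between two essential curves both of which have two punctures on their outer side, we conclude the two essential pullbacks are isotopic. Making this "incompatible partition" argument precise — ideally by reducing via a homeomorphism to $(\PP,V)$ and using the slope classification of Lemma~\ref{lem:isoclassesP}, so that disjointness forces equal slopes by Lemma~\ref{lem:i-properties-curve}~\ref{item:i1} — is the technical heart of the proof, but it is short once set up this way.
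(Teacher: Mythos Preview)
Your approach to part~\ref{pull1} (the ``cleaner'' alternative) is essentially the paper's: isotope $\gamma$ into a small disk containing at most one point of $\postf$, observe that each component of the preimage of that disk is again a disk containing at most one point of $\postf$, and then invoke Lemma~\ref{lem:pull}. That is fine.

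For part~\ref{pull2} you have identified the right ingredients---disjointness of distinct pullbacks, the annulus they cobound, and Lemma~\ref{lem:isocrit}---but you are making the ``crux'' much harder than it is. The paper's argument is a one-line count, not a reduction to slope classification: two disjoint Jordan curves $\widetilde\gamma,\widetilde\gamma'$ in $\Sp$ always decompose the sphere as $V\cup U\cup V'$ with $V,V'$ open Jordan regions and $U$ an annulus with $\partial U=\widetilde\gamma\cup\widetilde\gamma'$. If both curves are essential, then each of $V,V'$ contains at least two points of $\postf$; since $\#\postf=4$, this forces $U\cap\postf=\emptyset$, and Lemma~\ref{lem:isocrit} finishes. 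There is no need to analyze $f^{-1}(V_0),f^{-1}(V_1)$, to invoke free homotopy in $\Sp\setminus f^{-1}(\postf)$, or to pass to $(\PP,V)$ and compare slopes via Lemma~\ref{lem:i-properties-curve}. Your proposed route would work, but it imports machinery (Lemma~\ref{lem:isoclassesP}) that the paper only proves much later in the appendix, whereas the pigeonhole argument is self-contained.
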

\begin{proof} \ref{pull1}  Since $\gamma$ is peripheral, $\gamma$ can be isotoped (rel.\ $\postf$) into a Jordan curve $\gamma'$ inside a small open Jordan region $V\subset \Sp$ such that $\# (V \cap \postf) \leq 1$ and $V$ is evenly covered by the branched covering map $f$ as in \eqref{eq:evencov}. 

Then for each component $U_j$ of $f^{-1}(V)$, the map $f|U_j\:U_j\to V$ is given by $z\in \D\mapsto z^{d_j}\in \D$ for some $d_j\in \N$ after orientation-preserving  homeomorphic coordinate changes in the source  and target.  This implies that  $\# (U_j \cap \postf) \leq 1$ and that 
each pullback of  $\gamma'$ in $U_j$ is peripheral. 
Hence  
 all pullbacks of $\gamma'$ under $f$  are peripheral and  the same is true for the pullbacks of $\gamma$ as follows 
from  Lemma \ref{lem:pull}.

\smallskip 
\ref{pull2} Suppose $\widetilde \ga$ and $\widetilde \ga'$ are two distinct essential pullbacks of $\ga$ under $f$. Since these are components of $f^{-1}(\ga)$, the Jordan curves 
 $\widetilde \ga$ and $\widetilde \ga'$ are disjoint.  Then the 
 set $S^2\setminus (\widetilde \ga \cup \widetilde \ga')$ is a disjoint union $S^2\setminus (\widetilde \ga \cup \widetilde \ga')=V\cup U\cup V' $,
 where $V,V'\sub S^2$ are Jordan regions and $U\sub S^2$ is an annulus with $\partial U=\widetilde \ga \cup \widetilde \ga'$.
  Since  $\widetilde \ga$ and  $\widetilde \ga'$ are essential,  both $V$ and $V'$ must contain at least two postcritical points. Now  $\#\postf=4$, and so  $U\cap \postf =\emptyset$.   Lemma~\ref{lem:isocrit}
then implies that 
 $\widetilde \ga$ and $\widetilde \ga'$ are isotopic rel.\ $\postf$.

 It follows that the isotopy class 
$[\widetilde \ga]$ rel.\ $\postf$ does not depend on the choice of the essential pullback $\widetilde \ga$ of $\ga$.  At the same time, Lemma~\ref{lem:pull} implies that $[\widetilde \ga]$ only depends on the isotopy class $[\ga]$, as desired.
\end{proof}

For a general Thurston map $f$ the concept of an invariant multicurve  is important to decide whether $f$ is  realized or  obstructed. By definition, a {\em multicurve} is a non-empty finite family $\Gamma$ of essential Jordan curves in $\Sp\setminus \postf$ that are pairwise disjoint and pairwise non-isotopic rel.\ $\postf$.

 Suppose now that $\#\postf=4$. Then any two essential Jordan curves in $\Sp\setminus \postf$ are either isotopic rel.~$\postf$ or have a non-empty intersection (as follows from the argument in the proof of Corollary~\ref{cor:pull}~\ref{pull2}).   Thus, in this case each multicurve $\Gamma$ consists of a single essential Jordan curve $\gamma$ in $\Sp\setminus \postf$.  We say that an essential Jordan curve $\ga$ in $\Sp\setminus \postf$ is \emph{$f$-invariant} if each essential pullback of $\gamma$ under $f$ is isotopic to  $\gamma$  rel.\ $\postf$.

\begin{definition}\label{def:obstruction} Let $f\:\Sp\to\Sp$ be a Thurston map with $\#\postf=4$ and let $\gamma \sub \Sp\setminus \postf $ be an essential  $f$-invariant Jordan curve. We denote by $\gamma_1,\dots ,\gamma_n$ for $n\in \N_0$  all the essential pullbacks of $\gamma$ under $f$ and  define
\begin{equation}\label{eq:Thurst_coeff}
  \lambda_f(\gamma)\coloneq \sum_{j=1}^{n} \frac{1}{\deg(f\:\gamma_j\to \gamma)}.
\end{equation}
Then $\gamma$ is called a \emph{(Thurston) obstruction} for $f$
if $\lambda_f(\gamma) \geq 1$. 
\end{definition}
Note that if $n=0$, then  the sum in \eqref{eq:Thurst_coeff} is the empty sum and so   $\lambda_f(\gamma)=0$. It immediately follows from 
Lemma~\ref{lem:pull} that whether or not $\ga$ is an obstruction for $f$ only depends on the isotopy class $[\ga]$ 
rel.~$\postf$.

The following theorem gives a criterion when a Thurston map $f$ with 
$\#\postf=4$ is realized. 

\begin{theorem}[Thurston's criterion]\label{thm:Thurston}  Let $f\: S^2\ra S^2$ be a Thurston map with $\#\postf=4$ and suppose that 
$f$ has a hyperbolic orbifold. Then $f$ is realized by a rational map if and only if $f$ has no  obstruction.
\end{theorem}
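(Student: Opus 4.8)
The plan is to prove this via Thurston's original Teichm\"uller-theoretic argument, following Douady--Hubbard \cite{DH_Th_char}, specialized to $\#\postf=4$. Write $P\coloneq\postf$ and let $\mathcal{T}\coloneq\mathcal{T}(S^2,P)$ be the Teichm\"uller space of complex structures on $S^2$ marked at the four points of $P$, taken up to isotopy rel.\ $P$; since $\#P=4$, this $\mathcal{T}$ is biholomorphic to the upper half-plane $\mathbb{H}$. One first sets up \emph{Thurston's pullback map} $\sigma_f\colon\mathcal{T}\to\mathcal{T}$: represent a point of $\mathcal{T}$ by a homeomorphism $\varphi\colon(S^2,P)\to(\widehat{\mathbb{C}},\varphi(P))$, pull the standard complex structure on $\widehat{\mathbb{C}}$ back through $f$, and uniformize to obtain a new marked sphere together with a homeomorphism, hence a new point $\sigma_f(\tau)\in\mathcal{T}$. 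The standard reformulation of the realization problem, which I would treat as input, is that \emph{$f$ is realized by a rational map if and only if $\sigma_f$ has a fixed point in $\mathcal{T}$}: a fixed point yields a complex structure preserved by $f$ up to isotopy rel.\ $P$, which one promotes to an honest holomorphic conjugacy by a limiting argument.

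For the ``only if'' direction (realized $\Rightarrow$ no obstruction) no Teichm\"uller theory is needed. By Definition~\ref{def:Thurston-equiv-and-realized} and naturality of pullbacks (Lemma~\ref{lem:pull}) we may assume $f$ is a rational map $R$ with $\postf=P$, and since $\mathcal{O}_f$ is hyperbolic, $\widehat{\mathbb{C}}\setminus P$ is a hyperbolic four-punctured sphere. Let $\gamma$ be an $f$-invariant essential Jordan curve with essential pullbacks $\gamma_1,\dots,\gamma_n$, each isotopic to $\gamma$ rel.\ $P$ by invariance. Let $A\subset\widehat{\mathbb{C}}\setminus P$ be the annulus of maximal (finite) modulus $M$ in the isotopy class of $\gamma$. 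As every critical value of $R$ lies in $P$ and $A$ misses $P$, the restriction $R\colon R^{-1}(A)\to A$ is an unbranched covering, so the components of $R^{-1}(A)$ are annuli; those with essential core form a disjoint family $A_1,\dots,A_n$ in $\widehat{\mathbb{C}}\setminus P$, all isotopic to $\gamma$, with $\operatorname{mod}(A_j)=M/\deg(f\colon\gamma_j\to\gamma)$. The Gr\"{o}tzsch inequality for disjoint homotopic annuli gives $\sum_j\operatorname{mod}(A_j)\le M$, and a refinement (using that $R^{-1}(A)\subsetneq\widehat{\mathbb{C}}\setminus P$ together with hyperbolicity of $\mathcal{O}_f$) upgrades this to a strict inequality; dividing by $M$ yields $\lambda_f(\gamma)=\sum_j 1/\deg(f\colon\gamma_j\to\gamma)<1$, so $\gamma$ is not an obstruction.

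For the ``if'' direction (no obstruction $\Rightarrow$ realized) I would analyze the dynamics of $\sigma_f$ on $\mathcal{T}\cong\mathbb{H}$. Two facts are needed. First, $\sigma_f$ is holomorphic, hence $1$-Lipschitz for the Teichm\"uller (hyperbolic) metric by the Schwarz--Pick lemma --- pulling a Beltrami coefficient back through the (now holomorphic) map $f$ does not increase its sup-norm. Second, if some orbit $\tau_n\coloneq\sigma_f^n(\tau_0)$ is relatively compact in $\mathcal{T}$, then $\sigma_f$ has a fixed point: otherwise the Denjoy--Wolff theorem for holomorphic self-maps of the disk would force every orbit to converge to a single boundary point, contradicting relative compactness. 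So it suffices to show that, when $f$ has no obstruction, $(\tau_n)$ stays in a compact subset of moduli space. Assuming not, along a subsequence a simple closed curve on $(S^2,P)$ is pinched: the modulus $\operatorname{mod}_{\tau_n}(\delta_n)$ of the maximal annulus in some essential class $[\delta_n]$ tends to $\infty$. Pulling annuli back through the holomorphic map $X_{\tau_n}\to X_{\tau_{n+1}}$ exactly as above yields, for every essential class $[\delta]$,
\[
\operatorname{mod}_{\tau_n}(\delta)\;\ge\;\sum_{[\gamma]} m_{[\delta],[\gamma]}\,\operatorname{mod}_{\tau_{n+1}}(\gamma),\qquad m_{[\delta],[\gamma]}\coloneq\sum_{\substack{\gamma'\text{ ess.\ pullback of }\gamma\\ [\gamma']=[\delta]}}\frac{1}{\deg(f\colon\gamma'\to\gamma)}.
\]
Since $\#P=4$, at each $\tau_n$ only boundedly many isotopy classes can carry an annulus of modulus above a fixed threshold, so the relevant part of the matrix $(m_{[\delta],[\gamma]})$ along the orbit is finite and non-negative. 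Iterating the inequality along the degenerating orbit and invoking a recurrence/finiteness argument, one extracts an isotopy class $[\gamma]$ that is $f$-invariant in the sense of Definition~\ref{def:obstruction} and for which $\lambda_f(\gamma)\ge1$ --- a Thurston obstruction, contradicting the hypothesis. (When $\#P=4$ this ``multicurve'' is automatically a single curve, since any two essential curves in $S^2\setminus P$ are either isotopic rel.\ $P$ or intersect.) Hyperbolicity of $\mathcal{O}_f$ is used to make $X_\tau\setminus P$ hyperbolic and, crucially, to exclude the parabolic Latt\`es-type exceptions, for which $\sigma_f$ may act as a parabolic isometry with no fixed point in $\mathcal{T}$ even though $f$ has no obstruction. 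The main obstacle is precisely this non-degeneration step: converting a degenerating $\sigma_f$-orbit into an \emph{invariant} curve with $\lambda_f(\gamma)\ge1$ requires sufficiently sharp control on how extremal lengths (equivalently, annulus moduli) propagate under $\sigma_f$.
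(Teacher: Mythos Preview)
The paper does not actually prove this theorem. It is stated as background (Theorem~\ref{thm:Thurston}) and attributed to Douady--Hubbard \cite{DH_Th_char}, with the remark that the general version ``is also true for general Thurston maps with a hyperbolic orbifold; see \cite{DH_Th_char} or \cite[Section 2.6]{THEBook}.'' No argument is given; the result is used as a black box throughout (e.g.\ in the proofs of Theorems~\ref{thm:flapped_intro} and~\ref{thm:blow-up-obstr}).

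Your outline is the standard Douady--Hubbard Teichm\"uller-theoretic proof, correctly specialized to $\#P=4$ where $\mathcal{T}\cong\mathbb{H}$ and one can invoke Schwarz--Pick and Denjoy--Wolff directly. The overall architecture is sound. Two places deserve more care if you want this to stand as a proof rather than a sketch. First, in the ``only if'' direction, the upgrade from $\sum_j\operatorname{mod}(A_j)\le M$ to strict inequality is exactly where hyperbolicity of the orbifold enters, and your parenthetical justification is too vague: equality in Gr\"otzsch forces the essential annular components of $R^{-1}(A)$ to tile a maximal annulus conformally, which in turn forces $f^{-1}(P)\subset C_f\cup P$ --- and that is precisely the parabolic case by Lemma~\ref{lem:DHLem2}. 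Second, in the ``if'' direction, the step ``iterating the inequality \ldots one extracts an isotopy class $[\gamma]$ that is $f$-invariant \ldots with $\lambda_f(\gamma)\ge1$'' is the genuine crux of Douady--Hubbard and is not just a recurrence argument; it requires tracking which curves get short and showing that the transition matrix restricted to a recurrent set of classes has spectral radius $\ge 1$. You have identified this as ``the main obstacle,'' which is accurate, but as written the proposal defers rather than resolves it.
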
 

With a suitable definition  of an obstruction (as an invariant multicurve that satisfies certain mapping properties) this statement is also true for  general  Thurston maps with a hyperbolic orbifold; see  \cite{DH_Th_char} or 
\cite[Section 2.6]{THEBook}.

The example of the $(n\times n)$-Latt\`{e}s map $\La_n\:\PP\to \PP$ with  $n\ge 2$ shows that Theorem~\ref{thm:Thurston} is false if $f$ has a parabolic orbifold.
Indeed, let  $\gamma$ be any essential Jordan curve in $(\PP, P_{\La_n})$, where $P_{\La_n}=V=\{A,B,C,D\}$ consists of the vertices of the pillow $\PP$. Since only the isotopy class $[\ga]$ rel.~$V$ matters, by Lemma~\ref{lem:isoclassesP}  we may 
assume without loss of generality   that $\gamma=\tau_{r/s}=\wp(\ell_{r/s}(z_0))$ with $z_0\in \C$,   $r/s\in\widehat\Q$, and $\ell_{r/s}(z_0)\sub \C \setminus \Z^2$.  Here  $r$ and $s$ are relatively prime integers and so there exist $p,q\in \Z$ such that $pr+qs=1$. Let $\widetilde\om \coloneq -p+iq$. Using \eqref{eq:wpeq} and \eqref{eq:2Lattes}  one can verify that  under $\La_n$ the curve $\gamma$ has exactly $n$ distinct pullbacks  
\begin{equation}\label{eq:lattes-preimage}
\ga_j=\wp( \ell_{r/s}((z_0+2j\widetilde \om)/n)), \quad j=1,\dots,n. 
\end{equation}
 Moreover, each  curve $\gamma_j$ is isotopic to $\gamma$ rel.\ $P_{\La_n}=V$ and $\deg(\La_n\:\gamma_j\to\gamma)=n$ for all $j=1,\dots, n$; see Figure \ref{fig:L2preimage} for an illustration.
Thus, $\lambda_{\La_n}(\gamma)=1$ and $\ga$ is an obstruction.

\begin{figure}
\centering
\def\svgwidth{0.7\columnwidth}
\begingroup%
  \makeatletter%
  \providecommand\color[2][]{%
    \errmessage{(Inkscape) Color is used for the text in Inkscape, but the package 'color.sty' is not loaded}%
    \renewcommand\color[2][]{}%
  }%
  \providecommand\transparent[1]{%
    \errmessage{(Inkscape) Transparency is used (non-zero) for the text in Inkscape, but the package 'transparent.sty' is not loaded}%
    \renewcommand\transparent[1]{}%
  }%
  \providecommand\rotatebox[2]{#2}%
  \newcommand*\fsize{\dimexpr\f@size pt\relax}%
  \newcommand*\lineheight[1]{\fontsize{\fsize}{#1\fsize}\selectfont}%
  \ifx\svgwidth\undefined%
    \setlength{\unitlength}{500.10323066bp}%
    \ifx\svgscale\undefined%
      \relax%
    \else%
      \setlength{\unitlength}{\unitlength * \real{\svgscale}}%
    \fi%
  \else%
    \setlength{\unitlength}{\svgwidth}%
  \fi%
  \global\let\svgwidth\undefined%
  \global\let\svgscale\undefined%
  \makeatother%
  \begin{picture}(1,0.31993907)%
    \lineheight{1}%
    \setlength\tabcolsep{0pt}%
    \put(0,0){\includegraphics[width=\unitlength,page=1]{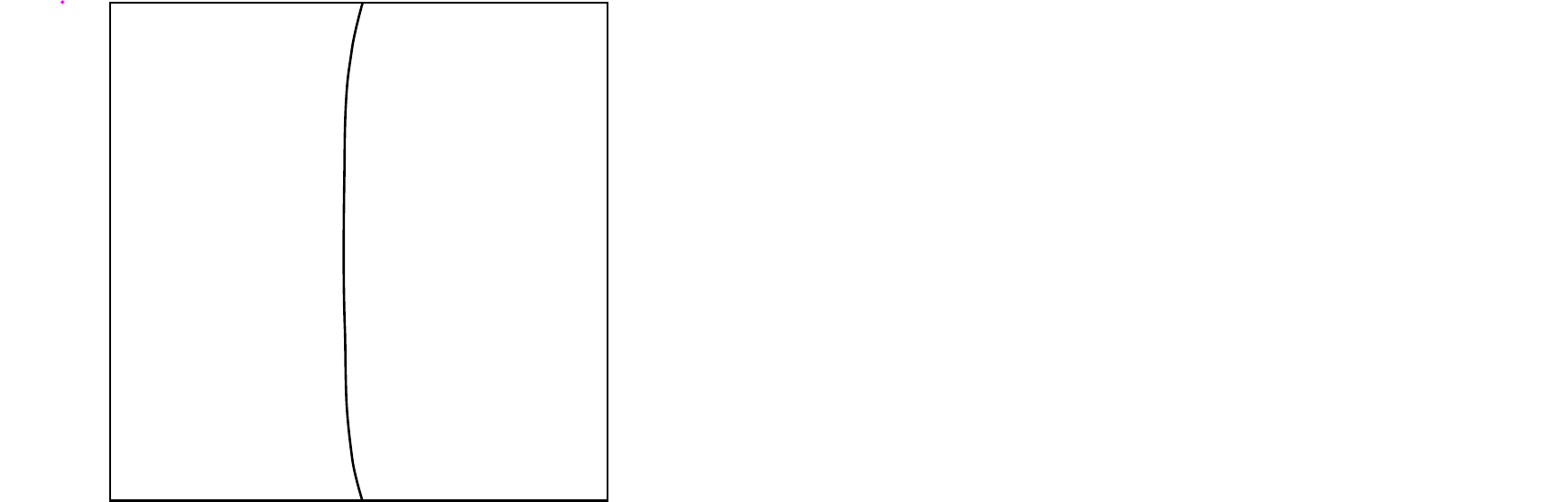}}%
    \put(0.46684308,0.19897753){\color[rgb]{0,0,0}\makebox(0,0)[lt]{\lineheight{1.25}\smash{\begin{tabular}[t]{l}$\LL_2$\end{tabular}}}}%
    \put(0,0){\includegraphics[width=\unitlength,page=2]{L2preimagecurve.pdf}}%
    \put(0.68551157,0.06401587){\makebox(0,0)[lt]{\lineheight{1.25}\smash{\begin{tabular}[t]{l}\textcolor{anisgreen}{$\tau_2$}\end{tabular}}}}%
    \put(0,0){\includegraphics[width=\unitlength,page=3]{L2preimagecurve.pdf}}%
    \put(-0.00189116,0.15084293){\color[rgb]{0,0,0}\makebox(0,0)[lt]{\lineheight{1.25}\smash{\begin{tabular}[t]{l}$\PP$\end{tabular}}}}%
    \put(0.92840451,0.14440111){\color[rgb]{0,0,0}\makebox(0,0)[lt]{\lineheight{1.25}\smash{\begin{tabular}[t]{l}$\PP$\end{tabular}}}}%
  \end{picture}%
\endgroup%
 
\caption{The two pullbacks of a curve $\ga=\tau_2$ under $\mathcal{L}_2$.}\label{fig:L2preimage}
\end{figure}

\section{Blowing up arcs}
\label{sec:blow-up}

Here, we describe the operation of ``blowing up arcs'', originally introduced by Kevin Pilgrim and Tan Lei in \cite[Section 2.5]{PT}. This operation allows us to define and modify various Thurston maps and plays a crucial role in this paper. We will first describe the general construction and then illustrate it  for Latt\`es maps. As we will explain, the  procedure of ``gluing a flap"  that we introduced  in Section~\ref{sec:blowupL}  can be viewed as a special case of  blowing  up arcs for  Latt\`es maps.

The construction of blowing up arcs will involve a finite collection $E$ of  arcs  with pairwise disjoint interiors in a $2$-sphere $S^2$. We denote by $V$  the set of endpoints of these arcs and consider $(V,E)$ as an embedded graph in $S^2$. In the construction we will make various topological choices. 
The following general statement  guarantees that we do not run into topological difficulties. In the formulation  we equip $S^2$ with a ``nice" metric $d$ so that  
$(S^2,d)$ is isometric to  $\CDach$ carrying the spherical metric with length element $ds=2|dz|/(1+|z|^2)$.  
\begin{prop}\label{prop:geod-arcs}
Let $G=(V,E)$  be a planar embedded graph in $S^2$ and  $\GC\sub S^2$ be its realization.  Then there exists a planar embedded graph $G'=(V,E')$ in $S^2$ with the same vertex set such that its  realization  $\GC'$ is  isotopic to $\GC$ rel.\ $V$  and such that each edge of $G'$ is a piecewise geodesic
 arc in $(\Sp,d)$.
\end{prop}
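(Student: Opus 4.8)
\emph{The plan.} I would reduce the proposition to a purely local straightening problem. Throughout, recall that $(\Sp,d)$ is isometric to $\CDach$ with the round spherical metric, so geodesics are arcs of great circles and sufficiently small metric balls are geodesically convex; ``piecewise geodesic'' means a finite concatenation of such arcs. The first observation is that it suffices to construct a homeomorphism $h\colon\Sp\to\Sp$ that is isotopic to $\id_{\Sp}$ rel.\ $V$ and for which $h(e)$ is a piecewise geodesic arc for every $e\in E$. Indeed, $h$ fixes $V$ pointwise and is a homeomorphism, so it preserves the endpoints of the edges (which lie in $V$) and the pairwise disjointness of their interiors; hence $G'\coloneq (V,\{h(e):e\in E\})$ is a planar embedded graph with the same vertex set, whose realization $h(\GC)=\GC'$ is isotopic to $\GC$ rel.\ $V$ (via the isotopy realizing $h$), and whose edges are piecewise geodesic.

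\emph{Taming and the vertices.} Since embedded $1$-complexes in surfaces are tame, after an ambient isotopy of $\Sp$ rel.\ $V$ one may assume that each edge of $G$ is a smoothly (indeed $C^1$-regularly) embedded arc; a further small perturbation rel.\ $V$, supported near the vertices, makes the edge-germs at each vertex $w$ pairwise transverse. Then there is $r_0>0$ such that for every $0<r\le r_0$ the round balls $B(w,r)$ $(w\in V)$ are pairwise disjoint, an edge meets $\cl(B(w,r))$ only when $w$ is one of its endpoints, and in that case it meets $\cl(B(w,r))$ in a single subarc ``stub'' from $w$ to $\partial B(w,r)$, with distinct stubs at $w$ meeting only at $w$. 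Fixing such an $r$, the stubs at $w$ form a finite star of disjoint arcs issuing from the center of the disk $\cl(B(w,r))$. Such a star is unique up to ambient isotopy of the disk rel.\ its center, in any prescribed cyclic order; hence there is an ambient isotopy of $\Sp$ supported in $B(w,r_0)$, fixing $w$, carrying each stub to a radial geodesic segment (with endpoints on $\partial B(w,r)$ ordered as the edges are attached at $w$). Doing this at all vertices (the supports are disjoint), we may assume that each edge $e$ with $\partial e=\{u,v\}$ is a concatenation of a radial geodesic segment in $\cl(B(u,r))$, a \emph{central part} $\gamma_e\coloneq e\setminus(B(u,r)\cup B(v,r))$ contained in the compact surface with boundary $K\coloneq\Sp\setminus\bigcup_{w\in V}B(w,r)$ and meeting $\partial K$ exactly in its two endpoints, and a radial geodesic segment in $\cl(B(v,r))$; the $\gamma_e$ are pairwise disjoint, and the radial segments issuing from a common vertex are pairwise disjoint.

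\emph{The central parts.} Next choose pairwise disjoint closed rectangular (``tube'') neighborhoods $T_e\sub K$ of the arcs $\gamma_e$, disjoint from the radial segments except along $\partial K$, and arranged so that near each endpoint $p$ of $\gamma_e$ the tube $T_e$ contains a short geodesic segment issuing from $p$ into $\inte(T_e)$ in the direction of $\gamma_e$. Inside each $T_e$ one then builds a simple piecewise geodesic arc $\gamma'_e$ with the same endpoints as $\gamma_e$: replace a tiny initial and terminal piece of $\gamma_e$ by those short geodesic segments; cover the remaining compact middle arc --- which has interior and endpoints in $\inte(T_e)$ --- by finitely many convex balls contained in $\inte(T_e)$; subdivide the arc so that consecutive subdivision points lie in a common such ball; replace each piece by the geodesic chord between its endpoints; and delete sub-loops so that the resulting path is embedded. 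The outcome $\gamma'_e$ lies in $\inte(T_e)$ and is piecewise geodesic. Since any two simple arcs in a disk with the same endpoints are ambiently isotopic rel.\ those endpoints by an isotopy that is the identity near the boundary of the disk, there is an ambient isotopy of $\Sp$ supported in $T_e$ taking $\gamma_e$ onto $\gamma'_e$. Running all of these simultaneously (disjoint supports) produces a graph all of whose edges are piecewise geodesic and which is isotopic rel.\ $V$ to the original $\GC$; the reduction in the first paragraph then completes the proof.

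\emph{Main obstacle.} No new mathematics is involved: the ingredients are classical (tameness of $1$-complexes in surfaces; uniqueness up to ambient isotopy of a star of arcs in a disk and of an arc in a disk with prescribed endpoints; polygonal connectedness of open subsets of $\Sp$). The real work, and the point I would expect to require care, is the bookkeeping: executing all the local straightenings with pairwise disjoint supports so that the edges remain pairwise disjoint and no new intersections are created. Isolating the vertices first --- where several edges meet, so that disjoint tube neighborhoods are not available --- inside the small round balls $B(w,r)$ is precisely what makes this manageable.
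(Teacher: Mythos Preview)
Your argument is correct and is essentially the standard proof of this classical fact. The paper itself does not give a proof: it simply refers the reader to \cite[Chapter~I,~\S 4]{Bollobas} for an outline and notes that the result also follows from \cite[Lemma~A.8]{BuserGeometry}. Your sketch --- first normalizing the edge-germs at vertices to radial geodesic segments in small disjoint balls, then straightening the remaining central arcs inside disjoint tube neighborhoods via a convex-ball polygonal approximation, all by ambient isotopies with disjoint supports --- is exactly the kind of argument those references have in mind, and your identification of the ``bookkeeping at the vertices'' as the only delicate point is accurate.
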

 An outline of the proof  is given in \cite[Chapter~I,~\S 4]{Bollobas}; the proposition  also follows from \cite[Lemma A.8]{BuserGeometry}.

\subsection{The general construction}\label{subsec:blow-up-general}
Before we provide a formal definition, we give some rough idea of how to  ``blow up'' arcs. In the following,  $f\:\Sp\to\Sp$ is a fixed Thurston map. Let $e$ be an arc in $\Sp$ such that the restriction $f|e$ is a homeomorphism onto its image. We cut the sphere $\Sp$ open along $e$ and glue in a closed Jordan region  $D$ along the boundary. In this way we obtain a new 
$2$-sphere on which we can define a  branched covering map $\widehat{f}$ as follows: $\widehat{f}$ maps the complement of $\inter(D)$ in the same way as the original map  $f$ and it maps $\inter(D)$ to the complement of $f(e)$ by a homeomorphism that matches the map $f|e$. We say that $\widehat{f}$ is obtained from $f$ by {\em blowing up} the arc $e$  with multiplicity $1$.

Now we proceed to give a rigorous definition of the blow-up operation in the general case, where several arcs $e$ are blown up simultaneously with possibly different multiplicitities $m_e\ge 1$ resulting in a new Thurston map $\widehat f$.  
To this end, let 
$E$ be a finite set of arcs in $(\Sp,f^{-1}(\postf))$ with pairwise disjoint interiors  
such that the restriction $f|e\: e\to f(e)$ is a homeomorphism for each $e\in E$. In this case,  we say that $E$ satisfies the \emph{blow-up conditions}. 

We assume that each arc $e\in E$ has an assigned {\em multiplicity} $m_e\in \N$.  Since each $e\in E$ is an arc in $(\Sp,f^{-1}(\postf))$, its interior $\inte(e)$  is disjoint from $f^{-1}(\postf) \supset \postf$ and so  $\inte(e)$ does not contain any critical or postcritical point of $f$.

For each arc $e\in E$, we choose an open Jordan region $W_e\sub S^2$ so that the following conditions  hold: 

\begin{enumerate}[label=(\text{A\arabic*)},font=\normalfont]

\item The open  Jordan regions $W_e$, $e\in E$, are pairwise disjoint. 

\smallskip 
\item  For  distinct arcs $e_1,e_2\in E$, we have 
$\cl{(W_{e_1})} \cap \cl({W_{e_2})}=\partial e_1\cap \partial e_2$.

\smallskip
\item  $\inter(e)\subset W_e$ and $\partial e\subset \partial W_e$ for each $e\in E$. 

\smallskip
\item  $\cl(W_e)\cap f^{-1}(\postf) = e\cap f^{-1}(\postf)=\partial e$ for each $e\in E$.

\smallskip
\item $f|\cl(W_{e})$ is a homeomorphism onto its image for each $e\in E$. 
\end{enumerate}

The existence of such a choice (and also of the choices below) can easily be justified based on Propositon~\ref{prop:geod-arcs} and we will skip the details.

Let $e\in E$ and  $W_e$ be chosen as above.
Then we choose a closed Jordan region  $D_e$ so that $e$ is a crosscut in $D_e$ and $D_e\setminus \partial e \sub W_e$. The two endpoints  of $e$ lie on  the Jordan curve $\partial D_e$ and partition it into two arcs, which 
we denote by $\partial D_e^+$ and  $\partial D_e^-$.  
One can think of $D_e$ as the resulting region if $e$ has been ``opened up".  This is illustrated in the left and middle parts of Figure~\ref{fig:blowupsphere1}.

\begin{figure}[t]
\centering
\def\svgwidth{0.9\columnwidth}
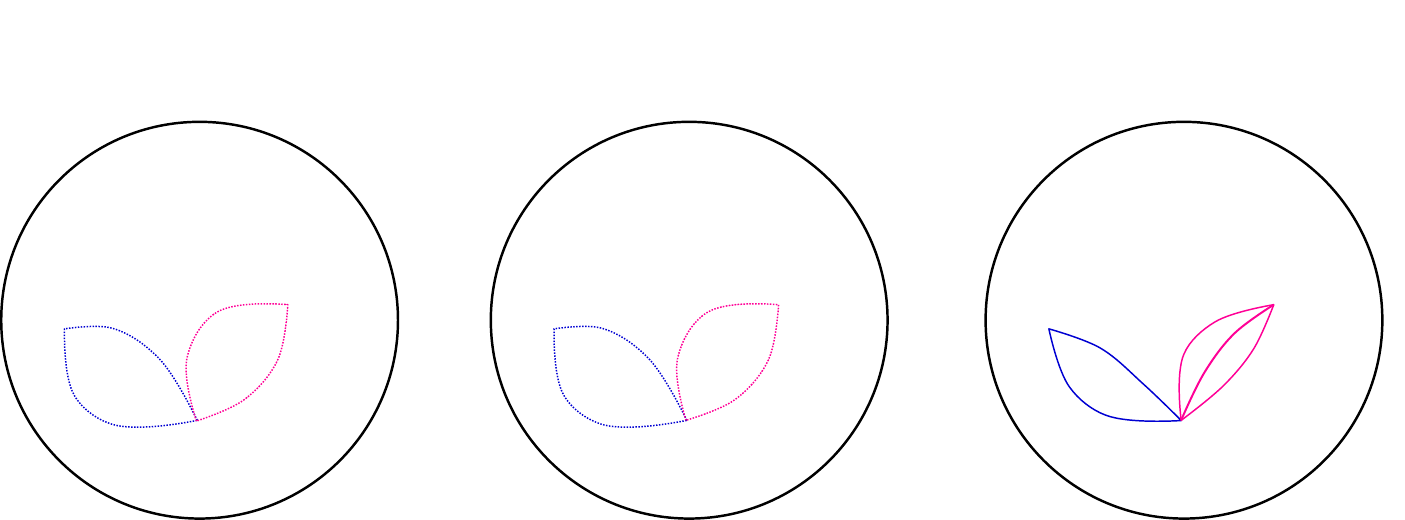
\caption{Setup for blowing up the arcs $e_1$ and $e_2$ (in  the sphere on the left) with the multiplicities $m_{e_1}=1$ and $m_{e_2}=2$.}\label{fig:blowupsphere1}
\end{figure}

In order to define the desired Thurston map $\widehat f$, we want to collapse $D_e$ back to $e$. For this we choose  
a continuous map 
$h\: \Sp\times\I\to \Sp$ 
 with the following properties:  

\begin{enumerate}[label=(\text{B\arabic*}),font=\normalfont]

\item\label{cond:B1} $h$ is a {\em pseudo-isotopy}, that is,  $h_t\coloneq h(\cdot, t)$ is a homeomorphism on $\Sp$ for  each $t\in [0,1)$.

\smallskip
\item\label{cond:B2} $h_0$ is the identity map on $\Sp$. 

\smallskip
\item\label{cond:B3}$h_t$ is the identity map on $\Sp\setminus  \bigcup_{e\in E}{W_e}$  for each  $t\in[0,1]$.

\smallskip 
\item\label{cond:B4} $h_1$ is a homeomorphism of $S^2\setminus \bigcup_{e\in E}{D_e}$ onto $S^2\setminus \bigcup_{e\in E}{e}$, and 
$h_1$ maps  $\partial D_e^+$ and  $\partial D_e^-$ homeomorphically onto $e$ for each $e\in E$. 
\end{enumerate}

It is easy to  see that if we equip $S^2$ with some metric, then  the set $h_t(D_e)$ Hausdorff converges to $e$ as $t\to 1^-$. This implies that $h_1(D_e)=e$. So 
intuitively, the deformation process described by $h$ 
collapses each closed Jordan region
$D_e$ to $e$  at time $1$   so that the points   in $\Sp\setminus  \bigcup_{e\in E}{W_e}$  remain fixed.

We now make yet another choice. For a fixed  arc $e\in E$, let
 $m=m_e$. We choose $m-1$ crosscuts  $e^1,\dots,e^{m-1}$ in $D_e$ with the same endpoints as  $e$ such that these crosscuts have pairwise disjoint interiors. We set $e^0\coloneq \partial D_e^+$ and  
$e^{m}\coloneq \partial D_e^-$. The arcs $e^0,\dots,e^{m}$
subdivide the closed Jordan region $D_e$ into $m$ closed Jordan regions $D_e^1, \dots, D_e^m$,  called \emph{components} of $D_e$. This is illustrated in the right-hand part of Figure~\ref{fig:blowupsphere1}.

We may assume that the labeling is such that 
$\partial D_e^k=e^{k-1}\cup e^k$ for $k=1, \dots, m$. 
For each $k=1, \dots, m$, we now choose a continuous map 
$\varphi_k\: D_e^k \ra S^2$ with the following properties:  
\begin{enumerate}[label=(\text{C\arabic*}),font=\normalfont]

\item\label{cond:C1} $\varphi_k$ is an orientation-preserving  homeomorphism of $\inte(D_e^k)$ onto
 $S^2\setminus f(e)$ and maps $ e^{k-1}$ and  $ e^k$ homeomorphically onto $f(e)$.   
  
\smallskip
\item\label{cond:C2}  $\varphi_1|e^0=f\circ h_1|e^0$, $\varphi_m|e^m=f\circ h_1|e^m$, and   $\varphi_k|e^k=\varphi_{k+1}|e^k$ for $k=1, \dots, m-1$.  
\end{enumerate}
Note that, by the earlier discussion, $h_1$ maps $e^0=\partial D^+_e$ and $e^m=\partial D^-_e$ homeomorphically onto $e$ and $f$ is a homeomorphism of $e$ onto $f(e)$. These choices of the maps 
$\varphi_k$  depend on~$e$, but we suppress this in our notation for simplicity.

A map $\widehat{f}\:\Sp\to\Sp$  can now be defined as follows: 

\begin{enumerate}[label=(\text{D\arabic*}),font=\normalfont]

\begin{figure}[b]
\centering
\def\svgwidth{0.63\columnwidth}
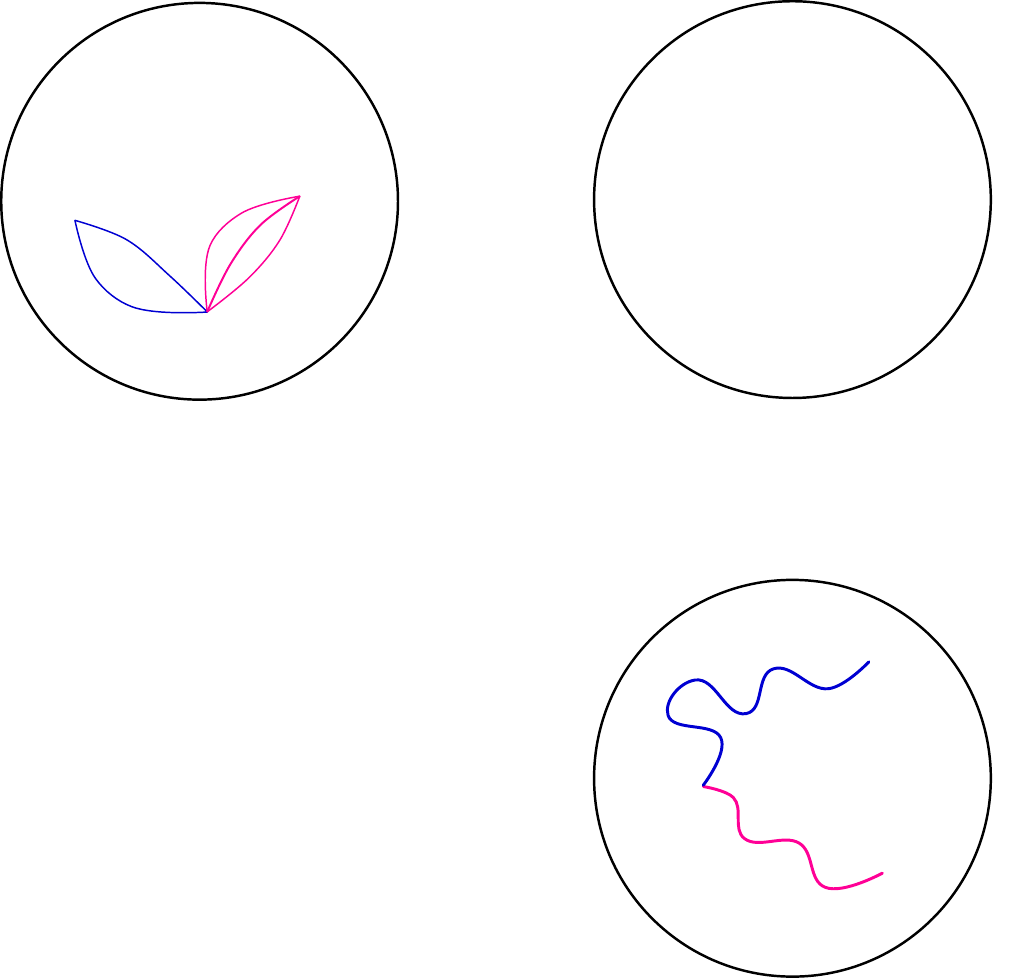
\caption{The map $\widehat f$ is obtained from $f$ by blowing up the arcs $e_1$ and $e_2$ with multiplicities $m_{e_1}=1$ and $m_{e_2}=2$.} \label{fig:blowupsphere2}
\end{figure}

\item\label{cond:D1}   If $p\in \Sp\setminus\bigcup_{e\in E} \inter(D_e)$, we set $\widehat{f}(p)=f(h_1(p))$.

\smallskip
\item\label{cond:D2} 
  If $p\in D_e$ for some $e\in E$, then $p$ lies in one of the components $D_e^k$ of  $D_e$ and we set $\widehat{f}(p)=\varphi_k(p)$. 
\end{enumerate}
The matching conditions (C2) above immediately imply that $\widehat{f}$ is well-defined and continuous.

\begin{definition}\label{def:blowing-up-f} We say that the  map 
 $\widehat{f}\: S^2\ra S^2$ as described above is obtained from 
 the Thurston map $f$ by \emph{blowing up each arc $e\in E$ with multiplicity  $m_e$}. \end{definition}

 Figure~\ref{fig:blowupsphere2} illustrates the construction of $\widehat{f}$. Here, we blow up the arcs $e_1$ and $e_2$ from Figure~\ref{fig:blowupsphere1} with multiplicities  $m_{e_1}=1$ and $m_{e_2}=2$. The arcs $f(e_1)$ and $f(e_2)$ share an endpoint (since $e_1$ and $e_2$ do), but in general they could have more points in common or even coincide. For simplicity,   we chose to draw them with disjoint interiors.

    By construction, $\widehat f$  acts in a similar way as $f$ outside the closed Jordan regions $D_{e_1}=D^1_{e_1}$ and $D_{e_2}=D^1_{e_2}\cup D^2_{e_2}$.  More precisely,  $\widehat f=f \circ h_1$ on $\Sp\setminus (\inter(D_{e_1}) \cup \inter(D_{e_2}))$, where $h_1$ collapses the closed Jordan regions $D_{e_1}$ and $D_{e_2}$ onto $e_1$ and $e_2$, respectively. At the same time, $\widehat f$ maps $\inter(D_{e_1})$  homeomorphically onto $\Sp\setminus f(e_1)$, and each of the regions  $\inter(D^1_{e_2})$
    and  $\inter(D^2_{e_2})$  homeomorphically onto $\Sp\setminus f(e_2)$.

\begin{figure}[b]
\centering
\def\svgwidth{0.81\columnwidth}
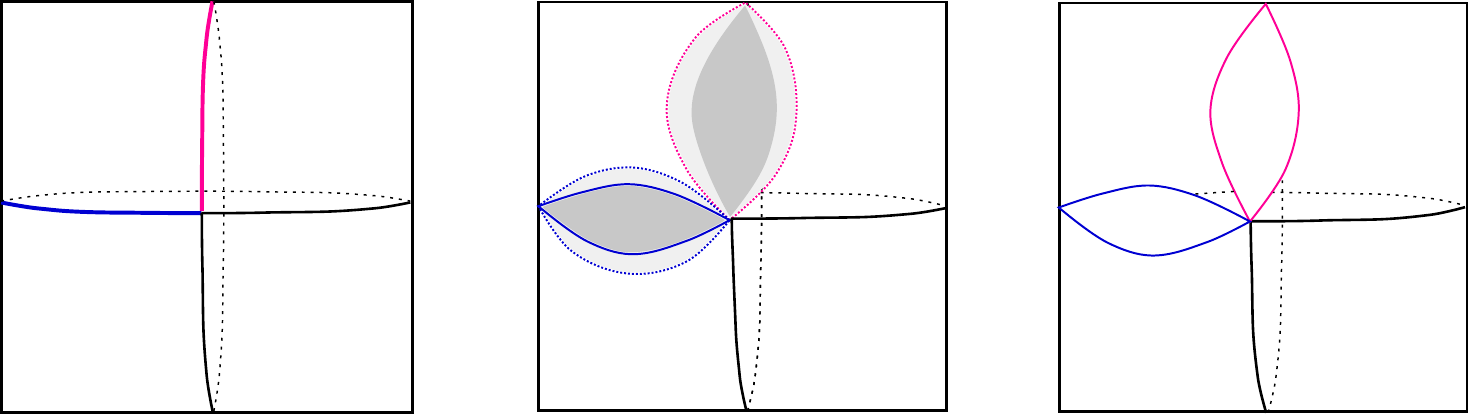
\caption{Setup for blowing up the arc set $E=\{e_1, e_2\}$ (on the left pillow) with $m_{e_1}=1$ and $m_{e_2}=2$.}\label{fig:generablowup}
\end{figure}

\begin{figure}[b]
\centering
\def\svgwidth{0.7\columnwidth}
\hspace{0.9cm}
\begingroup%
  \makeatletter%
  \providecommand\color[2][]{%
    \errmessage{(Inkscape) Color is used for the text in Inkscape, but the package 'color.sty' is not loaded}%
    \renewcommand\color[2][]{}%
  }%
  \providecommand\transparent[1]{%
    \errmessage{(Inkscape) Transparency is used (non-zero) for the text in Inkscape, but the package 'transparent.sty' is not loaded}%
    \renewcommand\transparent[1]{}%
  }%
  \providecommand\rotatebox[2]{#2}%
  \newcommand*\fsize{\dimexpr\f@size pt\relax}%
  \newcommand*\lineheight[1]{\fontsize{\fsize}{#1\fsize}\selectfont}%
  \ifx\svgwidth\undefined%
    \setlength{\unitlength}{392.41090716bp}%
    \ifx\svgscale\undefined%
      \relax%
    \else%
      \setlength{\unitlength}{\unitlength * \real{\svgscale}}%
    \fi%
  \else%
    \setlength{\unitlength}{\svgwidth}%
  \fi%
  \global\let\svgwidth\undefined%
  \global\let\svgscale\undefined%
  \makeatother%
  \begin{picture}(1,0.30927224)%
    \lineheight{1}%
    \setlength\tabcolsep{0pt}%
    \put(0,0){\includegraphics[width=\unitlength,page=1]{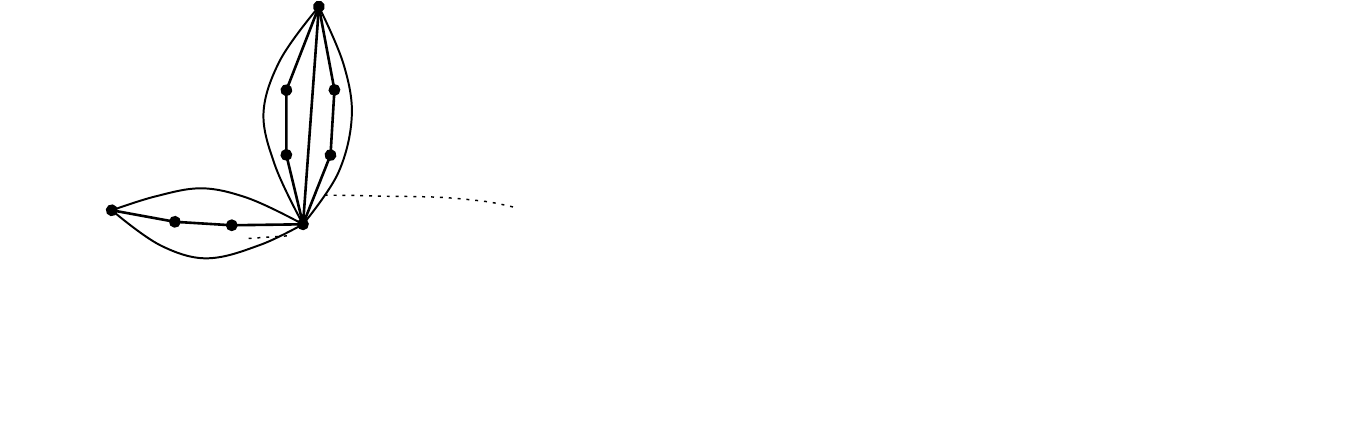}}%
    \put(0.45828998,0.1906773){\color[rgb]{0,0,0}\makebox(0,0)[lt]{\lineheight{1.25}\smash{\begin{tabular}[t]{l}$\widehat{f}$\end{tabular}}}}%
    \put(0,0){\includegraphics[width=\unitlength,page=2]{fhatgeneral.pdf}}%
    \put(-0.00241016,0.14189295){\color[rgb]{0,0,0}\makebox(0,0)[lt]{\lineheight{1.25}\smash{\begin{tabular}[t]{l}$\PP$\end{tabular}}}}%
    \put(0.90875602,0.14033517){\color[rgb]{0,0,0}\makebox(0,0)[lt]{\lineheight{1.25}\smash{\begin{tabular}[t]{l}$\PP$\end{tabular}}}}%
  \end{picture}%
\endgroup%

\caption{The map $\widehat{f}$ obtained from $f=\mathcal{L}_2$ by blowing up the arcs in the set $E=\{e_1, e_2\}$ illustrated in Figure~\ref{fig:generablowup} with $m_{e_1}=1$ and $m_{e_2}=2$.} \label{fig:fhatgeneral}
\end{figure}

In the next section, we want to relate   ``blowing up arcs" with ``gluing flaps" as discussed in the introduction. In order to set this up,  
we consider the $(2\times 2)$-Latt\`{e}s map $f=\La_2$.  We choose two edges $e_1$ and $e_2$ of a $1$-tile in $\PP$ as shown in the pillow on the left in Figure \ref{fig:generablowup}. Note that $f$ sends $e_1$ and $e_2$ homeomorphically onto the edges $c$ and $b$ of $\PP$, respectively.  Thus the set $E=\{e_1,e_2\}$ satisfies the blow-up conditions. Figure~\ref{fig:generablowup} illustrates the setup for blowing up these arcs $e_1$ and $e_2$ with the multiplicities $m_{e_1}=1$ and $m_{e_2}=2$. The resulting map $\widehat f\:\PP\to \PP$ is shown in Figure~\ref{fig:fhatgeneral}. The points marked by a dot
 on the left  pillow $\PP$ (the domain of the map) correspond to the preimage points $\widehat f^{-1}(V)$. The pillow on the left is subdivided into closed Jordan regions alternately colored black and white. The map $\widehat f$  sends each of these closed Jordan regions $U$ homeomorphically onto the back side or  front side of the pillow $\PP$  depending on whether $U$ is black or white.

 The following statement summarizes the main properties of  maps $\widehat{f}$ as in  Definition~\ref{def:blowing-up-f}.

\begin{lemma}\label{lem:blowup} Let $f\: S^2\ra S^2$ be a Thurston map and  $E$ be a set of arcs in $(S^2, f^{-1}(\postf))$ satisfying the blow-up conditions. 
Suppose  $\widehat{f}\: S^2\ra S^2$ is the map obtained by blowing up each arc $e\in E$ with multiplicity  $m_e\in \N$. 

Then $\widehat f$ is a Thurston map with $P_{\widehat f}=\postf$.  Moreover, the map $\widehat f$  is uniquely determined up to Thurston equivalence independently of the choices  in the above construction. More precisely, up to  Thurston equivalence $\widehat f$ depends only on  the original map $f$, the isotopy classes of the arcs in $E$ rel.\ $f^{-1}(\postf)$, and the multiplicities $m_e$ for $e\in E$. 
\end{lemma}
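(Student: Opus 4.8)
\textit{Proof proposal.} My plan is to establish the lemma in three steps: \textbf{(I)} $\widehat{f}$ is a branched covering map of degree $\geq 2$; \textbf{(II)} $P_{\widehat{f}}=\postf$ is finite, so that $\widehat{f}$ is a Thurston map; \textbf{(III)} the Thurston equivalence class of $\widehat{f}$ depends only on $f$, the isotopy classes of the arcs in $E$ rel.\ $f^{-1}(\postf)$, and the multiplicities $m_e$. That $\widehat{f}$ is well defined and continuous is already recorded in the construction (it follows from the matching conditions (C2)).

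For \textbf{(I)} I would carry out a local analysis. Away from the set $V=\bigcup_{e\in E}\partial e$ of endpoints, every point has a neighborhood on which $\widehat{f}$ is a homeomorphism or of the form $f\circ(\text{local homeomorphism})$: this is immediate on $\inte(D_e^k)$, where $\widehat{f}=\varphi_k$ is a homeomorphism by (C1), and on $\Sp\setminus\bigcup_e D_e$, where $\widehat{f}=f\circ h_1$ by (D1) and $h_1$ restricts to a homeomorphism of $\Sp\setminus\bigcup_e D_e$ onto $\Sp\setminus\bigcup_e e$ by (B4); on the interior of any of the arcs $e^0,e^1,\dots,e^{m_e}$, the conditions (C2) (together with (D1) for $e^0,e^{m_e}$, and orientation-preservation of the $\varphi_k$ and of $h_1$) glue two one-sided homeomorphisms onto the two sides of $f(e)$ into a local homeomorphism --- here one uses that $\inte(e)$ is disjoint from $f^{-1}(\postf)\supseteq\critf\cup\postf$ by (A4), so no critical or postcritical point of $f$ is met. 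At a vertex $v\in V$ I would fix a small closed Jordan region $N\ni q\coloneq f(v)=\widehat{f}(v)$ that is evenly covered by $f$ and so small that inside $N$ every $f(e)$ with $v\in\partial e$ is a radial slit at $q$ and the component of $f^{-1}(N)$ through $v$ is a closed Jordan region on which $f$ is topologically $z\mapsto z^{\deg(f,v)}$; the arcs $e\ni v$ then appear near $v$ as finitely many of the radial preimages of these slits. Unwinding (A1)--(C2), the map $\widehat{f}$ near $v$ is precisely the map obtained from this local $\deg(f,v)$-fold branched cover by cutting along the arcs $e\ni v$ and inserting, at the cut along $e$, $m_e$ closed disk sectors, each mapped homeomorphically onto $N$ with the slit $f(e)$ removed. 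This map is continuous, open, discrete, and orientation-preserving, hence by the Sto\"ilow normal form theorem for open discrete maps between surfaces it is topologically $z\mapsto z^{d}$ with $d=\deg(f,v)+\sum_{e:\,v\in\partial e}m_e$. Combining the cases, $\widehat{f}$ is a branched covering map; counting preimages of a generic point gives $\deg(\widehat{f})=\deg(f)+\sum_{e\in E}m_e\geq 2$, and the analysis shows $C_{\widehat{f}}\subseteq\critf\cup V$, with $\deg(\widehat{f},\cdot)=\deg(f,\cdot)$ on $\critf\setminus V$.

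For \textbf{(II)}, since each $e\in E$ is an arc in $(\Sp,f^{-1}(\postf))$ we have $V\subseteq f^{-1}(\postf)$; together with $\critf\subseteq f^{-1}(\postf)$ and Step (I) this gives $C_{\widehat{f}}\subseteq f^{-1}(\postf)$. Next, $\widehat{f}$ agrees with $f$ on $\Sp\setminus\bigcup_e W_e$ (because $\bigcup_e\inte(D_e)\subseteq\bigcup_e W_e$ and $h_1=\id$ there by (B3)), and in fact on all of $f^{-1}(\postf)$: by (A4) we have $f^{-1}(\postf)\cap\cl(W_e)=\partial e\subseteq\partial W_e$, and $h_1=\id$ on $\partial W_e$. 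Since $\postf\subseteq f^{-1}(\postf)$, it follows that $\postf$ is forward invariant under $\widehat{f}$ with $\widehat{f}|\postf=f|\postf$; hence $\widehat{f}^{\,n}(C_{\widehat{f}})\subseteq\widehat{f}^{\,n-1}\bigl(\widehat{f}(f^{-1}(\postf))\bigr)\subseteq\widehat{f}^{\,n-1}(\postf)\subseteq\postf$ for every $n\in\N$, so $P_{\widehat{f}}\subseteq\postf$. Conversely $\critf\subseteq C_{\widehat{f}}$ (for $c\in\critf$, either $c\notin\bigcup_e\cl(W_e)$ and then $\widehat{f}=f$ near $c$, or $c\in\cl(W_e)\cap f^{-1}(\postf)=\partial e\subseteq V$ and then $\deg(\widehat{f},c)\geq\deg(f,c)\geq 2$ by Step (I)), and $\widehat{f}^{\,n}|\critf=f^{\,n}|\critf$ because $\widehat{f}=f$ on $\critf\cup\postf\supseteq\critf\cup f(\critf)$; hence $\postf\subseteq P_{\widehat{f}}$. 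So $P_{\widehat{f}}=\postf$, which is finite, and $\widehat{f}$ is a Thurston map with $P_{\widehat{f}}=\postf$.

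Step \textbf{(III)} is the main obstacle. The plan is to show that any two admissible configurations are joined by a path, and that a path of configurations conjugates the corresponding blow-ups by isotopies fixing $\postf$. The key point enabling this is that all modifications in the construction take place inside $\bigcup_e W_e$, which is disjoint from $\postf$ (indeed $\postf\subseteq f^{-1}(\postf)$ and $\cl(W_e)\cap f^{-1}(\postf)=\partial e\subseteq\partial W_e$ by (A4)), so every isotopy used can be taken to be the identity near $\postf$. Given two blow-ups $\widehat{f}_0$ and $\widehat{f}_1$ with arc sets $E_0\sim E_1$ rel.\ $f^{-1}(\postf)$ and matching multiplicities, I would first choose a path $t\mapsto E_t$ from $E_0$ to $E_1$ realizing the isotopy and preserving the blow-up conditions (a general-position argument keeps $f$ injective on each arc), then extend it to a path $t\mapsto\mathcal{D}_t$ of admissible auxiliary data. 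The latter is possible because the relevant choices --- open Jordan regions $W_e$ as in (A1)--(A5) containing a prescribed arc, closed Jordan regions $D_e$ with a prescribed crosscut, pseudo-isotopies $h$ as in (B1)--(B4), subdividing crosscuts, and homeomorphisms $\varphi_k$ as in (C1)--(C2) --- fiber over connected parameter spaces as the arcs vary, so interpolation is available via the isotopy extension theorem. The resulting family $t\mapsto\widehat{f}_t$ consists of Thurston maps, all with postcritical set $\postf$; unwinding the construction, one can write $\widehat{f}_t=a_t\circ\widehat{f}_0\circ b_t$ for isotopies $t\mapsto a_t$ and $t\mapsto b_t$ of $\Sp$ with $a_0=b_0=\id$, and since all the variation is confined to $\bigcup_e W_e$ while $E_t$ stays in its isotopy class rel.\ $f^{-1}(\postf)\supseteq\postf$, the homeomorphisms $a_1$ and $b_1$ are isotopic to $\id$ rel.\ $\postf$. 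Setting $h_0\coloneq a_1$ and $h_1\coloneq b_1^{-1}$ then gives $h_0\circ\widehat{f}_0=\widehat{f}_1\circ h_1$ with $h_0\simeq h_1$ rel.\ $\postf$, i.e., $\widehat{f}_0$ and $\widehat{f}_1$ are Thurston equivalent. I expect the hardest part to be making the identity $\widehat{f}_t=a_t\circ\widehat{f}_0\circ b_t$ with $a_t,b_t\simeq\id$ rel.\ $\postf$ rigorous --- that is, organizing the isotopy-extension bookkeeping so that a continuous path of admissible data literally realizes a conjugacy of blow-ups through maps fixing $\postf$.
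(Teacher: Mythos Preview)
Your proposal is correct and follows essentially the same approach as the paper: establish that $\widehat f$ is a branched covering map with the stated local degree formula, deduce $\critf\subseteq C_{\widehat f}\subseteq f^{-1}(\postf)$ and use $\widehat f|f^{-1}(\postf)=f|f^{-1}(\postf)$ to get $P_{\widehat f}=\postf$, and then only sketch Step~(III). The paper is terser in Step~(I), invoking \cite[Corollary~A.14]{THEBook} rather than your hands-on Sto\"ilow argument, and for Step~(III) it simply refers to \cite[Proposition~2]{PT} where you outline the isotopy-extension bookkeeping; both treatments leave the uniqueness claim without full detail.
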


\begin{proof} By construction $\widehat f$ is an orientation-preserving local homeomorphism near each point $p\in S^2\setminus f^{-1}(\postf)$. By considering the  number of 
preimages of a generic point in $S^2$ we see that the topological degree of $\widehat{f}$ is equal to  $\deg(f)+ \sum_{e\in E} m_{e}>0.$ 
The fact that  $\widehat{f}\:\Sp\to\Sp$ is a branched covering map can now be deduced from  \cite[Corollary A.14]{THEBook}. 

We have $\deg(\widehat f, p)=1$ for $p\in S^2\setminus f^{-1}(\postf)$ and  
$$\deg(\widehat f, p) =\deg(f,p)+ \sum_{\{e\in E:\ p\in e\}} m_e $$
for $p\in  f^{-1}(\postf)$.  
This implies  $\critf \sub C_{\widehat f} \sub f^{-1}(\postf)$. Since 
on the set $f^{-1}(\postf)$ the maps $f$ and $\widehat f$ agree, 
it follows that $P_{\widehat f}=\postf$. So $\widehat f$ has a finite postcritical set, and we conclude that 
$\widehat f$ is indeed a Thurston map.

We  omit a detailed justification of the second claim that  $\widehat f$ is uniquely determined up to Thurston equivalence
by $f$, the isotopy classes of the arcs $E$, and their multiplicities.  
 A proof can be given along the lines of  \cite[Proposition 2]{PT}.  
 \end{proof} 
 
 \begin{rem}\label{rem:hyporb} If in the previous statement $E\ne \emptyset$ and $P_f\ge 3$, then $\widehat f$ has a hyperbolic orbifold. To  see this, pick an arc $e\in E$. Then $f(e)$ has at most two points in common with $P_f$, and so we can find a point $p\in P_f\setminus f(e)\sub P_{\widehat f}$. Then it follows from the construction of $\widehat 
 f$ that there exists a point $q$ in the interior of the region $D_e$ associated with $e$ such that $\widehat f(q)=p$ and $\deg(\widehat f, q)=1$.
 Then $q\not\in C_{\widehat f}$, but we also have 
 $$q\in \inte(D_e)\sub 
 S^2\setminus f^{-1}(P_f)\sub S^2\setminus P_f=S^2\setminus P_{\widehat f}.$$  This shows that $q\in \widehat f^{-1}(p)\sub  
  \widehat f^{-1}(P_{\widehat f})$, but $q\not\in  C_{\widehat f}\cup 
  P_{\widehat f}$, and so $\widehat f$ must have  a hyperbolic orbifold by the first part of Lemma~\ref{lem:DHLem2}.  
   \end{rem}

\subsection{Blowing up the $(n\times n)$-Latt\`{e}s map}\label{subsec:blownup-lattes} Let $\PP$ be  the  Euclidean square pillow and $ \La_n\:\PP\to \PP$ be the 
$(n\times n)$-Latt\`{e}s map for fixed  $n\geq 2$.  We denote by $\CC\sub \PP$ the common boundary of the two sides of $\PP$. The set $\CC$  may be viewed as a planar embedded graph with the vertex set $V=
\{A,B,C,D\}$ and the edge set $\{a, b,c,d\}$ in the notation from Section~\ref{subsec:pillow}. Let $\widetilde{\CC}\coloneq  \La_n^{-1}(\CC)\sub \PP$ be the preimage of $\CC$ under $\La_n$, viewed as a planar embedded graph with the  vertex set $\La_n^{-1}(V)$. In the next section, we will study the question whether a  Thurston map is realized by a rational map if it is obtained from $\La_n$ by blowing up 
 edges of $\widetilde{\CC}$. In order to facilitate this discussion,  we will provide a more concrete combinatorial model for these maps.

By the definition of the map $\La_n$, the graph $\widetilde{\CC}$ subdivides the pillow $\PP$ into $2n^2$ $1$-tiles, which are squares of sidelength $1/n$. The edges of the embedded graph 
$\widetilde{\CC}$ are precisely the sides of these squares. We call them the $1$-{\em edges} of $\PP$ (for given $n$).  The map $\La_n$ sends each $1$-edge $e$ of $\PP$ homeomorphically onto one of the edges $a$, $b$, $c$, $d$ of~$\CC$.  We call  $e$ {\em horizontal} if  $\La_n$ maps it onto  $a$ or  $c$, and {\em vertical} if $\La_n$ maps it onto $b$ or $d$.

We take two disjoint copies of the Euclidean square $[0,1/n]^2\sub \R^2$ and identify the points on three of their sides, say the sides $\{0\}\times [0, 1/n]$, $[0,1/n] \times \{1/n\}$, and $\{1/n\}\times [0,1/n]$. We call the  object obtained a \emph{flap} $F$. Note that it is homeomorphic to the  closed unit disk and has two ``free" sides corresponding to the two copies of  $[0,1/n] \times \{0\}$ in $F$.


We can cut the pillow $\PP$ open along one of the edges of $\widetilde{\CC}$ and glue in a flap $F$ to the pillow by identifying each copy of  $[0,1/n]\times \{0\}$ in the flap with one side of the slit by an isometry (see Figure~\ref{fig_flap}). In this way, we get a new polyhedral surface homeomorphic to $\Sp$. One can also glue multiple copies of the flap to the slit by an isometry and obtain  a polyhedral surface $\widehat \PP$ homeomorphic to $\Sp$. This can 
be  described  more concretely as follows. Let $e$ be an edge in $\widetilde \CC$ and $F_1, \dots, F_m$ be $m\geq 1$ copies of the flap. For each $k=1,\dots,m$, we denote the two copies of $[0,1/n]\times \{0\}$ in the flap $F_k$ by $e_k'$ and $e_k''$. We now  construct a new polyhedral surface $\widehat \PP$ in the following way: 
 \begin{enumerate}[label=\text{(\roman*)},font=\normalfont]
\item First, we cut the original pillow $\PP$ open along the edge $e$.

\smallskip
\item Then, for each $k=1,\dots, m-1$, we identify the edge $e_k''$ of $F_k$ with the edge $e_{k+1}'$ of $F_{k+1}$ by an isometry. We get a polyhedral surface $D_e$ homeomorphic to a closed disk, whose boundary consists of two edges $e_1'$ and $e_m''$.

\smallskip
\item  Finally, we glue the disk $D_e$ to the pillow $\PP$ cut open along $e$  by identifying the edges $e_1'$ and $e_m''$ in $\partial D_e$ with the two sides of the slit by an isometry so that $e_1'$ and $e_m''$ are identified with different sides of the slit. We obtain a polyhedral surface $\widehat \PP$ that  is homeomorphic to a $2$-sphere. 
\end{enumerate}
More generally, we can cut open $\PP$ simultaneously along several  edges $e$ of 
$\widetilde{\CC}$  and, by the method described, glue $m_e\in \N$   copies of the flap   to the slit obtained from each edge $e$.    If these edges $e$ of $\widetilde{\CC}$ with their multiplicities $m_e$ are given,  then there  is essentially only one way of gluing flaps  so that the resulting object is   a polyhedral surface homeomorphic to $\Sp$.

Let $\widehat{\PP}$ be the  polyhedral surface obtained from $\PP$ by gluing a total number of $n_h\geq 0$ {\em horizontal flaps} (i.e., 
flaps  glued along horizontal edges of $\widetilde{\CC}$) and a total number of $n_v\geq 0 $ {\em vertical flaps} (i.e., flaps  glued along vertical edges of $\widetilde{\CC}$). We call this surface a \emph{flapped pillow}. We denote by  $E$  the set of 
 all edges in $\widetilde\CC$ along which flaps were glued  and  by $m_e$, $e\in E$, the corresponding  multiplicities. See the left part of Figure \ref{fig:Lhatlattes} for an example of a flapped pillow $\widehat \PP$ obtained by gluing one horizontal and two vertical flaps at the edges $e_1$ and $e_2$ from Figure \ref{fig:generablowup}.

\begin{figure}[t]
\centering
\def\svgwidth{0.75\columnwidth}
\begingroup%
  \makeatletter%
  \providecommand\color[2][]{%
    \errmessage{(Inkscape) Color is used for the text in Inkscape, but the package 'color.sty' is not loaded}%
    \renewcommand\color[2][]{}%
  }%
  \providecommand\transparent[1]{%
    \errmessage{(Inkscape) Transparency is used (non-zero) for the text in Inkscape, but the package 'transparent.sty' is not loaded}%
    \renewcommand\transparent[1]{}%
  }%
  \providecommand\rotatebox[2]{#2}%
  \newcommand*\fsize{\dimexpr\f@size pt\relax}%
  \newcommand*\lineheight[1]{\fontsize{\fsize}{#1\fsize}\selectfont}%
  \ifx\svgwidth\undefined%
    \setlength{\unitlength}{399.69863751bp}%
    \ifx\svgscale\undefined%
      \relax%
    \else%
      \setlength{\unitlength}{\unitlength * \real{\svgscale}}%
    \fi%
  \else%
    \setlength{\unitlength}{\svgwidth}%
  \fi%
  \global\let\svgwidth\undefined%
  \global\let\svgscale\undefined%
  \makeatother%
  \begin{picture}(1,0.35038629)%
    \lineheight{1}%
    \setlength\tabcolsep{0pt}%
    \put(0,0){\includegraphics[width=\unitlength,page=1]{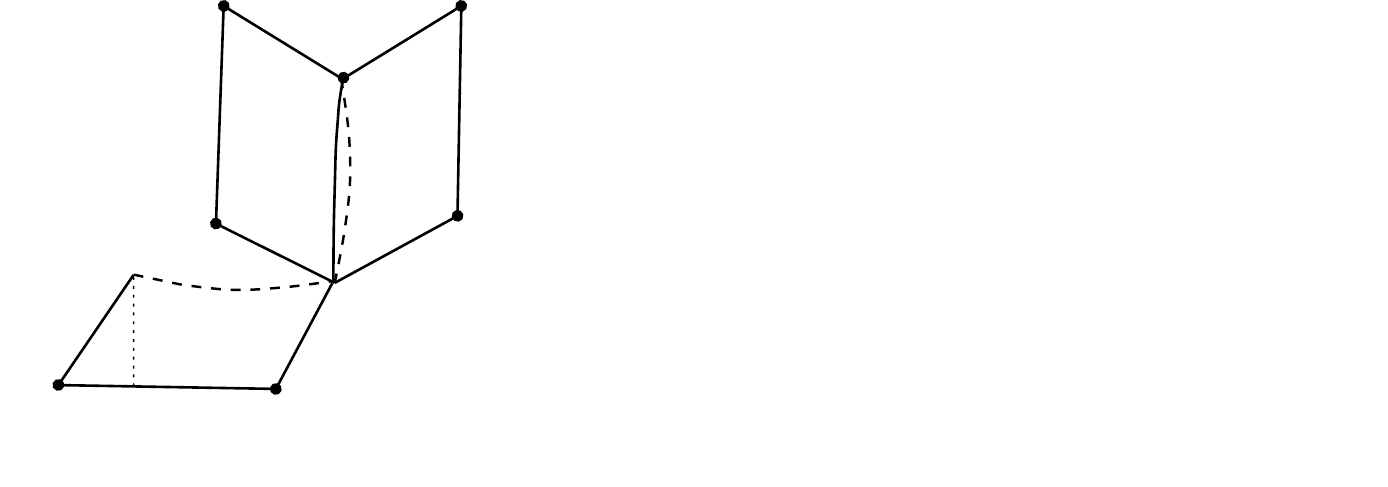}}%
    \put(0.46511054,0.18403043){\color[rgb]{0,0,0}\makebox(0,0)[lt]{\lineheight{1.25}\smash{\begin{tabular}[t]{l}$\widehat{\La}$\end{tabular}}}}%
    \put(0,0){\includegraphics[width=\unitlength,page=2]{fhatlattes.pdf}}%
    \put(-0.00236622,0.14027317){\color[rgb]{0,0,0}\makebox(0,0)[lt]{\lineheight{1.25}\smash{\begin{tabular}[t]{l}$\PPh$\end{tabular}}}}%
    \put(0.91041968,0.14031406){\color[rgb]{0,0,0}\makebox(0,0)[lt]{\lineheight{1.25}\smash{\begin{tabular}[t]{l}$\PP$\end{tabular}}}}%
  \end{picture}%
\endgroup%
 
\caption{A branched covering map $\widehat{ \La}\: \widehat{\PP}\ra \PP$ induced by the flapped pillow $\widehat \PP$ on the left.} \label{fig:Lhatlattes}
\end{figure}
 
The polyhedral surface  $\widehat \PP$ is naturally subdivided into 
   $$2(n^2+n_h+n_v)=2n^2+2\sum_{e\in E} m_e$$  squares  of  sidelength $1/n$, called the \emph{$1$-tiles} of the flapped pillow $\widehat \PP$. The vertices and the edges of these squares are called the \emph{$1$-vertices} and \emph{$1$-edges} of $\widehat \PP$.  There is a natural  path metric on $\widehat{\PP}$ that agrees with 
 the Euclidean metric on each $1$-tile. The surface $\widehat{\PP}$
 equipped with this metric  is locally Euclidean with  conic singularities at some of the $1$-vertices. Such a conic singularity arises at a $1$-vertex $v\in \widehat \PP$ if $v$ is contained in $k_v\ne 4$ distinct $1$-tiles.

 We will assume that $\widehat \PP$ has at least one flap, that is, $n_h+n_v\geq 1$. Let $F_j$, $j=1,\dots, n_h+n_v$, be the collection of flaps glued to $\PP$. Each flap $F_j$ consists of two $1$-tiles in $\widehat \PP$. We call the four $1$-vertices that belong to $F_j$ the \emph{vertices} of the flap $F_j$. The boundary $\partial F_j$ is a Jordan curve in $\widehat \PP$ composed of two $1$-edges $e'_j$ and $e''_j$, which we call the \emph{base edges} of $F_j$. The $1$-edge in $F_j$ that is opposite to the base edges is called the \emph{top edge} of the flap $F_j$. Note that 
 $\partial e'_j=\partial e''_j$ consists of two vertices of $F_j$.

 We now define the {\em base} $B(\widehat \PP)\sub \widehat \PP$ of the flapped pillow as 
\begin{equation}\label{eq:base-pillow}
B(\widehat \PP)\coloneq \widehat \PP \setminus \biggl (\bigcup_{j=1}^{n_h+n_v} (F_j\setminus \partial e'_j) \biggr). \end{equation}
In other words, $B(\widehat \PP)$ is obtained from $\widehat \PP$ 
by removing all flaps $F_j$ from $\widehat \PP$, except that we keep  the  two vertices in $\partial e'_j\sub F_j$ from each flap.   
 There is a natural identification 
 \begin{equation}\label{eq:base-ident}
B(\widehat \PP)\cong \PP\setminus \bigcup_{e\in E} \inter(e)\sub \PP.  \end{equation}
This means that we can consider the base $B(\widehat \PP)$ both as a subset of $\widehat \PP$ and  of $\PP$. Figure~\ref{fig:basepillow} illustrates these two viewpoints.  This is slightly imprecise, but this 
point of view will be   extremely convenient in the following. 

\begin{figure}[b]
\def\svgwidth{0.52\columnwidth}
\begingroup%
  \makeatletter%
  \providecommand\color[2][]{%
    \errmessage{(Inkscape) Color is used for the text in Inkscape, but the package 'color.sty' is not loaded}%
    \renewcommand\color[2][]{}%
  }%
  \providecommand\transparent[1]{%
    \errmessage{(Inkscape) Transparency is used (non-zero) for the text in Inkscape, but the package 'transparent.sty' is not loaded}%
    \renewcommand\transparent[1]{}%
  }%
  \providecommand\rotatebox[2]{#2}%
  \newcommand*\fsize{\dimexpr\f@size pt\relax}%
  \newcommand*\lineheight[1]{\fontsize{\fsize}{#1\fsize}\selectfont}%
  \ifx\svgwidth\undefined%
    \setlength{\unitlength}{273.75918466bp}%
    \ifx\svgscale\undefined%
      \relax%
    \else%
      \setlength{\unitlength}{\unitlength * \real{\svgscale}}%
    \fi%
  \else%
    \setlength{\unitlength}{\svgwidth}%
  \fi%
  \global\let\svgwidth\undefined%
  \global\let\svgscale\undefined%
  \makeatother%
  \begin{picture}(1,0.42920368)%
    \lineheight{1}%
    \setlength\tabcolsep{0pt}%
    \put(0,0){\includegraphics[width=\unitlength,page=1]{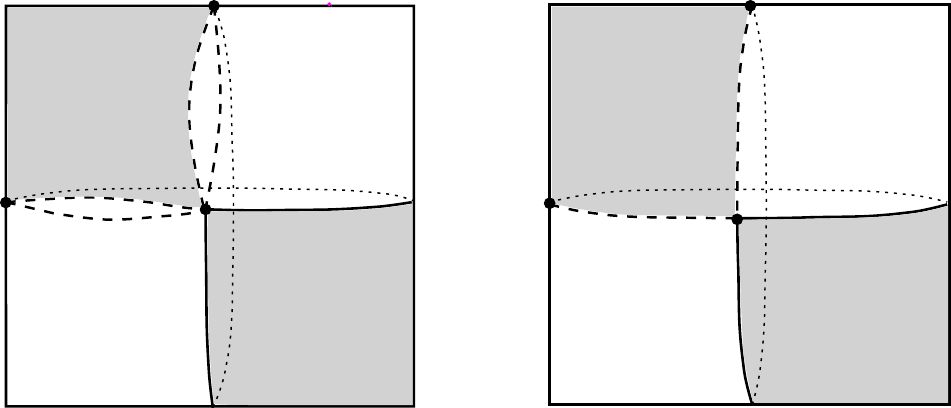}}%
    \put(0.49324384,0.20448652){\makebox(0,0)[lt]{\lineheight{1.25}\smash{\begin{tabular}[t]{l}$\cong$\end{tabular}}}}%
  \end{picture}%
\endgroup%

\caption{The base $B(\widehat \PP)$ of the flapped pillow $\widehat \PP$ from Figure \ref{fig:Lhatlattes} depicted in two different ways: as a subset of $\PPh$ and as the subset $\PP\setminus \bigcup_{e\in E} \inter(e)$ of $\PP$.} \label{fig:basepillow}
\end{figure}

We choose the  orientation on  $\widehat \PP$ so that the induced orientation on $B(\widehat \PP)$ considered as a subset of 
$\widehat \PP$ coincides with the orientation on 
$B(\widehat \PP)$ considered as a subset of the oriented sphere
$\PP$ (if we represent  orientations on surfaces by flags as described in \cite[Section~A.4]{THEBook}, then we simply pick 
a positively-oriented flag contained in $B(\widehat \PP)\sub \PP$  and declare  it to be positively-oriented in
$\widehat \PP\supset B(\widehat \PP)$).

The set $ B(\widehat \PP)\cong \PP\setminus \bigcup_{e\in E}
\inter(e)$
contains the vertex set $\La_n^{-1}(V)\sub \PP$ of  the graph $
\widetilde \CC =\La_n^{-1}(\CC)\sub \PP$. This means that we   can naturally view each vertex of  $\widetilde \CC $  also as a $1$-vertex in $\widehat \PP$. Let $\widehat{A},\,\widehat{B},\,
\widehat{C},\,\widehat{D}$ be the $1$-vertices of $\widehat{\PP}$ that correspond to the vertices $A,\,B,\,C,\,D$ of the original pillow, respectively. We set $\widehat{V}\coloneq\{\widehat{A},\widehat{B},\widehat{C},\widehat{D}\}$ and call $\widehat{A},\,\widehat{B},\,\widehat{C},\,\widehat{D}$ the \emph{vertices} of $\widehat{\PP}$.


Recall from Section \ref{subsec:lattes-maps} that the faces of the 
embedded graph $\widetilde{\CC}\sub \PP$ are colored black and white in a checkerboard manner. This coloring induces  a checkerboard coloring on the $1$-tiles of the flapped pillow $\widehat{\PP}$. The original map $ \La_n\:\PP\to \PP$ can now be naturally extended to a continuous map $\widehat{ \La}\:\widehat{\PP}\to \PP$ by reflection so that it preserves the coloring: $\widehat{ \La}$ maps each  $1$-tile of $\widehat{\PP}$ by a Euclidean similarity (scaling distances by the factor $n$) onto the $0$-tile of $\PP$ with the same color; see Figure \ref{fig:Lhatlattes} for an illustration.  On the base  $B(\widehat \PP)$  the map $\widehat \La$ agrees with  the original $(n\times n)$-Latt\`{e}s map $\La_n$
(if we consider $B(\widehat \PP)$ as a subset of $\PP$ by the identification~\eqref{eq:base-ident}).

It is clear that   $\widehat{ \La}\: \widehat{\PP}\ra \PP$ is a branched covering map.
This map is essentially the  Thurston map obtained from $\La_n$ by blowing up each arc $e\in E$ with muliplicity $m_e$. 
To make this more precise, we need a suitable  identification of the source  $\widehat{\PP}$ with the target $\PP$ of $\widehat{ \La}$ so that 
we obtain  a self-map on $\PP$.  For this we choose  a natural homeomorphism $\phi\:  \widehat{\PP}\to \PP$, which we will now define.

We view the set $\widehat \CC \coloneq  \widehat{\La}^{-1} (\CC)\subset \widehat \PP$ as a planar embedded graph, whose vertices and edges are precisely the $1$-vertices and the $1$-edges of the flapped pillow $\widehat \PP$.  Each $1$-edge of $\widehat \PP$ is homeomorphically mapped by $\widehat \La$ onto one of the edges of $\PP$. Similarly as before,  the $1$-edges of $\widehat \PP$ that are mapped by $\widehat \La$ onto $a$ or $c$ are called \emph{horizontal}, while the $1$-edges of $\widehat \PP$ that are mapped by $\widehat \La$ onto $b$ or $d$ are called \emph{vertical}.

There is a simple path of length $n$ in the graph $\widehat \CC$ that connects the vertices $\widehat A$ and $\widehat B$. Clearly, any such path consists only of horizontal $1$-edges in $\widehat \PP$. We denote by $\widehat a$ the realization of the chosen path in the sphere $\widehat{\PP}$, which is an arc in $(\widehat{\PP}, \widehat V)$. The arc $\widehat a$ may not be uniquely determined (namely, if flaps have been glued to slits obtained from edges $e\sub a$), but any two such arcs are isotopic rel.\ $\widehat V$. We define $\widehat b$, $\widehat c$,  $\widehat d$  in a similar way and  call $\widehat a,\, \widehat c$ the \emph{horizontal edges}, and $\widehat b,\,  \widehat d$ the \emph{vertical edges} of $\widehat{\PP}$.

We now choose an orientation-preserving homeomorphism 
 $\phi\:  \widehat{\PP}\to \PP$ that sends $\widehat{A},\,\widehat{B},\,\widehat{C},\, \widehat{D}$ to $A,\,B,\,C,\, D$, and $\widehat a, \,\widehat b, \,\widehat c, \,\widehat d$ to $a, \,b, \,c, \,d$, respectively. We define $f\coloneq\widehat{ \La}\circ \phi^{-1}$, which is a self-map on $\PP$. Clearly,  $f$ is a branched covering map on $\PP$. To refer to this map, 
 we  say that  $f\colon \PP \to \PP$ is  {\em obtained from the $(n\times n)$-Latt\`es map $\La_n$ by gluing $n_h$ horizontal and $n_v$ vertical flaps to $\PP$}. More informally, we call both  maps 
 $f$ and $\widehat \La$  a {\em blown-up Latt\`es map}. 
 
A point in $\widehat{\PP}$ is a critical point for $\widehat{ \La}\:\widehat{\PP}\to \PP$ if and only if it is on the boundary of at least four $1$-tiles subdividing $\widehat{\PP}$. This implies  that the set $C_{\widehat \La}$ of critical points of $\widehat \La$ consists of $1$-vertices of $\widehat \PP$ and that  each critical point of $\La_n$ is also a critical point for $\widehat{ \La}$ (recall that we can view each point in $\La_n^{-1}(V)\supset C_{\La_n}$ as a $1$-vertex in $\widehat \PP$). Moreover, if a $1$-vertex of $\widehat \PP$ is a critical point of 
$\widehat \La$, but not of $\La_n$, then it must be one of the points in $\widehat V$. For example, $\widehat A\in\widehat V$ is a critical point of $\widehat{ \La}$  if and only if a flap was  glued to an edge of $\widetilde{\CC}$ incident to  $A\cong \widehat A$. In any case, since $\widehat \La$ sends the $1$-vertices of $\widehat \PP$ to the vertices of $\PP$, the postcritical set of $f=\widehat{ \La}\circ \phi^{-1}$ coincides with the vertex set $V$. Thus, $f$ is a Thurston map.  

Since we assumed that  $\widehat \PP$ contains at least one flap
(that is, $n_h+n_v>0$), the orbifold of the Thurston map $f$ is hyperbolic. Indeed, each  $1$-vertex of $\widehat \PP$ that is a critical point of $\La_n$ is also  a critical point of $\widehat \La$ with the same or larger local degree.
Since we glued at least one flap, there is at least one $1$-vertex $v$ contained in six or more $1$-tiles of $\widehat \PP$. 
Then $\deg(\widehat \La,v)=\deg(f,v') \ge 3$, where $v'=\phi(v)$. 
Now $$X\coloneqq f(v')= \widehat \La(v)\in V=\{A,B,C,D\}=\postf, $$ 
 and so for the ramification function $\alpha_f$ of $f$ we have 
 $\alpha_f(X)\ge 3$. On the other hand, for all other points $Y\in  
 V=\postf$ we have $\alpha_{f}(Y)\ge \alpha_{\La_n}(Y)\ge 2$. 
 It then follows from  \eqref{eq:orb-char} that 
 $\chi(\mathcal{O}_f)<0$, and so $f$ has indeed a hyperbolic orbifold.

The homeomorphism $\phi$  chosen in the definition of $f$ 
 is not unique, but any two such 
 homeomorphisms are isotopic rel.\ $\widehat V$ (this easily  follows from \cite[Theorem A.5]{BuserGeometry}). This implies that  $f$ is uniquely determined up to Thurston equivalence. This  map may be viewed (up to Thurston equivalence) as the result of the blowing up operation introduced in Section~\ref{subsec:blow-up-general} applied to the edges $e \in E$ with the multiplicities $m_e$.  In particular, if we run the procedure for the map $\widehat{ \La}$ indicated in 
Figure~\ref{fig:Lhatlattes}, then we obtain the map $\widehat f$ illustrated in Figure~\ref{fig:fhatgeneral} (up to Thurston equivalence).


\section{Realizing  blown-up Latt\`{e}s maps}
\label{sec:realizing-blownup-lattes}
The goal of this section is to determine when a blown-up Latt\`{e}s map is realized by a rational map.  In particular, we will apply Thurston's criterion to prove Theorem~\ref{thm:flapped_intro}. The strategies and techniques used in the  proof will  highlight the main ideas needed for establishing the more  general Theorem~\ref{thm:blow-up-obstr}.

We fix $n\ge 2$, $n_h, n_v\ge 0$,  and follow the notation introduced in Section~\ref{subsec:blownup-lattes}.
In particular, we denote by 
$\widehat{\PP}$ a flapped pillow with $n_h$ horizontal and $n_v$ vertical flaps, by $\widehat{ \La}\:\widehat{\PP}\to \PP$ the respective blown-up $(n\times n)$-Latt\`es map, and by 
$\phi\:\widehat{\PP}\to \PP$ the identifying homeomorphism.  Then 
 $f\:\PP\to \PP$  given as   $f=\widehat{ \La}\circ \phi^{-1}$ is the Thurston map under consideration. We will assume that  $n_h+n_v>0$, and so  $\widehat \PP$ has at least one flap. In this case, $f$ has a hyperbolic orbifold as we have seen,  and so we can apply Thurston's criterion.  For this we consider   essential Jordan curves $\gamma$ in $(\PP,\postf)=(\PP,V)$ and study their  (essential) pullbacks under $f$.

If $\ga$ is such  a curve, then the homeomorphism $\phi$ sends the pullbacks of $\gamma$ under $\widehat \La$ to the pullbacks of $\gamma$ under $f$. So in order to understand the isotopy types and mapping properties of the pullbacks under $f$, we will instead look at  the pullbacks of $\ga$ under $\widehat \La$. In particular, if $\widehat \gamma$ is a pullback of $\gamma$ under $\widehat \La$, then $\deg(\widehat \La\: \widehat \gamma \to \gamma) = \deg(f\: \phi(\widehat \gamma) \to \gamma)$ and $\phi(\widehat \gamma)$ is essential in $(\PP,\postf)=(\PP,V)$ if and only if $\widehat \gamma$ is essential in $(\widehat \PP, \widehat V)$, where $\widehat V$ denotes the vertex set of the flapped pillow $\widehat \PP$.

 Since  the mapping $\widehat \La\:\widehat \PP \to \PP$ is a similarity map on each $1$-tile of  $\widehat \PP$, the preimage $\widehat \La^{-1} (\ga)$ of a Jordan curve $\ga$ in $(\PP,\postf)$ (or of any subset $\ga$ of $\PP$), can  be obtained in the following intuitive way: we rescale and copy the part of $\ga$ that belongs to the white $0$-tile of  $\PP$ into each white $1$-tile of  $\widehat \PP$ and
the part of $\ga$ that belongs to the black  $0$-tile  of $\PP$ into each black  $1$-tile.

\subsection{The horizontal and  vertical curves}\label{subsec:obstructed pillow} 

Recall that $\alpha^h$ and $\alpha^v$ (see \eqref{eq:hori+vert}) denote the Jordan curves in $(\PP,V)=(\PP,\postf)$ that separate the two horizontal and the two vertical edges of $\PP$, respectively. These two curves  are invariant under $f$ and will play a crucial role in the  considerations of this section. 


\begin{figure}[t]
\def\svgwidth{0.72\columnwidth}
\begingroup%
  \makeatletter%
  \providecommand\color[2][]{%
    \errmessage{(Inkscape) Color is used for the text in Inkscape, but the package 'color.sty' is not loaded}%
    \renewcommand\color[2][]{}%
  }%
  \providecommand\transparent[1]{%
    \errmessage{(Inkscape) Transparency is used (non-zero) for the text in Inkscape, but the package 'transparent.sty' is not loaded}%
    \renewcommand\transparent[1]{}%
  }%
  \providecommand\rotatebox[2]{#2}%
  \newcommand*\fsize{\dimexpr\f@size pt\relax}%
  \newcommand*\lineheight[1]{\fontsize{\fsize}{#1\fsize}\selectfont}%
  \ifx\svgwidth\undefined%
    \setlength{\unitlength}{502.25221772bp}%
    \ifx\svgscale\undefined%
      \relax%
    \else%
      \setlength{\unitlength}{\unitlength * \real{\svgscale}}%
    \fi%
  \else%
    \setlength{\unitlength}{\svgwidth}%
  \fi%
  \global\let\svgwidth\undefined%
  \global\let\svgscale\undefined%
  \makeatother%
  \begin{picture}(1,0.32399527)%
    \lineheight{1}%
    \setlength\tabcolsep{0pt}%
    \put(0,0){\includegraphics[width=\unitlength,page=1]{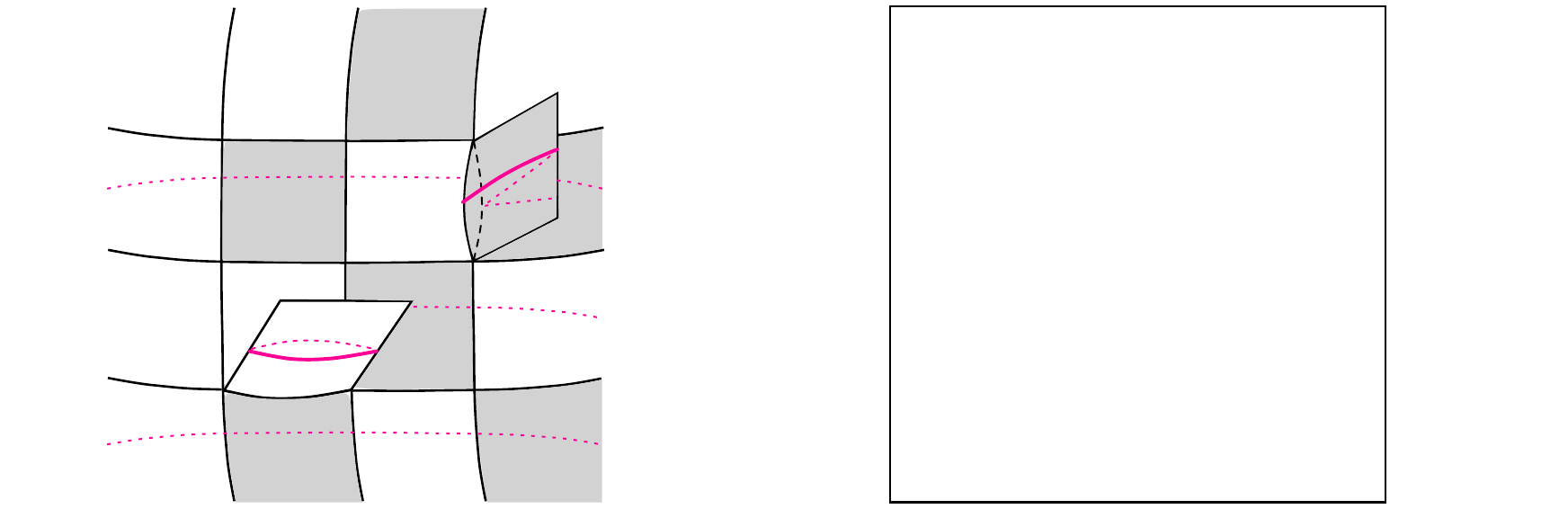}}%
    \put(0.46637644,0.19211183){\color[rgb]{0,0,0}\makebox(0,0)[lt]{\lineheight{1.25}\smash{\begin{tabular}[t]{l}$\widehat{\LL}$\end{tabular}}}}%
    \put(0,0){\includegraphics[width=\unitlength,page=2]{blowupwice.pdf}}%
    \put(0.76183716,0.11432192){\color[rgb]{0,0,0}\makebox(0,0)[lt]{\lineheight{1.25}\smash{\begin{tabular}[t]{l}\textcolor{anisred}{$\alpha^h$}\end{tabular}}}}%
    \put(0,0){\includegraphics[width=\unitlength,page=3]{blowupwice.pdf}}%
    \put(-0.00188306,0.15346579){\color[rgb]{0,0,0}\makebox(0,0)[lt]{\lineheight{1.25}\smash{\begin{tabular}[t]{l}$\PPh$\end{tabular}}}}%
    \put(0.92871085,0.15539289){\color[rgb]{0,0,0}\makebox(0,0)[lt]{\lineheight{1.25}\smash{\begin{tabular}[t]{l}$\PP$\end{tabular}}}}%
  \end{picture}%
\endgroup%

\caption{Pullbacks of $\alpha^h$ for a blown-up 
$(4\times 4)$-Latt\`es map with $n_h=n_v=1$.}
\label{fig:horivertiflap} 
\end{figure}

\begin{lemma}\label{lem:blown-lattes-horizontal}
Let $f=\widehat \La \circ \phi^{-1} \colon \PP \to \PP$ be a Thurston map obtained from the $(n\times n)$-Latt\`es map, $n\ge 2$,  by gluing $n_h\ge 0$ horizontal and $n_v\ge0$ vertical flaps to $\PP$. Then the following statements are true:
\begin{enumerate}[label=\text{(\roman*)},font=\normalfont]
\item The Jordan curve $\alpha^h$ has $n+n_h$ pullbacks under $f$.  Exactly $n$ of these pullbacks are essential. Each of these essential pullbacks  is isotopic to~$\alpha^h$ rel.\ $\postf$. 

\smallskip
\item If $\widetilde{\alpha}$ is one of the $n$ essential pullbacks of $\alpha^h$, then $\deg(f\:\widetilde{\alpha}\to\alpha^h)=n+n_{\widetilde{\alpha}}$, where  $n_{\widetilde{\alpha}}\ge 0$ is the number 
of distinct vertical flaps in $\widehat \PP$ that $
\phi^{-1} (\widetilde{\alpha})$ meets.  

\smallskip
\item We have 
$$\displaystyle \lambda_f(\alpha^h)=\sum_{\widetilde{\alpha}} \frac{1}{n+n_{\widetilde{\alpha}}},$$ where the sum is taken over all essential pullbacks $\widetilde \alpha$ of $\alpha^h$ under~$f$. 
\end{enumerate}
Analogous statements are true for the curve $\alpha^v$. 
\end{lemma}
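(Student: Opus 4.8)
The plan is to carry out the whole analysis on the flapped pillow. Since $\phi\colon\widehat\PP\to\PP$ is a homeomorphism with $\phi(\widehat V)=V$, it induces a bijection between the pullbacks of $\alpha^h$ under $\widehat\La\colon\widehat\PP\to\PP$ and those under $f=\widehat\La\circ\phi^{-1}$ that preserves mapping degrees, essential/peripheral type, and isotopy classes; so it suffices to describe $\widehat\La^{-1}(\alpha^h)$. The key observation is that $\widehat\La$ restricts to a Euclidean similarity on each $1$-tile $S$ of $\widehat\PP$ which sends the two horizontal $1$-edges of $S$ to horizontal edges of $\PP$ and the two vertical $1$-edges of $S$ to vertical edges of $\PP$; this is immediate from the reflection construction of $\widehat\La$. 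Since $\alpha^h$ meets each $0$-tile of $\PP$ in a single arc joining the midpoints of its two vertical edges, it follows that $\widehat\La^{-1}(\alpha^h)$ meets each $1$-tile $S$ in a single arc joining the midpoints of the two vertical $1$-edges of $S$, and these arcs fit together at shared midpoints into a disjoint union of Jordan curves. Each such curve follows a cyclic sequence of $1$-tiles in which consecutive tiles share a vertical $1$-edge; I will call it a \emph{horizontal circuit}.

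The second step is to enumerate the horizontal circuits via the ``horizontal adjacency graph'' on the $1$-tiles of $\widehat\PP$, which has one edge for each vertical $1$-edge shared by two tiles. On the base $B(\widehat\PP)\cong\PP\setminus\bigcup_{e\in E}\inter(e)$ this graph is a disjoint union of $n$ cycles, one per horizontal row, each running once around the pillow through $2n$ tiles. A horizontal flap $F$ (glued along a horizontal $1$-edge) is built from two $1$-tiles whose glued side edges are \emph{vertical} $1$-edges, whereas $F$ meets the base only along its horizontal base edges; so these two tiles form a separate $2$-cycle, producing a small horizontal circuit contained in $F$. A vertical flap (glued along a vertical $1$-edge $e$ shared by base tiles $S_1,S_2$) re-routes the corresponding row cycle through $e$, replacing the adjacency $S_1$--$S_2$ by the chain $S_1$--$Q_1$--$Q_2$--$S_2$, where $Q_1,Q_2$ are the flap tiles and their glued top edges are vertical $1$-edges; this lengthens the cycle by two tiles but creates no new component. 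Hence $\widehat\La^{-1}(\alpha^h)$ has exactly $n+n_h$ components: $n$ ``large'' circuits, one per row, and $n_h$ ``small'' circuits, one per horizontal flap.

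It then remains to determine isotopy classes and mapping degrees. Each small circuit bounds a disk inside its flap containing no point of $\widehat V$ (the four vertices lie on the base), so it is peripheral, and exactly $n$ pullbacks are essential. Pushing a large circuit out of the interiors of the vertical flaps it visits — an isotopy supported there and hence rel.\ $\widehat V$ — moves it into the base, where it is a plain horizontal curve; applying $\phi$ shows $\phi(\widetilde\alpha)$ is isotopic to $\alpha^h$ in $(\PP,V)$, which is (i) (one may also reduce this to the Latt\`es computation \eqref{eq:lattes-preimage} with slope $0$). For the degree, $\widehat\La$ carries a $1$-tile of a circuit onto the front arc or the back arc of $\alpha^h$ according to its checkerboard colour, so $\deg(\widehat\La\colon\widetilde\alpha\to\alpha^h)$ equals the number of white $1$-tiles of $\widetilde\alpha$, and also the number of black ones. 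A large circuit of the $(n\times n)$-Latt\`es map has $2n$ tiles, $n$ of each colour, giving degree $n$; each vertical flap it passes through contributes one white and one black tile, so $\deg(\widehat\La\colon\widetilde\alpha\to\alpha^h)=n+n_{\widetilde\alpha}$, which is (ii). Statement (iii) is then the definition \eqref{eq:Thurst_coeff} of $\lambda_f$ applied to the list of essential pullbacks, and the statements for $\alpha^v$ follow by interchanging ``horizontal'' and ``vertical'' throughout (a $90^\circ$ rotation of the pillow).

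I expect the main obstacle to be the careful topological bookkeeping underlying the second and third steps: determining which $1$-edges of a glued flap become horizontal and which vertical once the flap lies inside $\widehat\PP$, checking that the reflection construction of $\widehat\La$ indeed preserves the horizontal/vertical splitting on every $1$-tile (including those inside flaps), and pinning down the colours of the two $1$-tiles of a vertical flap so that the ``$+1$ to the degree'' count is rigorous. The isotopy assertions are routine once Lemma~\ref{lem:isocrit} is available, provided one is careful that the relevant disks and annuli genuinely avoid all four points of $\widehat V$.
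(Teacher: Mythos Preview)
Your proof is correct and follows essentially the same approach as the paper's: analyze $\widehat\La^{-1}(\alpha^h)$ on the flapped pillow tile by tile, identify the $n$ row-circuits as the essential pullbacks and the $n_h$ small loops inside horizontal flaps as the peripheral ones, and read off the degrees from the number of $1$-tiles traversed. The paper's own proof is a brief sketch that declares these facts ``obvious'' and refers to Figure~\ref{fig:horivertiflap}; you have simply supplied the explicit bookkeeping (the horizontal adjacency graph, the case analysis for horizontal versus vertical flaps, and the colouring argument for the degree) that the paper leaves to the reader's geometric intuition.
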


\begin{proof} 
Figure \ref{fig:horivertiflap} illustrates the proof. It is obvious that the Jordan curve   $\alpha^h$ has exactly $n+n_h$ distinct pullbacks under $\widehat{\La}$. Among them, there are $n$ essential pullbacks $\widehat \alpha_1,\dots, \widehat \alpha_n$ that separate the two horizontal edges of $\widehat \PP$ and thus are isotopic to each other relative to the vertex set $\widehat V$ of $\widehat \PP$.  For each $j=1,\dots, n$, the image $\alpha_j\coloneq \phi(\widehat \alpha_j)$ is isotopic to $\alpha^h$. Moreover, we have $\deg(\widehat \La\: \widehat \alpha_j\to\alpha^h)=n+n_{{\alpha}_j}$. The other $n_h$ pullbacks of $\alpha^h$ under $\widehat \La$ are each contained in one of the horizontal flaps and thus are peripheral in $(\widehat \PP, \widehat V)$.  Consequently, $$\lambda_f(\alpha^h)=\sum_{j=1}^n \frac{1}{\deg(f\: {\alpha}_j\to\alpha^h)}=\sum_{j=1}^n \frac{1}{\deg(\widehat \La\: \widehat \alpha_j\to\alpha^h)}=\sum_{j=1}^n \frac{1}{n+n_{{\alpha}_j}}.$$
This completes the proof of the lemma for the curve $\alpha^h$. The proof for the curve $\alpha^v$ follows from similar considerations. 
\end{proof}

The following corollary is an immediate consequence of the previous lemma.

\begin{cor}\label{cor:the horizontal curve}
Let $f=\widehat \La \circ \phi^{-1} \colon \PP \to \PP$ be a Thurston map obtained from the $(n\times n)$-Latt\`es map, $n\ge 2$,  by gluing $n_h\ge 0$ horizontal and $n_v\ge0$ vertical flaps to $\PP$. Then  $\alpha^h$ is an obstruction (for $f$) if and only if $n_v=0$,  and 
$\alpha^v$ is an obstruction  if and only if $n_h=0$.
\end{cor}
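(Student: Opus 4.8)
The plan is to read the corollary off Lemma~\ref{lem:blown-lattes-horizontal} with only one small geometric addition. First I would record that $\alpha^h$ is essential and $f$-invariant: by part~(i) of the lemma every essential pullback of $\alpha^h$ under $f$ is isotopic to $\alpha^h$ rel.\ $\postf$, so by Definition~\ref{def:obstruction} the curve $\alpha^h$ is an obstruction for $f$ if and only if $\lambda_f(\alpha^h)\ge 1$. By part~(iii), $\lambda_f(\alpha^h)=\sum_{j=1}^{n}\frac{1}{n+n_{\alpha_j}}$, where $\alpha_1,\dots,\alpha_n$ are the essential pullbacks of $\alpha^h$ under $f$ and each $n_{\alpha_j}\ge 0$ is the number of distinct vertical flaps of $\widehat\PP$ that $\phi^{-1}(\alpha_j)$ meets. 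Since every summand is at most $1/n$, this already gives $\lambda_f(\alpha^h)\le 1$, with equality if and only if $n_{\alpha_j}=0$ for all $j$.

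From here the two implications are almost immediate. If $n_v=0$ there are no vertical flaps, so all $n_{\alpha_j}=0$ and $\lambda_f(\alpha^h)=1$, whence $\alpha^h$ is an obstruction. If $n_v\ge 1$, I would fix one vertical flap $F$ of $\widehat\PP$ and show that some essential pullback of $\alpha^h$ meets $F$; this forces $n_{\alpha_j}\ge 1$ for that $j$, and hence $\lambda_f(\alpha^h)=\sum_k\frac{1}{n+n_{\alpha_k}}<\sum_k\frac1n=1$, so $\alpha^h$ is not an obstruction. To produce such a pullback, recall from Section~\ref{subsec:blownup-lattes} that $\widehat\La$ maps each of the two $1$-tiles constituting $F$ by a Euclidean similarity onto a whole $0$-tile of $\PP$, and that $\alpha^h=\wp(\R\times\{1/2\})$ meets the interior of each $0$-tile; hence $\widehat\La^{-1}(\alpha^h)$ contains an interior point of a $1$-tile of $F$, so it meets $\inte(F)$. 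The pullback (connected component of $\widehat\La^{-1}(\alpha^h)$) through such a point cannot be one of the $n_h$ peripheral pullbacks, since those are contained in horizontal flaps and distinct flaps of $\widehat\PP$ meet only along edges and vertices of their boundaries, so no horizontal flap meets $\inte(F)$. Therefore it is an essential pullback, equal to $\phi^{-1}(\alpha_j)$ for some $j$, and it meets the vertical flap $F$, giving $n_{\alpha_j}\ge 1$ as desired. Finally, the statement for $\alpha^v$ follows in exactly the same way, invoking the analogous assertions for $\alpha^v$ in Lemma~\ref{lem:blown-lattes-horizontal} and interchanging the roles of horizontal and vertical flaps.

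The only point that is not purely formal — and the step I would present with a bit of care — is the claim that when $n_v\ge 1$ some essential pullback of $\alpha^h$ genuinely meets a vertical flap, equivalently that the pullback meeting $\inte(F)$ is not one of the peripheral ones. Its justification rests solely on three facts from the construction in Section~\ref{subsec:blownup-lattes}: every $1$-tile of $\widehat\PP$ is mapped by $\widehat\La$ onto a full $0$-tile of $\PP$, the curve $\alpha^h$ passes through the interior of every $0$-tile, and distinct flaps of $\widehat\PP$ have disjoint interiors (indeed meet only along their boundaries). Since all three are evident from the construction, I do not expect any real obstacle, and the whole argument remains short.
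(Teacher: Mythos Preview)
Your proof is correct and follows essentially the same route as the paper: both read $\lambda_f(\alpha^h)=\sum_{j=1}^n \frac{1}{n+n_{\alpha_j}}$ off Lemma~\ref{lem:blown-lattes-horizontal}, note that this equals $1$ when $n_v=0$, and observe that when $n_v\ge 1$ at least one $n_{\alpha_j}$ is positive so the sum drops below $1$. The paper simply asserts the last point without argument, whereas you supply a short geometric justification; your reasoning there is sound (horizontal and vertical flaps are attached along edges of different types, hence have disjoint interiors, so a pullback meeting $\inte(F)$ for a vertical flap $F$ cannot be one of the peripheral pullbacks sitting inside horizontal flaps).
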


\begin{proof} Let us first suppose that $n_v=0$, that is, $\widehat{\PP}$ does not have any vertical flaps. Then by Lemma \ref{lem:blown-lattes-horizontal}, $\alpha^h$ has $n$ essential pullbacks under $f$,  each of which is mapped onto $\alpha^h$ with degree $n$ (this is illustrated in Figure~\ref{fig:horiflap} in a special case). Consequently, $\lambda_f(\alpha^h)= n\cdot (1/n)=1$, which means $\alpha^h$ is an  obstruction for $f$.  

If  $n_v>0$, the flapped pillow $\widehat \PP$ has at least one vertical flap. Then $n_{\widetilde{\alpha}}>0$ for at least one essential pullback $\widetilde{\alpha}$ of $\alpha^h$. Lemma \ref{lem:blown-lattes-horizontal} implies that
$ \lambda_f(\alpha^h)\leq (n-1)\frac{1}{n}+\frac{1}{n+1}<1, $
and so  $\alpha^h$ is not an obstruction for $f$.

The proof for the vertical curve $\alpha^v$ is completely analogous. 
\end{proof}

The above corollary can be read as follows: the obstruction $\alpha^h$ for the $(n\times n)$-Latt\`es map can be eliminated by gluing a vertical flap to $\PP$. Similarly, the obstruction  $\alpha^v$ can be eliminated by gluing a horizontal flap. We will show momentarily that if both of these obstructions are eliminated (that is, if there are both horizontal and vertical flaps) then no other obstructions are present and so the map $f$ is realized. 

\begin{figure}[t] 


\def\svgwidth{0.71\columnwidth}
\begingroup%
  \makeatletter%
  \providecommand\color[2][]{%
    \errmessage{(Inkscape) Color is used for the text in Inkscape, but the package 'color.sty' is not loaded}%
    \renewcommand\color[2][]{}%
  }%
  \providecommand\transparent[1]{%
    \errmessage{(Inkscape) Transparency is used (non-zero) for the text in Inkscape, but the package 'transparent.sty' is not loaded}%
    \renewcommand\transparent[1]{}%
  }%
  \providecommand\rotatebox[2]{#2}%
  \newcommand*\fsize{\dimexpr\f@size pt\relax}%
  \newcommand*\lineheight[1]{\fontsize{\fsize}{#1\fsize}\selectfont}%
  \ifx\svgwidth\undefined%
    \setlength{\unitlength}{500.62163773bp}%
    \ifx\svgscale\undefined%
      \relax%
    \else%
      \setlength{\unitlength}{\unitlength * \real{\svgscale}}%
    \fi%
  \else%
    \setlength{\unitlength}{\svgwidth}%
  \fi%
  \global\let\svgwidth\undefined%
  \global\let\svgscale\undefined%
  \makeatother%
  \begin{picture}(1,0.32468064)%
    \lineheight{1}%
    \setlength\tabcolsep{0pt}%
    \put(0,0){\includegraphics[width=\unitlength,page=1]{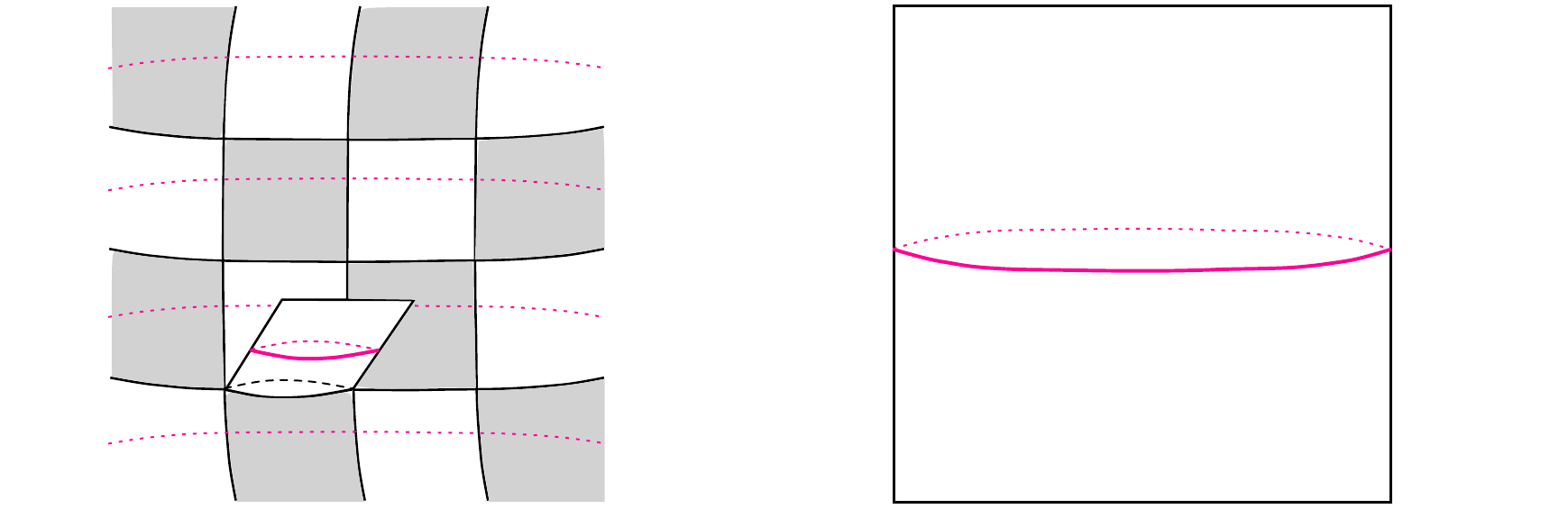}}%
    \put(0.76512657,0.11469411){\color[rgb]{0,0,0}\makebox(0,0)[lt]{\lineheight{1.25}\smash{\begin{tabular}[t]{l}\textcolor{anisred}{$\alpha^h$}\end{tabular}}}}%
    \put(0,0){\includegraphics[width=\unitlength,page=2]{blowuponce.pdf}}%
    \put(0.46915009,0.19390429){\color[rgb]{0,0,0}\makebox(0,0)[lt]{\lineheight{1.25}\smash{\begin{tabular}[t]{l}$\widehat{\LL}$\end{tabular}}}}%
    \put(0,0){\includegraphics[width=\unitlength,page=3]{blowuponce.pdf}}%
    \put(-0.0018892,0.1555235){\color[rgb]{0,0,0}\makebox(0,0)[lt]{\lineheight{1.25}\smash{\begin{tabular}[t]{l}$\PPh$\end{tabular}}}}%
    \put(0.92847865,0.15624925){\color[rgb]{0,0,0}\makebox(0,0)[lt]{\lineheight{1.25}\smash{\begin{tabular}[t]{l}$\PP$\end{tabular}}}}%
  \end{picture}%
\endgroup%
 
\caption{Pullbacks of $\alpha^h$ for a blown up 
$(4\times 4)$-Latt\`es map with $n_h=1$ and $n_v=0$.}
\label{fig:horiflap} 
\end{figure}

\subsection{Ruling out other obstructions} \label{subsec:non-obstructed pillow}

Now we discuss what happens with the essential curves in $(\PP,\postf)$ that are not isotopic to the horizontal curve $\alpha^h$ or the vertical curve $\alpha^v$.

\begin{theorem}\label{thm:flapped_proven}
Let $f=\widehat \La \circ \phi^{-1} \colon \PP \to \PP$ be a Thurston map obtained from the $(n\times n)$-Latt\`es map, $n\ge 2$,  by gluing $n_h\ge 0$ horizontal and $n_v\ge0$ vertical flaps to $\PP$ and assume 
that $n_h+n_v>0$.  If  $\gamma\sub \PP\setminus  \postf $ is an essential Jordan curve that is not isotopic to either $\alpha^h$ or $\alpha^v$, then $\gamma$ is not an obstruction for $f$.
\end{theorem}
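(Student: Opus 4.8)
The plan is to show that if $\gamma$ is an essential Jordan curve in $(\PP,V)$ not isotopic to $\alpha^h$ or $\alpha^v$, then $\lambda_f(\gamma)<1$. By Lemma~\ref{lem:isoclassesP} we may assume $\gamma = \tau_{r/s}$ for a slope $r/s\in\widehat\Q$ with $r\neq 0$ and $s\neq 0$; set $q\coloneq |r|\geq 1$ and $p\coloneq s\geq 1$. By Lemma~\ref{lem:i-properties-curve}\ref{item:i3}--\ref{item:i4}, $\gamma$ meets each of the horizontal edges $a,c$ in exactly $q$ points and each of the vertical edges $b,d$ in exactly $p$ points, and we may take $\gamma$ in minimal position with the $1$-skeleton $\widetilde\CC$ of $\La_n$ (after isotoping by the straight geodesic representative). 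First I would pass to $\widehat\La$ via $\phi$ as in the discussion opening Section~\ref{sec:realizing-blownup-lattes}: it suffices to estimate $\lambda_{\widehat\La}(\gamma)$, i.e. to count the essential pullbacks $\widehat\gamma_j$ of $\gamma$ under $\widehat\La$ and their degrees $\deg(\widehat\La\colon\widehat\gamma_j\to\gamma)$.

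The key computation is as follows. Each essential pullback $\widehat\gamma_j$ lives in the flapped pillow $\widehat\PP$, and its ``base part'' $\widehat\gamma_j\cap B(\widehat\PP)$ is, under the identification $B(\widehat\PP)\cong\PP\setminus\bigcup_{e\in E}\inter(e)$, a pullback of $\gamma$ under $\La_n$ — which, by the Latt\`es pullback formula \eqref{eq:lattes-preimage}, consists of $n$ parallel copies of $\gamma$, each wrapping around the base. For a given essential pullback $\widehat\gamma_j$, its mapping degree is the number of times it traverses $\gamma$; I would show $\deg(\widehat\La\colon\widehat\gamma_j\to\gamma)= n + (\text{number of flap-excursions of }\widehat\gamma_j)$, generalizing Lemma~\ref{lem:blown-lattes-horizontal}(ii): every time $\widehat\gamma_j$ enters and exits a flap $F_\ell$ glued at an edge $e$, it picks up one extra copy of $\gamma$ in its $\widehat\La$-image. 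Since $\gamma$ crosses $e$ whenever $e$ is horizontal and $\gamma$ separates $a$ from $c$-type crossings, the total number of flap-excursions over all essential pullbacks is controlled by the intersection numbers: $\ins(\alpha^h,\gamma)=2q$ counts (a multiple of) the horizontal flap crossings and $\ins(\alpha^v,\gamma)=2p$ the vertical ones. Concretely, I expect a bound of the shape
\[
\lambda_{\widehat\La}(\gamma)\ \le\ \sum_{j} \frac{1}{\,n + n_{\widehat\gamma_j}^h + n_{\widehat\gamma_j}^v\,},
\]
where $n_{\widehat\gamma_j}^h, n_{\widehat\gamma_j}^v\geq 0$ count the horizontal/vertical flap-excursions of $\widehat\gamma_j$, with $\sum_j n_{\widehat\gamma_j}^h \geq n_h\cdot(\text{something}\ge 1\text{ when }q\ge1)$ and likewise for the vertical count, because $q\ge 1$ forces $\gamma$ to cross every horizontal $1$-edge-slit carrying a flap, and $p\ge 1$ forces it to cross every vertical one. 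Crucially, with $n_h>0$ \emph{and} $n_v>0$ both sums are strictly positive, so at least one term is $\le \frac{1}{n+1}$ while the rest are $\le\frac1n$; since there are at most $n$ essential pullbacks (the number of essential pullbacks of any curve under a degree-$n^2$ map built from $\La_n$ is bounded by $n$, arguing as in Lemma~\ref{lem:blown-lattes-horizontal}(i)), one gets $\lambda_f(\gamma)\le (n-1)\frac1n+\frac1{n+1}<1$.

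I would split the argument into cases by which of $n_h, n_v$ are positive. If $n_h>0$ and $n_v>0$: the argument above applies since both $r\ne0$ and $s\ne0$, so $\gamma$ is forced to interact with at least one horizontal and one vertical flap, giving $\lambda_f(\gamma)<1$. If $n_h>0$ and $n_v=0$ (the case of Theorem~\ref{thm:flapped_intro}'s obstructed regime): then $\alpha^v$ is the obstruction; I must show no \emph{other} curve is, i.e. any $\gamma$ with $r\ne0$ is not obstructing. Here $q=|r|\ge1$ forces $\gamma$ across the horizontal flaps, so each essential pullback still picks up at least one extra degree beyond $n$, again yielding $\lambda_f(\gamma)\le(n-1)\frac1n+\frac1{n+1}<1$; the symmetric statement handles $n_v>0, n_h=0$. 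The main obstacle I anticipate is the careful bookkeeping in the second step: rigorously establishing that the mapping degree of an essential pullback equals $n$ plus its number of flap-excursions, and that $q\ge1$ (resp. $p\ge1$) genuinely forces every essential pullback to cross every horizontal (resp. vertical) flap-slit at least once. This requires a clean analysis of how the straight geodesic $\tau_{r/s}$ lifts under $\widehat\La$ and how its pieces thread through the flaps — precisely the kind of intersection-number counting the authors flag in the paragraph before Lemma~\ref{lem:preimage-bound} as requiring sharper estimates than are available in the literature, and this is where I would expect to spend most of the work (likely by subdividing $\widehat\PP$ into its $1$-tiles, tracking $\gamma$ tile-by-tile via its slope, and using Lemma~\ref{lem:alter} to control the alternation pattern of crossings).
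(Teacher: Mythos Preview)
Your proposal has a genuine gap at the step you flag as ``at least one term is $\le \frac{1}{n+1}$'': you assert that some \emph{essential} pullback must make a flap-excursion, but the justification (``$q\ge 1$ forces $\gamma$ to cross every horizontal $1$-edge-slit carrying a flap'') conflates $\gamma$ (which lives in the target $\PP$) with its pullbacks (which live in the source $\widehat\PP$). The edges at which flaps are glued sit in the domain; what matters is whether an essential pullback $\widehat\gamma_j$ crosses a base edge of some flap, and nothing in your sketch rules out the scenario where the essential pullbacks all live in the base $B(\widehat\PP)$ while only peripheral pullbacks enter the flaps. Relatedly, your degree formula $\deg=n+(\text{flap-excursions})$ presupposes that the base part of each essential $\widehat\gamma_j$ is a single Latt\`es pullback of degree exactly $n$; a priori an essential pullback in $\widehat\PP$ could thread through several Latt\`es tracks connected by flaps, so even the lower bound $\deg\ge n$ is not immediate from your setup. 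Finally, the bound ``at most $n$ essential pullbacks'' does not follow just by analogy with Lemma~\ref{lem:blown-lattes-horizontal}(i); that lemma treats only $\alpha^h,\alpha^v$, whose pullbacks one can read off by hand.

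The paper's proof addresses exactly these points by arguing indirectly. It first applies the intersection-number lemma (Lemma~\ref{lem:preimage-bound}) twice: once with the arc $a$ (on which $f$ is at most $n$-to-$1$) to get $k\le n$ essential pullbacks of $\gamma$, and once with $\gamma_j$ against $\alpha^h$ (which has exactly $n$ essential pullbacks by Lemma~\ref{lem:blown-lattes-horizontal}) to get $\deg(f\colon\gamma_j\to\gamma)\ge n$. Assuming $\lambda_f(\gamma)\ge 1$ forces equality everywhere: $k=n$ and all degrees $=n$. Then the strict part of Lemma~\ref{lem:preimage-bound}, combined with Lemma~\ref{lem:curves-over-flap}, shows that degree $=n$ forbids any essential $\widehat\gamma_j$ from meeting a flap; hence all $n$ essential pullbacks lie in $B(\widehat\PP)$ and must coincide with the $n$ Latt\`es pullbacks of $\gamma$. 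But those collectively meet \emph{every} $1$-edge (since $\gamma$ meets the interior of every edge of $\PP$), contradicting $B(\widehat\PP)=\PP\setminus\bigcup_{e\in E}\inte(e)$. So the step you identified as ``the main obstacle'' is handled not by direct tile-by-tile tracking but by this squeeze argument; your direct approach would need a separate argument guaranteeing that at least one essential pullback is forced onto a flap, which is essentially the contrapositive of the paper's Claim.
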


Before we turn to the proof of this theorem, we first record how it  implies Theorem~\ref{thm:flapped_intro} stated in the introduction.

\begin{proof} [Proof of Theorem~\ref{thm:flapped_intro}]
Let $n$, $f$, $n_h$, and  $n_v$ with $n_h+n_v>0$  be as in the statement. We have seen in Section~\ref{subsec:blownup-lattes}
that then $f$ has a hyperbolic orbifold. If $n_h=0$ or $n_v=0$, then 
by Corollary~\ref{cor:the horizontal curve} the curve $\alpha^v$ or the curve $\alpha^h$ is an obstruction, respectively. 

If $n_h>0$ and 
$n_v>0$, then $f$ has no obstruction as follows from 
Corollary~\ref{cor:the horizontal curve} and Theorem~\ref{thm:flapped_proven}. Since $f$ has a hyperbolic orbifold, in this case $f$ is realized by a rational map according to  Theorem~\ref{thm:Thurston}.
 \end{proof}

Corollary~\ref{cor:the horizontal curve} and  Theorem~\ref{thm:flapped_proven} also imply that if $n_h=0$, then 
$\alpha^v$ is the only obstruction for $f$ (up to isotopy rel.\ $\postf$). Similarly, $\alpha^h$ is the only obstruction if $n_v=0$.

Before we go into the details, we  will give an outline for the proof of 
Theorem~\ref{thm:flapped_proven}.  
We argue  by contradiction and assume that $f$ has an 
 obstruction given by an essential Jordan curve $\gamma$ in $(\PP,\postf)$
 that is isotopic to neither $\alpha^h$ nor $\alpha^v$ rel.\ $\postf$. 
 Then  $\lambda_f(\gamma)\geq 1$. Let $ \gamma_1,\dots,  \gamma_k$ for some  $k\in \N$  be all the essential pullbacks of $\gamma$ under $f$, which must be isotopic to $\gamma$ rel.\ 
 $\postf$. 

Using  facts about intersection numbers and the mapping properties of $f$, one can  show that for the number of essential pullbacks of $\gamma$ we have $k \le n$ and that the corresponding mapping degrees satisfy $\deg(f\:  \gamma_j\to \gamma)\geq n$ for all $j=1,\dots, k$. Since $\lambda_f(\gamma)\geq 1$, it follows  that there are exactly $k=n$ essential pullbacks and that $\deg(f\:  \gamma_j\to \gamma)=n$ for each $j=1, \dots, n$. 

This in turn  implies that none of the essential pullbacks $\widehat \gamma_j\coloneq \phi^{-1}( \gamma_j)$ of $\gamma$ under $\widehat \La$ goes over a flap in $\widehat \PP$.
Then  all the $n$ pullbacks $\widehat \gamma_1,\dots, \widehat \gamma_n$ belong to the base $B(\widehat \PP)$ of $\widehat \PP$. This means that each $\widehat \gamma_j$ can  be thought of as a pullback of $\gamma$ under the original $(n\times n)$-Latt\`es map $\La_n$. However, there are only $n$ pullbacks of $\gamma$ under $\La_n$, which cross all the edges of the graph $\La_n^{-1}(\CC)$, where $\CC$ is the common boundary of the $0$-tiles in $\PP$. Consequently, the pullbacks $\widehat \gamma_1,\dots, \widehat \gamma_n$ cross all the $1$-edges in the closure of the base $B(\widehat \PP)$. It follows that one of the pullbacks $\widehat \gamma_j$ must cross one of the base edges of a flap $F$ in $\widehat \PP$, which would necessarily mean that  $\widehat \gamma_j$ goes over the flap $F$. This gives the desired contradiction and Theorem~\ref{thm:flapped_proven} follows.

In the remainder  of this section we will fill in the details for this outline.  First, we establish  several general facts about degrees and intersection numbers. 

Let $n\in \N$ and $f\: X\ra Y$ be a map between two sets $X$ and $Y$. We say that  $f$  is {\em at most $n$-to-$1$}  if $\#f^{-1}(y)\le n$ for each $y\in Y$. 
 We say that $f$ is  {\em $n$-to-$1$}
 if $\#f^{-1}(y)= n$ for each $y\in Y$.

\begin{lemma}\label{lem:preimage-count}
Let $f\: X\ra Y$ be a map between two sets $X$ and $Y$.
Suppose $M\sub X$, $N\sub Y$, and  $f|M\:M\to f(M)$ is at most  $n$-to-$1$  for some $n\in \N$. Then 
$$\#(M \cap f^{-1}(N)) \le  n \cdot \#(f(M)\cap N).$$
\end{lemma}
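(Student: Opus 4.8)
The plan is to prove this by a direct counting argument using the fiber structure of the map $f$. The key observation is that the set $M \cap f^{-1}(N)$ consists of points in $M$ that map into $N$, and these points are distributed among the fibers of $f|M$ over points of $f(M) \cap N$.

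First I would note that if $x \in M \cap f^{-1}(N)$, then $f(x) \in f(M)$ (trivially) and $f(x) \in N$ (since $x \in f^{-1}(N)$), so $f(x) \in f(M) \cap N$. This shows that $M \cap f^{-1}(N) = \bigcup_{y \in f(M) \cap N} \big( M \cap f^{-1}(y) \big)$, and this is a disjoint union since the sets $f^{-1}(y)$ for distinct $y$ are disjoint. Therefore
\[
\#(M \cap f^{-1}(N)) = \sum_{y \in f(M)\cap N} \#\big(M \cap f^{-1}(y)\big).
\]
Now for each $y \in f(M) \cap N$, the set $M \cap f^{-1}(y)$ is exactly the fiber of the restricted map $f|M \colon M \to f(M)$ over $y$, which by hypothesis has cardinality at most $n$ (this is the definition of $f|M$ being at most $n$-to-$1$, applied to the point $y \in f(M)$). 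Summing over all $y \in f(M) \cap N$ gives
\[
\#(M \cap f^{-1}(N)) = \sum_{y \in f(M)\cap N} \#\big(M \cap f^{-1}(y)\big) \le \sum_{y \in f(M)\cap N} n = n \cdot \#\big(f(M) \cap N\big),
\]
which is the desired inequality.

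I do not anticipate any real obstacle here; the statement is purely set-theoretic and follows from partitioning the preimage set according to its image. The only mild subtlety worth a sentence in the write-up is that the inequality is vacuously fine (both sides finite or the bound trivial) when the relevant sets are infinite, but in the intended application all the sets in question are finite, so one may simply work with finite cardinalities throughout. One should also be slightly careful that ``$f|M$ is at most $n$-to-$1$'' is a statement about fibers over points of the codomain $f(M)$, which is precisely what is used above, so no reinterpretation is needed.
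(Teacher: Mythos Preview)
Your proof is correct and takes essentially the same approach as the paper: both observe that $f$ sends $M\cap f^{-1}(N)$ into $f(M)\cap N$ with each fiber of size at most $n$. Your version just spells out the disjoint-union decomposition and summation more explicitly than the paper's two-sentence proof.
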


\begin{proof} The map $f$ sends each point in $M \cap f^{-1}(N)$
 to a point in $f(M)\cap N$. Moreover, 
 each point in $f(M)\cap N$  has at most  $n$ preimages in 
 $M\cap f^{-1}(N)$ under $f$. The statement follows. 
\end{proof}

\begin{lemma}\label{lem:preimage-bound} Let 
$f\: S^2\ra S^2$ be a Thurston map with $\#\postf=4$, and $\ga$ be an essential 
Jordan curve in $(S^2, \postf)$. Suppose that $\widetilde \alpha$ is an essential Jordan curve or an arc in $(S^2,\postf)$ such that
\begin{enumerate}[label=\text{(\roman*)},font=\normalfont,leftmargin=*]

\smallskip 
\item \label{item:bd1}
 $f(\widetilde\alpha)$ and $\widetilde \alpha$ are isotopic  rel.\ $\postf$, 

\smallskip 
\item \label{item:bd2} the map $f|\widetilde \alpha: \widetilde \alpha \to f( \widetilde \alpha)$ is at most $n$-to-$1$, where $n\in \N$,

\smallskip 
\item  \label{item:bd3} $\ins(f(\widetilde \alpha), \gamma)=
\#(f(\widetilde \alpha)\cap \gamma)
>0$. 
\end{enumerate}

Then $k \le n$,  where  $k\in \N_0$ denotes  the number of distinct pullbacks of $\gamma$ under $f$ that are isotopic to $\gamma$
rel.\ $\postf$.  Moreover, if $\widetilde \alpha$ meets a peripheral pullback of $\ga$, then $k<n$.
\end{lemma}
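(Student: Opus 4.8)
The plan is to exploit the covering-map structure of $f$ on $S^2 \setminus f^{-1}(\postf)$ together with the hypotheses relating $\widetilde\alpha$ to $\ga$ via intersection numbers. First I would observe that since $\ga$ is essential and $\#\postf = 4$, any two essential pullbacks of $\ga$ under $f$ are isotopic rel.\ $\postf$ (by the argument in Corollary~\ref{cor:pull}~\ref{pull2}), so $k$ counts precisely the essential pullbacks of $\ga$. Let $\ga_1, \dots, \ga_k$ be these essential pullbacks. The key point is that $f^{-1}(\ga) = \ga_1 \cup \dots \cup \ga_k \cup (\text{peripheral pullbacks})$, and these are pairwise disjoint Jordan curves (components of $f^{-1}(\ga)$).

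The main step: count the intersection points of $\widetilde\alpha$ with $f^{-1}(\ga)$ in two ways. On the one hand, $\widetilde\alpha \cap f^{-1}(\ga) = \widetilde\alpha \cap f^{-1}(f(\widetilde\alpha) \cap \ga)$ roughly speaking — more precisely, I would apply Lemma~\ref{lem:preimage-count} with $M = \widetilde\alpha$, $N = \ga$, and the map $f|\widetilde\alpha$, which is at most $n$-to-$1$ by \ref{item:bd2}. This gives
\[
\#(\widetilde\alpha \cap f^{-1}(\ga)) \le n \cdot \#(f(\widetilde\alpha) \cap \ga) = n \cdot \ins(f(\widetilde\alpha), \ga),
\]
using \ref{item:bd3}. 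On the other hand, since $\widetilde\alpha \sim f(\widetilde\alpha) \sim \ga$ rel.\ $\postf$ (combining \ref{item:bd1} with the fact that $\ins(f(\widetilde\alpha), \ga) > 0$ forces... wait, actually I need to be careful: $\widetilde\alpha$ need not be isotopic to $\ga$). Let me reconsider. By \ref{item:bd1}, $\widetilde\alpha \sim f(\widetilde\alpha)$ rel.\ $\postf$. For each essential pullback $\ga_j$, we have $\ga_j \sim \ga$ rel.\ $\postf$. Thus $\ins(\widetilde\alpha, \ga_j) = \ins(f(\widetilde\alpha), \ga) > 0$ for every $j = 1, \dots, k$, where the equality uses isotopy invariance of intersection numbers and the second hypothesis. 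Since the $\ga_j$ are pairwise disjoint and each satisfies $\#(\widetilde\alpha \cap \ga_j) \ge \ins(\widetilde\alpha, \ga_j) \ge 1$, we get
\[
\#(\widetilde\alpha \cap f^{-1}(\ga)) \ge \sum_{j=1}^{k} \#(\widetilde\alpha \cap \ga_j) \ge \sum_{j=1}^{k} \ins(f(\widetilde\alpha), \ga) = k \cdot \ins(f(\widetilde\alpha), \ga).
\]
Combining the two inequalities and dividing by $\ins(f(\widetilde\alpha), \ga) > 0$ yields $k \le n$.

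For the refinement: suppose $\widetilde\alpha$ meets a peripheral pullback $\beta$ of $\ga$. Then $\beta$ is disjoint from all the $\ga_j$, and $\#(\widetilde\alpha \cap \beta) \ge 1$ (it meets $\beta$). So the lower bound improves to $\#(\widetilde\alpha \cap f^{-1}(\ga)) \ge k \cdot \ins(f(\widetilde\alpha), \ga) + 1$, whence $k \cdot \ins(f(\widetilde\alpha), \ga) + 1 \le n \cdot \ins(f(\widetilde\alpha), \ga)$, giving $k < n$ since $\ins(f(\widetilde\alpha), \ga) \ge 1$. The main obstacle I anticipate is making the first application of Lemma~\ref{lem:preimage-count} fully rigorous: one must check that $\widetilde\alpha \cap f^{-1}(\ga)$, the quantity being bounded, is finite and that the hypotheses of that lemma apply verbatim with $f(M) \cap N = f(\widetilde\alpha) \cap \ga$ — this requires knowing $f(\widetilde\alpha)$ and $\ga$ are in minimal/transverse position (given by \ref{item:bd3} and Lemma~\ref{lem:transverse}) and that $\widetilde\alpha$ can be chosen transverse to $f^{-1}(\ga)$, which follows since $f$ is a local homeomorphism away from $f^{-1}(\postf)$ and pulls back transversality. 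A small subtlety is that $\widetilde\alpha$ might be an arc rather than a Jordan curve, but the counting argument is insensitive to this distinction.
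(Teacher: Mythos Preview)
Your proof is correct and follows essentially the same argument as the paper: apply Lemma~\ref{lem:preimage-count} for the upper bound, use isotopy invariance of intersection numbers (via \ref{item:bd1} and $\ga_j\sim\ga$) for the lower bound, and observe that a peripheral pullback meeting $\widetilde\alpha$ makes the middle inequality strict. Your closing worries are unnecessary: Lemma~\ref{lem:preimage-count} is purely set-theoretic and needs no transversality, and finiteness of $\#(\widetilde\alpha\cap f^{-1}(\ga))$ is immediate from the upper bound $n\cdot\#(f(\widetilde\alpha)\cap\ga)$, which is finite by hypothesis~\ref{item:bd3}.
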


 In the formulation and the ensuing proof  intersection numbers are with respect to $(S^2, P_f)$.  Note that since $f(\widetilde\alpha)$ and $\widetilde \alpha$ are isotopic  rel.\ $\postf$ by assumption, $f(\widetilde\alpha)$ is of the same type as $\widetilde \alpha$, that is,  a Jordan curve or an arc in $(S^2,\postf)$.

\begin{proof} 
Let  $\gamma_1,\dots, \gamma_k$ be the distinct  pullbacks of $\gamma$ under $f$ that are isotopic to $\gamma$ rel.\ $\postf$. Since $f|\widetilde \alpha\:\widetilde \alpha\ra  f(\widetilde \alpha)$ is at most $n$-to-$1$, we can apply Lemma \ref{lem:preimage-count} and conclude that
$$\#(\widetilde \alpha \cap f^{-1}(\gamma))\le  n \cdot \#( f(\widetilde \alpha) \cap \ga)=
n\cdot  \ins (f(\widetilde \alpha) , \gamma).$$
On the other hand,
\begin{equation}\label{eq:degree-bound-curves}
n\cdot \ins (f(\widetilde \alpha) , \gamma)\ge \#(\widetilde \alpha \cap f^{-1}(\gamma))\geq \sum_{j=1}^k \#(\widetilde \alpha \cap \gamma_j  ) \geq \sum_{j=1}^k \ins(\widetilde \alpha, \gamma_j)=
k \cdot  \ins(f(\widetilde \alpha) , \gamma).
\end{equation} 
Since $\ins(f(\widetilde \alpha) ,\gamma)>0$, we see that $k \le n$. 
If $\widetilde \alpha$ meets a peripheral pullback of $\ga$, then the second inequality in \eqref{eq:degree-bound-curves} is strict and we actually have $k <n$. 
\end{proof}

The next result will lead to  the  strict inequality from Lemma~\ref{lem:preimage-bound} in the proof of Theorem~\ref{thm:flapped_proven}.

\begin{lemma}\label{lem:curves-over-flap} As before, 
let $\widehat \La\: \widehat \PP \to \PP$ be the blown-up $(n\times n)$-Latt\`es map, and suppose  $\gamma=\tau_{r/s}$ is a simple closed geodesic in $\PP$ with slope $r/s\in \widehat \Q\setminus \{0,\infty\}$. Let $\widehat \gamma$ be a pullback of $\gamma$ under $\widehat \La$. If $\widehat \gamma$ intersects the interior of a base edge of a flap $F$ in $\widehat \PP$, then $\widehat \gamma$ also intersects the top edge of~$F$.
\end{lemma}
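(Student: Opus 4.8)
The plan is to argue about the combinatorial structure of a flap $F$ in $\widehat\PP$ and the way the blown-up Latt\`es map $\widehat\La$ acts on it. Recall that $F$ consists of two $1$-tiles glued along three of their sides, with the two remaining sides being the base edges $e',e''$ of $F$, which share both endpoints, and the top edge being the $1$-edge of $F$ opposite to $e'$ (equivalently $e''$). Since $\widehat\La$ maps each $1$-tile by a Euclidean similarity onto the front or back $0$-tile of $\PP$, the two $1$-tiles of $F$ are mapped onto the two different sides of $\PP$ (they have opposite colors in the checkerboard coloring), and $\widehat\La$ sends $F$ two-to-one onto a single $0$-tile $T$ of $\PP$; more precisely, $\widehat\La|F$ is the ``folding'' map that identifies the two $1$-tiles of $F$ along the three glued edges and maps the resulting square $0$-tile onto $T$ by a similarity. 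Under this description the base edges $e'$ and $e''$ are both mapped onto the same edge $e_0$ of $\CC=\partial T$ (the edge $a,b,c,$ or $d$ that is the $\widehat\La$-image of $e'$), and the top edge of $F$ is mapped onto the edge $e_0'$ of $\CC$ \emph{opposite} to $e_0$ in the $0$-tile $T$.

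The key step is then the following observation about the image curve $\gamma=\tau_{r/s}$ with $r/s\notin\{0,\infty\}$. First I would note that $\widehat\gamma$ is a pullback of $\gamma$ under $\widehat\La$, so $\widehat\La(\widehat\gamma\cap F)\subset\gamma\cap T$, and $\widehat\La$ restricted to each of the two $1$-tiles of $F$ is a homeomorphism onto $T$. By hypothesis $\widehat\gamma$ meets the interior of one of the base edges, say $\inte(e')$; then the image point lies in $\gamma\cap\inte(e_0)$, so in particular $\gamma$ meets $\inte(e_0)$. Now I would invoke Lemma~\ref{lem:alter}: since $r/s\ne 0$ (if $e_0\in\{a,c\}$) or $r/s\ne\infty$ (if $e_0\in\{b,d\}$; use the analogous statement noted after Lemma~\ref{lem:alter}), the points of $\gamma$ on the two opposite edges $e_0$ and $e_0'$ of $\PP$ are non-empty and \emph{alternate} along $\gamma$. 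In particular $\gamma$ meets $\inte(e_0')$ as well, and moreover each arc of $\gamma\cap T$ that enters through $\inte(e_0)$ must exit $T$ through $\inte(e_0')$ (an arc of $\gamma$ inside the closed $0$-tile $T$ with one endpoint on $\inte(e_0)$ cannot return to $e_0$ without a point of $e_0'$ in between, and it cannot end on the remaining two edges of $T$ without... — here one uses that $\gamma$ is a geodesic with slope transverse to all four edges, so within $T$ it is a straight segment joining two distinct edges). Pick the component (straight segment) $\sigma$ of $\gamma\cap T$ whose preimage under $\widehat\La|F$ contains the point of $\widehat\gamma\cap\inte(e')$; lifting $\sigma$ through the appropriate $1$-tile of $F$ produces a subarc of $\widehat\gamma$ that starts on $\inte(e')$ and ends on the preimage of $\inte(e_0')$, which is precisely (the interior of) the top edge of $F$. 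Hence $\widehat\gamma$ meets the top edge of $F$, as claimed.

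I would carry out the steps in this order: (1) record the precise combinatorial picture of a flap and of $\widehat\La|F$ as a folding map onto a single $0$-tile $T$, identifying the images of the base edges and of the top edge as a pair of opposite edges of $T$; (2) observe that $\gamma\cap T$ consists of finitely many straight segments each joining two of the four edges of $T$, and that the existence of a point of $\widehat\gamma$ on $\inte(e')$ forces $\gamma$ to meet $\inte(e_0)$; (3) apply Lemma~\ref{lem:alter} (and its $b,d$-analogue) to conclude that the segment of $\gamma\cap T$ through that point runs from $\inte(e_0)$ to $\inte(e_0')$; (4) lift this segment through the corresponding $1$-tile of $F$ to get a subarc of $\widehat\gamma$ joining $\inte(e')$ to the top edge of $F$.

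The main obstacle I anticipate is step~(2)--(3): one must rule out that the relevant component of $\gamma\cap T$ runs from $e_0$ back to $e_0$, or from $e_0$ to one of the two edges of $T$ adjacent to $e_0$, rather than to the opposite edge $e_0'$. Lemma~\ref{lem:alter} gives the alternation of $\gamma$-points on $e_0$ and $e_0'$ along the \emph{whole} closed geodesic $\gamma$, which immediately excludes a segment running from $e_0$ to $e_0$ (such a segment would be a subarc of $\gamma$ with both endpoints on $e_0$ containing no point of $e_0'$). Excluding the ``adjacent edge'' cases, and turning the global alternation statement into the local statement that \emph{this particular} segment goes to $e_0'$, requires a little care — most cleanly done by working on the universal cover, representing $\gamma\cap T$ via $\wp$ as the image of a straight line segment of slope $r/s$ crossing a single unit square of the lattice tiling, and noting that a line of slope $r/s\notin\{0,\infty\}$ entering a closed unit square through the interior of one side and not passing through a lattice point must exit through the interior of the opposite side (it cannot exit through an adjacent side because that would force it through the common vertex, a lattice point). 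Once this elementary planar fact is in hand, everything else is bookkeeping about the folding map $\widehat\La|F$.
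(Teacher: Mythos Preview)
Your proposal contains a genuine gap. First, a minor point: your description of $\widehat\La|F$ as ``two-to-one onto a single $0$-tile $T$'' contradicts your own (correct) observation that the two $1$-tiles of $F$ have opposite colors and hence map to \emph{different} $0$-tiles; in fact $\widehat\La|_{\inte(F)}$ is a homeomorphism onto $\PP\setminus e_0$. More seriously, the planar fact you invoke at the end is false: a straight line of slope $r/s\notin\{0,\infty\}$ entering a unit square through the interior of one side can exit through an \emph{adjacent} side without passing through a vertex (e.g.\ the line of slope $2$ through $(0.9,0)$ exits $[0,1]^2$ at $(1,0.2)$). Hence the segment $\sigma$ of $\gamma\cap T$ through your image point need not reach $e_0'$; its lift to the corresponding $1$-tile of $F$ may terminate on a side edge of $F$ rather than the top edge, and from there $\widehat\gamma$ continues into the other $1$-tile. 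Lemma~\ref{lem:alter} gives alternation of $e_0$- and $e_0'$-points along the full closed curve $\gamma$, but says nothing about where one straight segment inside a single $0$-tile ends.

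The paper's argument sidesteps this local difficulty. Since $\widehat\La\colon\widehat\gamma\to\gamma$ is a covering, the alternation of $a\cap\gamma$ and $c\cap\gamma$ on $\gamma$ lifts to alternation of $\widehat\gamma\cap\widehat\La^{-1}(a)$ and $\widehat\gamma\cap\widehat\La^{-1}(c)$ on $\widehat\gamma$. Now use the identities $F\cap\widehat\La^{-1}(a)=e'\cup e''=\partial F$ and $F\cap\widehat\La^{-1}(c)=\widetilde e$ (the top edge): starting at $p\in\widehat\gamma\cap\inte(e')\subset\widehat\La^{-1}(a)$ and traveling along $\widehat\gamma$ into $\inte(F)$ (the intersection at $p$ is transverse), the next point of $\widehat\La^{-1}(a\cup c)$ on $\widehat\gamma$ must lie in $\widehat\La^{-1}(c)$, hence on $\widetilde e$, and this happens before $\widehat\gamma$ can leave $F$ through $\partial F\subset\widehat\La^{-1}(a)$. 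The key is to apply alternation on the \emph{pullback} curve globally, not on the image curve within a single tile.
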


\begin{proof} We have  $\gamma=\wp(\ell_{r/s})$, where $\ell_{r/s}\subset\C\setminus \Z^2$ is a straight line with slope $r/s\ne 0,\infty$.  Then 
$\ga$ is an essential Jordan curve in $(\PP, V)$. Let  $\widehat\gamma\subset \widehat \PP$ be as in the statement.  As in Section \ref{subsec:pillow}, 
we denote by  $a,b,c,d$  the edges of the pillow $\PP$.  Let $e'\sub \Phat$ be a base edge of a flap $F$ in $\widehat \PP$ such that $\widehat{\gamma}\cap \inter(e') \neq \emptyset$.  We will assume that $F$ is a horizontal flap. Then   $\widehat \La(e')=a$ or  $\widehat \La(e')=c$. We will make the further assumption that $\widehat \La(e')=a$. The other cases, when $\widehat \La(e')=c$ or when $F$ is a vertical flap, can be treated in a way that is completely analogous to the ensuing argument.

Let  $e''\sub F$ be the base edge of $F$ different from $e'$, and $\widetilde e$ be the top edge of $F$.
Then $\widehat \La(e'')=a$ and $\widehat \La(\widetilde e)=c$.   Moreover, 
\begin{equation}\label{eq:pre-on-flap} 
F\cap \widehat \La^{-1}(a)=e'\cup e''=\partial F \quad \text{and}\quad 
F\cap \widehat \La^{-1}(c) = \widetilde e.
\end{equation}

We have  $\gamma=\wp(\ell_{r/s})$ with $r/s\ne 0$, and so  by Lemma~\ref{lem:alter}  the sets 
$a\cap \ga$ and $c\cap \ga$ are finite and non-empty and the points in these  sets  alternate on $\gamma$. Since $\widehat \La$ is a covering map
from $\widehat \gamma$ onto  $\ga$,  we conclude  that the sets  $\widehat \gamma \cap  \widehat \La^{-1}(a) $ and $\widehat \gamma \cap  \widehat \La^{-1}(c)$  are also  finite and  non-empty and the points in  these sets alternate on $\widehat \gamma$.

We choose a point $p\in \widehat \gamma \cap \inte(e')$. Since $\gamma=\wp(\ell_{r/s})$, the curve $\gamma$ has transverse intersections with $\inter(a)$. It follows that the pullback $\widehat\gamma$ has a transverse intersection with $\inter(e')$ at $p$, and so  $\widehat \gamma $ crosses into the interior of the flap $F$ at $p$. Therefore, if we travel along $\widehat \gamma $ starting at $p\in \widehat \gamma \cap  \widehat \La^{-1}(a)$ and traverse into the interior of the flap $F$, we must meet   $\widehat \La^{-1}(c)$ before we possibly exit $F$  through its boundary  $\partial F=e'\cup e''=F\cap \widehat \La^{-1}(a)$  (see \eqref{eq:pre-on-flap}). Now  $F\cap \widehat \La^{-1}(c)=\widetilde  e$ and so this 
 implies that the pullback $\widehat \gamma$ meets the top edge $\widetilde e$ (see Figure \ref{fig_run_into_flap} for an illustration).  The statement follows. \end{proof}

\begin{figure}[t]
\def\svgwidth{0.5\columnwidth}
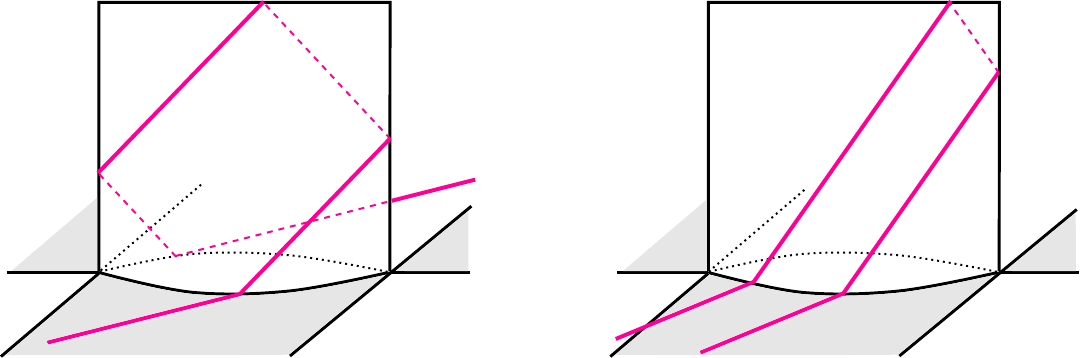
\caption{A pullback $\widehat \gamma$ going over a horizontal flap in $\widehat \PP$.} \label{fig_run_into_flap}
\end{figure}

We are now ready to prove the main result of this section.

\begin{proof}[Proof of Theorem~\ref{thm:flapped_proven}] 
Let $f\: \PP\ra \PP$ be  a Thurston map as in the statement, 
obtained from 
the $(n\times n)$-Latt\`es map $\La_n$, $n\ge 2$,  by gluing $n_h\ge 0$ horizontal and $n_v\ge0$ vertical flaps, where  we assume $n_h+n_v>0$.  As described in the beginning of this section, then $f=\widehat \La \circ \phi^{-1}$, where $\widehat \La\: \widehat \PP \to \PP$ is a branched covering map  on the associated flapped pillow $ \widehat \PP$ and 
$\phi\:  \widehat \PP \ra \PP$ is an identifying homeomorphism as discussed in 
Section \ref{subsec:blownup-lattes}.  Note that $ \widehat \PP$
has at least one flap, since  $n_h+n_v>0$. 
Following the notation from Section \ref{subsec:blownup-lattes}, we denote by 
$\widehat a$, $\widehat b$, $\widehat c$, $\widehat d$ the arcs in 
  $(\widehat \PP, \widehat V)$ that  under $\phi$ correspond to the edges  
  $a$, $b$, $c$, $d$ of $\PP$, respectively.

We now argue by contradiction and assume that there exists  an essential Jordan curve $\gamma$ in $(\PP, \postf)=(\PP,V)$ that is not  isotopic to  $\alpha^h$ or $\alpha^v$ rel.\ $\postf=V$, but is   an obstruction for $f$, that is, $\lambda_f(\gamma) \geq 1$.
Since we can replace $\ga$ with  any curve in the same isotopy class, by Lemma~\ref{lem:isoclassesP} we may assume that $\gamma = \wp(\ell_{r/s})$ for a straight line $\ell_{r/s}\sub \C\setminus\Z^2$ with  slope  $r/s\in \widehat \Q$.  Since $\gamma$ is not isotopic to 
$\alpha^h$ or $\alpha^v$ rel.\ $V$, we have $r/s\ne 0,\infty$, and so $r,s\ne 0$.   Then it follows from  Lemma \ref{lem:i-properties-curve} ~\ref{item:i3} and~\ref{item:i4}
that \begin{align}\label{eq:gaamin} 
\#(a\cap \ga)= \ins(a, \ga)= |&r|=  \ins(c, \ga) =\#(c\cap \ga)>0,
\\
\#(b\cap \ga)=\ins(b,\ga) =&s = \ins (d,\ga) =\#(d\cap \ga) >0. \notag
 \end{align} 
 In particular, $\gamma\sub \PP\setminus V$ intersects the interiors of all  four edges of   $\PP$.
 
 Let $\gamma_1, \dots, \gamma_k$ for some $k\in \N$ denote all the pullbacks of $\gamma$ under $f$ that are isotopic to $\gamma$ rel.\ $\postf$. By construction of the blown-up map, the arc  $\widehat a=\phi^{-1}(a) $ consists of $n$ $1$-edges in $\widehat \PP$,  each of which is homeomorphically mapped onto $a$ by $\widehat \La$. This implies that the map $f|a\: a\to a$ is at most $n$-to-$1$. 
By \eqref{eq:gaamin}  we can apply Lemma~\ref{lem:preimage-bound} to $a$ and $\gamma$ and conclude   that $k\leq n$.

  By Lemma \ref{lem:blown-lattes-horizontal}, the horizontal curve $\alpha^h$ has $n$ distinct pullbacks under $f$ that are isotopic to $\alpha^h$ rel.\ $\postf$. Since 
  \begin{equation}\label{eq:ahgamin}
\ins(\gamma, \alpha^h)=\#(\gamma \cap \alpha^h) =2|r|>0
\end{equation} 
by Lemma~\ref{lem:i-properties-curve}~\ref{item:i1}, we can apply  Lemma~\ref{lem:preimage-bound} again, this time  to $\gamma_j$ and $\alpha^h$ (in the roles of $\widetilde \alpha$ and $\ga$, respectively),
  and conclude that  $n\leq \deg(f\colon \gam_j\to\gam)$ for all $j=1,\dots,k$.  

Then   
$$1\le \lambda_f(\gamma)=\sum_{j=1}^k\frac{1}{\deg(f\colon \gam_j\to\gam)}\le k/n\le 1. $$ 
Therefore,  $k=n$ and $\deg(f\colon \gam_j\to\gam)= n$ for each $j=1,\dots,n$. 

This shows that the curve $\gamma$ has exactly $n$ essential pullbacks under $\widehat \La$ given by $\widehat \gamma_1\coloneq \phi^{-1}(\gamma_1), \dots, \widehat \gamma_n\coloneq \phi^{-1}(\gamma_n)$. Here, the isotopy classes are considered with respect to the vertex set $\widehat V$ of $\widehat \PP$. We will now use the second part of Lemma~\ref{lem:preimage-bound} to show that none of these pullbacks goes over a flap in $\widehat \PP$. 

\begin{claim}
For each flap $F$ in  $\widehat \PP$ and each pullback $\widehat \gamma_j$, $j=1,\dots,n$, we have $F\cap \widehat \gamma_j = \emptyset$. 
\end{claim}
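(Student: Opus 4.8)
The plan is to argue by contradiction. Suppose some essential pullback $\widehat\gamma_j$ meets a flap $F$ in $\widehat\PP$; the goal is to produce a \emph{peripheral} pullback of $\alpha^h$ or $\alpha^v$ that is crossed by $\gamma_j=\phi(\widehat\gamma_j)$, in conflict with the second part of Lemma~\ref{lem:preimage-bound}. To set this up I would first apply Lemma~\ref{lem:preimage-bound} with $\widetilde\alpha=\gamma_j$ and with its ``$\gamma$'' taken to be $\alpha^h$: the hypotheses hold because $f(\gamma_j)=\gamma\sim\gamma_j$ rel.\ $\postf$, the map $f|\gamma_j$ is exactly (hence at most) $n$-to-$1$, and, since $\gamma=\tau_{r/s}$ with $r\ne 0$ is in minimal position with $\alpha^h=\tau_0$ by Lemma~\ref{lem:i-properties-curve}, we have $\ins(\gamma,\alpha^h)=\#(\gamma\cap\alpha^h)=2|r|>0$. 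By Lemma~\ref{lem:blown-lattes-horizontal} the curve $\alpha^h$ has exactly $n$ pullbacks under $f$ isotopic to $\alpha^h$, so the count ``$k$'' appearing in Lemma~\ref{lem:preimage-bound} equals $n$; the moreover-part therefore forces $\gamma_j$ to meet no peripheral pullback of $\alpha^h$ under $f$, and the same argument with $\alpha^v$ shows $\gamma_j$ meets no peripheral pullback of $\alpha^v$ either. Because $f=\widehat\La\circ\phi^{-1}$, the peripheral pullbacks of $\alpha^h$, resp.\ $\alpha^v$, under $f$ are exactly the $\phi$-images of the peripheral pullbacks of $\alpha^h$, resp.\ $\alpha^v$, under $\widehat\La$, so equivalently $\widehat\gamma_j$ meets none of the latter.

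Next I would record a description of these peripheral pullbacks inside a flap. By Lemma~\ref{lem:blown-lattes-horizontal} (and its proof) each horizontal flap $F$ contains exactly one peripheral pullback $\delta_F$ of $\alpha^h$ under $\widehat\La$, and since $\widehat\La$ restricts to a Euclidean similarity on each of the two $1$-tiles of $F$ while $\alpha^h$ separates the edges $a$ and $c$ of $\PP$, the Jordan curve $\delta_F$ lies in $\inte(F)$, is disjoint from the base edges $e',e''$ of $F$ (which $\widehat\La$ maps onto $a$), and separates them from the top edge $\widetilde e$ of $F$ (which $\widehat\La$ maps onto $c$). Thus $F\setminus\delta_F$ has two components; writing $U$ for the open disk bounded by $\delta_F$, we have $\widetilde e\subset U$ and $\overline{U}\subset\inte(F)$, while $\partial F=e'\cup e''$ lies in the complementary (half-open) annulus. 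An entirely analogous statement holds for each vertical flap, with $\alpha^v$ in place of $\alpha^h$ and $b,d$ in place of $a,c$.

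With this in hand, the contradiction goes as follows. Assume $\widehat\gamma_j\cap F\ne\emptyset$, say with $F$ horizontal (the vertical case is identical). I would use two elementary observations: first, $\widehat\gamma_j$ is disjoint from every $1$-vertex of $\widehat\PP$, because $\widehat\La$ maps each such vertex into $V=\postf$ while $\gamma\sub\PP\setminus V$; second, $F$ contains at most one point of $\widehat V$, because $F\cap\widehat V$ is contained in the two-element set of endpoints of the edge of $\La_n^{-1}(\CC)$ along which $F$ was glued, and no single $1$-edge of $\La_n^{-1}(\CC)$ joins two corners of $\PP$ when $n\ge 2$. Since $\widehat\PP\setminus F$ is connected, if $\widehat\gamma_j$ were contained in $F$ then one component of $\widehat\PP\setminus\widehat\gamma_j$ would be contained in $F$ and hence contain at most one point of $\widehat V$, contradicting the essentiality of $\widehat\gamma_j$. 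Therefore $\widehat\gamma_j$ meets $\partial F$, and, avoiding the endpoints of $e',e''$, it meets the interior of one of the base edges; by Lemma~\ref{lem:curves-over-flap} (applicable since $r/s\in\widehat\Q\setminus\{0,\infty\}$) it then also meets the top edge $\widetilde e\subset U$. So $\widehat\gamma_j$ meets $U$ and, not being contained in $F\supset\overline{U}$, also meets $\widehat\PP\setminus\overline{U}$; connectedness of $\widehat\gamma_j$ then forces it to meet $\partial U=\delta_F$. Hence $\gamma_j$ meets the peripheral pullback $\phi(\delta_F)$ of $\alpha^h$ under $f$, contradicting the first paragraph. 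This proves the claim.

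The step I expect to require the most care is the second paragraph: verifying the topological picture of a flap, and in particular that the interior pullback $\delta_F$ of $\alpha^h$ (resp.\ $\alpha^v$) genuinely separates the base edges from the top edge of $F$ --- together with the two subsidiary facts that every $1$-vertex of $\widehat\PP$ maps into $\postf$ and that each flap contains at most one corner of $\PP$. These are routine point-set considerations but need to be spelled out carefully; once they are granted, the contradiction in the third paragraph is a short connectedness argument driven by Lemmas~\ref{lem:preimage-bound} and~\ref{lem:curves-over-flap}.
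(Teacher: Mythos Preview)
Your proof is correct and follows essentially the same approach as the paper's: both argue by contradiction, show that $\widehat\gamma_j$ must meet $\partial F$ (hence the interior of a base edge, since it avoids $1$-vertices), invoke Lemma~\ref{lem:curves-over-flap} to reach the top edge, conclude that $\widehat\gamma_j$ crosses the peripheral pullback of $\alpha^h$ (or $\alpha^v$) sitting in $F$, and then derive a contradiction from the ``moreover'' part of Lemma~\ref{lem:preimage-bound}. The only cosmetic differences are that you invoke Lemma~\ref{lem:preimage-bound} contrapositively at the outset rather than at the end, and you justify $\widehat\gamma_j\not\subset F$ via ``$F$ contains at most one point of $\widehat V$'' whereas the paper uses the slightly cleaner observation that $\inte(F)\cap\widehat V=\emptyset$.
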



To see that the Claim is true, suppose some pullback $\widehat \gamma_j$ meets a flap $F$ in $\widehat \PP$. We may assume that $F$ is a horizontal flap; the other case, when $F$ is vertical, can be treated by similar considerations. Then $F$ contains a peripheral pullback $\widehat \alpha^h$ of $\alpha^h$ under $\widehat \La$, which separates the union $\partial F$ of the two base edges of $F$  from the top edge of $F$. We will first show that $\widehat \gamma_j$ intersects $\widehat\alpha^h$.

Note that $\partial F$ is a Jordan curve and that $\inter(F)$ does not contain any point from the vertex set $\widehat V$ of $\widehat \PP$. It follows that the curve $\widehat \gamma_j$ must intersect $\partial F$, because $\widehat \gamma_j$ is essential in $(\widehat \PP, \widehat V)$. Since the curve $\gamma$ does not pass through $\postf=V$, its pullback $\widehat \gamma_j$ under $\widehat \La$  does not pass through any $1$-vertex in $\widehat \PP$. Consequently, $\widehat \gamma_j$ must meet the interior of one of the two base edges of $F$, which compose the boundary $\partial F$. Lemma \ref{lem:curves-over-flap} now implies that $\widehat \gamma_j$ also meets the top edge of $F$. Therefore, $\widehat \gamma_j$ meets the peripheral pullback $\widehat \alpha^h$ in $F$. 

It follows that $\gamma_j=\phi(\widehat \gamma_j)$ meets the peripheral pullback $\phi(\widehat \alpha^h)$ of $\alpha^h$ under $f$.  Lemma \ref{lem:preimage-bound} now  implies that $n< \deg(f\colon \gamma_j\to \gam)=n$, which is a contradiction. This finishes the proof of the Claim.

\smallskip

The Claim implies that each essential pullback $\widehat \gamma_j$, $j=1,\dots, n$, belongs to the base  $B(\widehat \PP)$
of $\widehat \PP$. By  \eqref{eq:base-ident}
we can identify $B(\widehat \PP)$ with the subset $\PP\setminus \bigcup_{e\in E} \inter(e)$ of the original pillow $\PP$, where  $E$ denotes 
the non-empty subset of all $1$-edges of $\PP$ along which flaps were glued in the construction of $\widehat \PP$. 

 Under this identification, the map $\widehat \La$ on $B(\widehat \PP)$ coincides with the $(n\times n)$-Latt\`{e}s map $\La_n$. Thus we may view $\widehat \gamma_1, \dots, \widehat \gamma_n$ as pullbacks of $\gamma$ under $\La_n$ on the original pillow $\PP$.
 Now  $\gamma$ has exactly $n$ pullbacks under $\La_n$ (see \eqref{eq:lattes-preimage}). This implies that 
 $$  \La_n^{-1}(\ga)=\widehat  \ga_1\cup\dots \cup \widehat \ga_n. $$ 
 Since  $\gamma$ meets the interior of every edge of $\PP$, the set 
 $\La_n^{-1}(\gamma)=\widehat  \ga_1\cup\dots \cup \widehat \ga_n $ meets the interior of every $1$-edge of $\PP$.  This is impossible, because 
 $$\widehat  \ga_1\cup\dots \cup \widehat \ga_n \sub B(\widehat \PP) =\PP\setminus \bigcup_{e\in E} \inter(e)$$ does not meet the interior of  any $1$-edge in $E\ne \emptyset$. This is a contradiction and the statement follows. 
\end{proof}

\section{Essential circuit length}
\label{sec:esscirc}

In order to prove our main result, Theorem \ref{thm:blow-up-obstr}, we need some preparation, in particular  a refined version of Lemma \ref{lem:preimage-bound}.  We will also address the question how  blowing up arcs modifies the pullbacks of a curve $\alpha$ under natural restrictions on the blown-up arcs. 
First, we introduce some terminology and establish some auxiliary facts.

Let $U\sub S^2$ be an open and connected set, and $\sigma\sub S^2$ be an arc. We say that $\sigma$ is an  {\em arc in $U$ ending in $\partial U$} if there  exists an endpoint $p$ of $\sigma$ such that $\sigma\setminus\{p\}\sub U$ and $p\in \partial U$. 

Let $\GC$ be a connected planar embedded graph in $S^2$ 
 and $U$ be one of its faces. Then $U$ is simply connected,
and so we can find a homeomorphism $\varphi\:\D\to U$.
Since we want some additional properties of $\varphi$ here, it is easiest to equip $S^2$ with a complex structure and 
choose a conformal map $\varphi\:\D\to U$.

Since $\partial U$ is a finite union of edges of $\GC$, this set is locally connected and so the conformal map $\varphi$ extends to a surjective continuous map $\varphi\: \cl(\D) \ra \cl(U)$ \cite[Theorem 2.1]{Po}. This extension has the following property: if 
$\sigma$ is an arc in $U$ ending in $\partial U$, then $\varphi^{-1}(\sigma)$ is an arc in $\D$ ending in $\partial \D$ (see \cite[Proposition 2.14]{Po}).  For given $\GC$ and $U$,  we fix, once and for all,
such a map 
$\varphi=\varphi_{\GC, U}$  from $\cl(\D)$ onto  
$\cl(U)$. 

Let $(e_1,e_2,\dots,e_{n})$ be a circuit in $\GC$ that traces the boundary $\partial U$. Recall from Section \ref{subsec:graphs} that the number $n$ is called the circuit length of $U$ in $\GC$, and each edge $e\in\partial U$ appears exactly once or twice in the sequence $e_1,e_2,\dots,e_{n}$ depending on whether the face $U$ lies on one or both sides of $e$, respectively. 
Then there is a corresponding decomposition 
$\partial \D=\sigma_1\cup \dots \cup \sigma_n$ of the unit circle $\partial
\D$  into non-overlapping subarcs $\sigma_1, \dots, \sigma_n$ of $\partial \D$ such that 
$\varphi=\varphi_{\GC, U}$ is a homeomorphism of $\sigma_m$ onto $e_m$ for each $m=1, \dots, n$. 

Let $0<\eps < 1$. We say that a Jordan curve $\beta\sub U$ is an 
\emph{$\eps$-boundary of $U$ with respect to (wrt.) $\GC$} 
if $\beta'\coloneq \varphi^{-1}(\beta)\sub A_\eps\coloneq 
\{z\in \C: 1-\eps<|z|<1\}$, and $\beta'$ separates $0$ from  
$\partial \D$. Then  $\beta'$ is a core curve of the annulus 
$A_\eps$.

For the remainder of this section, $f\: S^2\ra S^2$ is  a Thurston map. All isotopies on $S^2$ are relative to $P_f$, and we consider intersection numbers in $(S^2, P_f)$. 

 Let  $e$ be an arc in $(S^2, \postf)$. Then we can naturally view the set $\GC\coloneq f^{-1}(e)$ as a planar embedded graph with the vertex set $f^{-1}(\partial e)$ and the edges given by the lifts of $e$ under $f$. Note that $\GC$ is bipartite.

\begin{lemma}\label{lem:choicebd} 
Let $f\: S^2\ra S^2$ be a Thurston map, $e$ be an arc and $\ga$ be a Jordan curve in $(S^2, \postf)$ with $\#(e\cap \ga)=\ins(e,\ga)$. Suppose that $\HC$ is a connected subgraph of $\GC=f^{-1}(e)$ and $U$ is a face of $\HC$. Let $2n$ with $n\in\N$ be the circuit length of $U$ in $\HC$. 
Then for each $0<\eps<1$ there exists an 
$\eps$-boundary $\beta$ of $U$ wrt.\  $\HC$ such that 
$\#(\beta\cap f^{-1}(\ga)) = 2n \cdot \ins(e,\ga).$
\end{lemma}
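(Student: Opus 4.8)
\textbf{Plan for the proof of Lemma~\ref{lem:choicebd}.}

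The plan is to work in the disk $\D$ via the fixed conformal map $\varphi=\varphi_{\HC,U}\colon\cl(\D)\to\cl(U)$, and to count intersections of the pullback $f^{-1}(\ga)$ with a carefully chosen core curve of the thin annulus $A_\eps$. First I would unpack the boundary structure: since the circuit length of $U$ in $\HC$ is $2n$, there is a circuit $(e_1,\dots,e_{2n})$ tracing $\partial U$, and a corresponding non-overlapping decomposition $\partial\D=\sigma_1\cup\dots\cup\sigma_{2n}$ with $\varphi|\sigma_m$ a homeomorphism onto $e_m$. Each $e_m$ is a lift of $e$ under $f$, so $f\circ\varphi|\sigma_m$ maps $\sigma_m$ homeomorphically onto $e$. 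The key numerical input is that, because $\#(e\cap\ga)=\ins(e,\ga)$, each edge $e_m$ of $\GC=f^{-1}(e)$ meets $f^{-1}(\ga)$ in exactly $\ins(e,\ga)$ points, none of which is an endpoint of $e_m$ (the lift being a homeomorphism onto $e$, and $e\cap\ga$ being disjoint from $\partial e$ because $\ins(e,\ga)$ is realized without forcing intersections at marked points — here one should note $\partial e\sub\postf$ and $\ga\cap\postf=\emptyset$). Pulling back through $\varphi$: the set $\varphi^{-1}(f^{-1}(\ga)\cap\partial U)$ consists of exactly $2n\cdot\ins(e,\ga)$ points on $\partial\D$, distributed $\ins(e,\ga)$ per arc $\sigma_m$, all in the interiors of the $\sigma_m$.

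Next I would produce, for each such boundary intersection point $q\in\partial\D$, a short arc of $\varphi^{-1}(f^{-1}(\ga))$ emanating from $q$ into $\D$. For this, pick a point $p\in e\cap\ga$; near $p$ the curve $\ga$ crosses $e$ (after an isotopy we may take $\ga$ and $e$ in minimal position, hence transverse by Lemma~\ref{lem:transverse}), so locally $\ga$ enters the face of $\GC$ on each side of the edge $e_m$ lying over $p$. Consequently $f^{-1}(\ga)$, near the preimage point on $e_m\sub\partial U$, contains an arc entering $U$; applying $\varphi^{-1}$ and using the cited extension property (\cite[Proposition~2.14]{Po}: preimages of arcs in $U$ ending in $\partial U$ are arcs in $\D$ ending in $\partial\D$), we get $2n\cdot\ins(e,\ga)$ disjoint arcs $\delta_1,\dots,\delta_{2n\cdot\ins(e,\ga)}$ in $\D$, each with one endpoint a distinct $q_i\in\partial\D$ and otherwise in $\D$. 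Choose $\eps'\in(0,\eps)$ small enough that each $\delta_i$ meets the circle $\{|z|=1-t\}$ for every $t\in(0,\eps')$ — possible since each $\delta_i$ reaches $\partial\D$. Then I would select a core curve $\beta'$ of $A_{\eps'}$ (hence of $A_\eps$) of the form $\{|z|=1-t_0\}$ for a generic radius $t_0\in(0,\eps')$ at which $\beta'$ is transverse to $\varphi^{-1}(f^{-1}(\ga))$ and meets each $\delta_i$ in exactly one point; a standard transversality/genericity argument (the finitely many arcs $\delta_i$ are tame, and $f^{-1}(\ga)$ is a finite union of tame curves) gives such a $t_0$, and moreover one can arrange that $\beta'$ meets $\varphi^{-1}(f^{-1}(\ga))$ in \emph{only} these points by taking $t_0$ small: any component of $f^{-1}(\ga)$ meeting $\cl(U)$ either meets $\partial U$ — contributing one of the $\delta_i$ — or is contained in $U$, and such a closed component, being compact in $U$, lies in $\varphi(\{|z|\le 1-\eps'\})$ for $\eps'$ small, hence is disjoint from $\beta'$. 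Finally set $\beta\coloneq\varphi(\beta')$; it is an $\eps$-boundary of $U$ wrt.\ $\HC$ by definition, and $\#(\beta\cap f^{-1}(\ga))=\#(\beta'\cap\varphi^{-1}(f^{-1}(\ga)))=2n\cdot\ins(e,\ga)$.

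The main obstacle I anticipate is the bookkeeping at $\partial\D$ — specifically, showing that every boundary intersection point $q_i$ genuinely spawns an arc of $\varphi^{-1}(f^{-1}(\ga))$ that penetrates $\D$ (rather than, say, $f^{-1}(\ga)$ merely being tangent to $\partial U$ from outside $U$, or two arcs merging). This is where transversality of $\ga$ with $e$ is essential and must be invoked cleanly: after isotoping $\ga$ into minimal position with $e$, at each $p\in e\cap\ga$ the curve $\ga$ crosses $e$, and crossing is a local property preserved under the covering map $f$ away from $\postf$ and under the homeomorphism $\varphi$ on $\sigma_m$; on one side of $e_m$ lies $U$ (or $U$ lies on both sides, in which case $e_m$ appears twice in the circuit and contributes two distinct $\delta_i$, consistent with the factor $2n$ rather than $n$). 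Making this ``$U$ lies on one or both sides'' dichotomy interact correctly with the multiplicity of edges in the tracing circuit is the delicate point, but it is exactly the content already recalled from \cite[Lemma~4.2.2]{DiestelGraph} and the decomposition $\partial\D=\sigma_1\cup\dots\cup\sigma_{2n}$, so it should go through without new ideas.
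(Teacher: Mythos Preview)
Your approach is the same as the paper's: write $f^{-1}(\ga)\cap\cl(U)$ as a compact piece $K$ at positive distance from $\partial U$ together with $2n\cdot\ins(e,\ga)$ arcs in $U$ ending in $\partial U$ (one per edge--crossing pair in the boundary circuit, using transversality from minimal position), pull these back through $\varphi$ to pairwise disjoint arcs in $\D$ ending in $\partial\D$, and then find a core curve of $A_\eps$ meeting their union in exactly $2n\cdot\ins(e,\ga)$ points. Your handling of the ``edge appears twice in the circuit'' case via the decomposition $\partial\D=\sigma_1\cup\dots\cup\sigma_{2n}$ is exactly right.

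The one step that does not go through as written is the claim that a \emph{round} circle $\{|z|=1-t_0\}$ for generic $t_0$ meets each $\delta_i$ in exactly one point. Genericity buys transversality and hence finitely many intersections, but a topological arc in $\D$ ending at $\partial\D$ may cross a given concentric circle any odd number of times; nothing forces this number to be $1$. The paper avoids this by not insisting on a circle: it simply asserts (leaving the justification to the reader) that for $M$ pairwise disjoint arcs in $\D$ ending in $\partial\D$ there exists \emph{some} core curve $\beta'$ of $A_\eps$ with $\#(\beta'\cap\bigcup_i\alpha_i)=M$. One way to supply this: since the $M$ arcs are pairwise disjoint in $\cl(\D)$ with distinct endpoints on $\partial\D$, they can be simultaneously straightened to radial segments by an ambient homeomorphism of $\cl(\D)$; the image of a suitable concentric circle under the inverse homeomorphism is then a core curve crossing each arc once. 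Alternatively, build $\beta'$ by hand, threading close to $\partial\D$ between consecutive endpoints $q_i$ and crossing each $\delta_i$ exactly once near $q_i$.
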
 

Note that the circuit length of  $U$ in $\HC$ is even since $\HC$ is a bipartite graph. 

\begin{proof} Let $s\coloneq\ins(e,\ga)\in \N_0$. Then $e$ and $\ga$ have exactly 
$s$ distinct points in common, say $p_1, \dots, p_s\in e\cap \ga$.
Each point  $p_j$ lies  in $\inte(e)$, because 
$\partial e\sub \postf$ and $\ga\sub S^2\setminus \postf$.  
Since $e$ and $\ga$ are in minimal position, they meet transversely (see Lemma~\ref{lem:transverse}), that is, if we travel along $\ga$
towards one of the points $p_j$ (according to some orientation of $\ga$),
then near $p_j$ we stay on one side of $e$, but cross over  to the other side of $e$ if we pass $p_j$. 

This implies that we can find disjoint subarcs $\sigma_1, \dots, \sigma_s$ of $\ga$ such that each arc $\sigma_j$ contains 
$p_j$ in its interior, but contains no other point in $e\cap \ga$. Moreover, $p_j$ splits 
$\sigma_j$ into two non-overlapping subarcs $\sigma_j^L$ and 
$\sigma_j^R$ with the common endpoint $p_j$ so that with some fixed orientation of $e$ the arc  $\sigma_j^L$ lies to the left and 
$\sigma_j^R$ lies to the right of $e$. Note that if  $\ga'\coloneq
\ga\setminus(\sigma_1\cup \dots \cup \sigma_s)$, then  
$e\cap \ga'=\emptyset$.  

 For the given face $U$ of $\HC$ we fix a map
$\varphi=\varphi_{\HC,U}\: \cl(\D) \ra \cl(U)$ as discussed in the beginning of this section.  Let $(e_1,\dots,e_{2n})$ be a circuit in $\HC$ that traces the boundary $\partial U$. As we have already pointed out, 
the number of edges in the circuit is even, because $\HC$ is a bipartite graph. 
With suitable orientation of each  arc $e_m$, the face $U$ lies on the left of  
 $e_m$. If an arc appears twice in the list $e_1,\dots,e_{2n}$,  then it will carry  opposite orientations in its two occurrences.  

We want to investigate the set $f^{-1}(\ga)\cap \cl(U)$ near $\partial U$. 
Note that $f$ maps each arc $e_m$ homeomorphically onto $e$. 
This implies that 
$f$ is a homeomorphism on a suitable Jordan region that contains 
$e_m$ as a crosscut. It follows  that we can pull back the 
local picture near points in $e\cap \ga$ to a similar local picture for points in $e_m\cap f^{-1}(\ga)$.   So if we choose the arcs 
$\sigma_j$ small enough, as we may assume, and pull them back by $f$, then it is clear  that  $f^{-1}(\ga)\cap \cl(U)$ can be represented in the form $$ f^{-1}(\ga)\cap \cl(U)=K \cup \bigcup_{m=1}^{2n}  \bigcup_{j=1}^s 
\sigma_{m,j},$$
where $K$ has positive distance  to $\partial U=e_1\cup \dots \cup e_{2n}$ (with respect to some base metric on $S^2$). Moreover, each $\sigma_{m,j}$ is an arc in $U$ ending in 
$e_m\sub \partial U$ such that $f$ is a homeomorphism
 from $\sigma_{m,j}$ onto 
$\sigma_{j}^L$ or $\sigma_{j}^R$  depending on whether 
 $f|e_m\: e_m\ra e$ is orientation-preserving or orientation-reversing.  
 If we remove  from each arc  $\sigma_{m,j}$ its endpoint in $e_m$, then the  half-open arcs obtained are all disjoint. Two  arcs $\sigma_{m,j}$ and $\sigma_{m',j'}$  share 
 an endpoint precisely when $j=j'$ and they arise from edges $e_m$ and 
 $e_{m'}$ with the same underlying set, but with opposite orientations. In this case,  $f$ sends one of these arcs to $\sigma^L_j$, and the other one to  $\sigma^R_j$.  
 
 Since $K$ has positive distance to $\partial U$, it is clear that 
 if $\beta$ is an $\eps$-boundary of $U$ wrt.\ $\HC$ for $\eps>0$ small enough (as we may assume), then $\beta\cap K=\emptyset$. 
 So in order to control $\#(\beta\cap f^{-1}(\ga))$, we  have to   worry only about the intersections of $\beta$ with the arcs $\sigma_{m,j}$.
 
 Note that there are exactly $2n\cdot s=2n\cdot \ins(e,\ga)$ of these arcs. If we pull them back by the map $\varphi$, then we obtain 
 pairwise disjoint arcs in $\D$ ending in $\partial \D$. The statement now follows from the following fact,  whose precise justification we leave to the reader: if $\alpha_1, \dots, \alpha_M$
 with $M\in \N_0$  are pairwise disjoint arcs in $\D$ ending in $\partial\D$, then for each $0<\eps<1$ there exists a core curve $\beta'$  of 
 the annulus  $A_\eps= 
\{z\in \C: 1-\eps<|z|<1\}$ such that 
$ \# (\beta'\cap (\alpha_1\cup \dots \cup \alpha_M))=M$.  
\end{proof}

Now we are ready to provide a refined version of Lemma \ref{lem:preimage-bound}.

\begin{lemma}\label{lem:preimage-bound-refined}
Let $f\:S^2\ra S^2$ be a Thurston map with $\#\postf=4$, 
$\alpha$ and $\gamma$ be essential Jordan curves in 
$(S^2, \postf)$, $c$  be a  core arc of $\alpha$, and assume that $\#(c \cap\ga) =\ins(c, \ga)>0$.

Let  $\HC$ be a connected subgraph of $\GC\coloneq f^{-1}(c)$, 
  and $U$ be  a face of $\HC$ such that for 
small enough $\eps>0$   each 
$\eps$-boundary $\beta$ of $U$ wrt.\ $\HC$ is isotopic to $\alpha$ rel.\ $\postf$. Let 
$2n$ with $n \in \N$ be  the circuit length of $U$ in $\HC$.  

Then $k\le n$, where $k\in N_0$ denotes  the number of  pullbacks of $\gamma$ under $f$ that are isotopic to $\gamma$ rel.\ $\postf$. 
Moreover, if $\partial U\subset \HC$ meets a peripheral pullback of $\ga$ under $f$, then $k<n$. 
\end{lemma}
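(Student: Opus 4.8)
The idea is to mimic the counting argument in the proof of Lemma~\ref{lem:preimage-bound}, but to replace the role of the curve $\widetilde\alpha$ (whose image under $f$ was isotopic to $\gamma$ and met $f(\widetilde\alpha)$ transversely) by a carefully chosen $\eps$-boundary $\beta$ of $U$ with respect to $\HC$. First I would fix $\eps>0$ small enough that every $\eps$-boundary $\beta$ of $U$ wrt.\ $\HC$ is isotopic to $\alpha$ rel.\ $\postf$ (possible by hypothesis), and such that Lemma~\ref{lem:choicebd} applies with $e$ replaced by the core arc $c$. Applying Lemma~\ref{lem:choicebd} to $c$, $\gamma$, $\HC$, $U$ (noting $\ins(c,\gamma)=\#(c\cap\gamma)>0$ is assumed, so $c$ and $\gamma$ are already in minimal position), we obtain an $\eps$-boundary $\beta$ of $U$ wrt.\ $\HC$ with $\#(\beta\cap f^{-1}(\gamma))=2n\cdot\ins(c,\gamma)$. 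Since $\beta\sim\alpha$ rel.\ $\postf$, Lemma~\ref{lem:i-properties-curve-sphere}\ref{item:i1-sp} gives $\ins(\beta,\gamma)=\ins(\alpha,\gamma)=2\ins(c,\gamma)$.

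Next I would run the chain of inequalities exactly as in~\eqref{eq:degree-bound-curves}. Let $\gamma_1,\dots,\gamma_k$ be the pullbacks of $\gamma$ under $f$ isotopic to $\gamma$ rel.\ $\postf$; these are pairwise disjoint components of $f^{-1}(\gamma)$. Then
\[
2n\cdot\ins(c,\gamma)=\#(\beta\cap f^{-1}(\gamma))\ge\sum_{j=1}^{k}\#(\beta\cap\gamma_j)\ge\sum_{j=1}^{k}\ins(\beta,\gamma_j)=\sum_{j=1}^{k}\ins(\beta,\gamma)=k\cdot 2\ins(c,\gamma).
\]
Here the last-but-one equality uses that each $\gamma_j$ is isotopic to $\gamma$ rel.\ $\postf$, so $\ins(\beta,\gamma_j)=\ins(\beta,\gamma)$, and the final equality uses $\ins(\beta,\gamma)=2\ins(c,\gamma)$ from the previous paragraph. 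Since $\ins(c,\gamma)>0$, dividing gives $n\ge k$, as desired.

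For the refinement, suppose $\partial U$ (a subgraph of $\HC$, hence a subset of $\GC=f^{-1}(c)$) meets a peripheral pullback $\gamma_0$ of $\gamma$ under $f$. The point is to argue that then $\beta$, which hugs $\partial U$ inside $U$, also meets $\gamma_0$ (with at least one intersection point surviving no matter how small $\eps$ is), so that the first inequality above is strict: $\#(\beta\cap f^{-1}(\gamma))\ge\sum_{j=1}^k\#(\beta\cap\gamma_j)+\#(\beta\cap\gamma_0)>\sum_{j=1}^k\#(\beta\cap\gamma_j)$, yielding $n>k$. The hard part will be making this last step rigorous: one must show that a point of $\partial U\cap\gamma_0$ forces $\gamma_0$ to enter the face $U$ (since $\gamma_0$ is disjoint from $c$'s other lifts at that point only if $\gamma_0$ passes through a vertex of $\HC$, which it cannot as $\gamma_0\subset S^2\setminus\postf$ and the vertices of $\GC$ lie over $\partial c\subset\postf$; so $\gamma_0$ crosses transversely into $U$ through the interior of an edge of $\partial U$), and then that this local arc of $\gamma_0\cap U$ ending in $\partial U$ pulls back under $\varphi_{\HC,U}$ to an arc in $\D$ ending in $\partial\D$, which any core curve of a sufficiently thin annulus $A_\eps$ must cross an odd (in particular nonzero) number of times. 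This is essentially the same transversality-plus-minimal-position bookkeeping used in the proof of Lemma~\ref{lem:choicebd}, applied now to $\gamma_0$ in addition to the arcs $\sigma_{m,j}$; I would phrase it so that the $\eps$-boundary $\beta$ produced by Lemma~\ref{lem:choicebd} can be chosen to simultaneously realize $\#(\beta\cap f^{-1}(\gamma))=2n\cdot\ins(c,\gamma)$ on the non-peripheral part \emph{and} have $\beta\cap\gamma_0\ne\emptyset$, giving the strict inequality.
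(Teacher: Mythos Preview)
Your proposal is correct and follows essentially the same route as the paper: apply Lemma~\ref{lem:choicebd} to produce an $\eps$-boundary $\beta$ with $\#(\beta\cap f^{-1}(\gamma))=2n\cdot\ins(c,\gamma)$, use $\beta\sim\alpha$ and Lemma~\ref{lem:i-properties-curve-sphere}\ref{item:i1-sp} to rewrite this as $n\cdot\ins(\alpha,\gamma)$, and run the chain of inequalities exactly as you do to get $k\le n$.

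For the strict inequality your reasoning is also right, but the paper streamlines the bookkeeping in a way worth noting. You propose to argue separately that $\gamma_0$ crosses transversely into $U$ and hence meets $\beta$, ``in addition to the arcs $\sigma_{m,j}$''. In fact no additional work is needed: since $\gamma_0\subset f^{-1}(\gamma)$, the local arc of $\gamma_0$ in $U$ ending on $\partial U$ is already \emph{one of} the $\sigma_{m,j}$ identified in the proof of Lemma~\ref{lem:choicebd} (those arcs exhaust $f^{-1}(\gamma)\cap\cl(U)$ near $\partial U$). Hence one of the $2n\cdot\ins(c,\gamma)$ intersection points of $\beta$ with $f^{-1}(\gamma)$ automatically lies on $\gamma_0$ rather than on any $\gamma_j$, forcing the first inequality in your chain to be strict. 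Your phrase ``$\#(\beta\cap f^{-1}(\gamma))=2n\cdot\ins(c,\gamma)$ on the non-peripheral part'' is therefore slightly off: the equality is for all of $f^{-1}(\gamma)$, and the strictness comes precisely because $\gamma_0$ already accounts for at least one of those points.
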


\begin{proof}
 Let $\gamma_1,\dots, \gamma_k$ be the pullbacks of $\gamma$ under $f$ that are isotopic to $\gamma$ rel.\ $\postf$. 
Then by  Lemma~\ref{lem:choicebd}, for sufficiently small $\eps > 0$, we can find an $\eps$-boundary $\beta$ of $U$ wrt.\ $\HC$ such that 
$\beta\sim \alpha$ rel.\ $\postf$ and 
$$ \#(\beta\cap f^{-1}(\gamma)) = 2n \cdot \ins(c, \ga)= n\cdot \ins (\alpha, \ga),$$
where the last equality follows from Lemma \ref{lem:i-properties-curve-sphere}.
Hence, we have
\begin{equation}\label{eq:degree-bound-refined}
 n\cdot \ins (\alpha, \ga) = \#( \beta\cap  f^{-1}(\gamma))
\geq \sum_{j=1}^k \#( \beta \cap\gamma_j ) \geq \sum_{j=1}^k \ins(\beta, \gamma_j)=k \cdot  \ins(\alpha,\gamma).
\end{equation} 
Since $\ins(\alpha,\gamma)=2\cdot \ins(c, \ga)>0$, we conclude  that $k\le n$, as desired. 

To see the second statement, we have to revisit the proof of Lemma~\ref{lem:choicebd}. There we identified $2n\cdot  \ins(c,\gamma)=n\cdot \ins(\alpha,\gamma) $ distinct  arcs $\sigma$  in $U$ ending in $\partial U$ (they were called $\sigma_{m,j}$ in the proof). These arcs were subarcs of $f^{-1}(\ga)$ and accounted for all possible intersections of $\beta$ with $f^{-1}(\ga)$ for sufficiently small $\eps>0$; with a suitable choice of $\beta$ each of these arcs $\sigma$ gave precisely one such intersection point. Now if a peripheral pullback 
$ \widetilde \ga\sub f^{-1}(\ga)$ of $\gamma$ under
$f$ meets $\partial U$, then one of these arcs $\sigma$  is a subarc   
of  $ \widetilde \ga$. 
It follows that the first inequality in  \eqref{eq:degree-bound-refined} must be strict and so $k<n$. 
 \end{proof}

For the rest of this section, we fix  a Thurston map $f\: S^2\ra S^2$  with $\#\postf=4$, an essential Jordan curve $\alpha $ in $(S^2,  \postf)$, and core arcs   
 $a$ and $c$ of $\alpha$ that lie in different components of $\Sp\setminus\alpha$. We can view the set $\GC\coloneq f^{-1}(a\cup c)$ as a planar embedded graph in $S^2$ with the set of vertices $f^{-1}(\postf)$ and the edge set consisting of the lifts of $a$ and $c$ under $f$. Then $\GC$ is a bipartite graph.

Let $U$ be the unique connected component of $\Sp\setminus (a\cup c)$. Then $U$ is an annulus and $\alpha$ is its core curve.  The connected components $\widetilde{U}$ of $f^{-1}(U)$  are precisely the complementary components of   $\GC=f^{-1}(a \cup c)$ in $S^2$. It easily follows from the Riemann-Hurwitz formula (see \eqref{eq:RH}) that each $\widetilde{U}$ is an annulus, and that 
$f\:  \widetilde{U} \ra U$ is a covering map. Moreover,  each such annulus contains precisely one pullback $\widetilde \alpha$ of $\alpha$ under $f$.  

 This setup is illustrated in Figure \ref{fig_sphere1}. The points marked in black indicate the four postcritical points of $f$.  The  sphere on the right   contains  two core arcs $a$ and $c$ of a Jordan curve $\alpha$ in $(\Sp,P_f)$. 
On the left the lifts of $a$ and $c$ under  $f$ are shown in blue and magenta colors, respectively, and the pullbacks of $\alpha$ in green.

\begin{figure}[t]
\def\svgwidth{0.72\columnwidth}
\begingroup%
  \makeatletter%
  \providecommand\color[2][]{%
    \errmessage{(Inkscape) Color is used for the text in Inkscape, but the package 'color.sty' is not loaded}%
    \renewcommand\color[2][]{}%
  }%
  \providecommand\transparent[1]{%
    \errmessage{(Inkscape) Transparency is used (non-zero) for the text in Inkscape, but the package 'transparent.sty' is not loaded}%
    \renewcommand\transparent[1]{}%
  }%
  \providecommand\rotatebox[2]{#2}%
  \newcommand*\fsize{\dimexpr\f@size pt\relax}%
  \newcommand*\lineheight[1]{\fontsize{\fsize}{#1\fsize}\selectfont}%
  \ifx\svgwidth\undefined%
    \setlength{\unitlength}{506.2276715bp}%
    \ifx\svgscale\undefined%
      \relax%
    \else%
      \setlength{\unitlength}{\unitlength * \real{\svgscale}}%
    \fi%
  \else%
    \setlength{\unitlength}{\svgwidth}%
  \fi%
  \global\let\svgwidth\undefined%
  \global\let\svgscale\undefined%
  \makeatother%
  \begin{picture}(1,0.35185679)%
    \lineheight{1}%
    \setlength\tabcolsep{0pt}%
    \put(0,0){\includegraphics[width=\unitlength,page=1]{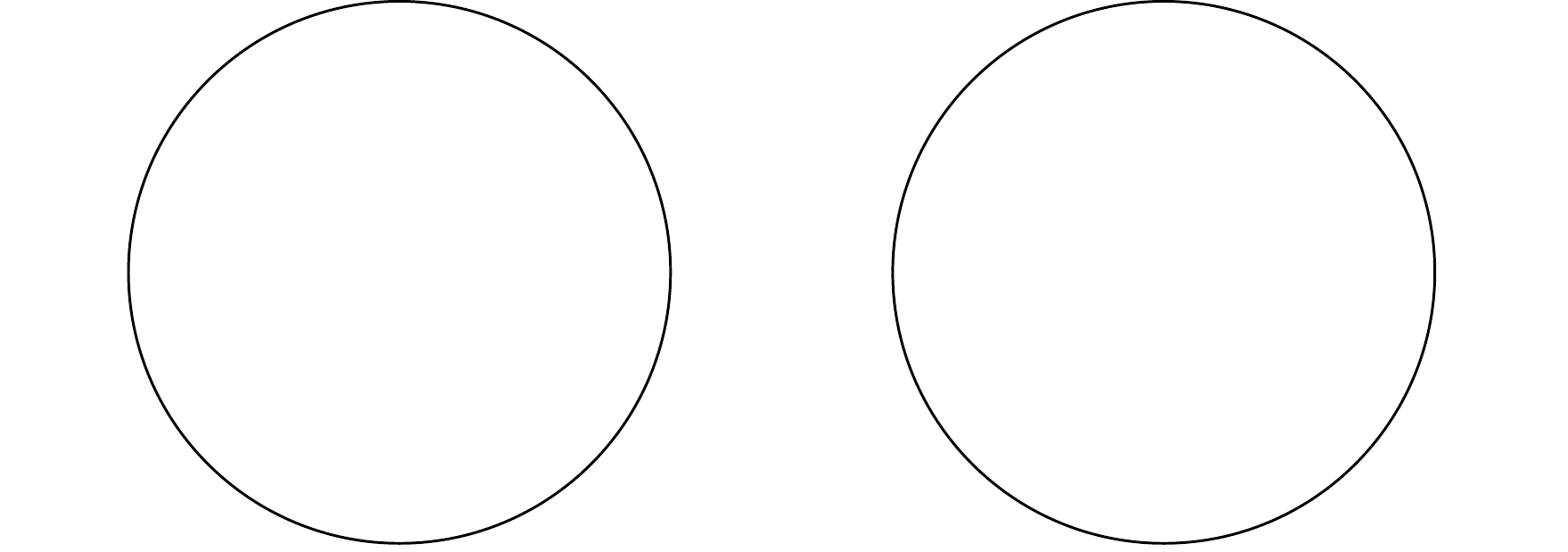}}%
    \put(0.49477032,0.21099379){\color[rgb]{0,0,0}\makebox(0,0)[lt]{\lineheight{1.25}\smash{\begin{tabular}[t]{l}$f$\end{tabular}}}}%
    \put(0,0){\includegraphics[width=\unitlength,page=2]{spherepics1.pdf}}%
    \put(0.76290601,0.11792558){\makebox(0,0)[lt]{\lineheight{1.25}\smash{\begin{tabular}[t]{l}\textcolor{anisgreen}{$\alpha\subset U$}\end{tabular}}}}%
    \put(0.73683407,0.29068824){\makebox(0,0)[lt]{\lineheight{1.25}\smash{\begin{tabular}[t]{l}\textcolor{anisred}{$c$}\end{tabular}}}}%
    \put(0.73327529,0.04681112){\makebox(0,0)[lt]{\lineheight{1.25}\smash{\begin{tabular}[t]{l}\textcolor{anisblue}{$a$}\end{tabular}}}}%
    \put(0.00109931,0.25743576){\makebox(0,0)[lt]{\lineheight{1.25}\smash{\begin{tabular}[t]{l}\textcolor{anisgreen}{$\talpha\subset \tU$}\end{tabular}}}}%
    \put(0,0){\includegraphics[width=\unitlength,page=3]{spherepics1.pdf}}%
  \end{picture}%
\endgroup%

\caption{A Thurston map $f$. The sphere on the right shows a Jordan curve $\alpha$ in $(\Sp, P_f)$ and two core arcs $a$ and $c$. The  sphere on the left shows the pullbacks of $\alpha$ under $f$ and the planar embedded graph $\GC=f^{-1}(a\cup c)$.}\label{fig_sphere1}
\end{figure}

We call a connected component  $ \widetilde{U}$ of   $f^{-1}(U)=S^2\setminus \GC$ {\em essential} or {\em peripheral}, depending on whether the unique pullback $\widetilde \alpha$ of $\alpha$ contained in $ \widetilde{U}$  is 
essential or peripheral in $(S^2, \postf)$, respectively. 
Each  boundary $\partial\widetilde{U}$ has exactly two connected components. One of them is mapped  by $f$ to $a$ and the other one to $c$; accordingly, we denote them by $\partial_a  \widetilde U$ and $\partial_c \widetilde U$, respectively. 
Then we have 
$$\partial  \widetilde U=\partial_a  \widetilde U\cup \partial_c  \widetilde U, \quad 
\partial  \widetilde U_a=f^{-1}(a)\cap \partial \widetilde U, \quad \text{and} \quad   \partial  \widetilde U_c=f^{-1}(c)\cap \partial  \widetilde U. $$ 

The sets $\partial_a \widetilde{U}$ and $\partial_c \widetilde{U}$ are subgraphs of $\GC$. Since $\widetilde{U}$ is a connected subset of  $S^2\setminus \GC\sub 
S^2\setminus  \partial_a \widetilde{U}$, there exists a unique face $V_a$ of $\partial_a \widetilde{U} $ (considered as a subgraph of $\GC$) such that 
$\widetilde U\sub V_a$. 
Similarly, there exists a unique face  $V_c$ of 
$\partial_c \widetilde U$ with $\widetilde U\sub V_c$. 
By definition, the {\em circuit length} of $\partial_a \widetilde U$ or 
 of $\partial_c \widetilde U$  is the circuit length of $V_a$ in $\partial_a \widetilde{U}$  or of $V_c$ in $\partial_c \widetilde{U}$, respectively.

Then the following statement is true.

\begin{lemma} \label{lem:deccirc} The circuit lengths of $\partial_a \widetilde{U}$ and $\partial_c \widetilde{U}$  are both equal to 
$2\cdot \deg(f\:  \widetilde{U} \ra U)$. 
\end{lemma}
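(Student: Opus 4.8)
The plan is to prove the equality for $\partial_a\widetilde U$; the case of $\partial_c\widetilde U$ follows by interchanging $a$ and $c$. Write $d\coloneq\deg(f\colon\widetilde U\to U)$. First I would note that $\partial V_a=\partial_a\widetilde U$ as subgraphs of $\GC$: every edge $\tilde e$ of the connected graph $\partial_a\widetilde U$ bounds $\widetilde U$ on at least one side, and since $\widetilde U\subseteq V_a$ that side lies in $V_a$, whence $\tilde e\subseteq\partial V_a$. By the description of the bounding circuit of a face in Section~\ref{subsec:graphs}, the circuit length of $V_a$ in $\partial_a\widetilde U$ equals $\sum_{\tilde e}\#\{\text{sides of }\tilde e\text{ on which }V_a\text{ lies}\}$, the sum running over the lifts $\tilde e$ of $a$ that lie in $\partial_a\widetilde U$. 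So it suffices to prove the two statements: (i) $V_a$ lies on a side of $\tilde e$ if and only if $\widetilde U$ does; and (ii) $\sum_{\tilde e}\#\{\text{sides of }\tilde e\text{ on which }\widetilde U\text{ lies}\}=2d$.

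For (i) I would first locate $V_a$. Since $\widetilde U$ is an annulus, $S^2\setminus\overline{\widetilde U}$ has exactly two components, with frontiers $\partial_a\widetilde U$ and $\partial_c\widetilde U$ respectively; call the first one $X_a$. Then $\overline{X_a}=X_a\cup\partial_a\widetilde U$, so $X_a$ is a connected component of $S^2\setminus\partial_a\widetilde U$, i.e. a face of $\partial_a\widetilde U$, disjoint from $\widetilde U$ and hence from $V_a$ (so in fact $V_a=S^2\setminus\overline{X_a}$). Now fix a lift $\tilde e\subseteq\partial_a\widetilde U$ and an interior point $x$ of $\tilde e$: in a small disk about $x$ the graph $\GC$ meets only $\tilde e$, so each of the two local sides of $\tilde e$ at $x$ lies in a single component of $S^2\setminus\GC$; that component is either $\widetilde U$, or else it is disjoint from $\overline{\widetilde U}$ and, since $x\in\partial_a\widetilde U=\partial X_a$, meets $X_a$ and is therefore contained in $X_a$. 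As $\widetilde U\subseteq V_a$ and $X_a\cap V_a=\emptyset$, a side of $\tilde e$ lies in $V_a$ exactly when it lies in $\widetilde U$. This proves (i).

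For (ii) I would carry out a direct count via the covering $f\colon\widetilde U\to U$. Pick an interior point $q_*$ of $a$; it is automatically not a critical value of $f$, since $\inte(a)\cap\postf=\emptyset$ and $\critf\subseteq f^{-1}(\postf)$. Choose a small closed disk $D_*$ around $q_*$ with $D_*\cap a$ a crosscut of $D_*$, so that $D_*\setminus a$ has two components $D_*^+,D_*^-$, both contained in $U$, and small enough that $f^{-1}(D_*)=\bigsqcup_{p\in f^{-1}(q_*)}\widetilde D_p$ with each $f|\widetilde D_p$ a homeomorphism onto $D_*$. The points of $f^{-1}(q_*)$ are in bijection with the lifts of $a$, each $p$ lying in the interior of a unique lift $\tilde e$, and $\widetilde D_p$ is split by $\tilde e$ into two half-disks, one mapped onto $D_*^+$ and one onto $D_*^-$, lying on the two sides of $\tilde e$ at $p$; such a half-disk lies in $\widetilde U$ precisely when the corresponding side of $\tilde e$ lies in $\widetilde U$. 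On the other hand $D_*^+$ is simply connected and contained in $U$, so $(f|\widetilde U)^{-1}(D_*^+)$ consists of exactly $d$ homeomorphic copies of $D_*^+$; that is, exactly $d$ of the half-disks over $D_*^+$ lie in $\widetilde U$, and likewise for $D_*^-$. Summing over the two signs yields $\sum_{\tilde e}\#\{\text{sides of }\tilde e\text{ on which }\widetilde U\text{ lies}\}=d+d=2d$, which is (ii); combined with (i) this gives the circuit length of $\partial_a\widetilde U$ equal to $2d$.

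The step I expect to demand the most care is (i). The set $V_a$ is a face of the auxiliary subgraph $\partial_a\widetilde U$ and can be strictly larger than $\widetilde U$—it may contain $\partial_c\widetilde U$ together with other complementary components of $\GC$—so it is not obvious a priori that it hugs $\widetilde U$ along each edge of $\partial_a\widetilde U$. The identification $V_a=S^2\setminus\overline{X_a}$, based on the two-component structure of $S^2\setminus\overline{\widetilde U}$, is what makes (i) precise; everything else, including (ii), is routine bookkeeping with the covering map.
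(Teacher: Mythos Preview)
Your proof is correct and follows essentially the same approach as the paper: both pick an interior point of $a$, look at a small transverse arc/disk and its lifts, and count that exactly $d$ lifts of each ``half'' land in $\widetilde U$, giving $2d$. The paper's proof asserts without justification that ``with suitable orientation of each arc $e_m$, the face $\widetilde U$ lies on the left of $e_m$'' (i.e.\ that $V_a$ and $\widetilde U$ lie on the same side of each edge), which is precisely your step~(i); so your version is actually more careful on the one point the paper glosses over, while your step~(ii) is the same covering count packaged with half-disks instead of half-arcs.
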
 

We call the identical  circuit lengths of $\partial_a \widetilde{U}$ and $\partial_c \widetilde{U}$ the 
{\em circuit length of  $\widetilde{U}$} (for fixed 
 $f$, $\alpha$, $a$, and $c$). 

\begin{proof} It is clear that the  
subgraph $\partial_a \widetilde{U}$ of $\GC$ is bipartite, and so 
$\partial_a \widetilde{U}$ has even  circuit length  $2n$ with $n\in\N$. Let  $d\coloneq \deg(f\:  \widetilde{U} \ra U)$.  It is enough to show that $2n=2d$, because the roles of  $\partial_a \widetilde{U}$
and  $\partial_c \widetilde{U}$ are symmetric, and so the same identity will then  also be true for the circuit length of  $\partial_c \widetilde{U}$. 

To see that $2n=2d$, we use a similar idea as in the proof of 
Lemma~\ref{lem:choicebd}.   We choose a point $p\in \inte(a)$ and an arc $\sigma\sub S^2\setminus c$ with $p\in \inte(\sigma)$ that 
meets  $a$ transversely in $p$, but has no other point with $a$ in common. Then $p$ splits 
$\sigma$ into two non-overlapping subarcs $\sigma^L$ and 
$\sigma^R$ with the common endpoint $p$ so that with some fixed orientation of $a$ the arc  $\sigma^L$ lies to the left and 
$\sigma^R$ lies to the right of $a$.

Let $(e_1,\dots,e_{2n})$ be a circuit in $\partial_a \widetilde U$ that traces the boundary $\partial V_a=\partial_a \widetilde U$,  where $V_a$ is the unique face of $\partial_a \widetilde U$ with $\widetilde U\sub V_a$. With suitable orientation of each arc $e_m$,
the face $\widetilde U$ lies on the left of $e_m$.  
We now consider the set $f^{-1}(\sigma)\cap \cl(\widetilde U)$ near 
$\partial_a \widetilde U$. If we choose $\sigma$ small enough (as we may), then as in the proof of Lemma~\ref{lem:choicebd} 
we see that  
\begin{equation}\label{eq:sigmapull} 
 f^{-1}(\sigma)\cap \cl( \widetilde U)= \bigcup_{m=1}^{2n} \sigma_{m},\end{equation}
where each $\sigma_{m}$ is an arc in $\widetilde U$ ending in 
$e_m\sub \partial_a \widetilde U$ such that $f$ is a homeomorphism
 from $\sigma_{m}$ to 
$\sigma^L$ or $\sigma^R$  depending on whether 
 $f|e_m\: e_m\ra a$ is orientation-preserving or orientation-reversing.  
 If we remove  from each arc  $\sigma_{m}$ its endpoint in $e_m$, then the  half-open arcs obtained are all disjoint. On the other hand,
since $f\: \widetilde U \to U$ is a $d$-to-$1$ covering map, there are precisely $d$ distinct  lifts of $\sigma^L\setminus \{p\}$ and $d$ 
distinct lifts of $\sigma^R\setminus \{p\}$  under $f$ contained in 
$\widetilde U$. These must be precisely the half-open arcs obtained from $\sigma_{m}$, $m=1, \dots, 2n$. It follows that 
$2n=2d$, as desired.
\end{proof} 

Suppose  $\widetilde U$ is  an essential  component of $f^{-1}(U)$.
We consider a circuit in  $\partial \widetilde{U}\sub \GC$ and denote by $\HC\sub \GC$ the underlying  graph of the circuit. In the following, we will often conflate the circuit with its underlying graph $\HC$, where we think of $\HC$ as traversed as a circuit in some way.   
Since $\widetilde{U}$ is a connected set in $S^2\setminus \GC\sub
S^2\setminus \HC$, there exists a unique face $V$ of $\HC$  such that 
$\widetilde U\sub V$. By definition, for $0<\eps<1$, an $\eps$-boundary $\beta$ of 
$\widetilde U$ wrt.\ $\HC$ is an $\eps$-boundary  of $V$  wrt.\ $\HC$. 
This is an  abuse of terminology, because even for small $\eps>0$ such an  $\eps$-boundary $\beta$ may not lie in $\widetilde U$, but it is convenient in the following.  Note that for small enough $\eps>0$ such $\eps$-boundaries  for fixed  $\HC$ have the same isotopy type rel.\ $\postf$. 

By definition the {\em essential circuit length} of $\widetilde{U}$ 
is the minimal length of all circuits  $\HC$  in $\partial \widetilde{U}$ 
such that for all small enough $\eps>0$ each $\eps$-boundary of $\widetilde{U}$ 
wrt.\  $\HC$ is isotopic to a core curve of  $\widetilde{U}$ rel.\ $\postf$. As we will see momentarily, if we run through $\partial_a \widetilde{U}$ and $\partial_c \widetilde{U}$ as circuits, then they have this property  and so the essential circuit length of $\widetilde{U}$ is well-defined. 
  We call a circuit $\HC$ in $\partial \widetilde U$ that realizes the 
essential circuit length an {\em essential circuit} for $\widetilde{U}$.

\begin{lemma} \label{lem:esscirc} We have the inequality 
$$ \text{circuit length of  $\widetilde{U}$} \ge 
 \text{essential circuit length of  $\widetilde{U}$}. $$  
  \end{lemma}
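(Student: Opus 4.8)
The inequality follows once we exhibit \emph{at least one} circuit in $\partial\widetilde U$ with the defining property of an essential circuit — namely that, for all small $\eps>0$, each $\eps$-boundary of $\widetilde U$ with respect to that circuit is isotopic rel.\ $\postf$ to a core curve of $\widetilde U$ — and whose length equals the circuit length of $\widetilde U$ (which, by Lemma~\ref{lem:deccirc}, is $2\deg(f\:\widetilde U\to U)$). The natural candidates are the two circuits tracing $\partial_a\widetilde U$ and $\partial_c\widetilde U$. Since the essential circuit length is defined as a minimum over all circuits in $\partial\widetilde U$ with the isotopy property, once we check that one of these circuits qualifies, the essential circuit length is at most its length, which is the circuit length of $\widetilde U$.

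The plan is as follows. First I would take $\HC\coloneq\partial_a\widetilde U$, traversed as a circuit tracing the boundary $\partial V_a$ of the unique face $V_a$ of $\partial_a\widetilde U$ containing $\widetilde U$. By Lemma~\ref{lem:deccirc} this circuit has length $2\deg(f\:\widetilde U\to U)$, which is exactly the circuit length of $\widetilde U$, so the length bookkeeping is immediate. Second — and this is the substance — I would show that for all sufficiently small $\eps>0$, each $\eps$-boundary $\beta$ of $V_a$ with respect to $\HC$ is isotopic rel.\ $\postf$ to the core curve $\widetilde\alpha$ of $\widetilde U$. The point is that $\widetilde U\subset V_a$, and in fact $V_a\setminus\widetilde U$ consists only of the complementary components of $\GC$ adjacent to $\partial_c\widetilde U$ (together with the arcs of $f^{-1}(c)$ lying inside $V_a$); none of these complementary regions contains a postcritical point, because $\postf\subset f^{-1}(\postf)=$ the vertex set of $\GC$, and all vertices lie on $\partial_a\widetilde U=\partial V_a$ or on $\partial_c\widetilde U$. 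Hence $V_a\setminus\clos(\widetilde U)$ contains no point of $\postf$, so an $\eps$-boundary $\beta$, which for small $\eps$ lies in a thin neighborhood of $\partial V_a$ inside $V_a$ and separates $\partial V_a$ from the rest of $V_a$ (by the conformal-extension machinery of Lemma~\ref{lem:choicebd} and the definition of $\eps$-boundary), cobounds an annulus with $\widetilde\alpha$ that avoids $\postf$. Lemma~\ref{lem:isocrit} then gives $\beta\sim\widetilde\alpha$ rel.\ $\postf$, i.e.\ $\beta$ is isotopic to a core curve of $\widetilde U$.

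The main obstacle I anticipate is the bookkeeping needed to justify that $\beta$ and $\widetilde\alpha$ genuinely cobound a postcritical-point-free annulus. One has to argue carefully that the region of $V_a$ between $\beta$ (sitting near $\partial V_a$) and $\widetilde U$ is indeed an annulus — which uses that $V_a$ is simply connected (a face of a connected subgraph, cf.\ Section~\ref{subsec:graphs}), that $\widetilde U$ is an annulus with $\partial_a\widetilde U$ as one boundary component sitting ``at'' $\partial V_a$, and that the $\eps$-boundary is a core curve of the model annulus $A_\eps$ under $\varphi_{\HC,V_a}$. Once this is set up, the containment $\postf\cap(V_a\setminus\clos(\widetilde U))=\emptyset$ is the clean input, and Lemma~\ref{lem:isocrit} closes the argument. (If one prefers, the same reasoning applied to $\HC=\partial_c\widetilde U$ works verbatim, and either choice suffices since the essential circuit length is a minimum.) This shows the essential circuit length is well-defined and bounded above by the circuit length of $\widetilde U$, which is the claim.
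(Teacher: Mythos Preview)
Your overall plan matches the paper's: take $\HC=\partial_a\widetilde U$ (traversed as the boundary circuit of its face $V_a\supset\widetilde U$), note its length is the circuit length of $\widetilde U$ by Lemma~\ref{lem:deccirc}, and check that small $\eps$-boundaries $\beta$ are isotopic rel.\ $\postf$ to the core curve $\widetilde\alpha$. So the strategy is right.

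The gap is in your justification of the isotopy. You assert that ``all vertices lie on $\partial_a\widetilde U=\partial V_a$ or on $\partial_c\widetilde U$'' and hence that $V_a\setminus\cl(\widetilde U)$ contains no point of $\postf$. This is false. In fact one can check that $V_a=\widetilde U\cup K_c$, where $K_c$ is the component of $S^2\setminus\widetilde U$ containing $\partial_c\widetilde U$; since $\widetilde\alpha$ is essential, $K_c$ contains exactly two of the four postcritical points, and there is no reason these must lie on $\partial_c\widetilde U$. (On the far side of $\partial_c\widetilde U$ inside $V_a$ there are, in general, further pieces of $\GC$ with their own vertices.) Your appeal to Lemma~\ref{lem:isocrit} is built on this claim as the ``clean input'', so the argument as written does not go through.

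The paper's route sidesteps this entirely and is shorter: since $\partial_c\widetilde U$ is a compact subset of the open face $V_a$, for all small $\eps$ the $\eps$-boundary $\beta$ separates $\partial V_a=\partial_a\widetilde U$ from $\partial_c\widetilde U$. As $\beta$ is disjoint from both boundary components of the annulus $\widetilde U$ and separates them, $\beta$ is itself a core curve of $\widetilde U$---so in particular it is isotopic rel.\ $\postf$ to $\widetilde\alpha$. No analysis of where postcritical points sit inside $V_a\setminus\cl(\widetilde U)$, and no call to Lemma~\ref{lem:isocrit}, is needed.
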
 
 
For example, consider the annulus $\widetilde U$ containing the pullback $\widetilde \alpha$ in Figure \ref{fig_sphere1}. Then the circuit length of $ \widetilde U$ equals $6$, while the essential circuit length of $ \widetilde U$ equals $4$. 
 
  \begin{proof} Consider $\partial_a \widetilde{U}$ as a circuit in $\partial \widetilde{U}$. Let $V_a$ be the unique face of $\partial_a \widetilde U$ that contains $\widetilde U$. Then, for sufficiently small $\eps>0$, each  $\eps$-boundary $\beta$ of $V_a$ wrt.\ $\partial_a \widetilde U$ necessarily separates $\partial_a \widetilde{U}$ and $\partial_c \widetilde{U}$, and is thus  a core curve of    
$\widetilde{U}$. The statement follows.   \end{proof}

Let $\widehat f$ be a Thurston map obtained from $f$ by blowing up some set of arcs $E$ in $S^2\setminus f^{-1}(\postf)$.
Under certain natural assumptions on the arcs in $E$, we want to 
 describe the components of $\widehat f^{-1}(U)$ and their properties in terms of the components of $f^{-1}(U)$.  We first formulate suitable conditions that allow such a comparison. 
  
  \begin{definition}[$\alpha$-restricted blow-up conditions]\label{def:blowup_conditions} Let  $f\: S^2\ra S^2$  be 
a Thurston map    with $\#\postf=4$, $\alpha $ be  an essential Jordan curve  in $(S^2,  \postf)$, and  
 $a$ and $c$ be core arcs of $\alpha$ that lie in different components of $\Sp\setminus\alpha$. 
Suppose 
 $E\ne \emptyset$ is  a finite set of arcs in $(\Sp,f^{-1}(\postf))$ satisfying the blow-up conditions, that is, the interiors of the arcs in $E$ are disjoint and $f\: e\to f(e)$ is a homeomorphism for each $e\in E$. 
 
 We say that $E$ satisfies the $\alpha$-\emph{restricted blow-up conditions} if  
 \begin{equation} \label{eq:alphares} 
\ins(f(e), \alpha)=\#(f(e)\cap \alpha) = 1\quad \text{and} \quad f(\inte(e)) \cap a =  \emptyset =f(\inte(e))\cap c 
\end{equation} for each $e\in E$. 
 \end{definition}

  In other words, for each $e\in E$ the arc $f(e)$ is in minimal position with respect to $\alpha$ and  intersects $\alpha$ only once, and $f(\inter(e))=\inter(f(e))$ belongs to the annulus $U=S^2\setminus (a\cup c)$. Note that the endpoints of $f(e)$ lie in $\postf\sub a\cup c=\partial U$; see Figure \ref{fig_sphere2} for an illustration.
  
\begin{figure}[t]
\def\svgwidth{0.76\columnwidth}
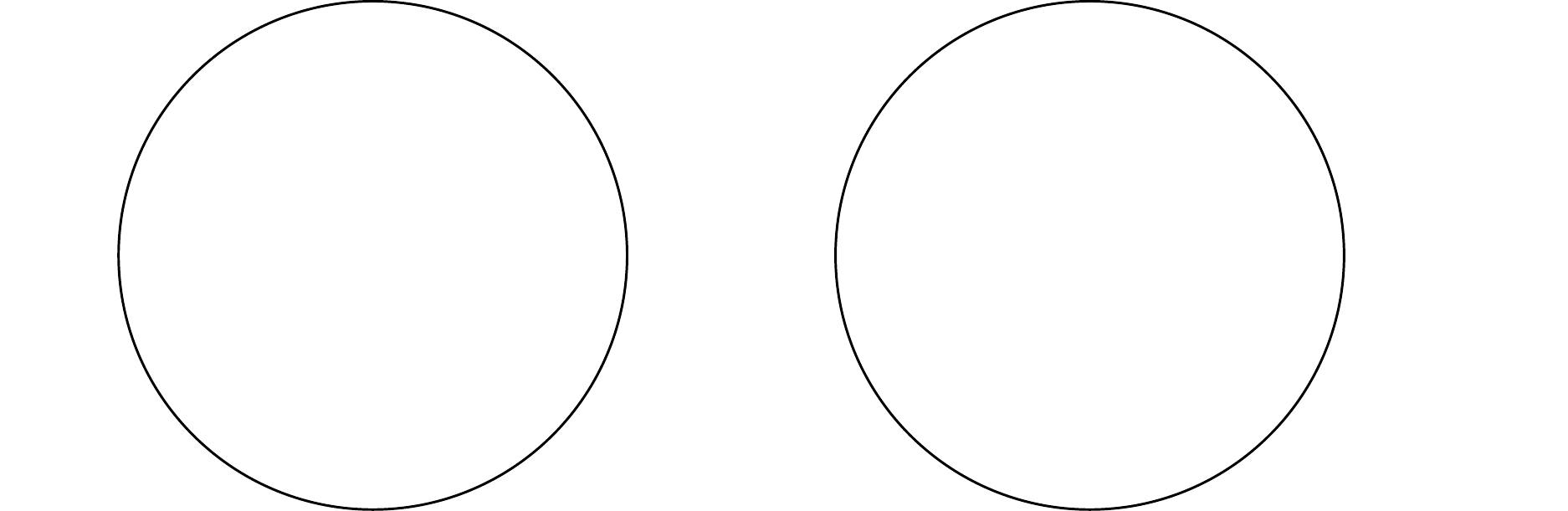
\caption{The Thurston map $f$ from Figure \ref{fig_sphere1} and a set $E=\{e_1,e_2\}$ of arcs  in $(\Sp,f^{-1}(\postf))$ satisfying the  {$\alpha$-restricted} blow-up conditions.}\label{fig_sphere2}
\end{figure}
  
 Condition~\eqref{eq:alphares} is somewhat artificial, because it depends not only on $\alpha$, but also on the choices of $a$ and $c$. One can show that up to isotopy  it can be replaced by the more natural condition  $\ins(f(e),\alpha)=1$ for all $e\in E$. Since the justification of this claim involves some topological machinery that is beyond the scope of the paper, we prefer to work with 
 \eqref{eq:alphares}. 
 
Now the following statement is true.

\begin{figure}[b]
\def\svgwidth{0.72\columnwidth}
\begingroup%
  \makeatletter%
  \providecommand\color[2][]{%
    \errmessage{(Inkscape) Color is used for the text in Inkscape, but the package 'color.sty' is not loaded}%
    \renewcommand\color[2][]{}%
  }%
  \providecommand\transparent[1]{%
    \errmessage{(Inkscape) Transparency is used (non-zero) for the text in Inkscape, but the package 'transparent.sty' is not loaded}%
    \renewcommand\transparent[1]{}%
  }%
  \providecommand\rotatebox[2]{#2}%
  \newcommand*\fsize{\dimexpr\f@size pt\relax}%
  \newcommand*\lineheight[1]{\fontsize{\fsize}{#1\fsize}\selectfont}%
  \ifx\svgwidth\undefined%
    \setlength{\unitlength}{511.96044545bp}%
    \ifx\svgscale\undefined%
      \relax%
    \else%
      \setlength{\unitlength}{\unitlength * \real{\svgscale}}%
    \fi%
  \else%
    \setlength{\unitlength}{\svgwidth}%
  \fi%
  \global\let\svgwidth\undefined%
  \global\let\svgscale\undefined%
  \makeatother%
  \begin{picture}(1,0.34357609)%
    \lineheight{1}%
    \setlength\tabcolsep{0pt}%
    \put(0,0){\includegraphics[width=\unitlength,page=1]{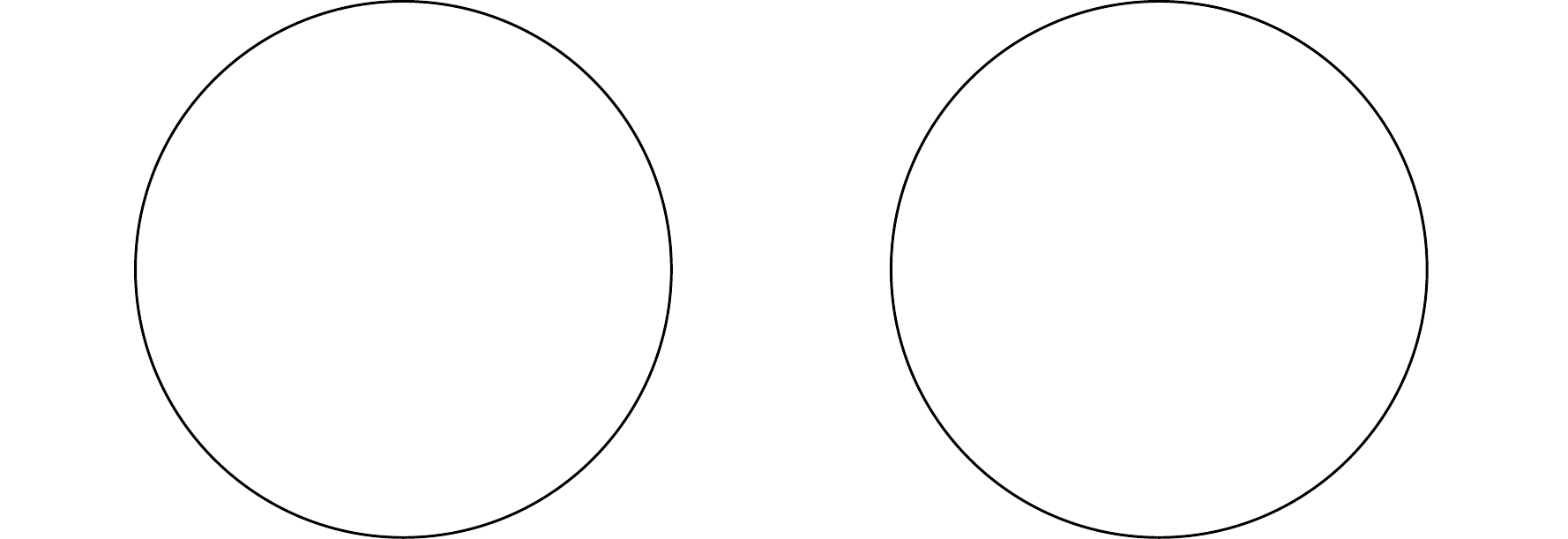}}%
    \put(0.49457356,0.20429049){\color[rgb]{0,0,0}\makebox(0,0)[lt]{\lineheight{1.25}\smash{\begin{tabular}[t]{l}$\fhat$\end{tabular}}}}%
    \put(0,0){\includegraphics[width=\unitlength,page=2]{spherepics3.pdf}}%
    \put(0.75970674,0.11226441){\makebox(0,0)[lt]{\lineheight{1.25}\smash{\begin{tabular}[t]{l}\textcolor{anisgreen}{$\alpha\subset U$}\end{tabular}}}}%
    \put(0.73392675,0.28309257){\makebox(0,0)[lt]{\lineheight{1.25}\smash{\begin{tabular}[t]{l}\textcolor{anisred}{$c$}\end{tabular}}}}%
    \put(0.73040782,0.04194627){\makebox(0,0)[lt]{\lineheight{1.25}\smash{\begin{tabular}[t]{l}\textcolor{anisblue}{$a$}\end{tabular}}}}%
    \put(-0.00021761,0.24821947){\makebox(0,0)[lt]{\lineheight{1.25}\smash{\begin{tabular}[t]{l}\textcolor{anisgreen}{$\halpha\subset \hU$}\end{tabular}}}}%
    \put(0,0){\includegraphics[width=\unitlength,page=3]{spherepics3.pdf}}%
  \end{picture}%
\endgroup%

\caption{A Thurston map $\widehat f$ obtained from the Thurston map $f$ in Figures~\ref{fig_sphere1} and \ref{fig_sphere2} by blowing up the arcs $e_1$ and $e_2$ with multiplicities $m_{e_1} = 2$ and $m_{e_2}=1$, respectively. }\label{fig_sphere3}
\end{figure}

 \begin{lemma} \label{lem:blowup/annuli}
 Let  $f\: S^2\ra S^2$  be 
a Thurston map    with $\#\postf=4$, $\alpha $ be  an essential Jordan curve  in $(S^2,  \postf)$, 
 $a$ and  $c$ be core arcs of $\alpha$ that lie in different components of $\Sp\setminus\alpha$.   Suppose a set  $E$   of  arcs   in $(\Sp,f^{-1}(\postf))$ 
 satisfies the  $\alpha$-restricted blow-up conditions and we are 
 given multiplicities $m_e\in \N$ for $e\in E$.
 
Then the Thurston map $\widehat f$ obtained from $f$ by blowing up each arc $e\in E$ with multiplicity $m_e$ can be constructed so that it satisfies the following conditions: 
 \begin{enumerate}[label=\text{(\roman*)},font=\normalfont]
\item \label{blow1} $\GC=f^{-1}(a\cup c)$ is a subgraph of $\widehat 
\GC\coloneq\widehat f^{-1}(a\cup c)$. 
\item \label{blow2} Each complementary annulus $\widehat U$ of  $\widehat 
\GC$ is contained in a unique complementary annulus $\widetilde U$ 
of $\GC$. Moreover, the assignment $\widehat U\mapsto 
\widetilde U$ is a bijection between the complementary annuli of 
$\widehat \GC$ and of $\GC$.  
  \setcounter{mylistnum}{\value{enumi}}
\end{enumerate}
Let $\widehat U$ and $\widetilde U$ be corresponding annuli as in 
\textnormal{\ref{blow2}}. Then the following statements are true: 
 \begin{enumerate}[label=\text{(\roman*)},font=\normalfont]
  \setcounter{enumi}{\value{mylistnum}}
  \item\label{blow3}  The core curves of $\widehat U$ and of $\widetilde U$ 
 are isotopic rel.\ $\postf=P_{\widehat f}$. In particular,  
 $\widehat U$ is essential if and only if $\widetilde U$  is essential. 
  \item\label{blow4}  If $\widehat U$ (and hence also $\widetilde U$) is essential, then the essential circuit lengths of  $\widehat U$ and  $\widetilde U$ are the same. Moreover, if $\HC$ is an essential circuit for 
  $\widehat U$, then $\HC$ is an essential circuit for
   $\widetilde U$. In particular,  $\HC\sub \partial  \widetilde U\sub\partial  \widehat U$. 
 \end{enumerate}  \end{lemma}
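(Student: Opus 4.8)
The plan is to run the blow-up construction of Section~\ref{subsec:blow-up-general} with all topological choices made so that the old graph $\GC\coloneq f^{-1}(a\cup c)$ survives verbatim inside $\widehat\GC\coloneq\widehat f^{-1}(a\cup c)$. First I would use Proposition~\ref{prop:geod-arcs} to assume $\GC$, the arcs of $E$, and $a,c$ are piecewise geodesic. The $\alpha$-restricted blow-up conditions give $f(\inte(e))=\inte(f(e))\subset U\coloneq\Sp\setminus(a\cup c)$ and $f(\partial e)\subset\postf\subset a\cup c$, so $\inte(e)\cap\GC=\emptyset$ and $e$ meets $\GC$ only at its endpoints $\partial e$; at each such endpoint the germ of $e$ is transverse to every $\GC$-edge. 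Hence one can refine conditions (A1)--(A5) so that in addition $\cl(W_e)\cap\GC=\partial e$ for every $e\in E$. Then $\GC$ is disjoint from $\bigcup_eW_e$ and from $\bigcup_e\inte(D_e)$, the pseudo-isotopy satisfies $h_1^{-1}(\GC)=\GC$, and $\widehat f=f\circ h_1$ near $\GC$, so $\widehat f(\GC)=f(\GC)\subset a\cup c$; thus $\GC\subset\widehat\GC$ as a set, and since $\widehat f=f$ on $\GC$ the vertices of $\GC$ (which are $f^{-1}(\postf)$) remain vertices of $\widehat\GC$ and no new vertex lies in the interior of an old edge, so $\GC$ is a subgraph of $\widehat\GC$.

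\textbf{The structure of $\widehat\GC$, and~\ref{blow1},~\ref{blow2}.} Off $\bigcup_e\inte(D_e)$ one has $\widehat\GC=h_1^{-1}(\GC)=\GC$. Inside a component $D_e^k$ of $D_e$ the map $\widehat f=\varphi_k$ is a homeomorphism onto $\Sp\setminus f(e)$, so $\widehat\GC\cap\inte(D_e^k)=\varphi_k^{-1}\big((a\cup c)\setminus f(e)\big)$. Since $\ins(f(e),\alpha)=\#(f(e)\cap\alpha)=1$, the arc $f(e)$ crosses $\alpha$ exactly once, so one endpoint of $f(e)$ is an endpoint $p_a$ of $a$ and the other an endpoint $p_c$ of $c$; hence $(a\cup c)\setminus f(e)=(a\setminus\{p_a\})\cup(c\setminus\{p_c\})$ is a disjoint union of two half-open arcs, and $\widehat\GC\cap D_e^k$ is the union of two edges ("slits"), one running from the point of $\partial e$ over $p_a$ to the new vertex $v_a^k\coloneq\varphi_k^{-1}(p_a')$ ($p_a'$ the other endpoint of $a$), the other from the point of $\partial e$ over $p_c$ to $v_c^k\coloneq\varphi_k^{-1}(p_c')$. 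Each $v_a^k,v_c^k$ is a degree-one leaf of $\widehat\GC$ lying in $\inte(D_e)$, hence is not postcritical (recall $P_{\widehat f}=\postf$ is disjoint from $\bigcup_e\inte(D_e)$). Thus $\widehat\GC$ is exactly $\GC$ together with, for each $e\in E$, precisely $2m_e$ new leaf vertices and $2m_e$ new pendant slit-edges; this proves~\ref{blow1}. For~\ref{blow2}: $\GC\subset\widehat\GC$ puts each complementary annulus $\widehat U$ of $\widehat\GC$ into a unique complementary annulus $\widetilde U$ of $\GC$, and every $\widetilde U$ meets $\Sp\setminus\widehat\GC$, so $\widehat U\mapsto\widetilde U$ is surjective; since the numbers of vertices and edges both grow by $\sum_e2m_e$, Euler's formula $V-E+F=2$ leaves the number of faces unchanged, and a surjection between finite sets of equal cardinality is a bijection.

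\textbf{Core curves:~\ref{blow3}.} For corresponding annuli $\widehat U\subset\widetilde U$, the set $\widetilde U\setminus\widehat U$ consists of the slit-edges contained in $\widetilde U$, each an arc in $\widetilde U$ ending in $\partial\widetilde U$ whose free endpoint is a non-postcritical leaf and which does not cross $\widetilde U$; removing such "whiskers" from an annulus does not affect its core curves, so a core curve of $\widehat U$ is a core curve of $\widetilde U$, and any two core curves of the annulus $\widetilde U\subset\Sp\setminus\postf$ are isotopic rel.\ $\postf$ by Lemma~\ref{lem:isocrit}. In particular $\widehat U$ is essential iff $\widetilde U$ is. (As for $\GC$, the Riemann--Hurwitz formula shows the complementary components of $\widehat\GC$ are annuli, so all of this makes sense.)

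\textbf{Essential circuits:~\ref{blow4}, and the main obstacle.} Assume $\widehat U$ (hence $\widetilde U$) essential; then $\partial\widetilde U\subset\GC\subset\widehat\GC$ and $\partial\widehat U=\partial\widetilde U\cup(\text{slit-edges in }\widetilde U)\supset\partial\widetilde U$. For a circuit $\HC$ with realization in $\partial\widetilde U$, the face of $\HC$ containing $\widetilde U$ also contains $\widehat U$, so the $\eps$-boundary of $\widehat U$ wrt.\ $\HC$ is literally that of $\widetilde U$ wrt.\ $\HC$; by~\ref{blow3} it is a core curve of $\widehat U$ iff it is a core curve of $\widetilde U$. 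Hence, for such circuits, essential for $\widehat U$ $\Leftrightarrow$ essential for $\widetilde U$, so the essential circuit length of $\widehat U$ is at most that of $\widetilde U$. Conversely, take a circuit $\HC$ in $\partial\widehat U$ essential for $\widehat U$ of minimal length. If $\HC$ used a slit-edge $\delta$, then, its free endpoint being a leaf, $\HC$ would traverse $\delta$ in a pendant excursion $\dots,q,\delta,v,\delta,q,\dots$; deleting the excursion gives a strictly shorter closed walk, and removing a pendant edge changes neither the face containing $\widehat U$ nor, up to isotopy rel.\ $\postf$, the corresponding $\eps$-boundary, so the shorter walk is still essential for $\widehat U$, contradicting minimality. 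Thus a minimal essential circuit $\HC$ for $\widehat U$ uses no slit-edge, so $\HC\subset\partial\widetilde U$, and by the equivalence above $\HC$ is an essential circuit for $\widetilde U$; this yields $\HC\subset\partial\widetilde U\subset\partial\widehat U$, the equality of essential circuit lengths, and the claimed statement. I expect the main obstacle to be precisely this last reduction: making rigorous that deleting a pendant edge from a boundary-tracing circuit leaves the isotopy class (rel.\ $\postf$) of its $\eps$-boundary unchanged, which requires a careful analysis of the conformal map $\varphi_{\HC,V}$ under the deformation. A secondary delicate point is the very first step --- producing the thin regions $W_e$ with $\cl(W_e)\cap\GC=\partial e$ that remain compatible with all requirements of the blow-up construction.
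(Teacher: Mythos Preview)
Your proof is correct and follows essentially the same route as the paper's: choose the regions $W_e$ so that $\GC$ lies outside them (the paper phrases this as $W_e\subset\widetilde U$), identify the new edges of $\widehat\GC$ as pendant ``sticks'' attached at $\partial e$, deduce \ref{blow2}--\ref{blow3} from the fact that $\widehat U$ is just $\widetilde U$ with these sticks removed, and for \ref{blow4} argue that a minimal essential circuit for $\widehat U$ cannot traverse a stick since deleting the pendant excursion yields a shorter essential circuit. The step you flag as the main obstacle---that removing a pendant edge with non-postcritical leaf leaves the isotopy class of the $\eps$-boundary unchanged---is likewise asserted without further detail in the paper, so you have not overlooked anything the paper actually provides.
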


In \ref{blow1} it is understood that the planar embedded graph  
$\GC=f^{-1}(a\cup c)\sub S^2$ has the vertex set $f^{-1}(P_f)$ and 
that $\widehat \GC=\widehat f^{-1}(a\cup c)$ has the  vertex set $\widehat{f}^{-1}(P_{\widehat{f}})\supset f^{-1}(P_f)$.

To illustrate the  lemma, we consider the Thurston map $\widehat f$
that is in indicated in Figure~\ref{fig_sphere3}  and obtained from the Thurston map $f$ in Figure \ref{fig_sphere1} by blowing up the arcs $e_1$ and $e_2$ in Figure \ref{fig_sphere2} with multilicities 
$m_{e_1}=2$ and $m_{e_2}=1$. Here, the  lifts of $a$ and $c$ under the blown-up map $\widehat f$ are shown in blue and magenta colors on the left sphere, respectively.   Comparing these figures, we 
  immediately see that in this case  the statements  of  the lemma are true. 
 
 \begin{proof}  Let  $e\in E$ be arbitrary. Since $E$ satisfies the conditions in Definition~\ref{def:blowup_conditions}, the set $f(e)$ is an arc in $(S^2, 
 P_f)$, and so $f(e)$  has its endpoints in $P_f$. By  \eqref{eq:alphares} the arc $f(e)$  meets $\alpha$ precisely once and is in minimal position with respect to $\alpha$. In particular, $f(e)$ meets $\alpha$ transversely by Lemma~\ref{lem:transverse}. 
 This implies that  the endpoints of $f(e)$  lie in different core arcs  of $\alpha$, and so one endpoint of $f(e)$ lies in $a$ and the other one in $c$. It follows that $e$ has one endpoint in 
 $f^{-1}(a)$  and the other one in $f^{-1}(c)$. 
 
The  set $\inter(e)$ belongs to 
 a unique annulus  $\widetilde U$ obtained as a  complementary 
 component of  $\GC=f^{-1}(a\cup c)$. Then one endpoint of $e$ is in $\partial_a \widetilde U=f^{-1}(a)\cap \partial \widetilde U$ and the other one in $\partial_c \widetilde U=f^{-1}(c)\cap \partial \widetilde U$. In the blow-up construction described in Section \ref{subsec:blow-up-general}, we can choose the open Jordan region $W_e$ so that $W_e \subset \widetilde U$ for each 
 $e\in E$ (of course, here the annulus  $\widetilde U$ depends on $e$). Now we make choices of the subsequent ingredients in  the blow-up construction as discussed in Section \ref{subsec:blow-up-general}. That is, for each fixed arc $e\in E$, we choose a closed Jordan region $D_e$ inside $W_e$. It is  is subdivided into $m=m_e$ components $D_e^1,\dots, D_e^m$. In addition, we also choose a pseudo-isotopy $h\: \Sp\times\I\to \Sp$ satisfying conditions \ref{cond:B1}--\ref{cond:B4}, as well as maps 
  $\varphi_k\:D_e^k\to \Sp$, $k=1,\dots, m=m_e$, satisfying conditions \ref{cond:C1} and \ref{cond:C2}. Let $\widehat f$ be the Thurston map obtained by blowing up each  arc  $e\in E$ with  multiplicity $m_e$ according to these choices. We claim that $\widehat f$ satisfies all the conditions in the statement. 
 
 It immediately follows from \ref{cond:B3} and the definition of $\widehat f$  that $\GC=f^{-1}(a\cup c)$ is a subgraph of $\widehat \GC\coloneq\widehat f^{-1}(a\cup c)$, and so statement  \ref{blow1} is true. 

We have $\widehat \GC\setminus \GC \subset \bigcup_{e\in E} D_e$. Condition \ref{cond:C1} now implies that for each $e\in E$ and each $k=1,\dots, m=m_e$, the set $\widehat \GC \cap D^k_e$ consists of two disjoint edges, one of which is homeomorphically mapped onto $a$ and the other one onto $c$ by $\widehat f$. We will call these edges  $a$-{\em sticks} and  $c$-{\em sticks}, respectively. Each closed Jordan region  $D_e$ contains exactly $m_e$ $a$-sticks, which have a common endpoint in $\partial e \cap f^{-1}(a)$, and exactly $m_e$ $c$-sticks with a common endpoint in $\partial e \cap f^{-1}(c)$. The edge set of $\widehat \GC$ consists of all the edges of $\GC$ together with all the $a$-sticks  and $c$-sticks.

Each complementary component $\widehat U$ of $\widehat \GC$
is equal to a unique complementary component $\widetilde U$ of $\GC$ with all the $a$- and $c$-sticks removed that are 
contained in $\cl(\widetilde U)$. Statement  \ref{blow2} follows. Furthermore, since $\postf\cap \widetilde U = \emptyset$, statement \ref{blow3} follows as well. 

To prove \ref{blow4}, let $\widetilde U$ and $\widehat U$ be corresponding essential annuli as in \ref{blow2}. Viewing  
$\partial \widetilde U$ as a subgraph of $\GC $ and $\partial  \widehat U$ as a subgraph of $\widehat \GC$, we see that $\partial \widetilde U$ is a subgraph of $\partial \widehat U$. The additional  edges of $\partial \widehat U$ are exactly the $a$- and $c$-sticks contained in
 $\cl(\widetilde U)$.  It  follows from the definition that the essential circuit length of $\widehat U$ is greater than or equal to  the essential circuit length of $\widetilde U$. 

Let $\HC$ be an essential circuit for $\widehat U$ and suppose 
 $\HC$ contains an $a$- or  $c$-stick $\sigma$.  Then  one of the endpoints of $\sigma$ has  degree $1$ in $\partial \widehat U$, and so $\sigma$ must appear in two consecutive positions in the circuit $\HC$. Omitting these two occurrences of $\sigma$ from $\HC$ we get a shorter  circuit $\HC'$ in $\partial \widehat U$ such that for all small enough $\eps>0$ each $\eps$-boundary of $\widehat U$ wrt.\ $\HC'$ is isotopic to a core curve of $\widehat{U}$ rel.\ $P_{\widehat f} =\postf$. This contradicts the choice of $\HC$, and it follows that  $\HC$ does not contain any $a$- or  $c$-sticks. Consequently, $\HC\subset\partial \widetilde  U \subset \partial  \widehat U$, and the definition of the essential circuit length together with \ref{blow3} imply that $\HC$ is an essential circuit for $\widetilde  U$. Statement  \ref{blow4} follows.
 \end{proof}

\section{Eliminating obstructions by blowing up arcs}
\label{sec:elim-obstr}
The goal of this section is to show that the blow-up surgery can be applied to an obstructed Thurston map $f$ with four postcritical points in such a way that the resulting map $\widehat{f}$ is realized by a rational map. The precise formulation is given in Theorem~\ref{thm:blow-up-obstr} (see also Remark~\ref{rem:killobs}). We will prove this statement by contradiction. For this  we assume that  
$\widehat{f}$ has an obstruction, and will carefully analyze some related mapping degrees.
This leads  to a very tight situation, where in some inequalities we actually have equality. From this we want to conclude that 
$f$ has a parabolic orbifold, in contradiction  to our hypotheses in   
 Theorem~\ref{thm:blow-up-obstr}. We first formulate a related criterion  for  parabolicity.

\begin{lemma}\label{lem:parabolicity}
Let $f\:S^2\ra S^2$ be a Thurston with $\#\postf=4$. Suppose there exists a Jordan curve $\alpha$ in $(S^2, \postf)$ such that  the following conditions are true:
\begin{enumerate}[label=\text{(\roman*)},font=\normalfont]
\item $\alpha$ is an obstruction for $f$.

\smallskip
\item $\alpha$ has no peripheral pullbacks under $f$. 

\smallskip  
\item If  we choose core arcs $a$ and $c$ of $\alpha$ in different components of $S^2\setminus \alpha$ and consider the graph $\GC=f^{-1}(a\cup c)$, then $\GC$ has precisely 
$n\in \N$ essential complementary components $U_1, \dots, U_n$ with core curves isotopic to $\alpha$ rel.\ $\postf$. Moreover, we assume  the essential circuit length of $U_j$ is  equal to $2n$ for each $j=1, \dots, n$. 
\end{enumerate} 
Then $f$ has a parabolic 
orbifold. \end{lemma}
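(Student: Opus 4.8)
The strategy is to show that the hypotheses force \emph{every} pullback of $\alpha$ to map with degree exactly $n$ and to force a rigid combinatorial picture that, via the Riemann--Hurwitz formula, leaves no room for critical points off $f^{-1}(\postf)$; this will yield $f^{-1}(\postf)=\critf\cup\postf$, and then the second part of Lemma~\ref{lem:DHLem2} gives parabolicity (we are in the case $\#\postf=4$, so the hypothesis of that lemma is met). First I would set $d\coloneq\deg(f)$ and unwind the obstruction hypothesis. Since $\alpha$ has no peripheral pullbacks, the essential pullbacks $\widetilde\alpha_1,\dots,\widetilde\alpha_n$ (one in each essential complementary annulus $U_j$, and there are no others by hypothesis (iii) combined with the fact that every pullback lies in a complementary annulus of $\GC$) satisfy
\[
1\le \lambda_f(\alpha)=\sum_{j=1}^n\frac{1}{\deg(f\:\widetilde\alpha_j\to\alpha)}.
\]
Next I would apply Lemma~\ref{lem:preimage-bound-refined} with $\gamma=\alpha$ itself: for each $j$, the annulus $U_j$ has essential circuit length $2n$, so taking $\HC$ an essential circuit for $U_j$ (whose $\eps$-boundaries are isotopic to $\alpha$ rel.\ $\postf$ by definition of essential circuit length) and using $\ins(c,\alpha)>0$ — which holds because $c$ is a core arc of $\alpha$, hence $\ins(\alpha,c)=2\ins(a\text{-side core},c)$-type identities from Lemma~\ref{lem:i-properties-curve-sphere} give a positive number once we know $\alpha$ is essential and $\alpha$ genuinely separates — we conclude that the number $k$ of pullbacks of $\alpha$ isotopic to $\alpha$ is at most $n$. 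But that number is exactly $n$. So all inequalities in the proof of Lemma~\ref{lem:preimage-bound-refined} are equalities, which forces $\deg(f\:\widetilde\alpha_j\to\alpha)=n$ for every $j$ (each essential pullback maps with degree exactly $n$) and also forces $\lambda_f(\alpha)=n\cdot\frac1n=1$ together with the alternation/minimal-position picture being exactly as tight as possible.

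The heart of the argument is then a counting step using Lemma~\ref{lem:deccirc} and the Riemann--Hurwitz formula. By Lemma~\ref{lem:deccirc}, the circuit length of $U_j$ equals $2\deg(f\:U_j\to U)$ where $U=S^2\setminus(a\cup c)$; by Lemma~\ref{lem:esscirc}, circuit length $\ge$ essential circuit length $=2n$, so $\deg(f\:U_j\to U)\ge n$; but since each $\widetilde\alpha_j\subset U_j$ and $f\:\widetilde\alpha_j\to\alpha$ has degree $n$ while $f\:U_j\to U$ restricts to this on the core, we get $\deg(f\:U_j\to U)=n$ exactly, and hence the circuit length of each $U_j$ equals $2n$ too — that is, circuit length $=$ essential circuit length for each $U_j$. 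Now I would count preimages of a generic point $q\in\alpha\subset U$. On one hand $\sum_{U'}\deg(f\:U'\to U)=d$ where the sum is over \emph{all} complementary annuli $U'$ of $\GC$ (essential and peripheral); the $n$ essential ones contribute $n\cdot n=n^2$. On the other hand, a parallel count of edges: the graph $\GC=f^{-1}(a\cup c)$ has $2d$ edges (each of $a$, $c$ has $d$ lifts), and the total circuit length summed over all faces of $\GC$ equals $2\cdot\#(\text{edges})=4d$ since each edge borders faces with total multiplicity $2$. I would show that the equality "circuit length $=$ essential circuit length $=2n$" on the essential annuli, combined with the fact that peripheral annuli must carry the remaining edges with no critical points to spare, forces the Euler-characteristic deficit in Riemann--Hurwitz to be concentrated exactly at $\postf$. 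Concretely: apply \eqref{eq:RH} to each $U_j$, $\chi(U_j)=0=n\cdot\chi(U)=0$ (consistent, $\chi(U)=0$), so $\sum_{p\in U_j\cap\critf}(\deg(f,p)-1)=0$, i.e. \emph{no critical points in the interiors of the essential annuli}; similarly for peripheral annuli one gets the critical points there are accounted for; all remaining critical points lie on $\GC\subset f^{-1}(a\cup c)\subset f^{-1}(\postf)$. The tightness of the circuit-length equalities then pins down that $\GC$'s vertices off $\postf$ carry no "extra" branching beyond what forces $f^{-1}(\postf)\subset\critf\cup\postf$.

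The main obstacle I anticipate is the bookkeeping in that last counting step: translating "every essential pullback has degree exactly $n$ and every essential annulus has circuit length exactly $2n$" into the clean statement $f^{-1}(\postf)\subseteq\critf\cup\postf$. The subtlety is handling the peripheral complementary annuli of $\GC$ and the vertices of $\GC$ that are not postcritical — one must rule out that branching "hides" at such a vertex while still keeping $\alpha$ an obstruction with the prescribed essential circuit lengths. I expect the clean way to do this is to compare the two global counts — $d$ as a sum of local degrees over preimages of a point on $\alpha$, and $2d$ (resp.\ $4d$) as edge/half-edge counts on $\GC$ — and observe that the extremal configuration (all essential annuli achieving the minimal possible circuit length $2n$ and all degrees equal to $n$) saturates Riemann--Hurwitz in a way that is only possible when $f^{-1}(\postf)=\critf\cup\postf$, at which point Lemma~\ref{lem:DHLem2} closes the argument. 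Once that equality is in hand the conclusion is immediate, so essentially all the work is in the rigidity/counting lemma, and that is where I would spend the bulk of the write-up.
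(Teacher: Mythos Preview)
Your overall plan---reduce to $f^{-1}(\postf)\subseteq\critf\cup\postf$ and invoke Lemma~\ref{lem:DHLem2}---is the right one, and matches the paper. But there are two problems.

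First, a minor one: you cannot apply Lemma~\ref{lem:preimage-bound-refined} with $\gamma=\alpha$, because that lemma requires $\ins(c,\gamma)>0$, and a core arc $c$ of $\alpha$ is disjoint from $\alpha$, so $\ins(c,\alpha)=0$. This paragraph is unnecessary anyway: hypothesis~(iii) already gives exactly $n$ essential annuli, and the degree bound $\deg(f\:\widetilde\alpha_j\to\alpha)\ge n$ follows directly from Lemmas~\ref{lem:deccirc} and~\ref{lem:esscirc} (circuit length $\ge$ essential circuit length $=2n$). Then $1\le\lambda_f(\alpha)\le n\cdot(1/n)=1$ forces all degrees to equal $n$ and circuit length $=$ essential circuit length $=2n$ for each $U_j$. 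So this part is salvageable once you drop the appeal to Lemma~\ref{lem:preimage-bound-refined} and reorder.

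Second, the real gap: your global counting sketch does not prove $f^{-1}(\postf)\subseteq\critf\cup\postf$. The Riemann--Hurwitz identity on each annulus is $0=n\cdot 0$, which is vacuous, and the edge/half-edge counts you propose are automatically satisfied and carry no information about which vertices of $\GC$ lie in $\postf$. You say the extremal configuration ``saturates Riemann--Hurwitz in a way that is only possible when $f^{-1}(\postf)=\critf\cup\postf$'', but this is exactly what needs to be proved, and you never do.

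The paper's argument here is local and uses the essential circuit length directly. Take $v\in f^{-1}(\postf)$, a vertex of $\GC$. If $\deg_\GC(v)\ge 2$ then $v\in\critf$. If $\deg_\GC(v)=1$, let $e$ be the unique incident edge; since $\alpha$ has no peripheral pullbacks, $\GC=\bigcup_j\partial U_j$, so $e\subset\partial U_j$ for some $j$. The circuit of length $2n$ tracing the relevant component of $\partial U_j$ must traverse $e$ twice consecutively (since $v$ is a leaf). Deleting those two occurrences yields a circuit of length $2n-2$. If $v\notin\postf$, an $\eps$-boundary with respect to this shorter circuit is still isotopic to $\alpha_j\sim\alpha$ rel.\ $\postf$, contradicting that the essential circuit length of $U_j$ equals $2n$. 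Hence $v\in\postf$. The key idea you are missing is that the equality ``circuit length $=$ essential circuit length'' is exploited not through a global count but by noting that any degree-$1$ vertex off $\postf$ would let you \emph{shorten} the essential circuit.
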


\begin{proof}  Each $U_j$ contains precisely one pullback $\alpha_j$ of $\alpha$ under $f$. The curves $\alpha_1, \dots, \alpha_n$ are all the pullbacks of  $\alpha$ under $f$.
Then it follows from our assumptions that 
\begin{align*}
2\deg(f\: \alpha_j \ra \alpha)&=2\deg(f\:U_j\ra U)\\ 
&= \text {circuit length of $U_j$} \quad \text{(by 
Lemma~\ref{lem:deccirc})} \\
&\ge \text {essential circuit length of $U_j$} \quad \text{(by 
Lemma~\ref{lem:esscirc})} \\
&= 2n,
\end{align*} 
and so $\deg(f\: \alpha_j \ra \alpha)\ge n$ for $j=1, \dots, n$. On the other hand, 
$\alpha$ is an obstruction for $f$, and so
$$ 1\le \lambda_f(\alpha)= \sum_{j=1}^n \frac{1} 
{\deg(f\: \alpha_j \ra \alpha)}\le n/n=1. $$
It follows that we have equality in all previous inequalities. In particular, 
$$\text {circuit length of $U_j$}=
 \text {essential circuit length of $U_j$} =2n$$
 for $j=1, \dots, n$. 
 
We want to apply the second part of Lemma~\ref{lem:DHLem2}, that is, we want to show that $f^{-1}(\postf)\sub \critf  \cup \postf$. To see this, let 
 $v\in f^{-1}(\postf)$ be arbitrary. Then $v$ is a vertex of $\GC=f^{-1}(a\cup c)$. If $v$ is incident with two or more edges in $\GC$, then $v\in \critf$.

Otherwise, $v$ is the endpoint of precisely one edge $e$ in $\GC$, and so $\deg_{\GC}(v)=1$. 
We claim that then $v\in \postf$; to see this, we argue by contradiction and assume that $v\not \in \postf$. Since $\alpha$ has no peripheral pullbacks, we have 
$$ e\sub \GC =\bigcup_{j=1}^n \partial U_j, $$
and so $e\sub \partial U_j$ for some $U_j$. 
Then $e$ is contained in a circuit $(e_1,\dots, e_{2n})$ of length $2n$ that traces one of the components of $\partial U_j$. Since $\deg_{\GC}(v)=1$, the circuit must traverse $e$ twice with opposite orientations, that is, the edge $e$ appears precisely  in  two consecutive entries in the cycle $(e_1,\dots, e_{2n})$.  Erasing these two occurrences from the cycle, we obtain a new circuit in $\partial U_j\subset \GC$ with length $2n-2$. Let   $\HC$ denote the   underlying subgraph of $\GC$ corresponding to this shortened circuit   and let $U$ be the face of $\HC$ that contains $U_j$. Since the endpoint $v$ of $e$ does not belong to $\postf$, for every small enough  $\eps>0$ the  $\eps$-boundary of $\HC$  wrt.\  $U$  is isotopic  to the curve $\alpha_j$ rel.\ $\postf$.   
Then the essential circuit length of $U_j$ is  $\le 2n-2$, contradicting the fact that 
$2n$ is the essential circuit length of  $U_j$ by our hypotheses. 
So we must have $v\in \postf$.

It follows that $f^{-1}(\postf)\sub \critf  \cup \postf$, and so $f$ has a parabolic orbifold by Lemma~\ref{lem:DHLem2}.  
\end{proof}

 We are now ready to prove our main result. 
\begin{proof}[Proof of Theorem \ref{thm:blow-up-obstr}] 
 As in the statement, suppose  $f\:\Sp\to\Sp$ is a Thurston map with $\#\postf=4$ and a hyperbolic orbifold. We assume  that $f$ has an obstruction given by a Jordan curve $\alpha$ in $(S^2, \postf)$. We choose  core arcs $a$ and $c$ for $\alpha$ that lie in different components of $S^2\setminus \alpha$, and assume that $E\ne \emptyset$ is a finite set of arcs in $(\Sp,f^{-1}(\postf))$ that satisfies the $\alpha$-restricted blow-up conditions as in 
 Definition~\ref{def:blowup_conditions}. 
 
We assume that we obtained a Thurston map $\widehat{f}\: S^2\ra S^2$  by blowing up arcs in $E$ (with some multiplicities) so  that $\lambda_{\widehat{f}}(\alpha)<1$. Then 
$P_{\widehat{f}}=\postf$ and $\widehat{f}$ has a hyperbolic orbifold (see Lemma~\ref{lem:blowup} and Remark~\ref{rem:hyporb}). 
Up to replacing $\widehat{f}$ with a Thurston equivalent map, we may also assume 
that the statements in Lemma~\ref{lem:blowup/annuli}
are true for the map $\widehat{f}$.  
We now argue by contradiction and assume that  $\widehat{f}$ is not realized by a rational map. 
Then by Theorem~\ref{thm:Thurston} the map $\widehat{f}$ has an  obstruction given by a Jordan curve $\gamma$ in $(S^2, \postf)$.

  We set $U=S^2\setminus(a\cup c)$.
 Since $E$ satisfies the $\alpha$-restricted blow-up conditions, we have  $\#(f(e)\cap \alpha) = 1$ and $f(\inte(e)) \cap a = f(\inte(e))\cap c = \emptyset$ for each $e\in E$. In other words, $f(e)$ intersects $\alpha$ only once and $\inter(f(e))$ belongs to $U$. Then each arc in $E$ intersects only one pullback of $\alpha$ and only once.

 Since $\gamma$ is  an obstruction for $\widehat f$, but $\alpha $ is not, the curves $\alpha$ and $\ga$  are not isotopic  rel.\ $P_{\widehat{f}}=\postf$. So  we have $\ins(\alpha, \ga)>0$ for intersection numbers in $(S^2, P_f)$ as follows from Lemma~\ref{lem:isocrit}. By 
Lemma~\ref{lem:i-properties-curve-sphere}~\ref{item:i1-sp} we have 
   \begin{equation}\label{eq:lemprereq1}
  \ins(a,\ga)= \ins(c,\ga) =\tfrac12 \ins(\alpha, \ga)>0.
   \end{equation}
As follows from  Lemma~\ref{lem:i-properties-curve-sphere}~\ref{item:i2-sp}, by replacing  $\ga$ with an isotopic curve rel.\ $P_{\widehat{f}}=\postf$ if necessary, we may also assume 
 that 
 \begin{equation}\label{eq:lemprereq2}
 \# (\alpha\cap \ga)=\ins(\alpha,\ga),\   \# (a\cap \ga)=\ins(a,\ga),\   \# (c\cap \ga)=\ins(c,\ga)
\end{equation}
   and that the points in the non-empty and finite  sets   $a\cap \ga$ and $c\cap \ga$ alternate on $\ga$. 
   
 We denote by  $\alpha_1,\dots,\alpha_n$ with $n\in \N$ the pullbacks of $\alpha$ under $f$ that are isotopic to $\alpha$ rel.\ $\postf$.  
Now we consider the graphs $ \GC= f^{-1}(a\cup c)$ and $\widehat \GC=\widehat f^{-1}(a\cup c)$ as in Section~\ref{sec:esscirc}. By Lemma~\ref{lem:blowup/annuli}, $\GC$ is a subgraph of $\widehat{\GC}$. Moreover, the following 
facts are true for their complementary components. Each  $\alpha_j$ is a core curve in an essential  annulus 
$U_j$ that is a component of $S^2\setminus \GC$. Each 
$U_j$ contains precisely one component $\widehat U_j$
of $S^2\setminus \widehat \GC$. This component is essential and 
contains precisely one essential pullback $\widehat \alpha_j$  of $\alpha$ under $\widehat f$.  The essential circuit length of $U_j$ is the same as the essential circuit length of $\widehat U_j$. 
The curves $\widehat \alpha_1, \dots, \widehat \alpha_n$ are precisely all the distinct essential pullbacks of $\alpha$ under $\widehat f$. They  are isotopic to $\alpha$ rel.\ $\postf$.

Let   $\gamma_1,\dots,\gamma_k$ with $k\in \N$ be the pullbacks of $\gamma$ under $\widehat{f}$ that are isotopic to $\gamma$ rel.\ $P_{\widehat{f}}=\postf$.
Applying Lemma~\ref{lem:deccirc} and Lemma~\ref{lem:preimage-bound-refined} (for the latter  \eqref{eq:lemprereq1} and  \eqref{eq:lemprereq2}
are important)   to an essential circuit 
for  $\widehat U_j$, we see that  
\begin{align} \label{eq:circlenf} 
\deg(f\: \alpha_j\ra \alpha)&= \deg(f\: U_j\ra U)\\
&=\tfrac12\cdot\text {circuit length of $U_j$} \notag 
\\ &\ge \tfrac12\cdot  \text {essential circuit length of $U_j$} 
\notag
\\& =  \tfrac12\cdot  \text {essential circuit length of $\widehat U_j$}\notag \\
&\ge k \notag
\end{align}
for $j=1, \dots, n$.

On the other hand, $\alpha$ has  $n$ distinct essential 
pullbacks $\widehat \alpha_1, \dots, \widehat \alpha_n$ under $\widehat f$, and so Lemma~\ref{lem:preimage-bound} implies that 
$\deg(\widehat f\:\ga_m\to\ga)\ge n $ for $m=1,\dots, k$ (again 
\eqref{eq:lemprereq1} and \eqref{eq:lemprereq2} are used here). 
Since $\alpha$ and $\ga$ are obstructions for
$f$ and  $\widehat{f}$, respectively, we conclude that 
\begin{align*}1&\leq\lambda_{f}(\alpha)=\sum_{j=1}^n \frac{1}{\deg(f\:\alpha_j\to\alpha)}\leq n/k
 \end{align*} 
and
\begin{align*} 1&\leq\lambda_{\widehat{f}}(\gamma)=\sum_{m=1}^k \frac{1}{\deg(\widehat{f}\:\gamma_m\to\gamma)}\leq {k}/{n}.
 \end{align*} 
It follows that  $k=n$, which forces 
$\deg(f\:\alpha_j\to\alpha)=\deg(\widehat{f}\:\gamma_j\to\gamma)=n$ for $j=1,\dots,n$.
If we combine this with \eqref{eq:circlenf}, then we also see 
that 
\begin{align}
  \text {circuit length of $U_j$} &=  \text {essential circuit length of $U_j$}  \label{eq:circuit_length_relation} \\ &= \text {essential circuit length of $\widehat U_j$} =2n \notag
  \end{align}  
for $j=1, \dots, n$. 

We now want to apply Lemma~\ref{lem:parabolicity} to our map $f$ and the obstruction $\alpha$. In order to verify the hypotheses of  Lemma~\ref{lem:parabolicity}, it  remains to show that $\alpha$ has no peripheral pullbacks under $f$, or equivalently, no peripheral pullbacks under $\widehat f$ 
(see Lemma~\ref{lem:blowup/annuli}~\ref{blow3}).  

We argue by contradiction and assume that  
$\alpha$ has some peripheral pullbacks under $\widehat f$.
Then there  exists at least one peripheral  annulus  in the complement of $\widehat \GC$. Such an annulus is disjoint  from each annulus $\widehat U_j$. We can then travel from a point $p$ of such a peripheral annulus to a point in the set  $\widehat U_1 \cup 
\dots \cup \widehat U_n$ along an arc $\sigma$  in $S^2\setminus \widehat f^{-1}
(P_{\widehat{f}})$ that crosses each edge in the graph $\widehat \GC$ transversely. Then there is a first point $q$ on $\sigma$ 
where we enter the closure $M$ of $\widehat U_1 \cup 
\dots \cup \widehat U_n$. The point   $q$ is necessarily an 
 interior point of an edge $e$ of $\widehat \GC$ contained in the boundary $\partial \widehat U_j$ for some $j\in \{1, \dots, n\}$.
 Interior points of the subarc of $\sigma$ between $p$ and $q$ that are close to $q$ do not lie in $M\cup \widehat \GC$. Hence such points must belong to a peripheral component $\widehat U$ 
 of $\widehat \GC$. Then necessarily $e\sub \partial \widehat U$. 

In other words, there exists an edge $e$ in the graph 
$ \widehat \GC$ that belongs to the boundary of an essential annulus $\widehat U_j$ and a peripheral annulus $\widehat U$. 
Clearly, $\widehat f(e)=a$ or  $\widehat f(e)=c$.  In the following, we will assume that $\widehat f(e)=c$, that is, $e\subset \partial_c \widehat U_j\cap \partial_c \widehat U$; the other case, $\widehat f(e)=a$,  is completely analogous.

Since $\ins(c,\ga) = \#(c \cap \ga)>0$, there exists a pullback 
$\widehat \ga$ of $\ga$ under $\widehat f$ that meets $e$ transversely. Consequently, this pullback
$\widehat \ga$ meets both  $\widehat U_j$ and $\widehat U$.
Since $\widehat f\:\widehat \ga \ra \ga$ is a covering map and the points in $a\cap \ga \ne \emptyset$ and $c\cap \ga \ne \emptyset$ alternate on $\ga$, the points in $\widehat f^{-1}(a) \cap  \widehat \ga\ne \emptyset $ 
and $\widehat f^{-1}(c) \cap \widehat \ga\ne \emptyset $ alternate on 
$\widehat \ga$. This implies that the curve $\widehat \ga$ also meets the sets   $\partial_a \widehat U_j$ and $\partial_a \widehat U$, and hence  both components of the boundary of  $\widehat U_j$ and of $\widehat U$. We conclude   that   $\widehat \ga$ meets the core curve 
$\widehat \alpha_j$ of $\widehat U_j$ and the core curve 
$\widehat \alpha$ of $\widehat U$. Note that  $\widehat \alpha$ is a peripheral pullback of $\alpha$ under $\widehat f$. In order to show  that this is impossible, we consider two cases.

\smallskip
{\em Case 1:} $\widehat \ga$ is an essential pullback of $\ga$ under $\widehat f$, say $\widehat \ga=\ga_m$ for some $m\in \{1, \dots, n\}$. Then Lemma~\ref{lem:preimage-bound} (for the map $\widehat f$, and  $\alpha$, $\widehat \ga$ in the roles of $\ga$, $\widetilde \alpha$, respectively) shows 
$n<\deg(\widehat f\: \widehat \ga \ra \ga)$, because $\alpha$ has $n$ essential pullbacks and $ \widehat \ga $ meets a peripheral pullback of $\alpha$. On the other hand, we know that 
$\deg(\widehat f\: \widehat \ga \ra \ga)=\deg(\widehat f\: \ga_m \ra \ga)=n$. This is a contradiction. 

\smallskip
{\em Case 2:} $\widehat \ga$ is a peripheral  pullback of $\ga$ under $\widehat f$. Let  $\HC\coloneq \partial_c  U_j $.  Then it follows from Lemma \ref{lem:blowup/annuli} that 
 $\HC \subset \partial_c  \widehat U_j$. Moreover,  \eqref{eq:circuit_length_relation} implies that  $\HC$ (considered as a circuit) realizes the essential circuit lengths of $U_j$ and $\widehat U_j$, which are both equal to $2n$.

 Now 
  $e\sub \partial_c  U_j=\HC$, and so $\HC$ meets the peripheral pullback $\widehat \ga$ of $\ga$ under $\widehat f$. The second part of 
 Lemma~\ref{lem:preimage-bound-refined} applied to $\HC$ implies that the number $k$ of essential pullbacks of $\ga$ under $\widehat f$ is less than $n$, contradicting $k=n$.

 \smallskip To summarize, these contradictions show that 
  $\alpha$ has no peripheral pullbacks under $\widehat f$, and hence no peripheral pullbacks under $f$ by
  Lemma~\ref{lem:blowup/annuli}~\ref{blow3}.    So we can apply 
 Lemma~\ref{lem:parabolicity} and conclude that $f$ has a parabolic orbifold. This is yet another contradiction, because $f$ has a hyperbolic orbifold by our hypotheses.  This shows that our initial assumption that $\widehat f$ has an obstruction is false. Hence $\widehat f$ is realized by a rational map. This completes the proof of Theorem~\ref{thm:blow-up-obstr}. 
 \end{proof}

 \begin{rem} \label{rem:killobs} 
Let $f\: S^2 \ra S^2$ be a Thurston map with $\#P_f=4$
and a hyperbolic orbifold, and suppose $f$ has an  obstruction represented  by a Jordan curve $\alpha$ in $(S^2, P_f)$. Then there  always exist a set of arcs $E\neq \emptyset$ in $(S^2, f^{-1}(P_f))$ satisfying the $\alpha$-restricted blow-up conditions and  multiplicities $m_e$, $e\in E$, such  that the corresponding blown-up map $\widehat f$ satisfies $\lambda_{\widehat f}(\alpha) < 1$. We are then exactly in the setup of Theorem~\ref{thm:blow-up-obstr}. 

To see this, we first fix some core arcs $a$ and $c$ of $\alpha$ lying in different components of $S^2\setminus \alpha$. We now choose an arc $e_0$ in $(S^2,P_f)$ with 
$\ins(e_0,\alpha)=\#(e_0 \cap \alpha) = 1$ and $\inte(e_0)\subset \Sp\setminus (a\cup c)$. Let $E$ be the (non-empty) set of all lifts of $e_0$ under $f$. Then $E$ is a set of arcs in $(S^2, f^{-1}(P_f))$ and  it is clear that 
$E$ satisfies the $\alpha$-restricted blow-up conditions.
If $\widetilde \alpha$ is any pullback of $\alpha $ under $f$, then there exists at least one arc $ e \in E$ that meets 
$\widetilde \alpha$ (necessarily in an interior point of $e$).  Blowing up the arc $e$ with some multiplicity $m_e\in \N$ increases the mapping degree for the corresponding pullback $\widehat \alpha$ under  $\widehat f$ by $m_e$ and does not change the isotopy class of this pullback, that is, 
$[\widehat \alpha] = [\widetilde \alpha]$ rel.\ $P_{\widehat f}= P_f$ (this easily follows from Lemma~\ref{lem:blowup/annuli} and its proof).  Note that each   pullback $\widehat \alpha$ of $\alpha$ under $\widehat f$  corresponds to a   pullback $\widetilde \alpha$ of $\alpha$  under $f$ (this is essentially   Lemma~\ref{lem:blowup/annuli}~\ref{blow2}).   
It follows that  if we choose the multiplicities $m_e$, $e\in E$, large enough, then for the Thurston map $\widehat f$  we will have 
$\lambda_{\widehat f}(\alpha) < 1$ and so $\alpha$ is
 not an obstruction for  $\widehat f$. By Theorem~\ref{thm:blow-up-obstr} the map $\widehat f$
is actually realized by a rational map. So  by a suitable blow-up operation an obstructed
Thurston map  $f$  (with $P_f=4$ and a hyperbolic orbifold) can be turned into a Thurston map $\widehat f$ that is realized.  \end{rem}

\section{Global curve attractors}\label{sec:attractor}

In this section we will prove Theorem~\ref{thm:finite-curve-attr2}. 
We consider the pillow $\PP$ with its vertex set $V= \{A,B,C,D\}$.  For the remainder of this section, $f\:\PP\to \PP$ is  a Thurston map   obtained from the  $(2\times 2)$-Latt\`{e}s map by gluing $n_h\geq 1$ horizontal and $n_v\geq 1$ vertical flaps to $\PP$. Then $f$ is Thurston equivalent to a rational map  by Theorem \ref{thm:flapped_intro}.
In the following, all isotopies on $\PP$ are con\-sidered relative to $\postf=V$.

In order to prove Theorem~\ref{thm:finite-curve-attr2}, 
we want to show that Jordan curves in $(\PP,V)$ 
 are getting ``less twisted'' under taking preimages under $f$. To formalize this, we define the \emph{complexity}  $\| x \|$ of $x\in  \widehat{\Q}\cup \{\odot\}$ as $\Vert x \Vert\coloneq 0$ for $x= \odot$ 
 and $\| x\|\coloneq |r|+s$ for $x=r/s\in \widehat{\Q}$. 
Recall that $\odot$ represents the isotopy classes of all peripheral curves, and that for a slope  $r/s\in \widehat{\Q}$  we use  the convention that the numbers $r\in \Z$ and $s\in \N_0$  are relatively prime and that $r=1$ if $s=0$.  Note that $\| x \|=0$ for 
 $x\in  \widehat{\Q}\cup \{\odot\}$ if and only if $x=\odot$. 
 
 The complexity admits a natural interpretation in terms of intersection numbers. To see this, recall that  
 $\alpha^h$ and $\alpha^v$ (see \eqref{eq:hori+vert}) represent simple closed geodesics  in $(\PP, V)$ that separate the two horizontal and the two vertical edges of $\PP$, respectively. Suppose the slope $r/s \in \widehat{\Q}$ corresponds to  the isotopy class $[\ga]$ of a (necessarily essential)  Jordan curve 
 $\ga$ in $(\PP,V)$. Then by Lemma~\ref{lem:i-properties-curve}~\ref{item:i5},  
\[\| r/s \|=|r|+s= \tfrac12 \ins(\gamma,\alpha^h)+ \tfrac12 \ins(\gamma,\alpha^v).\]
Moreover, if $\ga$ is peripheral, then 
$\ins(\gamma,\alpha^h)+ \ins(\gamma,\alpha^v)=0$,  which agrees with the fact that $\| {\odot} \|=0$.

As we will see, under the slope map $\mu_f$ (as defined in Section \ref{subsec:intro_attractor}) complexities 
do not increase, and actually strictly decrease  unless the slope  belongs to a certain finite set.  More precisely, we will show the following statement.

\begin{prop}\label{prop:complexity-decreases}
Let $f\:\PP\to \PP$ be a Thurston map obtained from the  $(2\times 2)$-Latt\`{e}s map by gluing $n_h\geq 1$ horizontal and $n_v\geq 1$ vertical flaps to the pillow $\PP$. Then the following statements are  true:
\begin{enumerate}[label=\text{(\roman*)},font=\normalfont]

\item 
 $\|\mu_f(x) \| \leq \|x \|$ for all  
  $x\in  \widehat{\Q}\cup \{\odot\}$.

\smallskip
\item $\|\mu_f(x) \| < \|x \|$  for all  
 $x\in  \widehat{\Q}\cup \{\odot\}$ with   $\|x\|> 8.$  
\end{enumerate}
\end{prop}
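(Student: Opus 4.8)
The strategy is to work with the blown-up Latt\`es map $\widehat\La\colon\widehat\PP\to\PP$ (for $n=2$) and analyze the pullback $\widehat\La^{-1}(\tau_{r/s})$ of a simple closed geodesic $\tau_{r/s}$ directly, exploiting the very explicit tile structure of $\widehat\PP$. First I would reduce to the case $x=r/s\in\widehat\Q$ with $\|x\|=|r|+s$, since $\mu_f(\odot)=\odot$ makes both statements trivial for $x=\odot$, and since the complexity and the slope map depend only on the isotopy class $[\ga]$. Then I would pick the geodesic representative $\ga=\tau_{r/s}=\wp(\ell_{r/s})$ and study $\widehat\La^{-1}(\ga)\subset\widehat\PP$, which (because $\widehat\La$ is a similarity of ratio $2$ on each $1$-tile) is obtained by copying the rescaled pattern of $\ga$ into each $1$-tile of $\widehat\PP$. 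The essential pullbacks of $\ga$ under $f$ correspond, via $\phi$, to the essential complementary annuli of the graph $\widehat\La^{-1}(\CC)$ whose core curves carry the slope $\mu_f(r/s)$; so controlling $\mu_f(r/s)$ amounts to controlling the slope of such a core curve $\widehat\ga$, i.e.\ to computing (or bounding) $\ins(\widehat\ga,\alpha^h)$ and $\ins(\widehat\ga,\alpha^v)$ up in $\widehat\PP$, and transporting these back to $\PP$.

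For part (i), the key computation is the interaction of a pullback $\widehat\ga$ with the horizontal and vertical curves. By Lemma~\ref{lem:i-properties-curve}, $\ins(\ga,\alpha^h)=2|r|$ counts (twice) the crossings of $\ga$ with the horizontal edges $a,c$, and similarly $\ins(\ga,\alpha^v)=2s$. Since $\widehat\La$ maps each $1$-edge of $\widehat\PP$ homeomorphically onto an edge of $\PP$, and a pullback $\widehat\ga$ is a covering of $\ga$ of some degree $d\ge 1$, the number of crossings of $\widehat\ga$ with $\widehat\La^{-1}(a)\cup\widehat\La^{-1}(c)$ equals $d$ times the number for $\ga$. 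The essential pullbacks are the ones of degree exactly $2$ (or $2+$ extra contributions from flaps, as in Lemma~\ref{lem:blown-lattes-horizontal}); on the base $B(\widehat\PP)$, which is a subdivided copy of $\PP$ into $(2\times2)$ squares, a degree-$2$ essential pullback $\widehat\ga$ is covered by the $n=2$ copies of a piece of $\ga$ along each of two columns/rows, giving $\ins(\widehat\ga,\alpha^h)\le 2|r|$ and $\ins(\widehat\ga,\alpha^v)\le 2s$ (using the half-integer counts from Lemma~\ref{lem:alter}: the crossings with $a$ alternate with those with $c$, so the preimage pattern in a $1$-tile stacked vertically contributes roughly half as many horizontal crossings per copy). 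Pushing $\widehat\ga$ to $\PP$ by $\phi$ preserves isotopy class and hence slope, so $\|\mu_f(r/s)\|=\tfrac12\ins(\widehat\ga',\alpha^h)+\tfrac12\ins(\widehat\ga',\alpha^v)\le |r|+s=\|r/s\|$. Making this inequality precise — in particular tracking exactly how the horizontal and vertical flaps of $\widehat\PP$ affect the crossing counts and verifying the $\le$ in all slope ranges (including the special slopes $0,\infty,\pm1$) — is the most delicate bookkeeping, and I expect a careful case analysis on the sign of $r$ and the parity of $r,s$ modulo the $2$-tile structure will be needed.

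For part (ii) the point is to show the inequality is \emph{strict} once $\|x\|>8$. Here the idea is that when $|r|+s$ is large, the geodesic $\ga=\tau_{r/s}$ must cross \emph{every} $1$-edge of the base $B(\widehat\PP)$ many times — concretely, $\ga$ meets the interior of each of $a,b,c,d$ at least $\lceil\|x\|/2\rceil>4$ times, far more than the number of $1$-edges along any side of the $(2\times2)$ subdivision — so in particular $\ga$ must meet the interior of a base edge $e$ of one of the flaps $F$ (horizontal or vertical) of $\widehat\PP$; by Lemma~\ref{lem:curves-over-flap} the pullback $\widehat\ga$ then runs over the flap and hence meets the peripheral pullback of $\alpha^h$ or $\alpha^v$ lying inside $F$, forcing the mapping degree $\deg(\widehat\La\colon\widehat\ga\to\ga)$ to strictly exceed $2$ (Lemma~\ref{lem:blown-lattes-horizontal}~(ii)). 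A degree strictly larger than $n=2$ means each copy of the $\ga$-pattern in a $1$-tile contributes strictly fewer than a proportional share of crossings to $\alpha^h,\alpha^v$ relative to $\ga$, and summing the strict gain across the crossings yields $\|\mu_f(r/s)\|<\|r/s\|$. The threshold $8$ should come from the worst case: a slope with small $|r|$ or small $s$ (near $\alpha^h$ or $\alpha^v$) can have $\ga$ confined so as to just barely avoid all flaps; one checks by hand that this can only happen when $|r|+s\le 8$, e.g.\ by enumerating the finitely many slopes with $\|x\|\le 8$ and verifying directly that all larger slopes force a flap crossing. The main obstacle, as in part (i), is the combinatorial geometry: precisely certifying that every $\tau_{r/s}$ with $|r|+s>8$ is forced to intersect the interior of a flap base edge regardless of the position of the line $\ell_{r/s}\subset\C\setminus\Z^2$ and regardless of where on the $(2\times2)$ grid the $n_h,n_v\ge1$ flaps are attached. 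I would handle this by a pigeonhole argument on the $2$-periodic pattern of $\wp^{-1}(\text{flap edges})$ in $\C$, counting intersections of $\ell_{r/s}$ with the relevant family of lattice translates, and deferring the finitely many small-complexity exceptions to a direct check.
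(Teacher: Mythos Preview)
Your plan has a genuine gap in both parts, and it stems from using the pullback curve $\widetilde\gamma$ itself as the test curve for intersection bounds.

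For part (i): if $d=\deg(f\colon\widetilde\gamma\to\gamma)$, then $\#(\widetilde\gamma\cap f^{-1}(\alpha^h))=d\cdot\#(\gamma\cap\alpha^h)=2d|r|$, and the chain of inequalities you have in mind gives only $\ins(\widetilde\gamma,\alpha^h)\le d|r|$. You need $\le 2|r|$, but $d$ can exceed $2$ precisely when $\widehat\gamma$ runs over a flap---and there is no reason it should avoid all flaps for arbitrary slopes $r/s$. Your parenthetical ``(or $2+$ extra contributions from flaps)'' acknowledges this but offers no mechanism to control it. The paper sidesteps this entirely: it pulls back a geodesic \emph{core arc} $\xi$ of $\gamma$ rather than $\gamma$ itself, analyzes the planar graph $\GC=f^{-1}(\xi\cup\xi')$, and extracts from $\partial_\xi\widetilde U$ a short subgraph $\HC$ of circuit length $\le 4$ whose $\eps$-boundary $\beta$ is isotopic to $\widetilde\gamma$. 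The reason such an $\HC$ exists is the combinatorial fact (P2), specific to the $(2\times2)$-Latt\`es map, that every type-$A$ vertex of degree $\ge 2$ in $\GC$ is postcritical; a short case analysis then forces $\HC$ to have length $2$. Lemma~\ref{lem:choicebd} then yields $\#(\beta\cap f^{-1}(\alpha^h))\le 4\ins(\xi,\alpha^h)=4|r|$, independent of $d$.

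For part (ii): your claim that ``a degree strictly larger than $n=2$ means each copy \dots\ contributes strictly fewer than a proportional share of crossings'' has the inequality backwards---higher degree produces \emph{more} crossings of $\widetilde\gamma$ with $f^{-1}(\alpha^h)$, not fewer, so within your framework this hurts rather than helps. The paper's mechanism for strictness is different: with $|r|+s>8=4n$, Lemma~\ref{lem:curves-on-flapped-pillow} forces the subgraph $\widehat\HC=\phi^{-1}(\HC)\subset\widehat\La^{-1}(\xi)$ (not the pullback curve) to meet both a base edge and the top edge of some flap $F$; hence the $\eps$-boundary $\beta$ meets the \emph{peripheral} pullback of $\alpha^h$ (or $\alpha^v$) lying inside $F$, and those intersections are wasted in the estimate \eqref{eq:compl-decr-H}, making the second inequality there strict. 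The threshold $8$ is exactly the $4n$ from Lemma~\ref{lem:curves-on-flapped-pillow}, not an outcome of enumeration.
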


  Since the set $ \{x \in  \widehat{\Q}\cup \{\odot\} : \|x\|\leq 8\}$
  is finite, we actually have the strict inequality in (i) with at most finitely many exceptions.  The proof of the proposition will show that $\|\mu_f(x) \| =\|x \|$ if and only if $\mu_f(x)=x$ (see Remark~\ref{rem:mufix}). 
As we will see below, Theorem~\ref{thm:finite-curve-attr2}  easily follows from Proposition~\ref{prop:complexity-decreases}.

Before we proceed with the proof of this proposition, 
 we will establish  several auxiliary results.
 As in Section \ref{subsec:pillow},   $a, b, c, d$ are the edges of the pillow $\PP$, and $\wp\:\C\to \PP$  denotes the Weierstrass function that is doubly periodic with respect to the lattice $2\Z^2$.

We are interested in simple closed geodesics and geodesic arcs $\tau$
in $(\PP, V)$. Recall that every such geodesic has the form $\tau=\wp(\ell_{r/s})$ 
for a line $\ell_{r/s}\sub \C$ with slope $r/s\in \widehat \Q$. 
If $\ell_{r/s}\sub \C\setminus \Z^2$, then $\tau=\wp(\ell_{r/s})$ is a simple closed geodesic  in $(\PP,V)$, that is,  $\tau \sub \PP\setminus V$. If $\ell_{r/s}$ contains a point in $\Z^2$,
then $\tau=\wp(\ell_{r/s})$ is a geodesic  arc in $(\PP,V)$, that is, its interior  lies in 
 $\PP\setminus V$ and  its endpoints are in $V$. 

\begin{lemma}\label{lem:1-edge-crossing}
 Let $\tau$ be a simple closed geodesic or a geodesic arc in $(\PP,V)$ with slope $r/s\in\widehat{\Q}$. 
We consider the $1$-edges of $\PP$ with respect to the $(n\times n)$-Latt\`{e}s map $\La_n$, $n\ge 2$, that is, the lifts of the edges $a,b,c,d$ of $\PP$ under $\La_n$.  
 Then the following statements are  true:
\begin{enumerate}[label=\text{(\roman*)},font=\normalfont]
\item If $|r|>2n$, then $ \tau$ intersects the interior of every horizontal $1$-edge of $\PP$. 

\smallskip 
\item If $s>2n$,  then $ \tau$ intersects the interior of every vertical $1$-edge of $\PP$. 
\end{enumerate}
\end{lemma}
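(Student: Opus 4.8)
The two statements are symmetric (interchanging the roles of horizontal/vertical edges corresponds to interchanging the roles of $r$ and $s$, or equivalently to the reflection $z\mapsto iz$ on $\C$ which is an isometry of the lattice $\Z^2$), so I would prove (i) in detail and remark that (ii) follows by the same argument with the obvious modifications. Lift everything to the universal cover $\wp\colon\C\to\PP$. The horizontal $1$-edges of $\PP$ with respect to $\La_n$ are exactly the images under $\wp$ of the horizontal segments $[\,k/n+\ell/n\cdot i,\ (k+1)/n+\ell/n\cdot i\,]$ with $k,\ell\in\Z$; equivalently, the set $\wp^{-1}(\text{horizontal }1\text{-edges})$ is the union of the horizontal lines $\{\operatorname{Im}(z)=\ell/n:\ \ell\in\Z\}$, subdivided at the points of $\frac1n\Z^2$. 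Since $\wp$ has the periodicity \eqref{eq:wpeq}, it suffices to show: if $|r|>2n$, then a lift $\ell_{r/s}$ of $\tau$ (a straight line of slope $r/s$, or a segment of such a line of ``lattice length'' $2\om$ in the closed-geodesic case, respectively $\om$ in the arc case, where $\om=s+ir$) passes through the interior of at least one horizontal segment from \emph{every} congruence class mod the appropriate lattice.

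The key geometric observation is this: fix a horizontal $1$-edge $e$; its interior, pulled back, is a family of open unit-$\frac1n$ segments lying on a line $\{\operatorname{Im}(z)=\ell/n\}$, located in the $x$-interval $(\,k/n,\,(k+1)/n\,)+\frac2n\Z^2$ for suitable fixed residues of $k,\ell$. A line $\ell_{r/s}$ of slope $r/s$ meets the horizontal line $\{\operatorname{Im}=\ell/n\}$ in a single point, and as we translate $\ell_{r/s}$ by the period $2\om=2(s+ir)$ its intersection point with a fixed horizontal line is shifted horizontally by $2\cdot\frac{s}{r}\cdot(\text{change in }\operatorname{Im})^{-1}$-type amounts — more precisely, along the \emph{full} line $\ell_{r/s}$ the successive intersections with consecutive horizontal lines $\{\operatorname{Im}=m/n\}$, $m\in\Z$, are spaced horizontally by $s/(nr)$ in the $x$-coordinate. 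So along the segment of $\ell_{r/s}$ of vertical extent $2r$ (the closed-geodesic case) we meet $2nr$ consecutive horizontal lines, hitting $x$-coordinates in arithmetic progression of common difference $s/(nr)$; these $2nr$ points, reduced mod $2/n$, equidistribute with spacing $s/(nr)$ — in fact since $\gcd(r,s)=1$ the $2nr$ residues modulo $2/n$ include every residue class of the form $j\cdot\frac{s}{nr}\bmod\frac2n$, and because $|r|>2n$ the spacing $s/(nr) < s/(2n^2)$ is small enough that consecutive hit-points land less than $\frac1n$ apart in the relevant $x$-window, so one of them falls in the open interval $(k/n,(k+1)/n)$ corresponding to the interior of $e$. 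The counting ``$|r|>2n$ forces a hit in every open unit-$\frac1n$ window'' is the crux; I would phrase it as: the hit-points on the line $\{\operatorname{Im}=\ell/n\}$, reduced mod the period $\frac2n$, form a set of $|r|$ equally spaced points (spacing $\frac2{n|r|}$ after reduction, using $\gcd(r,s)=1$), and $|r|>2n$ guarantees $\frac2{n|r|}<\frac1n$, which places at least one such point strictly inside $(k/n,(k+1)/n)\bmod\frac2n$ for the residue class defining $e$.

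I would then need to handle transversality/interior issues: the hit-point must be an \emph{interior} point of the $1$-edge $e$, not an endpoint (an endpoint is a point of $\frac1n\Z^2$), and $\tau$ must genuinely cross there. Endpoints of $1$-edges form a set of measure zero; since there are only finitely many residue classes and the inequality $|r|>2n$ is strict, one can always choose among the $|r|$ (or $2|r|$) hit-points one that avoids $\frac1n\Z^2$ — indeed at most two of them can be lattice points on a given horizontal line segment per period, and $|r|>2n\ge 4>2$. For the geodesic-arc case ($\ell_{r/s}$ passing through a lattice point, segment of vertical extent $|r|$ rather than $2|r|$), the same count gives $\lfloor |r|/1\rfloor\ge |r|$ hit-points with spacing $\frac1{n|r|}<\frac1n$ mod $\frac1n$, which still suffices; one should double-check the arc endpoints don't cause a deficiency, but since $|r|>2n$ there is ample slack. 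Finally I would translate the statement back down via $\wp$: $\wp$ maps the interior of each such lattice-segment homeomorphically onto the interior of the corresponding $1$-edge, and maps the chosen crossing point of $\ell_{r/s}$ with that segment to a genuine interior intersection of $\tau$ with that $1$-edge, completing (i); then remark (ii) follows by the symmetry $z\mapsto iz$.

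\textbf{Main obstacle.} The delicate point is the clean number-theoretic bookkeeping that ``$|r|>2n$ implies every open $x$-window of length $\frac1n$ (in the correct residue class mod $\frac2n$) contains a hit-point'': one must be careful about (a) whether to count $|r|$ or $2|r|$ hit-points per horizontal line (closed geodesic vs.\ arc, and the factor $2$ from the period being $2\om$), (b) using $\gcd(r,s)=1$ to conclude the reduced hit-points are equally spaced rather than clustered, and (c) ensuring the chosen hit-point is interior to the edge. I expect the argument is robust because the hypothesis $|r|>2n$ (rather than $|r|\ge n$) leaves a generous margin — indeed the spacing $\frac2{n|r|}$ is strictly less than $\frac1{n^2}$, far below $\frac1n$ — so the only real work is organizing the congruence conditions cleanly rather than fighting a tight estimate.
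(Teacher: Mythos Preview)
Your high-level plan---lift to $\C$ via $\wp$ and run a spacing/pigeonhole argument---is exactly what the paper does, but your bookkeeping has drifted. The period lattice of $\wp$ is $2\Z^2$, so on a fixed horizontal line the preimages under $\wp$ of the interior of a given $1$-edge $e$ are open segments of length $1/n$ repeating with horizontal period $2$ (not $2/n$), and the relevant spacing of hit-points on that horizontal line is $2/|r|$ (not $2/(n|r|)$ or $s/(nr)$). With the correct constants the inequality you need is precisely $2/|r|<1/n$, i.e.\ $|r|>2n$---so the ``generous margin'' you advertise is illusory, and your worries about endpoints and about the arc case become genuine if the constants are off. Your scheme of tracking a single line across \emph{all} levels $\{\operatorname{Im}=m/n\}$ and then ``reducing mod $2/n$'' conflates different horizontal lines in $\PP$ that carry different $1$-edges; only the levels $m/n\equiv\pm\ell/n\pmod 2$ are relevant to the chosen $e$, which is why the count and the modulus come out wrong.

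The paper organizes the same idea more cleanly by reversing the roles: rather than following one line $\ell_{r/s}(z_0)$ across many horizontal levels, fix one lift $\widetilde e$ of $e$ (a segment of length $1/n$ on some horizontal line $\ell_0$) and vary the line. Choose $p,q\in\Z$ with $pr+qs=1$ and set $\om=2(s+ir)$, $\widetilde\om=2(-p+iq)$; then $\{\om,\widetilde\om\}$ is a basis of $2\Z^2$, so $\tau=\wp(\ell_{r/s}(z_0+j\widetilde\om))$ for every $j\in\Z$. Writing $\widetilde\om=u+v\om$ one finds $u=-2/r$ (multiply by $\overline\om$ and take imaginary parts), so the parallel lines $\ell_{r/s}(z_0+j\widetilde\om)$ cut $\ell_0$ into pieces of length $|u|=2/|r|<1/n$, forcing one of them to meet $\inte(\widetilde e)$; hence $\tau\cap\inte(e)\ne\emptyset$. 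This is uniform in whether $\tau$ is a closed geodesic or a geodesic arc (both have the form $\wp(\ell_{r/s}(z_0))$), and the strict inequality makes the ``interior'' issue automatic---so none of the case analysis you were bracing for is needed.
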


\begin{proof}
We will only show the first part  of the statement. The proof of the second part is completely analogous. 

Let $\tau$ be a simple closed geodesic or a geodesic arc in $(\PP,V)$ with slope $r/s\in\widehat{\Q}$ where $|r|>2n$. Suppose  that $e$ is a horizontal $1$-edge and  $\widetilde e$ is a lift of $e$ under $\wp$. Then the arc  $\widetilde e$ is a line segment of length $1/n$ contained in a line $\ell_0\sub \C$  parallel to the real axis. In order to show that $\tau$ meets $\inte(e)$, it suffices to represent the given geodesic $\tau$ in the form $\tau=\wp(\ell_{r/s})$  for a line $\ell_{r/s}\sub \C$ with
 $ \ell_{r/s} \cap\inte(\widetilde e) \ne \emptyset$.

For this  we   choose $p, q\in \Z$ 
 such that $pr+qs=1$   and define $\om\coloneq 
2(s+ir) $ and $\widetilde \om \coloneq 2(-p+iq)$. The numbers  $\om$ and $\widetilde \om$ form a basis of the period lattice $2\Z^2$ 
of $\wp$. In particular, if $\tau=\wp(\ell_{r/s}(z_0))$ for some $z_0\in \C$, then $\tau = \wp(\ell_{r/s}(z_0+j \widetilde \om))$ for all $j\in \Z$. The lines $\ell_{r/s}(z_0+j \widetilde \om)$, $j\in \Z$,
 are parallel and equally spaced. Actually, two consecutive lines in this family differ by a translation by  $\widetilde \om$.
 Since $r\ne 0$, these lines are not parallel to the real axis and so they will cut out subsegments of equal length on  the line $\ell_0$ that contains 
 $\widetilde e$. To determine the length of these segments, we
 write $\widetilde \om$ in the form 
 \begin{equation}\label{eq: spacing} 
\widetilde \om=u+v  \om
\end{equation}
 with $u,v\in \R$.
It is easy to see that \eqref{eq: spacing} implies that 
 $u=-2/r$ (multiply  \eqref{eq: spacing} by the complex conjugate of $\om$ and take imaginary parts), and so the lines in our family cut $\ell_0$ into subsegments  of length  $|u|=2/|r|$. Since 
 $|u|=2/|r|<1/n$ by our hypotheses, one of these lines meets  $\inte(\widetilde e)$. This implies that $\tau\cap \inte(e)\ne \emptyset$, as desired. 
\end{proof}

We now want to see what happens to a geodesic arc  $\xi$ in 
$(\PP,V)$ if we take preimages under a map $f$ as in
 Proposition~\ref{prop:complexity-decreases}.
Unless $\xi$ has  slope in a finite exceptional set,  suitable sets $ \HC$ in the preimage 
 $f^{-1}(\xi)$  will meet the interior of a flap glued to the pillow $\PP$, and consequently a peripheral pullback of the horizontal curve 
 $\alpha^h\sub \PP$ or of the vertical curve $\alpha^v\sub \PP$.
  We will formulate some relevant statements in a slightly more general situation. We first introduce some terminology.

  Suppose $Z\sub S^2$ consists of four distinct points. We say that $K\sub S^2$ {\em essentially separates~$Z$} if we can split $Z$ into two pairs (that is, into disjoint subsets $Z_1,Z_2\sub Z$ consisting of two points each) such that $K$ separates $Z_1$ and $Z_2$. Note that   $K$ trivially  has this property if $K\cap Z$ consists of two or more points.

Now  let $n\in \N$, $n\ge 2$, and consider the $(n\times n)$-Latt\`es map $\La_n\: \PP\ra \PP$. 
 If $\xi$ is a geodesic arc in $(\PP,V)$, then the preimage 
 $\La_n^{-1}(\xi)$ is a disjoint union of simple closed geodesics  and geodesic arcs  in  $(\PP, V)$. Note that each connected component of $\La_n^{-1}(\xi)$ essentially separates $V$,  but no proper subset of such a component does. It follows that if  $K\sub \La_n^{-1}(\xi)$ is a connected set, then it essentially separates $V$   if and only if $K$ is a simple closed geodesic or a geodesic arc in
 $(\PP,V)$.

Let $\widehat \La\: \widehat \PP \to \PP$ be a branched covering map obtained from the $(n\times n)$-Latt\`es map by gluing flaps to $\PP$. As in  Section~\ref{subsec:blownup-lattes}, we denote by $\widehat V$ the vertex set and by $B(\widehat \PP)$ the base of the flapped pillow $\widehat \PP$. By construction, $\widehat \La$ maps each $1$-tile of $\widehat \PP$ by a Euclidean similarity (with scaling factor $n$) onto a $0$-tile   of $\PP$. We also recall that we can naturally view the base  $B(\widehat \PP)$ as a subset of $\PP$ (see \eqref{eq:base-ident}) and, with such an identification, the map $\widehat \La$ coincides with $\La_n$ on $B(\widehat \PP)$.

 Suppose that a geodesic arc $\xi$ in $(\PP,V)$ joins two distinct points $X,Y\in V$. We consider $\widehat \GC\coloneq\widehat \La^{-1}(\xi)$ as a planar embedded  graph in $\Phat$ 
 with the set of vertices $\widehat \La^{-1}(\{X,Y\})$ and the edges given by the lifts of $\xi$ under $\widehat \La$.

\begin{lemma}\label{lem:arcs-over-flaps}
Let $\widehat \La\: \widehat \PP \to \PP$ be a branched covering map obtained from  the $(n\times n)$-Latt\`es map with $n\ge 2$ by  gluing $n_h\geq 1$ horizontal and $n_v\geq 1$ vertical flaps to $\PP$. Suppose $\xi$ is a geodesic arc in $(\PP,V)$ with slope $r/s \in \widehat \Q \setminus \{0,\infty\}$ and $\widehat \xi$ is a lift of $\xi$ under $\widehat \La$.

Let $F$ be a flap in $\widehat \PP$ with the base edges $e'$ 
and $e''$. If $$\widehat \xi \cap \left(\inter(F) \cup \inter (e') \cup \inter (e'')\right) \neq \emptyset,$$ then $\widehat \xi$ meets a base edge and the top edge of the flap $F$.
\end{lemma}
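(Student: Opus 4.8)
The statement is a ``flap version'' of Lemma~\ref{lem:curves-over-flap}, now for geodesic \emph{arcs} rather than Jordan curves, and the strategy is parallel. The plan is first to reduce to the case where $\widehat\xi$ actually enters the interior of $F$ through the interior of a base edge, and then to run a connectivity/alternation argument inside the flap. Since the slope satisfies $r/s\ne 0,\infty$, we have $r\ne 0$ and $s\ne 0$, and by Lemma~\ref{lem:alter} (together with the analogous statement for $b,d$) the arc $\xi$ meets the interiors of all four edges $a,b,c,d$ of $\PP$; more precisely, the points of $a\cap\xi$ and $c\cap\xi$ alternate along $\xi$, and likewise for $b\cap\xi$ and $d\cap\xi$. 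Because $\widehat\La$ restricted to any $1$-tile of $\widehat\PP$ is a similarity onto a $0$-tile and $\widehat\La|\widehat\xi\colon\widehat\xi\to\xi$ is a homeomorphism onto $\xi$ (it is a lift of an arc), these transversality and alternation properties pull back: $\widehat\xi$ meets $\widehat\La^{-1}(a\cup c)$ and $\widehat\La^{-1}(b\cup d)$ only in finitely many transverse points, and along $\widehat\xi$ the preimages of $a$ alternate with those of $c$, and preimages of $b$ alternate with those of $d$.

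\textbf{Reduction to entry through a base edge.} First I would dispose of the easy observations. The hypothesis $\widehat\xi\cap(\inter(F)\cup\inter(e')\cup\inter(e''))\ne\emptyset$ must be promoted to: $\widehat\xi$ meets $\inter(e')\cup\inter(e'')$. Indeed $\partial F=e'\cup e''$ is a Jordan curve, $\inter(F)$ contains no $1$-vertex of $\widehat\PP$ (in particular no point of $\widehat V$, and no vertex of the graph $\widehat\GC=\widehat\La^{-1}(\xi)$ either, since these vertices lie in $\widehat\La^{-1}(\{X,Y\})\subset\widehat\La^{-1}(V)$), and $\widehat\xi$ does not pass through any $1$-vertex because $\xi$ meets $V$ only at its endpoints $X,Y$ and those are mapped from $1$-vertices which are the only preimages in $\widehat V$. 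So if $\widehat\xi$ enters $\inter(F)$ it must cross the topological boundary $\partial F=e'\cup e''$, and it can only do so through the interior of one of the two base edges. Thus in all cases $\widehat\xi$ meets, say, $\inter(e')$ transversely at some point $p$, and since $\xi$ crosses the edge $\widehat\La(e')$ of $\PP$ transversely (recall $\xi=\wp(\ell_{r/s})$ has transverse intersections with the interiors of $a,b,c,d$), the arc $\widehat\xi$ crosses into $\inter(F)$ at $p$. This already gives ``$\widehat\xi$ meets a base edge''; the content is the top edge.

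\textbf{Locating the top edge via alternation inside the flap.} Assume $F$ is a horizontal flap; the vertical case is symmetric (replace $a,c$ by $b,d$). Then $\widehat\La(e')=\widehat\La(e'')\in\{a,c\}$, and the top edge $\widetilde e$ of $F$ satisfies $\widehat\La(\widetilde e)\in\{a,c\}$ with $\widehat\La(\widetilde e)\ne\widehat\La(e')$ --- this is exactly the combinatorics recorded in the proof of Lemma~\ref{lem:curves-over-flap}, coming from the checkerboard structure: the two $1$-tiles making up $F$ have opposite colors, the base edges lie in the preimage of one of $\{a,c\}$ and the top edge in the preimage of the other. Concretely, after possibly renaming, $\widehat\La(e')=\widehat\La(e'')=a$ and $\widehat\La(\widetilde e)=c$, and within the flap $F\cap\widehat\La^{-1}(a)=e'\cup e''=\partial F$ while $F\cap\widehat\La^{-1}(c)=\widetilde e$. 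Now I traverse $\widehat\xi$ starting at the crossing point $p\in\widehat\xi\cap\inter(e')$ and moving into $\inter(F)$. By the alternation property pulled back from Lemma~\ref{lem:alter} (points of $\widehat\La^{-1}(a)\cap\widehat\xi$ alternate with points of $\widehat\La^{-1}(c)\cap\widehat\xi$ along $\widehat\xi$), before $\widehat\xi$ can next meet $\widehat\La^{-1}(a)$ it must meet $\widehat\La^{-1}(c)$; but the only way for $\widehat\xi$ to leave $\inter(F)$ is through $\partial F=F\cap\widehat\La^{-1}(a)$, so $\widehat\xi$ must first hit $F\cap\widehat\La^{-1}(c)=\widetilde e$ while still inside $F$. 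Hence $\widehat\xi$ meets the top edge $\widetilde e$. The only subtlety to address carefully is the boundary case where the alternation statement of Lemma~\ref{lem:alter} is about a geodesic \emph{arc} $\xi$ (endpoints in $V$): there the sets $\widehat\La^{-1}(a)\cap\widehat\xi$ and $\widehat\La^{-1}(c)\cap\widehat\xi$ are non-empty and alternate on $\widehat\xi$ because $\widehat\La|\widehat\xi$ is a homeomorphism onto $\xi$ and $\xi$ has this property by Lemma~\ref{lem:alter}.

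\textbf{Expected main obstacle.} The genuinely delicate point is not the alternation argument but making rigorous the claim that $\widehat\xi$ \emph{cannot} escape $F$ except through $\partial F$ and the accompanying bookkeeping that $\widehat\xi$ enters $\inter(F)$ rather than merely touching $e'$ tangentially. This requires knowing that $\widehat\xi$ meets $\inter(e')$ transversely, which in turn rests on $\xi=\wp(\ell_{r/s})$ meeting $\inter(\widehat\La(e'))$ transversely and on $\widehat\La$ being a local similarity near $e'$ --- all already available in the excerpt, but to be invoked precisely. I would handle this exactly as in the proof of Lemma~\ref{lem:curves-over-flap}: pick $p\in\widehat\xi\cap\inter(e')$, note transversality at $p$, observe that $\widehat\xi$ crosses into $\inter(F)$ there, and then argue that continuing along $\widehat\xi$ from $p$ into $F$ one meets $\widehat\La^{-1}(c)\cap F=\widetilde e$ before possibly re-exiting through $\partial F=e'\cup e''=\widehat\La^{-1}(a)\cap F$. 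Everything else is a routine transcription of the horizontal-flap argument to the arc setting plus the symmetric vertical-flap case.
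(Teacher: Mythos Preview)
Your reduction step contains a genuine error: the claim that ``$\inter(F)$ contains no $1$-vertex of $\widehat\PP$'' is false. The two endpoints of the top edge $\widetilde e$ are $1$-vertices lying in $\inter(F)$ (they are not on $\partial F = e'\cup e''$). As a consequence, the promoted claim ``$\widehat\xi$ meets $\inter(e')\cup\inter(e'')$'' can fail. For a concrete counterexample, take a horizontal flap $F$ with $\widehat\La(e')=\widehat\La(e'')=a$ and let $\xi$ be the geodesic arc of slope $2/1$ with endpoints $A,B$; then $\inter(\xi)\subset\PP\setminus a$, and since $\widehat\La|\inter(F)\colon\inter(F)\to\PP\setminus a$ is a homeomorphism, there is a lift $\widehat\xi\subset F$ with $\inter(\widehat\xi)\subset\inter(F)$ and both endpoints at the base vertices of $F$. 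This $\widehat\xi$ satisfies the hypothesis of the lemma but meets $e'\cup e''$ only at the base vertices, so your one-directional trace ``from $p\in\inter(e')$ into $\inter(F)$'' never gets started.

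The paper avoids this by reversing the order of the two steps: it first secures a point $p\in\widehat\xi\cap\inter(F)$ (using transversality only if the hypothesis gives a point on $\inter(e')\cup\inter(e'')$ rather than in $\inter(F)$ directly), and then traces $\widehat\xi$ from $p$ in \emph{both} directions. Since the first point of $\widehat\La^{-1}(a\cup c)$ encountered in each direction lies in $F$ (one cannot leave $F$ without hitting $\partial F\subset\widehat\La^{-1}(a)$), and since alternation forces these two first points to lie in $\widehat\La^{-1}(a)$ and $\widehat\La^{-1}(c)$ respectively, one direction yields a point on $e'\cup e''$ and the other a point on $\widetilde e$. In the counterexample above this picks up the base vertices (in $\widehat\La^{-1}(a)$) on one side and the single crossing with $\widetilde e$ on the other. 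Your argument is easily repaired along these lines, but as written the reduction step does not go through.
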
 

\begin{proof} The proof is similar to the proof of Lemma \ref{lem:curves-over-flap}. Recall that  $a,b,c,d$ denote the edges of the pillow $\PP$. Suppose $\xi\subset \PP$, $\widehat\xi \subset \widehat \PP$, and $e',e''\subset F$ are as in the statement of the lemma. Let $\widetilde e\sub F$ be the top edge of $F$.

Without loss of generality we will assume that $F$ is a horizontal flap. Then   $\widehat \La(e')=a$ or  $\widehat \La(e')=c$. We will make the further assumption that $\widehat \La(e')=a$. The other cases, when $\widehat \La(e')=c$ or when $F$ is a vertical flap, can be treated in a way that is completely analogous to the ensuing argument. Then $\widehat \La(e'')=a$ and $\widehat \La(\widetilde e)=c$.   Moreover, 
\begin{equation}\label{eq:pre-on-flap-for-arc} 
\widehat \La^{-1}(a\cup c)\cap F=e'\cup e'' \cup \widetilde e.
\end{equation}

Since $\xi$ is a geodesic arc in $(\PP, V)$ with slope $r/s\ne 0$,  by Lemma \ref{lem:alter} the sets  $a\cap \xi $ and $c\cap \xi$ are  non-empty and finite,  and the points  in these sets  alternate on $\xi$.  We claim that there is a point $p\in \widehat \xi \cap \inter(F)$. By our hypotheses this can only  fail if  $\widehat \xi$ meets either $\inter(e')$ or $\inter(e'')$ in a point $q$. Since the arc   $\xi$ has a transverse intersection with $\inter(a)$ at $\widehat \La (q)$, the arc  $\widehat \xi$ has a transverse intersection with $\inter(e')$ or $\inter(e'')$ at $q$. Then  $\widehat \xi$  meets $\inter(F)$ in a point $p$ in any case. 

Since the points in  $a\cap \xi \ne \emptyset $ and $c\cap \xi\ne \emptyset $  alternate on $\xi$,  the points in $\widehat \xi \cap  \widehat \La^{-1}(a)\ne \emptyset $ and $\widehat \xi \cap  \widehat \La^{-1}(c)\ne \emptyset$  alternate on $\widehat \xi$. Note that $F\cap \widehat \La^{-1}(c)=\widetilde   e$ and $F\cap \widehat \La^{-1}(a)= \partial F = e' \cup e''$. So, if we trace the arc $\widehat \xi$ starting from $p$ in two different directions,  we must meet a base edge of $F$ in one direction and the top edge of $F$ in the other direction. The statement follows.
\end{proof}

 Now the following fact is true. 
 
 \begin{lemma}\label{lem:curves-on-flapped-pillow}
Let $\widehat{\La}\:\Phat\to \PP$ be a branched covering map obtained  from the $(n\times n)$-Latt\`{e}s map  with $n\ge 2$ by gluing $n_h\geq 1$ horizontal and $n_v\geq 1$ vertical flaps to $\PP$.  Suppose that $\xi$ is a geodesic arc in $(\PP,V)$ with slope $r/s\in \widehat \Q$ and $ \HC$ is any  connected subgraph of $ \widehat \GC=\widehat{\La}^{-1}(\xi)$  that essentially separates 
$\widehat V\sub \Phat$. If $|r|+s> 4n$, then $\HC$ meets a base edge and the top edge of a flap in $\Phat$. 
\end{lemma}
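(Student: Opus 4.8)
The strategy is to reduce the statement to the two combinatorial facts already available: Lemma~\ref{lem:1-edge-crossing} about which $1$-edges a geodesic (arc) of large slope must cross, and Lemma~\ref{lem:arcs-over-flaps} which says that once a lift of $\xi$ enters a flap (or meets the interior of a base edge) it must run all the way over the flap, meeting a base edge and the top edge. The point is that $\HC$, being a connected subgraph of $\widehat\GC=\widehat\La^{-1}(\xi)$ that essentially separates $\widehat V$, cannot live entirely inside the base $B(\widehat\PP)$; and to see this we push $\HC$ down to the original pillow $\PP$ via $\widehat\La$ (equivalently, via the identification \eqref{eq:base-ident} if $\HC\subset B(\widehat\PP)$) and use that a geodesic or geodesic arc of large slope crosses the interiors of enough $1$-edges that it necessarily hits a $1$-edge along which a flap was glued.

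\textbf{Step 1: the projection of $\HC$.} First I would argue by contradiction: suppose $\HC$ does not meet a base edge and the top edge of any flap. By Lemma~\ref{lem:arcs-over-flaps} (applied to the edges of $\HC$, each of which is a lift $\widehat\xi$ of $\xi$), this forces $\HC\cap(\inter(F)\cup\inter(e')\cup\inter(e''))=\emptyset$ for every flap $F$ with base edges $e',e''$. Since the complement $\widehat\PP\setminus\bigl(\bigcup_j(F_j\setminus\partial e'_j)\bigr)$ is exactly the base $B(\widehat\PP)$ (see \eqref{eq:base-pillow}), and $\HC$ moreover avoids the interiors of the base edges, $\HC$ is contained in $B(\widehat\PP)$ and in fact avoids the interior of every $1$-edge $e\in E$ along which a flap was glued. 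Via the identification \eqref{eq:base-ident}, $\HC$ then corresponds to a connected subset $K$ of $\La_n^{-1}(\xi)\subset\PP$ (recall $\widehat\La$ restricts to $\La_n$ on $B(\widehat\PP)$), and $K$ avoids the interiors of all $1$-edges in $E$.

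\textbf{Step 2: $K$ essentially separates $V$, hence is a geodesic or geodesic arc.} Since $\HC$ essentially separates $\widehat V$ and the identification \eqref{eq:base-ident} carries $\widehat V$ to $V$, the set $K$ essentially separates $V$ in $\PP$. As observed in the paragraph preceding Lemma~\ref{lem:arcs-over-flaps}, a connected subset of $\La_n^{-1}(\xi)$ essentially separates $V$ if and only if it is itself a simple closed geodesic or a geodesic arc in $(\PP,V)$; write $K=\wp(\ell_{r'/s'})$ for some slope $r'/s'\in\widehat\Q$. Here the key numerical input is that $K$, being a component (or all) of $\La_n^{-1}(\xi)$, has slope related to that of $\xi$ in the way dictated by multiplication by $n$: writing $\xi=\wp(\ell_{r/s})$, the preimage $\La_n^{-1}(\xi)=\wp(n^{-1}\ell_{r/s})$ consists of geodesics/arcs of slope $r/s$, so $r'/s'=r/s$ and in particular $|r'|=|r|$, $s'=s$, giving $|r'|+s'=|r|+s>4n$. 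Thus either $|r'|>2n$ or $s'>2n$.

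\textbf{Step 3: contradiction via Lemma~\ref{lem:1-edge-crossing}.} If $|r'|>2n$, Lemma~\ref{lem:1-edge-crossing}(i) says $K$ meets the interior of every horizontal $1$-edge of $\PP$; if $s'>2n$, part (ii) gives the same for every vertical $1$-edge. In either case $K$ meets the interior of every $1$-edge of the corresponding (horizontal or vertical) type. Since $n_h\ge 1$ and $n_v\ge 1$, the set $E$ contains at least one horizontal and at least one vertical $1$-edge, so whichever case occurs, $K$ meets the interior of some $1$-edge in $E$ --- contradicting the conclusion of Step~1 that $K$ avoids the interiors of all $1$-edges in $E$. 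This contradiction shows $\HC$ must meet a base edge and the top edge of some flap, proving the lemma. The main obstacle to watch is bookkeeping: making sure the ``essentially separates'' notion transports cleanly through \eqref{eq:base-ident} and that $\HC$ really is forced into the base (not merely its closure), which is precisely why Lemma~\ref{lem:arcs-over-flaps} is stated to also rule out meeting the interiors of the base edges --- without that, $\HC$ could sneak along a base edge and the projection argument would fail. One should also double-check the degenerate slopes: if $r/s=0$ or $\infty$ the hypothesis $|r|+s>4n$ can still hold (e.g.\ $s$ large with $r=0$), and Lemma~\ref{lem:1-edge-crossing} still applies since it only needs $|r|>2n$ or $s>2n$, while Lemma~\ref{lem:arcs-over-flaps} does require slope in $\widehat\Q\setminus\{0,\infty\}$ --- so in those degenerate cases one instead argues directly that a geodesic arc of slope $0$ (resp.\ $\infty$) of the given combinatorial type already crosses a flap base edge, a short separate check.
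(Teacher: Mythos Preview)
Your approach is essentially identical to the paper's, but two points need correction.

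First, the degenerate-slope discussion in your final paragraph is misguided. By the paper's slope convention, $r/s=0$ means $(r,s)=(0,1)$ and $r/s=\infty$ means $(r,s)=(1,0)$, so in either case $|r|+s=1<4n$. Hence the hypothesis $|r|+s>4n$ already forces $r/s\notin\{0,\infty\}$, and Lemma~\ref{lem:arcs-over-flaps} applies directly with no separate check needed; the paper notes this in one line.

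Second, the transport of ``essentially separates'' from $\widehat\PP$ to $\PP$ in your Step~2 is not mere bookkeeping, and your one-sentence justification is insufficient. The identification \eqref{eq:base-ident} is only between $B(\widehat\PP)$ and the subset $\PP\setminus\bigcup_{e\in E}\inte(e)$, not between the full spheres; a path $\beta$ in $\PP$ joining a point of $V_1$ to a point of $V_2$ may well pass through $\inte(e)$ for some $e\in E$, and then has no direct counterpart in $\widehat\PP$. The paper supplies the missing argument: given such a $\beta$ disjoint from $K=\HC$, one modifies it to a path $\widehat\beta$ in $\widehat\PP$ by, for each edge $e\in E$ that $\beta$ meets, replacing the portion of $\beta$ between its first and last points on $e$ by a path that runs over the flaps glued along $e$ and avoids $\HC$ (possible since $\HC\subset B(\widehat\PP)$). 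The resulting $\widehat\beta$ joins the corresponding pair in $\widehat V$ without meeting $\HC$, contradicting that $\HC$ essentially separates $\widehat V$. You should include this argument explicitly.
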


\begin{proof} 
Suppose $\xi$ is a geodesic arc in $(\PP,V)$ with  slope 
$r/s\in \widehat \Q$, where $|r|+s> 4n$, and  $ \HC$ is   a connected subgraph  of $\widehat \GC=\widehat {\La}^{-1}(\xi)$ that essentially separates the vertex set $\widehat V$ of $\widehat \PP$. 
Note that then $r/s\ne 0,\infty$, which will allow us to apply Lemma~\ref{lem:arcs-over-flaps}. Each edge of the graph  
$\widehat \GC=\widehat {\La}^{-1}(\xi)$ is a lift $\widehat \xi$ of $\xi$ as in this lemma.

We now argue by contradiction and suppose that there is no flap $F$ in $\widehat \PP$ such  that 
$\HC$ meets both a base edge and the top edge of $F$. By the definition of $B(\widehat \PP)$ (see \eqref{eq:base-pillow}) and  Lemma~\ref{lem:arcs-over-flaps}, this means that each edge of $\HC$, and thus the graph $ \HC$ itself, is contained in $B(\widehat \PP)$. Consequently,  we can consider 
$ \HC$ as a connected  subset of $\PP\supset B(\widehat \PP)$. On  
$B(\widehat \PP)$ the maps $\widehat \La$ and $\La_n$ are identical.  Therefore, we can also regard  $ \HC$ as a connected subset 
of $\La_n^{-1}(\xi)$.  

The set  $\HC$, now considered as a subset of $\PP$, essentially separates $V\sub \PP$.
To see this, let $\widehat V_1, \widehat V_2\sub \widehat V\sub \Phat$ be two pairs of vertices separated by $ \HC$ in   $\Phat$. We can identify $\widehat V_1$ with a pair $V_1$ and 
 $\widehat V_2$ with a pair  $V_2$ of vertices of $\PP$. We claim that $V_1$ and $V_2$   are separated by $\HC$ in $\PP$. Indeed, if this was not the case, then we could find a path $\beta$ in $\PP$ that joins $V_1$ and $V_2$ without meeting 
$\HC$. This path can be modified as follows to a path $\widehat \beta$ in 
$\Phat$ that joins  $\widehat V_1$ and $\widehat V_2$ and does not meet $\HC\sub \Phat$: if $\beta$ 
meets some $1$-edge $e$ of $\PP$ to which one or several  flaps are glued, then on $\beta$ there is   a first  point $p\in e$ and a 
last  point $q\in e$. We now replace the part of $\beta$ between $p$ and $q$  by a path that joins  points corresponding to $p$ and $q$ in $\widehat \PP$, travels on these flaps,  and   does  not meet $\HC$.  If we make such replacements for all these edges $e$ consecutively, then we obtain a path
$\widehat \beta$ that joins  $\widehat V_1$ and $\widehat V_2$, but is disjoint from $\HC$.  But such a path $\widehat \beta$
cannot exist, because $ \HC$ separates $\widehat V_1$ and $\widehat V_2$ in $\Phat$. 

We see that  $ \HC\sub \La_n^{-1}(\xi)$  indeed essentially separates $V$. Since $\HC$ is connected, the discussion above  (after the definition of essential separation) implies that   $ \HC$ is  a 
simple closed geodesic or a geodesic arc 
 in $(\PP,V)$ with slope $r/s$. Since $|r|+s>4n$, either $|r|>2n$ or $s>2n$. Thus, by Lemma \ref{lem:1-edge-crossing}, the geodesic  
$\HC$ meets each horizontal $1$-edge of $\PP$ in the first case or each vertical $1$-edge of $\PP$  in the second case. Since $n_h\geq 1$ and $n_v\geq 1$, in either case, $\HC$ must meet the interior of a $1$-edge along which a flap  is glued and hence cannot be a subset of $B(\widehat \PP)$. This is a  contradiction  and the lemma follows. 
\end{proof}

 \begin{rem}\label{rem:betameetsperi}
 Suppose we are in the setup of Lemma \ref{lem:curves-on-flapped-pillow}. Then the connected set $\HC$ meets a base edge   and the top edge of a flap, say a horizontal flap $F$.  Then there exists a peripheral pullback $\widehat \alpha$ of the horizontal curve
 $\alpha^h$ under the map $ \widehat \La$ that is contained in $F$.  Let $e'$ and $e''$ be the base edges of $F$, and $\widetilde e$ be the top edge of $F$.   
 Then the   curve $\widehat \alpha$ separates $\partial F = e'\cup e''$ from  $\widetilde e\sub F$. Since $\HC$ is connected and meets both  $\widetilde e$ and 
 $e'\cup e''$, we conclude that  $\HC \cap \widehat \alpha \ne \emptyset$. If $\beta$ is a connected set that traces $\HC$ closely, then it will also have points close to 
 $\widetilde e$ and close to  $e'\cup e''$. Again this will imply that 
 $\beta\cap \widehat \alpha \ne \emptyset$. This remark will become important in the proof of Proposition~\ref{prop:complexity-decreases}. 
 \end{rem}

 A completely analogous statement  to Lemma~\ref{lem:curves-on-flapped-pillow}  is true (with a very similar proof) if we assume that $\xi$ is a simple closed geodesic  in $(\PP,V)$ and $ \HC$~is an essential  pullback of $\xi$ under~$\widehat \La$.

  We now turn to the proof of Proposition~\ref{prop:complexity-decreases}.

\begin{proof}[Proof of Proposition \ref{prop:complexity-decreases}]
Let  $f\:\PP\to \PP$ be  a Thurston map obtained from the  $(2\times 2)$-Latt\`{e}s map by gluing $n_h\geq 1$ horizontal and $n_v\geq 1$ vertical flaps to $\PP$. Then $\postf=V$, where $V=\{A,B,C,D\}$ is the set of vertices  of $\PP$, and $A$ is the  unique point in $\postf=V$ that is fixed by $f$. 

To prove the first statement, let  $x\in  \widehat{\Q}\cup \{\odot\}$
be arbitrary. If $x=\odot$, then $\mu_f(\odot)=\odot$ and  
$\|\mu_f(\odot)\|=\|{\odot}\|=0$.  
So  in the following we will assume that $x=r/s\in   \widehat{\Q}$. 
Let $\gamma\sub \PP\setminus V$ be  a simple closed geodesic  with   slope $r/s\in\widehat {\Q}$. Then 
 $\gamma$ is an essential Jordan curve and so  each of the two  complementary components of $\ga$ in $\PP$ contains 
 precisely two postcritical points of $f$.  Let $\xi$ and $\xi'$ be core arcs of $\gamma$  belonging to  different  components of $\PP\setminus\gamma$.   Here we may assume that $\xi$ and $\xi'$ are geodesic arcs in $(\PP,V)$ with slope $r/s$. 
 
As before, we denote by  $\alpha^h$ and $\alpha^v$  simple closed geodesics  in $(\PP, V)$ that separate the two horizontal and the two vertical edges of $\PP$, respectively. 
Then, by Lemma \ref{lem:i-properties-curve}, we have:
\begin{align*}
\ins(\gamma,\alpha^h)&=2 \ins(\xi, \alpha^h)=\#(\gamma\cap\alpha^h)=2|r|,\\
\ins(\gamma,\alpha^v)&=2 \ins(\xi, \alpha^v)=\#(\gamma\cap\alpha^v)=2s, \\
 \|x\|  &=|r|+s = \tfrac12 \ins(\gamma, \alpha^h)+ \tfrac12  \ins(\gamma, \alpha^v). 
\end{align*}

We call  a point $p\in \PP$  a $1$-{\em vertex} if $f(p) \in 
\postf=\{A,B,C,D\}$. 
We say that a $1$-vertex is of {\em type} $A$, $B$, $C$, or $D$ if it is a preimage of $A$, $B$, $C$, or $D$ under $f$, respectively.

Without loss of generality,  we may assume that the core arc $\xi$ connects the point $A$ with a point $X\in\{B,C,D\}$. Then $\xi'$ joins the two points in $\{B,C,D\}\setminus X$. Let $\GC=f^{-1}(\xi\cup \xi')$, which we view as a planar embedded graph with the set of vertices $f^{-1}(V)$. Note that the degree of a vertex $p$ in $\GC$ is equal to the local degree of the map $f$ at $p$.  In addition, the graph $\GC$ has the following  properties: 

 \begin{enumerate}[label=\text{(P\arabic*)},font=\normalfont]

\item\label{G-prop:bipartite} $\GC$ is a bipartite graph. In particular, $1$-vertices of type $A$ are connected only to $1$-vertices of type $X$ and vice versa.

 \smallskip 
\item\label{G-prop:typeA} Each postcritical point of $f$ is a $1$-vertex of type $A$. If a $1$-vertex of type $A$ has degree $\ge 2$ in $\GC$, then it must be a postcritical point. 
\end{enumerate}

The analog of  \ref{G-prop:bipartite} is valid for arbitrary
Thurston maps with four postcritical  points. To see that 
\ref{G-prop:typeA} is true, note that the $(2\times 2)$-Latt\`es map sends each of the four vertices of $\PP$ to $A$. This remains true if we glue any number of flaps to $\PP$. Moreover, gluing additional flaps can only create additional preimages of $A$ of degree $1$ in $\GC$.

If every pullback  of $\gamma$ under $f$ is peripheral, then 
$\mu_f(x)=\odot$, and so
\begin{equation}\label{eq:tricasesig}
 \| \mu_f(x)\|=\|{\odot}\|=0< |r|+s=\|x\|. 
 \end{equation}

Suppose $\gamma$ has an essential pullback $\widetilde{\gamma}$ under $f$. Then $\mu_f(x)\in \widehat \Q$ is the slope cor\-res\-ponding to the isotopy class of $\widetilde \ga$.
 By the discussion in Section \ref{sec:esscirc}, the pullback $\widetilde{\gamma}$ belongs to a unique component $\widetilde{U}$ of $\PP\setminus \GC$. We use the notation   $\partial_\xi\widetilde{U}\coloneq f^{-1}(\xi)\cap\partial \widetilde{U}$. Then 
$\partial_\xi\widetilde{U}$ is a subgraph of $\GC$ that only contains $1$-vertices of type $A$ and $X$. 
Since $\widetilde{\gamma}$ is essential,  $\partial_\xi\widetilde{U}$ satisfies: 
 \begin{enumerate}
\item[\mylabel{U-prop:essential}{(P3)}] $\#(\partial_\xi\widetilde{U}\cap \postf) \leq 2$. 
\end{enumerate}

 Our goal now is to simplify the pullback $\widetilde{\gamma}$ using an isotopy depending on the combinatorics of $\partial_\xi\widetilde{U}$. More precisely, we will  construct a curve $\beta$ that is isotopic to $\widetilde{\gamma}$, but has fewer intersections with $\alpha^h$ and $\alpha^v$.
In order to obtain a suitable curve $\beta$, we  now distinguish several cases that exhaust all possibilities.

\begin{figure}[t]
\def\svgwidth{0.7\columnwidth}
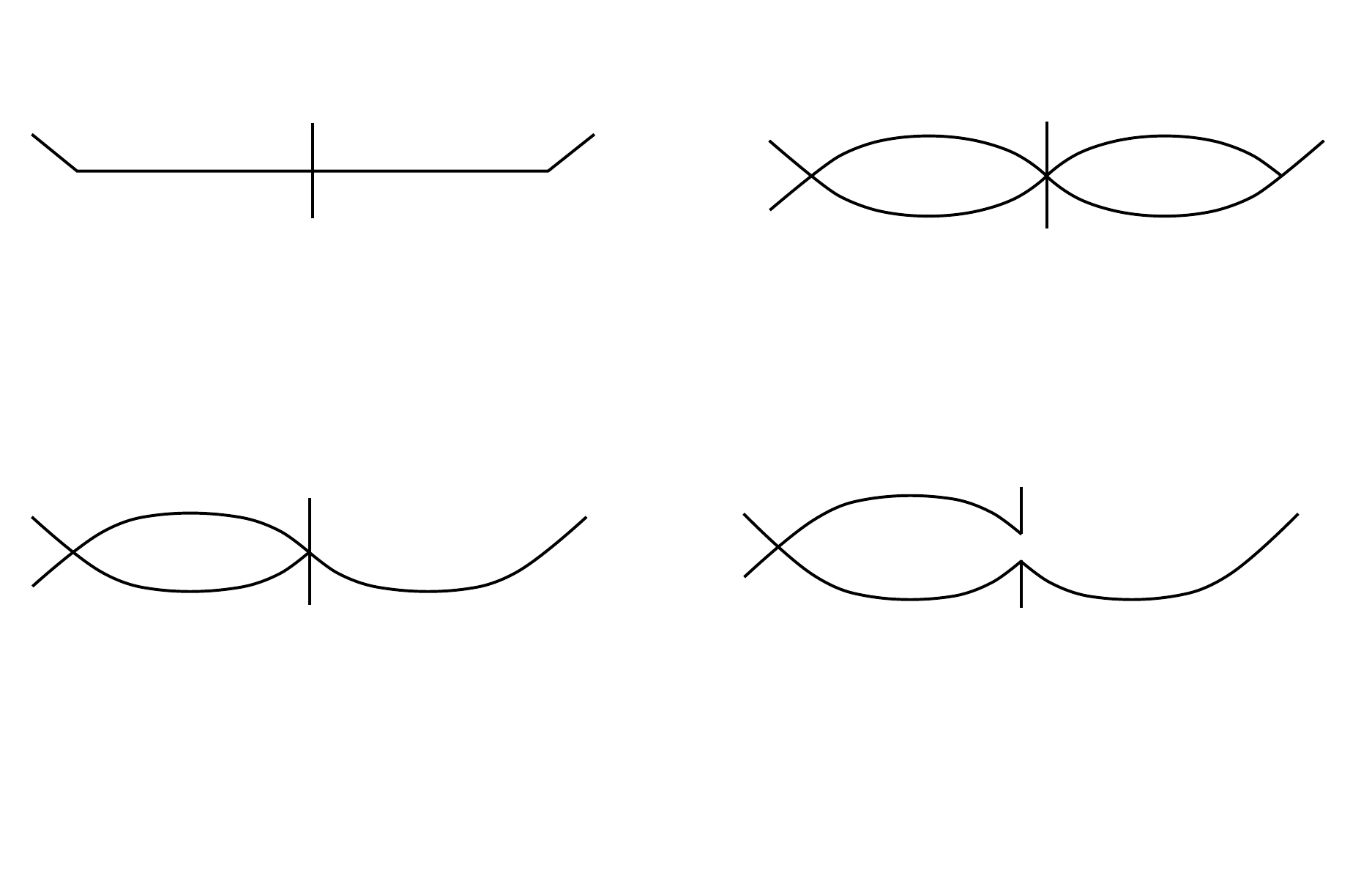
\caption{Different combinatorial types of the graph $\partial_\xi \widetilde U$. The subgraph in magenta corresponds to the appropriate choice of $\HC$ in each case. The vertices in black indicate the postcritical points.
}\label{fig:attractorcases} 
\end{figure}

\smallskip
\emph{Case 1:} $\partial_\xi\widetilde{U}$ does not contain any simple  cycle. Then $\partial_\xi\widetilde{U}$  is a tree and, since $\widetilde{\gamma}$ is essential, there are exactly two postcritical points in $\partial_\xi\widetilde{U}$. These are $1$-vertices of type $A$  by \ref{G-prop:typeA}.  Let  $\HC\sub \partial_\xi\widetilde{U}$ be the unique 
simple path  that joins these two postcritical  points in $\partial_\xi\widetilde{U}$.  By   \ref{G-prop:bipartite} the path $\HC$ 
must have length $\ge 2$, because the endpoints of $\HC$ have type $A$ and the vertices of types $A$ and $X$ alternate on $\HC$.

If the length of $\HC$ was $\ge 3$, then $\HC$ would contain 
at least one additional point $p$ of type $A$ apart from its endpoints. 
Then $\deg_\HC(p)=2$, so $\deg_\GC(p)\geq 2$, which means $p$ must be a postcritical point by  \ref{G-prop:typeA}. But then $\HC\sub \partial_\xi\widetilde{U}$ contains at least three postcritical points, which contradicts 
\ref{U-prop:essential}. We conclude that $\HC$ has length $2$; see Figure \ref{fig:attractorcases} (Case 1).

 Let $\widehat{U}=S^2\setminus \HC$. Then the annulus between $\widetilde{\gamma}$ and $\HC$ contains no postcritical points  of  $f$, and hence  for sufficiently small $\epsilon$ each $\epsilon$-boundary $\beta$ of $\widehat{U}$ wrt.\ $\HC$ is isotopic to~$\widetilde{\gamma}$ 
 as follows  from Lemma~\ref{lem:isocrit}.

\smallskip
\emph{Case 2:} $\partial_\xi \widetilde{U}$ contains a simple cycle. Then by \ref{G-prop:bipartite} one of  the vertices of such a  cycle must be of type $A$. Since this  vertex has degree equal to $2$ in the cycle, and hence degree $\ge 2$ in $\GC$, it must be a postcritical point by  \ref{G-prop:typeA}.  It follows that  
$\#(\partial_\xi\widetilde{U}\cap \postf)\geq 1$.  So by  \ref{U-prop:essential}, either $\#(\partial_\xi\widetilde{U}\cap \postf) = 1$ or $\#(\partial_\xi\widetilde{U}\cap \postf) = 2$.

\smallskip
\emph{Case 2a:}  $\#(\partial_\xi\widetilde{U}\cap \postf) = 1$. 
Since $\widetilde{\gamma}$ is essential, there are exactly two postcritical points in  the component of $S^2\setminus \widetilde{\gamma}$ that contains $\partial_\xi\widetilde{U}$.
One of them belongs to $\partial_\xi\widetilde{U}$, while the other one belongs 
to a face of $\partial_\xi\widetilde{U}$ disjoint from $\widetilde U$. This postcritical point then necessarily belongs to a face of a simple cycle $\HC$
in $\partial_\xi\widetilde{U}$.  
This simple cycle $\HC$ then necessarily contains the unique postcritical point 
in $\partial_\xi\widetilde{U}$ as we have seen above. 
Moreover, $\HC$ must have length $2$, because otherwise $\HC$ has an  even length $\ge 4$ by  \ref{G-prop:bipartite}. But then $\HC$ contains another $1$-vertex of type $A$ with degree $\ge 2$, which is necessarily a postcritical point  by  \ref{G-prop:typeA}. Then $\HC\sub \partial_\xi \widetilde{U}$ contains at least two postcritical points, which contradicts our assumption for this case.  So  $\HC$   has indeed length $2$; see Figure~\ref{fig:attractorcases} (Case 2a).

Let $\widehat{U}$ denote the face of $\HC$ that contains $\widetilde{U}$. 
Then again the annulus between $\widetilde{\gamma}$ and $\HC$ contains no postcritical points  of  $f$, and hence each $\epsilon$-boundary $\beta$ of $\widehat{U}$ wrt.\ $\HC$ is isotopic to~$\widetilde{\gamma}$ for sufficiently small $\epsilon$. 

\smallskip
\emph{Case 2b:}  $\#(\partial_\xi\widetilde{U}\cap \postf) = 2$. Let $\HC$ be a simple path in $\partial_\xi\widetilde{U}$ that joins the two postcritical points in $\partial_\xi\widetilde{U}$. By the same reasoning as in Case 1, $\HC$ has length $2$; see Figure \ref{fig:attractorcases} (Case 2b).  Let 
$\widehat{U}=S^2\setminus\HC$.  Since $\widetilde \gamma$ is essential, there are no postcritical points in the annulus between $\widetilde \gamma$ and $\HC$. Thus, each $\epsilon$-boundary $\beta$ of $\widehat{U}$ wrt.\ $\HC$ is isotopic to~$\widetilde{\gamma}$  for sufficiently small $\epsilon$.

\smallskip 
Note that in all cases $\HC$ essentially separates $V=\postf$, because in all cases $\HC$ separates the pairs of  points in $V$ contained in different complementary components of $\widetilde \ga$. Moreover, by our choice the  circuit length of $\widehat U$  is equal to $4$ in Cases 1 and 2b, and equal to $2$ in Case 2a. So in each case it is $\le 4$. Since $\xi$ and $\alpha^h$ are in minimal position as follows from Lemma~\ref{lem:i-properties-curve}, we can apply  Lemma~\ref{lem:choicebd} to the face $\widehat U$ of $\HC$. Hence for each sufficiently small $\eps>0$ we can always  find an $\eps$-boundary $\beta$ of $\widehat U$ wrt.\ $\HC$ that is isotopic to~$\widetilde{\gamma}$  and satisfies  $\#(\beta\cap f^{-1} (\alpha^h))\leq 4 \ins (\xi, \alpha^h)$.

Let $\widetilde{\alpha}_{1}$  and $\widetilde{\alpha}_{2}$ be the two pullbacks of $\alpha^h$ under $f$  that are isotopic to $\alpha^h$ (there are exactly two such pullbacks by Lemma \ref{lem:blown-lattes-horizontal}). Then in all cases we  have \begin{align}
2\ins(\widetilde \ga, \alpha^h)&=
 2\ins(\beta, \alpha^h)=\ins(\beta,\widetilde{\alpha}_{1})+ \ins(\beta,\widetilde{\alpha}_{2})  \label{eq:compl-decr-H} \\ & \leq  \# (\beta\cap\widetilde{\alpha}_{1}) + \#(\beta\cap\widetilde{\alpha}_{2}) \notag \\&\leq  \#(\beta\cap f^{-1} (\alpha^h)) \notag \\
 & \leq 4 \ins (\xi, \alpha^h) \notag \\
 & = 2 \ins (\gamma, \alpha^h).\notag  
 \end{align}
 Thus, $\ins(\widetilde \ga, \alpha^h) \leq \ins(\gamma, \alpha^h)$.
 The same reasoning (with a possibly different choice of $\beta$) also shows
$\ins(\widetilde \ga, \alpha^v) \leq \ins(\gamma, \alpha^v).$
Combining  these inequalities, we conclude: 
\begin{equation}\label{eq:compl-eq}
\|\mu_f(x)\|	=  \tfrac12  \ins(\widetilde{\gamma}, \alpha^h)+ \tfrac12  \ins(\widetilde{\gamma}, \alpha^v) \leq  \tfrac12  \ins(\gamma, \alpha^h)+  \tfrac12   \ins(\gamma, \alpha^v) = \|x\|.
\end{equation}
This completes the proof of the first part of the statement. 

Note that the second inequality  in \eqref{eq:compl-decr-H} is strict if $\beta$ intersects a peripheral pullback of~$\alpha^h$.
A similar statement  is also true 
 for the analogous inequality for the curve $\alpha^v$. 
 We now  assume that $x=r/s\in \widehat \Q$  satisfies $\|x\|>8$.
 We will argue that then either inequality  \eqref{eq:compl-decr-H}
 or the analogous inequality  for $\alpha^v$ is strict. This  will lead to 
 $\|\mu_f(x)\|< \|x\|. $
 
 To see this, first  note that   $f=\widehat \La \circ \phi^{-1}$, where $\widehat \La\:   \Phat \ra \PP $ is the associated branched covering map obtained by blowing up the $(2\times 2)$-Latt\`es map and $\phi\: \Phat \ra \PP$ is a suitable homeomorphism (see Section~\ref{subsec:blownup-lattes} for the details). 
 We proceed as in the first part of the proof and again represent $x$ by a simple closed geodesic $\ga$ in $(\PP,V)$ with slope $x=r/s$.    We may assume 
that $\ga$ has an essential pullback $\widetilde \ga$ under $f$, because otherwise we have the desired strict inequality by 
\eqref{eq:tricasesig}.

We choose a geodesic core arc $\xi$ in $(\PP,V)$  and a connected set 
$\HC\sub f^{-1}(\xi)$  as before and define $\widehat \HC\coloneq \phi^{-1} (\HC)$. Then
$\widehat \HC$ is a connected subset of $\widehat \La^{-1}(\xi)$ that essentially separates~$\widehat V$, where $\widehat V=
\phi^{-1}(V)$ is the set of vertices of the flapped pillow $\Phat$. 
Since $\|x\|=|r|+s>8$, 
we can apply  Lemma~\ref{lem:curves-on-flapped-pillow} (with $n=2$) and conclude that 
the set $\widehat\HC$ will meet a base edge and the top edge of some flap $F$ in $\Phat$. We will assume that $F$ is a horizontal flap, the case of a vertical flap being completely analogous. 

If $\eps$ is small enough, then the $\eps$-boundary $\beta$ constructed above traces $\HC$ very closely in the sense that for each point in $\HC$, there is a nearby point in $\beta$. The same is true  for $\widehat \beta\coloneq \phi^{-1} (\beta)$ and $\widehat \HC$. Using Remark \ref{rem:betameetsperi}, this implies that if $\eps$ is sufficiently small (as we may assume), then 
 $\widehat \beta$ will meet the peripheral pullback $\widehat \alpha$ of $\alpha^h$ under $\widehat \La$ that is contained in the horizontal flap $F$.  Consequently, $\beta$ meets the peripheral pullback $\phi
(\widehat \alpha)$ of $\alpha^h$ under $f$. As we already pointed out, this leads to a strict inequality in \eqref{eq:compl-decr-H} and thus also in \eqref{eq:compl-eq}. The statement follows. 
 \end{proof}

 The proof of Theorem~\ref{thm:finite-curve-attr2}  is now easy.
  
 \begin{proof}[Proof of Theorem~\ref{thm:finite-curve-attr2}] 
Let $f\:\PP \ra \PP$ be a Thurston map as in the statement.
 Then Proposition~\ref{prop:complexity-decreases} implies that 
if  $x\in  \widehat \Q \cup\{\odot\} $ is arbitrary, then 
 the complexities of  the elements $x,\, \mu_f(x),\, \mu^2_f(x),\, \dots$ of the  orbit of $x$  under iteration of $\mu_f$ strictly decrease until this orbit eventually reaches the finite set $S\coloneq \{u \in  \widehat{\Q}\cup \{\odot\} : \|u\|\leq 8\}$. From this point on, the orbit of $x$ stays in $S$. The statement follows.
 \end{proof}

 \begin{rem}\label{rem:mufix} 
 The proofs of  Theorems~\ref{thm:finite-curve-attr1} and~\ref{thm:finite-curve-attr2}
 show that a  global curve attractor $\AC(f)$ for $f$ can be obtained from Jordan curves 
 corresponding to slopes in  the finite set $S= \{x \in  \widehat{\Q}\cup \{\odot\} : \|x\|\leq 8\}$. Actually,  \eqref{eq:compl-decr-H} and \eqref{eq:compl-eq} imply that $\|\mu_f(x)\|=\|x\|$ if and only if $\mu_f(x)=x$. Therefore, the {\em minimal} global curve attractor  
 $\AC(f)$
 corresponds to  the set $\{x \in  \widehat{\Q}\cup \{\odot\} : \mu_f(x)=x\} \subset S$. 
In other words,  the minimal  $\AC(f)$ consists of  peripheral curves and essential curves that are invariant under $f$
(up to isotopy). 
 \end{rem}

In principle, a global curve attractor for a map $f$ as in  Theorem~\ref{thm:finite-curve-attr2} depends on the locations of the flaps. By Remark~\ref{rem:mufix} for each concrete case  one can easily determine the exact attractor by checking if a slope 
$x \in  \widehat{\Q}$ with   $\|x\|\leq 8$ is invariant. For example, 
by using a computer  program written by Darragh Glynn, we verified  that for the map $f$ corresponding to the flapped pillow in Figure \ref{fig:flabsatcorner} (with one horizontal flap and one vertical flap
glued at the two $1$-edges of $\PP$ incident to the vertex $B$) the invariant slopes are $0$, $\infty$,  $1$, $-1$.

\begin{figure}[t]
\def\svgwidth{0.31\columnwidth}
\begingroup%
  \makeatletter%
  \providecommand\color[2][]{%
    \errmessage{(Inkscape) Color is used for the text in Inkscape, but the package 'color.sty' is not loaded}%
    \renewcommand\color[2][]{}%
  }%
  \providecommand\transparent[1]{%
    \errmessage{(Inkscape) Transparency is used (non-zero) for the text in Inkscape, but the package 'transparent.sty' is not loaded}%
    \renewcommand\transparent[1]{}%
  }%
  \providecommand\rotatebox[2]{#2}%
  \newcommand*\fsize{\dimexpr\f@size pt\relax}%
  \newcommand*\lineheight[1]{\fontsize{\fsize}{#1\fsize}\selectfont}%
  \ifx\svgwidth\undefined%
    \setlength{\unitlength}{263.2842057bp}%
    \ifx\svgscale\undefined%
      \relax%
    \else%
      \setlength{\unitlength}{\unitlength * \real{\svgscale}}%
    \fi%
  \else%
    \setlength{\unitlength}{\svgwidth}%
  \fi%
  \global\let\svgwidth\undefined%
  \global\let\svgscale\undefined%
  \makeatother%
  \begin{picture}(1,0.93250957)%
    \lineheight{1}%
    \setlength\tabcolsep{0pt}%
    \put(0,0){\includegraphics[width=\unitlength,page=1]{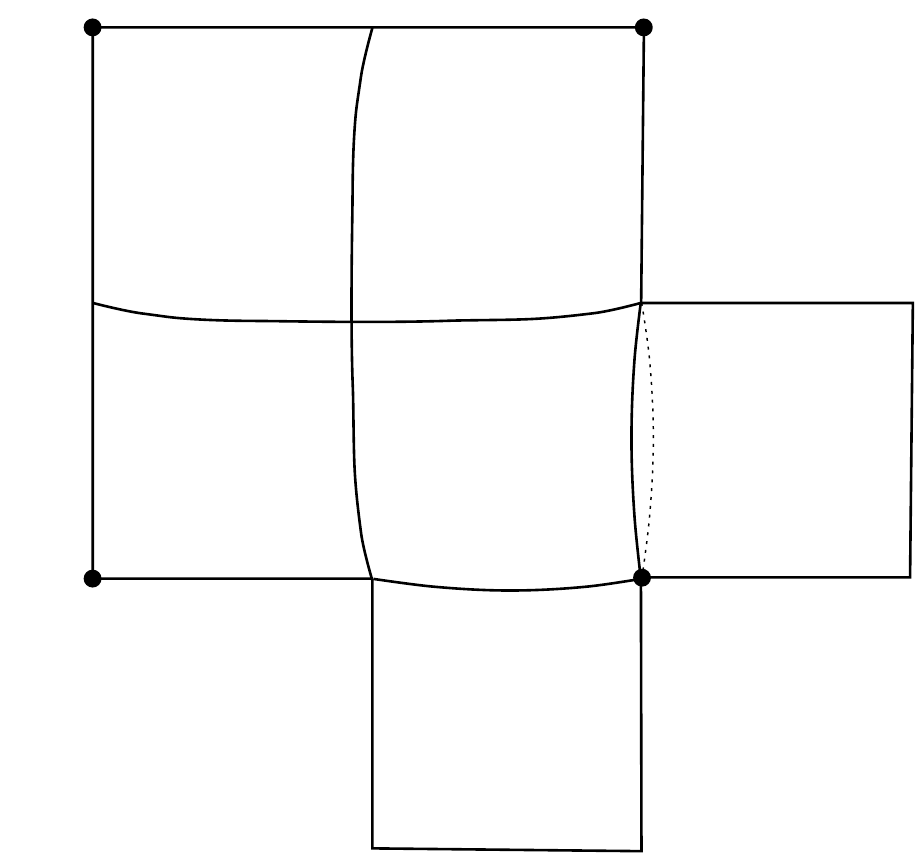}}%
    \put(0.01791479,0.23653157){\makebox(0,0)[lt]{\lineheight{1.25}\smash{\begin{tabular}[t]{l}$A$\end{tabular}}}}%
    \put(0.71833322,0.22204621){\makebox(0,0)[lt]{\lineheight{1.25}\smash{\begin{tabular}[t]{l}$B$\end{tabular}}}}%
    \put(-0.00355942,0.89319496){\makebox(0,0)[lt]{\lineheight{1.25}\smash{\begin{tabular}[t]{l}$D$\end{tabular}}}}%
    \put(0.73542504,0.88048566){\makebox(0,0)[lt]{\lineheight{1.25}\smash{\begin{tabular}[t]{l}$C$\end{tabular}}}}%
    \put(0,0){\includegraphics[width=\unitlength,page=2]{flabsatcorner.pdf}}%
  \end{picture}%
\endgroup%

\caption{A flapped pillow.}\label{fig:flabsatcorner}
\end{figure}

\section{Further discussion}\label{sec:Further_discussion}

In this section, we briefly discuss  some additional topics related to the investigations in this paper.

\subsection{Julia sets of blown-up Latt\`{e}s maps}
An obvious question is what we can say about the Julia sets of the
rational maps provided by Theorem~\ref{thm:flapped_intro}
(for the definitions of the Julia and Fatou sets of rational maps   and other basic notions in  complex dynamics, see \cite{Milnor_Book}).

\begin{prop} \label{prop:Julia} Let $g\: \CDach \ra \CDach$ be a rational map that is Thurston equivalent to a map $f\: \PP\ra \PP$ obtained from the $(n\times n)$-Latt\`{e}s map $\La_n$ with $n\ge 2$ by gluing $n_h\ge 1$ horizontal and $n_v\ge 1$ vertical flaps to the pillow $\PP$. Then the following statements are true: 

\begin{enumerate}[label=\text{(\roman*)},font=\normalfont]

\item The Julia set of $g$ is equal to $\CDach$ if $n$ is even and the vertex $A$ is not contained in a flap, or if $n$ is odd and none of the points in $V$ is contained in a flap. 

\smallskip
\item  The Julia set of $g$ is equal to a Sierpi\'nski carpet in $\CDach$  if $n$ is even and $A$ is  contained in a flap, or if $n$ is odd and at least one of the points in $V$ is contained in a flap.
\end{enumerate}
\end{prop}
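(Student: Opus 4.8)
The plan is to show that the dichotomy in the statement is governed entirely by whether the rational map $g$ --- equivalently, by Thurston equivalence, the map $f$ --- has a periodic critical point. The first step is to determine the action of $f$ on $\postf = V = \{A,B,C,D\}$ and to locate the critical points of $f$. Since $f = \widehat{\La}\circ\phi^{-1}$ agrees with $\La_n$ on the vertices of $\PP$ (on the base of $\widehat{\PP}$ the map $\widehat{\La}$ coincides with $\La_n$, and $\phi$ matches up corners), the identities $\La_n(\wp(z)) = \wp(nz)$ and \eqref{eq:wpeq} give that $A$ is fixed by $f$, that $f(B) = f(C) = f(D) = A$ when $n$ is even, and that $B$, $C$, $D$ are each fixed by $f$ when $n$ is odd. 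By the discussion in Section~\ref{subsec:blownup-lattes}, a corner $X\in V$ is a critical point of $f$ exactly when a flap is glued to a $1$-edge of $\widetilde{\CC}$ incident to $X$, i.e. exactly when $X$ is contained in a flap, while every other critical point of $f$ is (the image under $\phi$ of) a critical point of $\La_n$ and is a $1$-vertex but not a corner. The forward orbit of any critical point $c$ of $f$ enters $V$ after one step and is then, by the description of $f|V$, absorbed into a corner fixed by $f$; hence $c$ is periodic only if $c$ is itself a fixed corner that is also a critical point. Comparing with the two cases above, $f$ has a periodic critical point precisely in the cases of part (ii), and no periodic critical point precisely in the cases of part (i); since the dynamical portrait on $C_f\cup\postf$ is invariant under Thurston equivalence, the same holds for $g$.

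In the cases of part (i), $g$ is a postcritically-finite rational map with no periodic critical point. By the classification of periodic Fatou components and the standard fact that a postcritically-finite map has no parabolic basins, Siegel disks, or Herman rings (each would force an infinite postcritical set), every periodic Fatou component of $g$ would be a superattracting basin and hence would contain a periodic critical point; as there is none, the Fatou set of $g$ is empty and $J(g) = \CDach$. (Alternatively, $g$ is then a rational expanding Thurston map, and one may invoke the fact that such maps have Julia set $\CDach$; see \cite{THEBook}.)

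In the cases of part (ii), $g$ has a critical point fixed by $g$, which is a superattracting fixed point; hence the Fatou set of $g$ is non-empty and $J(g)$ is a compact subset of $S^2$ with empty interior. To conclude that $J(g)$ is a Sierpi\'nski carpet I would verify Whyburn's topological characterization: a subset of $S^2$ is homeomorphic to the standard Sierpi\'nski carpet if and only if it is compact, connected, locally connected, has empty interior, has no cut points, and its complementary components are Jordan domains with pairwise disjoint closures. That $J(g)$ is connected and locally connected, and that every Fatou component of $g$ is a Jordan domain, are standard facts for postcritically-finite rational maps (see \cite{Milnor_Book, THEBook}). It thus remains to show that the closures of distinct Fatou components are disjoint and that $J(g)$ has no cut point.

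For these last two properties I would use the combinatorial model. By the computation above the pillow boundary $\CC$ is forward invariant under $f$ and contains $\postf = V$, so $f$ and $\CC$ determine cell decompositions of $\PP$, the $k$-tiles being the squares of the $k$-th iterated flapped subdivision; transporting this structure to $\CDach$ through the Thurston equivalence and using subhyperbolicity of $g$, one obtains a decomposition of $\CDach$ into Jordan-domain tiles in which the tiles meeting $J(g)$ have diameters tending to $0$, the finitely many families of tiles lying in the Fatou set being controlled by B\"ottcher coordinates at the superattracting cycles. From the shrinking of the tiles that meet $J(g)$ one then reads off that distinct Fatou components cannot share a boundary point (such a point would be an interior point of arbitrarily small tiles straddling two components) and that $J(g)$ has no cut point. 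I expect this step --- extracting the disjointness of closures and the absence of cut points from the tile structure together with the local analysis at the superattracting cycles --- to be the main technical obstacle, as it is the one place where the precise geometry of the flapped pillow, rather than soft dynamical arguments, is genuinely used.
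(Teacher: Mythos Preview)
Your analysis of the periodic critical points is correct and matches the paper: the action of $f$ on $V$ is as you describe, a corner $X\in V$ is critical precisely when it lies in a flap, and hence $f$ (equivalently $g$) has a periodic critical point exactly in the configurations of~(ii). Part~(i) then follows immediately from the standard fact that a postcritically-finite rational map with no periodic critical point has Julia set equal to $\CDach$, just as you argue.

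For part~(ii), however, your proposed route via Whyburn's characterization has a genuine gap. Connectedness, local connectedness, and the fact that Fatou components are Jordan domains are indeed standard for postcritically-finite rational maps, but the two remaining properties---that distinct Fatou components have disjoint closures and that $J(g)$ has no local cut point---are precisely the heart of the matter, and your sketch does not establish them. The tile argument you indicate (``such a point would be an interior point of arbitrarily small tiles straddling two components'') does not work as stated: a common boundary point of two Fatou components would typically sit at a \emph{vertex} of the tile decomposition at every level, not in the interior of a tile, so shrinking tile diameters alone do not rule this out. Extracting the disjointness of closures from the combinatorial model requires a substantive argument you have not supplied, and this is not merely a technicality.

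The paper bypasses Whyburn entirely by invoking a criterion from \cite[Lemma~4.16]{BD_Exp}: for such a map, the Julia set is a Sierpi\'nski carpet if and only if $f$ has no \emph{Levy arc} (an arc or simple loop in $(\PP,V)$ with a lift under some $f^k$ isotopic to itself rel.\ $V$). The absence of Levy arcs is then proved in a few lines using the intersection-number machinery already developed in the paper: if $\alpha$ were a Levy arc, then $f^k|\widetilde\alpha$ is $1$-to-$1$ and $\ins(\alpha,\alpha^h)>0$ (say), so Lemma~\ref{lem:preimage-bound} forces $\alpha^h$ to have at most one essential pullback under $f^k$, contradicting Lemma~\ref{lem:blown-lattes-horizontal}, which gives $n^k>1$ such pullbacks. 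This is both shorter and reuses the paper's main tools; you should adopt this approach rather than attempting to verify Whyburn's conditions directly.
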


Obviously, these cases cover all possibilities and so the Julia set 
of $g$ is either the whole Riemann  sphere $\CDach$ or a Sierpi\'nski carpet, i.e.,  
a subset of $\CDach$ homeomorphic to the standard $1/3$-Sierpi\'nski carpet fractal. 
As we will see, in the first case the map $g$ has no periodic critical points, while it has periodic critical points (namely critical fixed points) in the second case.

\begin{proof}
Let $g$ be a rational map as in the statement. To see what the Julia set of $g$ is, we will check whether $g$ has periodic critical points or not, and verify in the former case that $g$ has no Levy arcs (see below for the definition). These conditions are invariant under Thurston equivalence and  therefore it is enough to consider the map $f$.  Then $\postf=V$, where $V=\{A,B,C,D\}$ is the set of vertices  of $\PP$. By definition of the $(n\times n)$-Latt\`{e}s 
map $\La_n$, for each $X\in V$ we have 
$\La_n(X)=A$ if $n$ is even and $\La_n(X)=X$ if $n$ is odd. 
Since $f|V$ agrees with $\La_n|V$, this implies that for  each $X\in V$ we also have 
$f(X)=A$ if $n$ is even and $f(X)=X$ if $n$ is odd. 

Since the orbit of each critical point under iteration of $f$ passes 
through the set $P_f=V$, this shows that any  periodic
critical point of $f$ must be equal to  the point $A$ if $n$ is even or must belong to $V$ if 
$n$ is odd. Now for $X\in V$ we have  $\deg_{\La_n}(X)=1$  and so
 $$\deg_{f}(X)=n_X+\deg_{\La_n}(X)=n_X+1,$$ where $n_X\in \N_0$ is the number of flaps that contain $X$. 
 These considerations show that $f$, and hence also $g$, has no periodic critical points in case (i). Hence the Julia set of $g$ is the whole Riemann sphere $\CDach$ in this case (see \cite[Corollary 19.8]{Milnor_Book}).
 
 In case (ii), the map $f$, and hence also $g$,  has a critical fixed point, and so the  Fatou set of $g$ is non-empty. To show that its Julia set is a Sierpi\'nski carpet, 
we  use the following criterion that follows from \cite[Lemma 4.16]{BD_Exp}:  the Julia set of $g$ is a Sierpi\'{n}ski carpet if and only if $g$, or equivalently $f$,  has no Levy arcs. Here a \emph{Levy arc} of $f$ is a path $\alpha$ in $\PP$ satisfying the following conditions:
\begin{enumerate}[label=(\text{L\arabic*)},font=\normalfont]

\item $\alpha$ is an arc in $(\PP, V)$,   or $\alpha$ is a simple loop based at a point $X\in V$ such that $\alpha \setminus \{X\}\sub \PP\setminus V$ and each component of $\PP\setminus  \alpha$ contains at least one point in $V$. 

\smallskip
\item There exist  $k\in\N$ and a lift $\widetilde \alpha$ of $\alpha$ under $f^k$ such that $\alpha$ and $\widetilde \alpha$ are isotopic rel.\ $V$. 
\end{enumerate}

Now suppose that $f$ has a Levy arc $\alpha$ with $\widetilde \alpha$ and $k\in\N$ as in (L2). Then $f^k|\widetilde \alpha$ is a $1$-to-$1$ map and either $\ins(\alpha, \alpha^h)>0 $ or $\ins(\alpha, \alpha^v)>0$. Without loss of generality, we may assume that $\ins(\alpha, \alpha^h)=\#(\alpha\cap \alpha^h)>0$. If $\alpha$ is an arc in $(\PP, V)$, then we can apply Lemma \ref{lem:preimage-bound} with $\gamma\coloneq \alpha^h$, $f\coloneq f^k$, and $\widetilde \alpha\coloneq \widetilde \alpha$ and conclude that the number of distinct pullbacks of $\alpha^h$ under $f^k$ that are isotopic to $\alpha^h$ is at most $1$.
This is also true if $\alpha $ is a  simple loop as in (L1) by the  argument in the proof of  Lemma~\ref{lem:preimage-bound}.
 
 We reach  a  contradiction, because it  follows from Lemma \ref{lem:blown-lattes-horizontal} that $\alpha^h$  has  $n^k>1$ such pullbacks.  Consequently, $f$ and $g$ do not have any Levy arcs and so the Julia set of $g$ is a  Sierpi\'{n}ski carpet.
\end{proof}

\subsection{The global curve attractor problem}\label{sec:numerics}  We were able to prove the existence of a finite global curve attractor only for blown-up $(n\times n)$-Latt\`{e}s maps with  $n=2$. The proof of Theorem~\ref{thm:finite-curve-attr2} crucially relies on 
 Proposition \ref{prop:complexity-decreases}, which says that the (naturally defined) complexity of curves does not increase under the pullback operation. The latter statement is false in general for blown-up $(n\times n)$-Latt\`{e}s maps with $n\geq 3$. 
 
 Numerical  computations  by Darragh Glynn suggest that for some blown-up $(3\times 3)$-Latt\`{e}s map $f$ one can have infinitely many slopes $x\in \widehat {\Q}$ such that $\|\mu_f(x) \| > \| x \|$.   For example, consider the map $f$ obtained from the $(3\times 3)$-Latt\`{e}s map by blowing up once the horizontal and vertical edges incident to the vertex $B$ of $\PP$. Then one can prove the following general relation for the slope map $\mu_f$:
  $$\mu_f(r/s)=r'/s' \Rightarrow \mu_f(r/(s+24r))=r'/(s'+22r').$$
Based on this one can show that $\|\mu_f(x) \| > \| x\|$ for all 
$$x\in \{1/(m+24k) :m\in\{7,8,9,15,16,17\},  k\in \N_0\}. $$   Actually, it seems that in this case the slope map $\mu_f$  has orbits
 with  arbitrarily  many strict increases  of complexity.  For instance, we have
two  jumps of complexity for the orbit of slope $1/9$ under $\mu_f$:
$$1/9	 \to 	3/25 \to 	3/23  \to 	1/7 \to 	3/19 \to 	3/17 \to 1/5 \to 	1/5.$$
%
Note that this orbit stabilizes at the fixed point $1/5$ of $\mu_f$.  The numerical computations by Darragh Glynn also show that there are examples of blown-up $(n\times n)$-Latt\`{e}s maps with $n\geq 5$  for which the slope map
has periodic cycles of  length $\geq 2$. 

It is natural  to ask what one can say  about the behavior of the slope map $\mu_f$ for an obstructed Thurston map $f$ (with $\#P_f=4$). It was already observed in \cite{Pullback} that for a blown-up $(2\times 2)$-Latt\`{e}s map $f$ with only vertical flaps glued to the  pillow $\PP$, there are infinitely many (non-isotopic) invariant essential Jordan curves. Indeed, for such a map $f$ the curve  $\alpha^v$ is $f$-invariant and satisfies $\lambda_f(\alpha^v)=1$. 
One can use this to show  that $f$ commutes with $T^2$ (up to isotopy rel.~$P_f$), where $T$ is a {\em Dehn twist} about $\alpha^v$. This implies that  each curve $T^{2n}(\alpha^h)$ is $f$-invariant. In fact, it is easy to verify directly 
that each essential Jordan curve with slope $x\in\Z \cup \{\infty\}$ is $f$-invariant, or equivalently,  that  $\mu_f(x)=x$ for $x\in \Z \cup \{\infty\}$.

However, for such a  blown-up $(2\times 2)$-Latt\`{e}s map $f$ with only vertical flaps glued to the pillow $\PP$, the general behavior of the slope map $\mu_f$ under iteration has not been analyzed before.  The considerations in the proof of the first part of Proposition \ref{prop:complexity-decreases} also  apply in this situation. In particular, \eqref{eq:compl-decr-H} and \eqref{eq:compl-eq} are still true and  show  that the orbit of an arbitrary $x\in\widehat \Q \cup \{\infty\}$ under $\mu_f$ eventually lands in a fixed point of $\mu_f$. Moreover, results in  Section \ref{sec:attractor} provide a 
method to determine all fixed slopes for $\mu_f$.

The easiest case is the map $f$ obtained from the $(2\times 2)$-Latt\`es map $\La_2$ by gluing  at least one vertical   flap  to each of the four vertical $1$-edges in the ``middle" of the pillow $\PP$. If $\xi$ is  a geodesic arc in $(\PP,V)$ with an endpoint in $A\in V$ and slope $x\in \Q\setminus \Z$, then each component of 
$\La_2^{-1}(\xi)$ must pass through the interior of one of the four vertical $1$-edges in the middle of $\PP$. 
Consequently, we can apply the considerations  in the proof of 
Lemma~\ref{lem:curves-on-flapped-pillow} and in the  second part of the proof of Proposition \ref{prop:complexity-decreases}  and conclude that $\| \mu_f(x)\| < \| x\| $ for $x\in \Q \setminus \Z$. Thus, the orbit of each $x\in\widehat \Q \cup
 \{\odot \}$ under $\mu_f$ eventually lands in $\Z \cup \{\infty, \odot \}$ (that is, in a fixed point of $\mu_f$). Since the map $f$ is easily seen to be \emph{expanding} (see \cite[Definition 2.2 and Theorem 14.1]{THEBook}), this provides an answer to a question raised by Kevin Pilgrim.  
 

\subsection{Twisting problems}\label{subsec:twists}  To this date,  many natural problems related to Thurston equivalence remain rather  mysterious 
and are often very  difficult to solve. \emph{Twisting problems} are examples of this nature. 

To explain this, suppose we are given a rational Thurston map $f\:\CDach\to\CDach$. Let $\phi\:\CDach\to\CDach$ be an orientation-preserving  homeomorphism that fixes the postcritical set $P_f$ pointwise. We now consider the branched covering map $g\coloneq \phi\circ f$ on $\CDach$, called the $\phi$-{\em twist}  of $f$.  Then  $C_g=C_f$ and $g$ has the same dynamics on $C_f$ as $f$. In particular, $P_g=P_f$; so $g$ has a finite postcritical set and 
is a  Thurston map. 

This leads us to the natural questions: \emph{Is $g$ realized? And if yes, to which rational map is $g$  equivalent depending on the isotopy type of $\phi$?} In fact, there are only finitely many rational maps $g$ (up to M\"obius conjugation)  that can arise in this way from a fixed map~$f$.  A famous instance of this question, called the ``twisted rabbit problem'', was solved by Laurent Bartholdi and Volodymyr Nekrashevych in \cite{BarNekr_Twist} (see also \cite{Lodge_Boundary,LifitingTrees}). 

In our context we can  ask  which twists of maps  as in 
Theorem \ref{thm:flapped_intro}  are realized. We do not have an answer to this question, but it seems that this leads to non-trivial and difficult problems. 
 For example, consider  the blown-up $(2\times 2)$-Latt\`{e}s map $f\:\PP\to\PP$ corresponding to the flapped pillow $\widehat \PP$ as in Figure \ref{fig:flabsatcorner}. Then $\widehat \PP$  has one horizontal and one vertical flap, and so $f$ is realized  by Theorem \ref{thm:flapped_intro}. One can check that the Jordan curve $\gamma\coloneq \wp(\ell_{3/13})$ has exactly two essential pullbacks $\gamma_1, \gamma_2 \sim \wp(\ell_{1/3})$ under $f$ with $\deg(f\:\gamma_1\to\gamma)=1$ and $\deg(f\:\gamma_2\to\gamma)=2$.  We now choose an orientation-preserving  homeomorphism $\phi\:\PP\to\PP$ that maps $\gamma$ onto $\gamma_1$, while fixing each point in $V=P_f$.
Then  the curve $\gamma_1$ is an obstruction for the twisted map $g\coloneq \phi\circ f$ with $\lambda_g( \gamma_1)=3/2$.  An analogous construction applies to some other essential Jordan curves, for instance, with slopes $3/23$ and $3/49$, and gives twists of $f$ with an obstruction.  

It follows from this discussion that the \emph{mapping class biset} associated with the map $f$ above is not \emph{contracting} (see \cite{BD_Algo,BD_Exp} for the definitions). Thus, the algebraic methods for solving the global curve attractor problem developed in \cite{Pilgrim_Alg_Th} (see, specifically, 
\cite[Theorem 1.4]{Pilgrim_Alg_Th}) do not apply in general for the maps considered 
in  Theorem \ref{thm:flapped_intro}.

\subsection{Thurston maps with more than four postcritical points} While in this paper we only discuss the case of Thurston maps $f\:\Sp\to\Sp$ with $\#\postf=4$, it is natural to ask if one can adapt Theorem~\ref{thm:blow-up-obstr} to the case when $\#\postf>4$. The main difficulty is that an obstruction in this case  is in general not given by a unique essential Jordan curve in $(\Sp,\postf)$, but by a {\em multicurve}. Of course, this fact complicates the analysis of pullback properties of curves and their intersection numbers. However, we expect that one can naturally generalize our result for an arbitrary Thurston map: given an obstructed Thurston map $f$ one can eliminate all possible multicurve obstructions  by successively applying the blow-up operation and  obtain a Thurston  map that is realized.

\subsection{Other combinatorial constructions of rational maps} The dynamical behavior of curves under the pullback operation is an important topic in holomorphic dynamics. While in this paper we only study the realization and the global curve attractor problems,  one is led to similar considerations,  for example,   in the study of {\em iterated monodromy groups} (see \cite{HM_growth}).   For these investigations  it is important  to have explicit  classes of  rational maps at hand that are constructed in combinatorial fashion and against which conjectures can be tested or which lead to the discovery of general phenomena. The  maps provided by Theorem~\ref{thm:blow-up-obstr}
may be useful in this respect. Another interesting class of maps worthy of further investigation are  Thurston maps constructed from tilings of the Euclidean or hyperbolic plane as in \cite[Example~12.25]{THEBook}.

\section{Appendix: Isotopy classes of Jordan curves in spheres with four marked points}\label{sec:Appendix}
In this appendix, we will provide  proofs for Lemmas~\ref{lem:isoclassesP} and~\ref{lem:i-properties-curve}.
Our presentation is rather detailed.   We need some additional  auxiliary  facts that we will discuss first. 
Throughout, we will rely on the notation and terminology established in  Section~\ref{sec:prelim}.

In the following, we will consider a marked sphere  $(S^2, Z)$, where $Z\sub S^2$ consists  of precisely four points.  If $M\sub S^2$ and $\alpha$ is a Jordan curve in $(S^2, Z)$, then we say that $\alpha$ is in {\em minimal position with the set} $M$ if $\#(\alpha\cap M)\le  
\#(\alpha'\cap M)$ for all Jordan curves $\alpha'$  in $(S^2, Z)$  with $\alpha\sim \alpha'$ rel.\ $Z$. 

Let $\alpha$ and $\beta$ be Jordan curves or arcs in  
$(S^2,Z)$.  
We say that subarcs $\alpha'\sub \alpha $ and $\beta'\sub \beta$ 
form a {\em bigon} $U$ in $(S^2,Z)$ if  $\alpha'$ and $\beta'$ have the same endpoints, but  disjoint interiors  and if $U\sub S^2$ is an open  Jordan region
 with $\partial U=
 \alpha'\cup \beta'$   and $U\sub S^2\setminus Z$.

\begin{lemma} \label{lem:nobigons}  Let $\ga$ be a Jordan curve  in a marked sphere 
$(\Sp, Z)$  with $\# Z = 4$, and let  $a$ and $c$ be  disjoint arcs in $(\Sp,Z)$.  Suppose $\ga$ is in minimal position with the set  $a\cup c$. 
Then $\ga$ meets each of the arcs $a$ and $c$ transversely and  no subarcs of $\ga$  and of $a$ or $c$ form a bigon $U$ in $(S^2, Z)$  with $U\cap (\ga \cup a \cup c)=\emptyset$. 
\end{lemma}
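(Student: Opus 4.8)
The plan is to prove the two assertions separately, each by a standard "innermost region" argument. First I would show that $\gamma$ meets $a$ (and symmetrically $c$) transversely. Since $\gamma$ is in minimal position with $a\cup c$, in particular $\gamma$ is in minimal position with $a$ alone in the following sense: we cannot reduce $\#(\gamma\cap a)$ by an isotopy rel.\ $Z$ that does not increase $\#(\gamma\cap c)$. If $\gamma$ touched $a$ at a point $p\in\gamma\cap a\cap(S^2\setminus Z)$ without crossing, then (exactly as in the proof of Lemma~\ref{lem:transverse}) there is a subarc $\sigma\subset\gamma$ with $\sigma\cap a=\{p\}$ whose two halves lie on the same side of $a$ near $p$; pulling $\sigma$ off $a$ through a thin isotopy supported in a small neighborhood $N$ of $p$ removes the intersection point $p$ with $a$. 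The only worry is that this isotopy might create new intersections with $c$; but $a$ and $c$ are disjoint, so by choosing $N$ small enough we can assume $N\cap c=\emptyset$, and then the isotopy does not affect $\gamma\cap c$ at all. This contradicts minimality, so $\gamma$ crosses $a$ transversely at every point of $\gamma\cap a\cap(S^2\setminus Z)$; since $\partial a\subset Z$ and $\gamma\subset S^2\setminus Z$, the intersection $\gamma\cap a$ is entirely interior, finite (as minimal position forces finiteness), and transverse. The same argument applies to $c$.

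Next I would rule out the bigon. Suppose, for contradiction, that subarcs $\gamma'\subset\gamma$ and $e'\subset e$ (where $e$ is $a$ or $c$) form a bigon $U$ with $U\subset S^2\setminus Z$ and $U\cap(\gamma\cup a\cup c)=\emptyset$. Let $p,q$ be the common endpoints of $\gamma'$ and $e'$; these lie in $\inte(e)\subset S^2\setminus Z$ and in $\gamma$. Since $\overline U$ is a closed Jordan region disjoint from $Z$ and from the rest of $\gamma\cup a\cup c$, there is an isotopy $H$ of $S^2$ rel.\ $Z$, supported in a small neighborhood of $\overline U$ that also misses the other arc and the rest of $\gamma$, which pushes $\gamma'$ across $U$ to the other side of $e'$. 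After this isotopy the two intersection points $p$ and $q$ of $\gamma$ with $e$ are removed, and (for a sufficiently thin neighborhood) no new intersection points of $\gamma$ with $a$ or with $c$ are created, because the support of $H$ is disjoint from $a\cup c$ except near $e'$, where $\gamma$ is simply moved from one side of $e'$ to the other. Hence $\#(\gamma\cap(a\cup c))$ strictly decreases, contradicting that $\gamma$ is in minimal position with $a\cup c$. Therefore no such bigon exists.

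The main obstacle I anticipate is purely a matter of careful bookkeeping rather than of any deep idea: making precise that the isotopies used (to remove a non-transverse touching point, and to push $\gamma$ across a bigon) can be chosen to have support disjoint from the marked set $Z$ and from the "other" parts of $a\cup c\cup\gamma$, so that the count $\#(\gamma\cap(a\cup c))$ genuinely drops. Because $a$ and $c$ are disjoint arcs and the bigon $U$ is assumed disjoint from $a\cup c\cup Z$, this localization is straightforward, but it is the point where the hypotheses "$a$ and $c$ disjoint" and "$U\cap(\gamma\cup a\cup c)=\emptyset$" are essential, and I would state it with a little care. Apart from this, the argument is a direct adaptation of the standard bigon criterion as in \cite[pp.~416--417]{BuserGeometry} and of the proof of Lemma~\ref{lem:transverse}, and I would not grind through the neighborhood constructions in detail.
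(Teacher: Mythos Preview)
Your proposal is correct and follows essentially the same approach as the paper's proof: both establish transversality by locally pushing $\gamma$ off a non-transverse touching point (the paper defers this to the proof of Lemma~\ref{lem:transverse} applied \emph{mutatis mutandis}), and both rule out the bigon by pushing $\gamma'$ across $U$ to strictly reduce $\#(\gamma\cap(a\cup c))$, contradicting minimality. Your explicit remark that disjointness of $a$ and $c$ lets the local isotopy avoid creating new intersections with the other arc is a useful clarification that the paper leaves implicit.
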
 

\begin{proof} These facts are  fairly standard  in contexts like this (see, for example,
\cite[Section 1.2.4]{FarbMargalit}), and so we will only give an outline of the proof.   

Our assumptions imply that $\ga$ meets each arc $a$ and $c$ transversally and has only finitely many intersections with $a\cup c$ (Lemma~\ref{lem:isocrit} and its proof apply {\em mutatis mutandis} to our situation).  We now argue by contradiction and assume that 
 a subarc $\ga'\sub \ga $ and a subarc $\sigma$ of $a$ or $c$, 
form a  bigon $U$ in $(S^2,Z)$ with $U\cap (\ga \cup a \cup c)=\emptyset$.  Note that then $\cl(U)\sub S^2\setminus Z$. Hence  we can modify the curve $\ga$ near $U$  by an isotopy  in $S^2\setminus Z$ that pulls the subarc $\ga'$ of $\ga$ through $U$ and away from $\sigma$  so that the new Jordan curve $\ga$ does not intersect $\sigma\sub a\cup c$  and  no new intersection points with  $a\cup c$  arise. 
This leads to a contradiction, because the original curve $\ga$ was in minimal position with $a\cup c$.  
\end{proof}

A topological space $D$ is called a {\em closed topological disk} if there exists a homeomorphism $\eta\: \cl(\D)\ra D$ of the closed unit disk $\cl(\D)\sub \C$ onto $D$. This is an abstract version of the notion of a closed Jordan region contained in a surface. The set $\partial D\coloneq \eta(\partial \D)$ is a Jordan curve independent of $\eta$ and called the {\em boundary} of $D$. 
The {\em interior} of $D$ is defined as $\inte(D)\coloneq \eta(\D)= D\setminus \partial D$. 

Similarly as for closed Jordan regions, an arc $\alpha$ contained in a  closed topological disk
$D$ is called a {\em crosscut} (in $D)$ if $\partial \alpha\sub \partial D$ and $\inte(\alpha)\sub \inte(D)$. A crosscut $\alpha$ splits 
$D$ into two compact and connected sets $S$ and $S'$ called the {\em sides} of $\alpha$ (in $D)$ such that $D=S\cup S'$ and $S\cap S'=\alpha$. 
With suitable orientations of $\alpha $ and $D$, one side of $\alpha$ lies on the left and the other side on the right of $\alpha$.
Each non-empty connected set $c\sub D$ that does not meet $\alpha$ is contained in precisely one side of $\alpha$.

If $\alpha_1, \dots, \alpha_n$ for  $n\in \N$  are pairwise disjoint 
crosscuts in a  closed topological disk
$D$, then we can define  an abstract  graph $G=(V,E)$ in the following way: we consider each  component $U$ of $D\setminus (\alpha_1 \cup \dots \cup\alpha_n)$ as a vertex of $G$. We join two distinct vertices represented by components $U$ and $U'$ by an edge, if one of the crosscuts $\alpha_j$ is contained in the boundary of both $U$ and $U'$.  
Accordingly, the edges of $G$ are in bijective correspondence with the crosscuts $\alpha_1, \dots, \alpha_n$. 
 
 \begin{lemma}\label{lem:chords} Let   $n\in \N$ and $\alpha_1, \dots, \alpha_n$ be  pairwise disjoint 
crosscuts in a  closed topological disk
$D$, and let $G$ be  the graph obtained from the components 
of the set $D\setminus (\alpha_1 \cup \dots \cup \alpha_n)$ as described. The the following statements are true: 

\begin{enumerate}[label=\text{(\roman*)},font=\normalfont,leftmargin=*]

\item \label{i:chord1} 
The graph $G$ is a finite tree with at least two vertices. 

\smallskip 
\item \label{i:chord2} 
Let $c\sub D$ be a   connected set and suppose that $c\cap \alpha_k=\emptyset$ for some $k\in \{1, \dots, n\}$. Then there exists 
$m\in \{1,\dots, n\}$ and a side $S$ of $\alpha_m$ such that 
$S\setminus \alpha_m $ is disjoint from all the 
sets $c$, $\alpha_1, \dots, \alpha_n$.  
\end{enumerate}
\end{lemma}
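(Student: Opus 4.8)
\textbf{Proof plan for Lemma~\ref{lem:chords}.}

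The plan is to exploit the fact that a closed topological disk is simply connected, so that removing a crosscut disconnects it into exactly two pieces. For part~\ref{i:chord1}, I would argue by induction on $n$, the number of crosscuts. When $n=1$, the single crosscut $\alpha_1$ splits $D$ into its two sides, which are the two vertices of $G$, joined by one edge; this is a tree with two vertices. For the inductive step, suppose $\alpha_1,\dots,\alpha_{n-1}$ have been dealt with and their complement in $D$ yields a tree $G'$. Adding $\alpha_n$: since $\alpha_n$ is disjoint from the other crosscuts, its interior lies in a single component $U_0$ of $D\setminus(\alpha_1\cup\dots\cup\alpha_{n-1})$, and $\alpha_n$ is a crosscut of the closed topological disk $\cl(U_0)$. (Here one should note that each such component, together with its boundary, is again a closed topological disk — this follows from the Schoenflies theorem applied inside $D$, or one can work with conformal structure as in Section~\ref{sec:esscirc}.) The crosscut $\alpha_n$ splits $U_0$ into two pieces, so passing from $G'$ to $G$ replaces the vertex corresponding to $U_0$ by two vertices joined by a new edge, with all old edges at $U_0$ redistributed among the two new vertices. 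This operation on a tree produces a tree, and it increases the number of vertices by one. Hence $G$ is a finite tree with $n+1\geq 2$ vertices.

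For part~\ref{i:chord2}, I would use the structure of $G$ as a tree together with a counting argument on leaves. Let $c\sub D$ be connected with $c\cap\alpha_k=\emptyset$ for some fixed $k$. Since $c$ is connected and misses $\alpha_k$, it lies entirely on one side of $\alpha_k$, hence (as $c$ avoids at least the crosscut $\alpha_k$) the set of vertices of $G$ whose corresponding component meets $c$ is contained in the vertex set of one of the two subtrees obtained by deleting the edge $\alpha_k$ from $G$. Pick the other subtree $T$ (the one on the side of $\alpha_k$ disjoint from $c$); it is a non-empty finite tree, so it has a leaf $v$ — in fact, if $T$ consists of a single vertex that vertex already works, and otherwise $T$ has at least two leaves, at least one of which is not the endpoint of the deleted edge $\alpha_k$. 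Take such a leaf $v$ not incident to $\alpha_k$ in $G$ if possible; let $U$ be the component of $D\setminus(\alpha_1\cup\dots\cup\alpha_n)$ corresponding to $v$ and let $\alpha_m$ be the unique crosscut among $\alpha_1,\dots,\alpha_n$ contained in $\partial U$ (unique because $v$ is a leaf). Then $U$ is one of the two sides of $\alpha_m$, more precisely $U = S\setminus\alpha_m$ for a side $S$ of $\alpha_m$, and by construction $U$ is disjoint from $c$. It is automatically disjoint from all of $\alpha_1,\dots,\alpha_n$ since $U$ is a component of their complement. This is exactly the assertion.

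The main obstacle I anticipate is the bookkeeping in the inductive step of part~\ref{i:chord1}: verifying cleanly that each component $U$ of $D\setminus(\alpha_1\cup\dots\cup\alpha_{n-1})$ together with its closure is a closed topological disk, and that a crosscut of $D$ lying in $\cl(U)$ is a crosscut of $\cl(U)$ in the abstract sense. This is intuitively obvious but requires either the Schoenflies theorem or a uniformization-type argument to make rigorous; since the paper already invokes such facts freely (e.g., conformal maps onto faces in Section~\ref{sec:esscirc}), I would simply cite that machinery. A secondary subtlety in part~\ref{i:chord2} is the edge case where the subtree $T$ disjoint from $c$ has only one vertex, or where every leaf of $T$ other than potential endpoints is incident to $\alpha_k$; a short case analysis shows a valid leaf always exists because $T$ has at least one vertex and, if it has an edge, at least two leaves. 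Neither point is deep, so the proof should be short once the disk-decomposition facts are granted.
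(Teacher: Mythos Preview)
Your approach is essentially the same as the paper's, and it is correct. For part~\ref{i:chord2} the argument is identical: delete the edge $\alpha_k$, pick a leaf of $G$ lying in the subtree on the side of $\alpha_k$ disjoint from $c$, and observe that the corresponding component is one side of a single crosscut minus that crosscut. Your case analysis about leaves not incident to $\alpha_k$ is slightly over-cautious: if the subtree $T$ has a single vertex, that vertex is already a leaf of $G$ with unique incident edge $\alpha_k$, and taking $\alpha_m=\alpha_k$ works perfectly well.

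For part~\ref{i:chord1} the paper takes a mildly different and cleaner route that sidesteps your ``main obstacle.'' Rather than inductively showing that each complementary component is again a closed topological disk so that adding $\alpha_n$ splits one vertex in two, the paper argues directly that $G$ is connected (by induction) and that every edge is a bridge: since any crosscut $\alpha_j$ separates $D$ into two sides, removing the corresponding edge from $G$ disconnects it, so $G$ has no cycles and is a tree. This avoids invoking Schoenflies or conformal-map machinery entirely. Your inductive argument is also correct once that machinery is granted, but the bridge argument is more elementary.
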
 

If $\alpha=\alpha_m$ and  $S$ are as in \ref{i:chord2}, then there exists a subarc  $\beta$ of $\partial D$ with the same endpoints as $\partial \alpha\sub D$ such that $\partial S=\alpha\cup \beta$. 
Then $U\coloneq\inte(S)$ is an open Jordan region bounded by  the union of the arcs $\alpha$ and $\beta$ whose only common points are their endpoints. This region  $U$ does not meet $c$ nor any of the arcs $\alpha_1, \dots, \alpha_n$. In the proof of Lemma~\ref{lem:alternate8} we will use such a region $U$ to obtain a bigon in an appropriate context.

\begin{proof} \ref{i:chord1} This is intuitively clear, and we leave the details to the reader. By induction on the number $n$ of 
crosscuts one can  show that $G$ is a finite connected graph with at least two vertices. Since a crosscut 
splits $D$ into two sides, it easily follows that the removal of any  edge from $G$ disconnects
  it.  Hence $G$ cannot contain any simple cycle and must be a tree.  
 
 \smallskip
  \ref{i:chord2} The graph $G$ is a tree; so if we remove  the edge corresponding 
  to the crosscut  $\alpha_k$ from $G$, then  we obtain two disjoint non-empty subgraphs $G_1$ and $G_2$ of $G$. The connected components of $D\setminus (\alpha_1 \cup \dots \cup \alpha_n)$ corresponding 
 to the vertices of $G_1$ are contained  in one side $S'$ of $\alpha_k$, while the other 
 connected components of $D\setminus (\alpha_1 \cup \dots \cup \alpha_n)$ corresponding to the vertices of $G_2$ lie in the other side $S''$ of $\alpha_k$. Since $c$ is connected and does not meet $\alpha_k$, it must be contained in one of the sides of $\alpha_k$, say $c\sub S''$. Then $c$ is disjoint from $S'$ and hence from all the sets that correspond to vertices in $G_1$. 
 
 The tree $G$ has a {\em leaf} $v$ in $G_1\ne \emptyset$, i.e., 
 there exists a vertex $v$ of $G_1$  such that $v$ is connected to the rest of $G$ by precisely one edge.  Then the connected component of $D\setminus (\alpha_1 \cup \dots \cup \alpha_n)$ corresponding to $v$ has exactly one of the crosscuts, say  $\alpha_m$ with $m\in \{1,\dots, n\}$, on its boundary.  Then this component has the form $S\setminus \alpha_m$, where $S$ is the unique side of $\alpha_m$ contained in $S'$. Then $S\setminus \alpha_m$ is disjoint from $c\sub S''$ and from all the crosscuts $\alpha_1, \dots, \alpha_n$.  
  \end{proof} 

We can now prove a statement  that is  the key  to the understanding of isotopy classes of Jordan curves in a sphere with four marked points.

 \begin{lemma}\label{lem:alternate8}
 Let $\ga$ be a Jordan curve  in a marked sphere 
$(\Sp, Z)$  with $\# Z = 4$, and let  $a$ and $c$ be  disjoint arcs in $(\Sp,Z)$.  Suppose $\ga$ is in minimal position with the set  $a\cup c$.  Then  the sets $a\cap \ga$ and  $c\cap \ga$ are 
 non-empty and finite, and the points in these sets alternate on $\ga$ unless 
$\ga$ is peripheral or $\ga\cap (a\cup c) =\emptyset$. 
\end{lemma}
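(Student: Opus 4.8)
The plan is to assume that $\gamma$ is essential and that $\gamma\cap(a\cup c)\ne\emptyset$, and to deduce that both $a\cap\gamma$ and $c\cap\gamma$ are non-empty and that the points alternate on $\gamma$. Finiteness of the intersections and transversality of the crossings come for free from Lemma~\ref{lem:nobigons} and its proof (the intersection with $a\cup c$ consists of finitely many transverse crossing points). First I would dispose of the non-alternation possibilities: if $\gamma$ is peripheral or misses $a\cup c$ entirely there is nothing to prove, so from now on $\gamma$ is essential and $\#(\gamma\cap(a\cup c))>0$.

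\textbf{Step 1: both $a$ and $c$ are met.} Suppose, say, $c\cap\gamma=\emptyset$, so all intersection points of $\gamma$ lie on $a$. Let $X$ and $Y$ be the endpoints of $a$. Since $\gamma$ is a Jordan curve disjoint from $Z$ and meeting $a$ transversely, I want to produce an ``innermost'' crossing giving a bigon, contradicting Lemma~\ref{lem:nobigons}. More precisely, I would cut $S^2$ along $c$: removing $c$ (an arc in $(S^2,Z)$) from $S^2$ leaves an open Jordan region $W$; its closure $D=\cl(W)$ is a closed topological disk, $a$ is a crosscut in $D$ (since $\partial a\subset Z\subset c$), and $\gamma$, being disjoint from $c$, is a Jordan curve in $\inte(D)=W$. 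Now consider the crosscut $a$ together with $\gamma$; the crossing points of $\gamma$ with $a$ cut $a$ into subarcs, and an innermost such subarc $\sigma\subset a$ together with a subarc $\gamma'\subset\gamma$ bounds a Jordan region $U$ in $D$ with $U\cap(a\cup\gamma)=\emptyset$; since $U\subset W\subset S^2\setminus Z$, this $U$ is a bigon forbidden by Lemma~\ref{lem:nobigons}. (If $\gamma\cap a$ is a single point, transversality already gives a contradiction, as a Jordan curve cannot cross an arc in exactly one point.) Hence $c\cap\gamma\ne\emptyset$, and symmetrically $a\cap\gamma\ne\emptyset$.

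\textbf{Step 2: alternation.} Now suppose the points of $a\cap\gamma$ and $c\cap\gamma$ do \emph{not} alternate on $\gamma$. Then there is a subarc $\delta\subset\gamma$ whose two endpoints lie in (say) $a\cap\gamma$, whose interior meets $a\cup c$ only in points of $a$, and which contains no point of $c$ — i.e.\ a subarc of $\gamma$ joining two points of $a$ and disjoint from $c$; by passing to an innermost such subarc I may assume $\inte(\delta)\cap a=\emptyset$, so $\delta$ is itself a crosscut of the disk $D=\cl(S^2\setminus c)$ with $\partial\delta\subset a$. Here is where Lemma~\ref{lem:chords} enters. Inside $D$ I have the finitely many pairwise disjoint crosscuts obtained as follows: take $a$ itself, and take all the subarcs of $\gamma$ cut off by $a\cap\gamma$ that are disjoint from $c$ (each such subarc is a crosscut of $D$ with endpoints on $a$); call this finite family $\alpha_1,\dots,\alpha_n$, including $\alpha_1=a$. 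Apply Lemma~\ref{lem:chords}\ref{i:chord2} with $c$ replaced by the (connected) remainder of $\gamma$ — that is, $\gamma$ minus the open subarcs that are the $\alpha_j$'s other than $a$ — which is a connected set disjoint from, say, $\delta=\alpha_k$ for an appropriate $k$. Actually it is cleaner to apply \ref{i:chord2} directly: since this connected remainder of $\gamma$ is disjoint from some $\alpha_k$, there is an $m$ and a side $S$ of $\alpha_m$ with $S\setminus\alpha_m$ disjoint from that remainder and from all the $\alpha_j$. By the remark following Lemma~\ref{lem:chords}, $U:=\inte(S)$ is an open Jordan region bounded by $\alpha_m$ and a subarc $\beta$ of $\partial D\subset a\cup c$ (in fact $\beta\subset a$, since $\partial\alpha_m\subset a$), with $U\cap(a\cup c)=\emptyset$ and $U\cap\gamma=\emptyset$. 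If $\alpha_m=a$, then $S\setminus a$ disjoint from $\gamma$ contradicts Step~1 (that side would contain no crossing of $\gamma$, forcing all crossings of $\gamma$ with $a$ to the other side and eventually producing an empty intersection on one of the two arcs after re-examining which points of $Z$ lie where). If $\alpha_m\ne a$, then $\alpha_m$ is a subarc of $\gamma$ and $\alpha_m\cup\beta=\partial U$ with $\beta\subset a$, so $\alpha_m$ and $\beta$ form a bigon in $(S^2,Z)$ with $U\cap(\gamma\cup a\cup c)=\emptyset$ — contradicting Lemma~\ref{lem:nobigons}. Either way we reach a contradiction, so the points alternate.

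\textbf{Expected main obstacle.} The routine part is finiteness and transversality, handed to us by Lemma~\ref{lem:nobigons}. The delicate bookkeeping is Step~2: organizing the subarcs of $\gamma$ cut off by $a$ into a genuine family of pairwise disjoint crosscuts of the disk $D=\cl(S^2\setminus c)$, verifying that the ``leftover'' piece of $\gamma$ is connected and disjoint from at least one of them (so that Lemma~\ref{lem:chords}\ref{i:chord2} applies), and then checking that the region $U$ produced is honestly a forbidden bigon (or, in the degenerate case $\alpha_m=a$, that it still contradicts non-triviality of $a\cap\gamma$). I would also need to be a little careful that cutting along $c$ produces an honest closed topological disk — this uses that $c$ is an arc in $(S^2,Z)$ and the Schoenflies theorem — and to keep track of the marked points $Z$, which all lie on $\partial D$, so that ``$U\subset S^2\setminus Z$'' is automatic once $U\subset\inte(D)$. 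The symmetry between $a$ and $c$ lets me cut along whichever arc is more convenient at each stage.
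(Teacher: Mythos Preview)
Your overall strategy---slit the sphere along one of the arcs to obtain a disk, and use Lemma~\ref{lem:chords} on the resulting crosscuts to locate a forbidden bigon---is exactly the paper's approach. But you slit along the wrong arc, and this breaks both steps.

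The specific error is the claim ``$\partial a\subset Z\subset c$'' in Step~1. Since $a$ and $c$ are \emph{disjoint} arcs in $(S^2,Z)$ with $\#Z=4$, each has its two endpoints in $Z$, and these four endpoints exhaust $Z$; hence $\partial a$ and $\partial c$ partition $Z$ and $\partial a\cap c=\emptyset$. So when you cut along $c$, the boundary $\partial D$ consists of two copies of $c$ glued at their endpoints, and the two points of $\partial a$ lie in $\inte(D)$. Thus $a$ is \emph{not} a crosscut of $D$, and neither is any subarc $\delta\subset\gamma$ with $\partial\delta\subset a$ (your Step~2). Lemma~\ref{lem:chords} therefore does not apply to your family. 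A second slip: ``a Jordan curve cannot cross an arc in exactly one point'' is false---a single transverse crossing just means the endpoints of the arc lie on opposite sides of $\gamma$.

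The fix is to slit along $a$ instead (assuming $n:=\#(a\cap\gamma)\ge 2$; the case $n\le 1$ is handled directly using essentiality, as in the paper). Then the subarcs $\gamma[y_k,y_{k+1}]$ of $\gamma$ between consecutive $a$-crossings become genuine pairwise disjoint crosscuts of $D$, and the arc $c$---which is connected and disjoint from $a$---sits in $\inte(D)$ and plays the role of the connected set in Lemma~\ref{lem:chords}\,\ref{i:chord2}. If some $\gamma[y_k,y_{k+1}]$ misses $c$, that lemma hands you a side $S$ of some crosscut $\gamma_m$ with $S\setminus\gamma_m$ disjoint from $c$ and from all the crosscuts; one then checks (this is the ``expected obstacle'' you anticipated) that the corresponding arc $\beta\subset\partial D$ avoids the two corners $x_0,x_1$, so that back on $S^2$ the region $\inte(S)$ is a bigon bounded by $\gamma[y_m,y_{m+1}]$ and a subarc of $a$, disjoint from $\gamma\cup a\cup c$---contradicting Lemma~\ref{lem:nobigons}. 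This also makes the ``remainder'' issue disappear: the connected set is $c$, not a leftover piece of $\gamma$.
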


\begin{proof}
In the given setup,  each of the disjoints sets $\partial a$ and $\partial c$ contains two points in $Z$. Since  $\#Z=4$, it follows that $a$ connects two of the points in $Z$, while $c$ connects the other two points.  

We may assume that $\ga$ is essential and that at least one of the arcs $a$ or $c$ meets $\ga$, say $a\cap \ga\ne \emptyset$, because otherwise we are in an exceptional situation as in the statement.

If none of the arcs $a$ and $c$ meets $\ga$ in more than two points, then  $\# (a \cap \gamma  ) =1$ and $\# (c \cap \gamma  ) \le 1$. Now $a$ and $\ga$ meet transversely by   
 Lemma~\ref{lem:nobigons}. This implies that the endpoints of 
 $a$ lie in different  components of $S^2\setminus \ga$. Since $\ga$ is essential, each of these components contains  precisely two points of $Z=\partial a\cup\partial c$. Hence the endpoints of $c$  
 also lie in different components of $S^2\setminus \ga$. This implies that $c\cap \gamma\ne \emptyset$ and so $\# (c \cap \gamma  ) =1$ in the case
 under consideration. So both $a$ and $c$ meet $\ga$ in exactly one point. It  follows that the statement is true in this case.

We are reduced to the situation where at least one of the arcs $a$ or  $c$ meets $\ga$ in at least two (but necessarily finitely many) points, say  
$n\coloneqq \# (a \cap \gamma ) \geq 2$.
We now endow $\gamma$ and $a$  with some orientations.
 With the  given orientation we denote the initial point of $a$    by $x_0$ and its terminal point by $x_1$.  Let $y_1, \dots, y_{n}, y_{n+1}=y_1$ denote  the $n\ge 2$ intersection points of $\gamma$ with $a$ that we encounter while traversing $\gamma$ once  starting from some point  in $\gamma \setminus a $.  
 The same $n$ points also appear on $a$. We denote them by $p_1, \dots, p_n$ in the order they appear if we traverse $a$ starting from $x_0$. For $k=1, \dots, n$, we denote by 
  $\gamma[y_k, y_{k+1}]$ the subarc of $\ga$ obtained from  traversing  $\ga$ with the given orientation from $y_k$ to $y_{k+1}$.

\smallskip 
{\em Claim.} $c\cap \gamma[y_k, y_{k+1}]\ne \emptyset$ for each 
$k=1, \dots, n$.

\smallskip 
To see this, we argue by contradiction and assume  that $c\cap \gamma[y_k, y_{k+1}] = \emptyset$ for some $k\in \{1, \dots, n\}$.  Our goal now is  to show that some subarcs of $a$ and  $\gamma$ form a bigon $U$ in $(S^2, Z)$ with $U\cap (\ga\cup a\cup c)=\emptyset$. This is a  contradiction with  Lemma~\ref{lem:nobigons}, because $\ga$ and $a\cup c$ are in minimal position.  

 To produce such bigon $U$,  we want to apply Lemma~\ref{lem:chords}. In order to do this, we slit the sphere $S^2$ open along the arc $a$. This results in a closed topological disk $D$ whose boundary $\partial D$ consists of  two copies $a^+$ and $a^-$ of the arc $a$. The set $S^2\setminus \inte(a)$ 
can be identified with  $\inte(D)$, while each point in $\inte(a)$ is doubled into  one corresponding point in $a^+$ and one in $a^-$.

The arcs $a^+$ and  $a^-$ have their endpoints $x_0$ and $x_1$ in common. We identify $a^+$ with the original arc $a$ with the same orientation. Then  we can think of the intersection points 
$p_1, \dots, p_n$ of $\ga$ with $a$ as lying on $a^+=a$, while each of the points $p_j$ has a corresponding point $q_j$ on $a^{-}$.

Each arc $\ga[y_j, y_{j+1}]$  corresponds to a  crosscut 
$\ga_j$ in $D$ for $j=1, \dots, n$. These crosscuts have their endpoints in the set $P\cup Q$, where $P\coloneq \{ p_1, \dots, p_n\}$ and $Q\coloneq \{ q_1, \dots, q_n\}$. Moreover, the crosscuts $\ga_1, \dots, \ga_n$  are pairwise disjoint. Indeed, the only possible common intersection point of two of these arcs
could be a common endpoint of two consecutive arcs $\ga_j$ and 
$\ga_{j+1}$  (where $\ga_{n+1}\coloneq \ga_1$) corresponding 
to $y_{j+1}\in a$; but in the process of creating $D$, the point $y_{j+1}$ is doubled into the points $p_{\ell}$ and $q_{\ell}$
for some $\ell \in \{1, \dots, n\}$.  Since $\ga$ meets $a$ transversely, one of these points will be the terminal point of $\ga_j$, while the other one will be the initial point of $\ga_{j+1}$, and so actually $\ga_j\cap \ga_{j+1}=\emptyset$. 
 It follows that the hypotheses of Lemma~\ref{lem:chords} are satisfied.  

It is clear that the arc $c$, now considered as a subset of $D$, does not meet the crosscut $\ga_k$ corresponding to $\gamma[y_k,y_{k+1}]$. 
 Hence by Lemma~\ref{lem:chords}, there exists $m\in \{1, \dots, n\}$ and a side $S$ of $\ga_m$ in $D$ such that 
$S\setminus \ga_m$ is disjoint from $c$ and all the arcs $\ga_1, \dots, \ga_n$. Then there exists an arc $\beta\sub \partial 
D=a^+\cup a^-$ with the same endpoints as $\ga_m$ such that 
$\partial S=\ga_m\cup \beta$. The set  $S\setminus \ga_m$ is
disjoint from $\ga_1\cup \dots \cup \ga_n\supset P\cup Q$, and so the  arc $\beta$ has its endpoints in the set $P\cup Q$, but no other points in common with $P\cup Q$. 

This implies that neither $x_0$ nor $x_1$  are contained in 
$\beta$; indeed, suppose $x_0\in \beta$, for example. Then  the endpoints of $\beta$ and hence of $\ga_m$ are necessarily the points $p_1$ and $q_1$. Collapsing $D$ back to $S^2$, we see that the endpoints $y_m$ and $y_{m+1}$ of 
$\ga[y_m, y_{m+1}]$ are the same. This is a contradiction (here the assumption $n\ge 2$ is crucial).  We arrive at a similar contradiction (using the points $p_n$ and $q_n$) if we assume $x_1\in \beta$. It follows that $\beta\sub \inte(a^+)$ or $\beta\sub \inte(a^-)$. 

These considerations imply that  if we pass back to  $S^2$ by  identifying corresponding points in $a^+$ and $a^-$, then from $\inte(S)$ 
 we obtain a bigon $U\sub S^2$ bounded 
  by the subarc $\ga[y_m, y_{m+1}]$ of $\ga$ and a subarc 
  $\widetilde \beta$ of $a$, where  $U$ is disjoint from $ \ga \cup a\cup c$. This is impossible by 
 Lemma~\ref{lem:nobigons} since $\ga$ is in minimal position with $a\cup c$. This contradiction shows that the Claim is indeed true. 

 \smallskip 
  The Claim implies that 
 $c$ has at least $n\ge 2$  intersection points with $\ga$. Hence we can reverse the roles of $a$ and $c$ and get a similar statement as the Claim also for the arc $c$. This implies that the (finitely many) points in $a\cap \ga\ne \emptyset$ and $c\cap \ga\ne \emptyset$ alternate on $\ga$. \end{proof} 
 
  As in Section \ref{subsec:pillow},   we now consider the pillow $\PP$ with its set $V$ of vertices as the marked points, and 
  the Weierstrass function $\wp\:\C\to \PP$  that is doubly periodic with respect to the lattice $2\Z^2$. We will now revert to  the notation  $a$ and $c$ for the horizontal  edges  of $\PP$.  Recall that $\I=[0,1]$.

\begin{lemma}\label{lem:difference} 
Let $\alpha\: \I\ra  \C$ be a simple loop or a homeomorphic 
parametrization of an  arc. Suppose that   the endpoints $z_0=\alpha(0)$ and $w_0=\alpha(1)$ of $\alpha$ lie in $\C\setminus \wp^{-1}(a\cup c)$ and that  $\wp(z_0)=\wp(w_0)$. 
Suppose further that either $\alpha\cap \wp^{-1} (a\cup c)=\emptyset$, or all of the following conditions are true:   $\alpha$ meets each of the lines in $\wp^{-1} (a\cup c)$ transversely, we have 
 $$0<\# ( \alpha \cap \wp^{-1} (a))=\# ( \alpha \cap \wp^{-1} (c))<\infty,
 $$ and  the points in $ \alpha \cap \wp^{-1} (a) $  and 
 $ \alpha \cap \wp^{-1} (c)$  alternate on $\alpha$. 
 
Then $w_0-z_0\in 2\Z^2$.  Here $w_0-z_0\ne 0$ unless 
$\alpha\cap \wp^{-1} (a\cup c)=\emptyset$. \end{lemma}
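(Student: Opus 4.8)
The plan is to track the ``height'' function $\operatorname{Im}$ relative to the lattice of lines $\wp^{-1}(a\cup c)$. Recall from Section~\ref{subsec:pillow} that $\wp^{-1}(a\cup c)$ consists precisely of the horizontal lines $\ell_0(ni)=\{z\in\C:\operatorname{Im}(z)=n\}$, $n\in\Z$, with $\wp$ sending $\ell_0(ni)$ to $a$ when $n$ is even and to $c$ when $n$ is odd. Thus crossing a line $\operatorname{Im}(z)=n$ with $n$ even corresponds to hitting $\wp^{-1}(a)$, and crossing one with $n$ odd to hitting $\wp^{-1}(c)$. I will use this to read off the parity and magnitude of $\operatorname{Im}(w_0-z_0)$ from the intersection data, and a separate argument for $\operatorname{Re}(w_0-z_0)$ coming from $\wp^{-1}(b\cup d)$.

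First I would dispose of the trivial case: if $\alpha\cap\wp^{-1}(a\cup c)=\emptyset$, then $\alpha$ stays strictly between two consecutive horizontal lattice lines, so $z_0$ and $w_0$ lie in the same strip, whence $-1<\operatorname{Im}(w_0-z_0)<1$. Combined with the hypothesis $\wp(z_0)=\wp(w_0)$ and \eqref{eq:wpeq}, which says $w_0-z_0\in 2\Z^2$ or $w_0+z_0\in 2\Z^2$, I want to conclude $w_0-z_0\in 2\Z^2$; ruling out the alternative $w_0+z_0\in 2\Z^2$ (when $w_0-z_0\notin 2\Z^2$) will use that $z_0\notin\wp^{-1}(a\cup c)\supset$ the relevant constraints, or more simply that in this boundedness regime both congruences force $w_0-z_0\in 2\Z^2$ anyway after also controlling the real part. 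In the main case, since $\alpha$ meets each horizontal line transversely, each intersection point is a genuine sign-definite crossing; as we travel along $\alpha$ the value $\operatorname{Im}\circ\alpha$ passes each integer level exactly at the listed points. Because the points of $\alpha\cap\wp^{-1}(a)$ and $\alpha\cap\wp^{-1}(c)$ alternate and are equal in number, say $N$ each, the crossings of even levels and odd levels alternate along $\alpha$, so in particular $\operatorname{Im}\circ\alpha$ is monotone across each pair of consecutive crossings in the appropriate sense. A careful bookkeeping of signed crossings of all integer levels then shows the net change $\operatorname{Im}(w_0)-\operatorname{Im}(z_0)$ is an even integer, and it is nonzero because $N\ge 1$ forces at least one crossing with a definite net direction. (The cleanest way to make ``net change is even and nonzero'' rigorous is: the number of times $\operatorname{Im}\circ\alpha$ crosses even levels minus a similar count controls the parity, and alternation prevents cancellation; alternatively, observe that between two consecutive $\wp^{-1}(a)$-crossings there is exactly one $\wp^{-1}(c)$-crossing, so consecutive $\wp^{-1}(a)$-levels differ by exactly $2$, and sum the telescoping differences.)

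Next I would handle the real part. The hypotheses of the lemma only constrain intersections with $\wp^{-1}(a\cup c)$, i.e.\ the horizontal lines, and say nothing a priori about $\wp^{-1}(b\cup d)$, the vertical lines $\operatorname{Re}(z)=n$. But this is exactly where \eqref{eq:wpeq} does the work: we already know $w_0-z_0\in 2\Z^2$ or $w_0+z_0\in 2\Z^2$. The parity computation above gives $\operatorname{Im}(w_0-z_0)\in 2\Z\setminus\{0\}$ in the nonempty case (and $\operatorname{Im}(w_0-z_0)\in 2\Z$, forced to $0$ by boundedness, in the empty case). If $w_0-z_0\in 2\Z^2$ we are done since $2\Z^2=\{2k+2ni:k,n\in\Z\}$ and the imaginary part being a nonzero even integer is automatically consistent (and gives $w_0\ne z_0$). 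If instead $w_0+z_0\in 2\Z^2$, then $\operatorname{Im}(w_0+z_0)\in 2\Z$; subtracting, $\operatorname{Im}(2z_0)=\operatorname{Im}(w_0+z_0)-\operatorname{Im}(w_0-z_0)\in 2\Z$, so $\operatorname{Im}(z_0)\in\Z$, contradicting $z_0\in\C\setminus\wp^{-1}(a\cup c)=\C\setminus\bigcup_n\{\operatorname{Im}=n\}$. Hence the case $w_0+z_0\in 2\Z^2$ (with $w_0-z_0\notin 2\Z^2$) cannot occur, and we conclude $w_0-z_0\in 2\Z^2$ in all cases. The ``$w_0\ne z_0$ unless the intersection is empty'' clause then follows because $\operatorname{Im}(w_0-z_0)\ne 0$ in the nonempty case.

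I expect the main obstacle to be making the signed-crossing bookkeeping for the imaginary part fully rigorous without hand-waving: one has to argue that transversality plus the alternation of $\wp^{-1}(a)$- and $\wp^{-1}(c)$-crossings really does force $\operatorname{Im}\circ\alpha$ to change by a nonzero even integer, rather than merely that it returns to its start (which would be the case for a loop in $\C$ with no monodromy—but here $\alpha$ is a path in $\C$, not a loop, so its endpoints genuinely differ). The key realization is that consecutive crossings of $\wp^{-1}(a)$ along $\alpha$ are separated by exactly one crossing of $\wp^{-1}(c)$, which pins the difference of the two $a$-levels to $\pm 2$ (never $0$, and never $\pm 4$ or more, since an intermediate even level would require an intervening $a$-crossing), and likewise for $c$; summing these differences telescopes to give $\operatorname{Im}(w_0)-\operatorname{Im}(z_0)$ as a sum of an odd number—no, of $N$ terms each $\pm 2$ plus boundary corrections from $z_0,w_0$ lying strictly inside strips, and a short parity argument finishes it. Everything else is routine, leaning on \eqref{eq:wpeq} and the description of $\wp^{-1}(a\cup c)$ already established in the text.
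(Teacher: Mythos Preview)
Your overall strategy matches the paper's: identify $\wp^{-1}(a\cup c)$ with the horizontal integer lines $L_n=\{z:\operatorname{Im}(z)=n\}$, use transversality and alternation to control the crossings, invoke \eqref{eq:wpeq}, and rule out $w_0+z_0\in 2\Z^2$ by parity. The gap is your claim that the crossing bookkeeping shows $\operatorname{Im}(w_0)-\operatorname{Im}(z_0)$ is an even integer. This is false: the crossing data only determine which open strips $(m,m+1)$ the endpoints lie in, not their exact imaginary parts (for instance $\operatorname{Im}(z_0)=0.3$, $\operatorname{Im}(w_0)=2.7$ with two crossings is perfectly consistent with the hypotheses). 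Your elimination of the case $w_0+z_0\in 2\Z^2$ then breaks, since it feeds this false claim into the subtraction $\operatorname{Im}(2z_0)=\operatorname{Im}(w_0+z_0)-\operatorname{Im}(w_0-z_0)$.

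What the crossing argument actually yields---and this is the observation the paper makes cleanly---is that the successive crossing levels $\operatorname{Im}(\alpha(u_j))$ are strictly monotone with step $\pm 1$: between consecutive crossings the path stays in a single strip, so the next line hit is an adjacent one, and alternation of parities forbids returning to the previous level. Hence the total number $k=2N$ of crossings equals the number of lines $L_n$ separating $z_0$ from $w_0$, and $k$ is even. Now if $w_0=-z_0+v_0$ with $v_0\in 2\Z^2$, then $z_0$ and $w_0$ are symmetric about $\tfrac12 v_0\in\Z^2$, whose imaginary part is an integer; since $\operatorname{Im}(z_0)\notin\Z$, a direct count shows the number of integer levels strictly between $\operatorname{Im}(z_0)$ and $\operatorname{Im}(w_0)$ is odd, contradicting $k$ even. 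Equivalently, in your framework: $z_0\in(m,m+1)$ and $w_0\in(m\pm k,m\pm k+1)$ give $\operatorname{Im}(z_0+w_0)$ in an open interval of length $2$ centered at the odd integer $2m\pm k+1$, hence $\operatorname{Im}(z_0+w_0)\notin 2\Z$ and $w_0+z_0\notin 2\Z^2$. The empty case (same strip, $k=0$) and the nonvanishing clause ($k>0$ forces different strips) then follow immediately.
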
  

\begin{proof}  Note that the set  $\wp^{-1}(a\cup c)$ consists 
precisely of the lines $L_n\coloneq\{ z\in \C: \text{Im} (z)=n\}$, $n\in \Z$. Moreover, such  a line $L_n$  is mapped to  $a$ or $c$ depending on whether $n$ is even or odd, respectively. 

We consider the second case  first when $\alpha\cap \wp^{-1} (a\cup c)\ne \emptyset$.  We denote by  $0<u_1<\dots< u_k<1$, $k\in \N$,  all the  (finitely many)  $u$-parameter values with $\alpha(u_j)\in \wp^{-1} (a\cup c)$ for $j=1, \dots, k$.  Since  $\alpha$ meets each line $L_n$ transversely and the  points in $ \alpha \cap \wp^{-1} (a)\ne \emptyset $  and $ \alpha \cap \wp^{-1} (c) \ne \emptyset$ alternate on $\alpha$,
it is clear that the values $\text{Im} (\alpha(u_j))$ either strictly increase  by $1$ in each step  $j=1, \dots , k$,  or strictly decrease by $1$  in each step. Here we have strict increase  if 
 $\text{Im} (\alpha(u_1)-\alpha(0))>0$, and  strict decrease if 
$\text{Im} (\alpha(u_1)-\alpha(0))<0$.  This implies that  $k>0$ is precisely the number of lines $L_n$, $n\in \Z$,  that separate $z_0$ and $w_0$.  So $z_0$ and $w_0$ lie in different components of $\C\setminus \wp^{-1}(a\cup c)$ which shows that $z_0\ne w_0$.  

By our hypotheses,   $\# (\alpha \cap\wp^{-1} (a))=\# (\alpha \cap\wp^{-1} (c))$, which implies that the number $k$ of intersection points of $\alpha$ with $\wp^{-1} (a\cup c)$ is even. Since $\wp(z_0)=\wp(w_0)$, by \eqref{eq:wpeq}  we have 
$w_0=\pm z_0+v_0$ with $v_0\in 2\Z^2$.  We have to rule out the minus sign here. 

We argue by contradiction and assume that 
$w_0=-z_0+v_0$. Then $v_1\coloneqq \tfrac12(z_0+w_0)=\frac12v_0\in \Z^2$, and so  the endpoints $z_0$ and $w_0$ of $\alpha$ are in symmetric position to the point $v_1\in \Z^2$. 
This implies that the number $k$  of lines $L_n$, $n\in \Z$, separating 
$z_0$ and $w_0$ is odd, contradicting what we have just seen.
We conclude $w_0=z_0+v_0$ with $v_0\in 2\Z^2$, and the statement follows in this case. Note that the exact same argument leading to 
$w_0-z_0\in 2\Z^2$ also applies if $\alpha\cap \wp^{-1} (a\cup c)=\emptyset$. 
\end{proof}

We call a Jordan curve $\ga $ in $(\PP, V)$ {\em null-homotopic in
$(\PP, V)$} if $\ga$ can be homotoped in $\PP\setminus V$ to a point, i.e.,  
if there exists a homotopy $H\: \partial \D\times \I\ra  \PP\setminus V$ such that $H_0$ is a homeomorphism of $\partial \D$ onto $\ga$  and $H_1$ is a constant map.

\begin{lemma}\label{lem:essnonnull}  Let $\ga$ be an essential Jordan curve in 
$(\PP, V)$. Then $\ga$ is not null-homotopic in $(\PP, V)$.  
\end{lemma}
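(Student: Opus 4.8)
The statement is essentially the converse direction of the theory built up so far: an essential Jordan curve cannot be null-homotopic in $(\PP,V)$. The plan is to argue by contradiction, so suppose $\gamma$ is essential in $(\PP,V)$ but null-homotopic, witnessed by a homotopy $H\:\partial\D\times\I\to\PP\setminus V$ with $H_0$ a homeomorphism onto $\gamma$ and $H_1$ constant. First I would put $\gamma$ in minimal position with the set $a\cup c$, where $a$ and $c$ are the two horizontal edges of $\PP$; since isotopy preserves both essentiality and null-homotopy, we may assume $\gamma$ itself has this property. By Lemma~\ref{lem:alternate8}, since $\gamma$ is essential, the sets $a\cap\gamma$ and $c\cap\gamma$ are non-empty and finite and the points in them alternate on $\gamma$; in particular $\#(a\cap\gamma)=\#(c\cap\gamma)>0$, and by Lemma~\ref{lem:nobigons} $\gamma$ meets each of $a$ and $c$ transversely. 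The key point I want to exploit is that a null-homotopy lifts, via the universal orbifold covering map $\wp\:\C\to\PP$, to a genuine homotopy in $\C$ of a closed loop to a point.

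The main step is the lift. Since $H$ takes values in $\PP\setminus V$, and $\wp$ restricted to $\wp^{-1}(\PP\setminus V)=\C\setminus\Z^2$ is a covering map onto $\PP\setminus V$, the homotopy $H$ lifts to a homotopy $\widetilde H\:\partial\D\times\I\to\C\setminus\Z^2$ once we fix a lift of one point; because $H_1$ is constant, $\widetilde H_1$ is constant too, and $\widetilde H_0$ is a loop $\widetilde\gamma$ in $\C$ freely homotopic to a point in $\C\setminus\Z^2$, hence in particular a \emph{closed} loop: a parametrization $\widetilde\gamma\:\I\to\C$ with $\widetilde\gamma(0)=\widetilde\gamma(1)$ and $\wp\circ\widetilde\gamma$ running once around $\gamma$. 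Here I should be slightly careful: $H_0$ is a homeomorphism of $\partial\D$ onto $\gamma$, so $\wp\circ\widetilde H_0$ parametrizes $\gamma$ as a simple loop, and thus (after reparametrizing $\partial\D$ as $\I$) the lifted loop $\widetilde\gamma$ satisfies the hypotheses of Lemma~\ref{lem:difference}: its endpoints $z_0=\widetilde\gamma(0)$ and $w_0=\widetilde\gamma(1)$ lie in $\C\setminus\wp^{-1}(a\cup c)$ (we may arrange the basepoint of $\gamma$ to avoid $a\cup c$, which is possible since $\gamma\not\subset a\cup c$), $\wp(z_0)=\wp(w_0)$, $\widetilde\gamma$ meets each line of $\wp^{-1}(a\cup c)$ transversely, and—by the alternation just established for $\gamma$ together with the fact that $\wp\circ\widetilde\gamma$ is the simple loop $\gamma$ traversed once—the intersection points with $\wp^{-1}(a)$ and $\wp^{-1}(c)$ are finite, non-empty, equal in number, and alternate along $\widetilde\gamma$.

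Now Lemma~\ref{lem:difference} forces $w_0-z_0\in 2\Z^2$ and, crucially, $w_0-z_0\neq 0$ because $\widetilde\gamma\cap\wp^{-1}(a\cup c)\neq\emptyset$. This contradicts $\widetilde\gamma(0)=\widetilde\gamma(1)$, i.e.\ $z_0=w_0$, which we derived from $H_1$ being constant (the lifted loop closes up). Therefore no such null-homotopy exists, and $\gamma$ is not null-homotopic in $(\PP,V)$. The main obstacle I anticipate is the bookkeeping in the lifting step: one must verify that the lift $\widetilde H$ exists with values avoiding $\Z^2$ (this uses that $\wp\:\C\setminus\Z^2\to\PP\setminus V$ is a covering map, a special case of $\wp$ being the universal orbifold covering map), that the lifted $H_0$ genuinely traverses $\gamma$ once so that the alternation property transfers verbatim from $\gamma$ to $\widetilde\gamma$, and that the basepoint can be chosen off $a\cup c$; once these are in place, Lemma~\ref{lem:difference} does all the real work. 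A small remark: one could alternatively phrase the whole argument as ``$\pi_1(\PP\setminus V)$ injects appropriately,'' but the concrete lifting-plus-Lemma~\ref{lem:difference} route keeps everything self-contained within the framework already established.
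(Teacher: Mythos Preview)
Your approach has a genuine gap. You invoke Lemma~\ref{lem:alternate8} to conclude that $a\cap\gamma$ and $c\cap\gamma$ are non-empty, but that lemma has \emph{two} exceptional cases: $\gamma$ peripheral, \emph{or} $\gamma\cap(a\cup c)=\emptyset$. You only ruled out the first. An essential curve can perfectly well miss $a\cup c$ in minimal position --- the horizontal curve $\alpha^h$ does exactly this. In that situation Lemma~\ref{lem:difference} still applies (via its first alternative), but it only yields $w_0-z_0\in 2\Z^2$, not $w_0-z_0\neq 0$; so you get no contradiction. You could try to patch this by switching to the pair $b,d$ when $\gamma\cap(a\cup c)=\emptyset$, but you would then need to argue that after re-isotoping to minimal position with $b\cup d$ you cannot again land in the empty-intersection case, which takes some extra work. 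Incidentally, the paper's proof of Lemma~\ref{lem:prim}~\ref{i:prim2} runs exactly your argument and hits exactly this case --- and resolves it by appealing to Lemma~\ref{lem:essnonnull}. So you are in danger of reproducing the very step that the present lemma is meant to supply.

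The paper takes a completely different and much shorter route: it uses a winding-number argument directly on the sphere. By Sch\"onflies one identifies $(\PP,\gamma)$ with $(\CDach,\partial\D)$ and arranges that $0$ and $\infty$ are marked points; then any null-homotopy in $\PP\setminus V\subset\C\setminus\{0\}$ would carry a simple loop (winding number $\pm 1$ about $0$) to a constant loop (winding number $0$), contradicting homotopy invariance of the winding number. This avoids all the lifting and alternation machinery and needs nothing about the special arcs $a,c$. Your approach, once the gap is fixed, would give an alternative proof internal to the $\wp$-lifting framework, but at the cost of considerably more bookkeeping than the two-line winding-number argument.
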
 
This statement sounds somewhat tautological, because in topology ``essential" is often defined as ``not null-homotopic". Recall though that in our context $\ga$ is called essential if each of the two components of $\PP\setminus \ga$ contains precisely two of the points in $V$. In the proof, we will use some standard facts about winding numbers; see \cite[Chapter 4]{Burckel} for the basic definitions and background.

\begin{proof} On an intuitive level, every homotopy contracting $\ga$ to a point must slide over all the points in one of the complementary components of $\ga$. Hence it cannot stay in  $\PP\setminus V$, and so $\ga$ is not null-homotopic in $(\PP, V)$.

To make this more rigorous, we  argue by contradiction. By the Sch\"onflies theorem we may identify $\PP$ with $\CDach$ and 
$\ga$ with $\partial \D$  and assume that $0$ and $\infty$ belong 
to the set $Z\sub \CDach$ of marked points corresponding to the points in $V$.  We now argue by contradiction and assume that there exists a homotopy $H\: \partial \D \times \I\ra \CDach \setminus Z\sub \C\setminus \{0\}$ such that 
$H_0$ is a homeomorphism on $\partial \D$ and $H_1$ is a constant map. Then for each $t\in \I$, the map $u\in \I\mapsto
 \alpha_t(u)\coloneq H_t(e^{2\pi i u})$ is a loop in $\C\setminus \{0\}$. Each loop $\alpha_t$, $t\in \I$,  has the same winding number 
 $\text{ind}_{\alpha_t}(0)$  
around $0$, because this winding number is  invariant under homotopies in $\C\setminus \{0\}$ (see \cite[Theorem 4.12]{Burckel}). On the other hand, 
 $\text{ind}_{\alpha_0}(0)=\pm1$, because $\alpha_0$ is a simple loop (see \cite[Theorem 4.42]{Burckel}), while  $\text{ind}_{\alpha_1}(0)=0$, because $\alpha_1$ is a constant loop. This is a contradiction.  \end{proof}

An element $x$ of a rank-2 lattice $\Gamma$ (such as 
$2\Z^2$)  in $\C$ is called {\em primitive} 
if it cannot be represented in the form $x=ny$ with $y\in \Gamma$ and $n\in \N$, $n\ge 2$. Note that then $x\ne 0$.

\begin{lemma}\label{lem:prim} 
Let $\ga$ be a  Jordan curve in $(\PP,V)$ parametrized as a simple loop $\beta\:\I\ra \PP$, and let $\alpha\: \I\ra \C$ be a lift of $\beta$ under $\wp$. 

\begin{enumerate}[label=\text{(\roman*)},font=\normalfont,leftmargin=*]

\item \label{i:prim1} 
If $\alpha(0)=\alpha(1)$ and $\alpha$ is a path in a convex subset of $\C\setminus \Z^2$, then $\ga$ is null-homotopic in $(\PP, V)$.

 \smallskip 
\item \label{i:prim2} 
If $\ga$ is essential, then  $\alpha(1)-\alpha(0)$ is a primitive element of $2\Z^2$. It is uniquely determined up to sign by the isotopy class 
$[\ga]$ of $\ga$ rel.\ $V$.  

\end{enumerate}
\end{lemma}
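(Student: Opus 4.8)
The plan is to prove the two parts of Lemma~\ref{lem:prim} by exploiting the covering-space structure of $\wp$ together with the topological facts about essential curves established above (especially Lemma~\ref{lem:essnonnull}) and the arithmetic information coming from Lemma~\ref{lem:difference}.

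\textbf{Part \ref{i:prim1}.} Suppose $\alpha(0)=\alpha(1)$, so that $\alpha$ is a genuine loop in $\C$, and that $\alpha$ lies in a convex set $K\sub \C\setminus \Z^2$. Since $K$ is convex, hence contractible, the loop $\alpha$ is null-homotopic in $K$: explicitly, if $z_1\coloneq \alpha(0)$, the straight-line homotopy $\widetilde H(u,t)\coloneq (1-t)\alpha(u)+t z_1$ stays in $K\sub \C\setminus \Z^2$ and contracts $\alpha$ to the constant loop at $z_1$. Now post-compose with $\wp$: the map $H\coloneq \wp\circ \widetilde H$ is a homotopy in $\PP\setminus V$ (because $\wp^{-1}(V)=\Z^2$ and $\widetilde H$ avoids $\Z^2$) from $\wp\circ \alpha=\beta$, i.e.\ from $\ga$ traversed as a simple loop, to the constant loop at $\wp(z_1)\in \PP\setminus V$. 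This exhibits $\ga$ as null-homotopic in $(\PP,V)$, which is exactly the claim. (One should note that $\widetilde H_0=\alpha$ need not be injective if $\alpha$ is not simple, but we only need $H_0$ to be a reparametrization of $\ga$; alternatively, since $\beta$ is simple and $\wp$ is a local homeomorphism off $\Z^2$, the lift $\alpha$ of a simple loop is automatically simple, so $\widetilde H_0$ is a homeomorphism onto $\alpha$ and $H_0$ onto $\ga$.)

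\textbf{Part \ref{i:prim2}.} Now assume $\ga$ is essential. First I would fix core arcs of $\ga$: choose arcs $a_\ga$ and $c_\ga$ of $(\PP,V)$ lying in the two different components of $\PP\setminus\ga$, joining the two marked points in each component. Using Lemma~\ref{lem:transverse} (or rather the argument of Lemma~\ref{lem:isocrit} and Lemma~\ref{lem:nobigons}) one may isotope $a_\ga$ and $c_\ga$, rel.\ $V$, so that together with $\ga$ they are all in minimal position; since $a_\ga$ and $c_\ga$ sit on opposite sides of $\ga$ each meets $\ga$ exactly once, transversely. Pulling everything back to $\C$: write $\beta\:\I\to\PP$ for the simple-loop parametrization of $\ga$ and let $\alpha\:\I\to\C$ be a lift under $\wp$, and let $z_0=\alpha(0)$, $w_0=\alpha(1)$. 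Since $\wp\circ\alpha$ is a closed loop, $\wp(z_0)=\wp(w_0)$, and because $\ga$ avoids $V$ we have $z_0,w_0\in\C\setminus\Z^2$; after translating $a_\ga,c_\ga$ slightly we may also assume $z_0,w_0\notin\wp^{-1}(a_\ga\cup c_\ga)$. The minimal-position/alternation input is now supplied by Lemma~\ref{lem:alternate8} applied to $\ga$ with the arcs $a_\ga,c_\ga$: since $\ga$ is essential and not disjoint from $a_\ga\cup c_\ga$, the points of $a_\ga\cap\ga$ and $c_\ga\cap\ga$ are nonempty, finite, equal in number, and alternate on $\ga$. Lifting this structure under $\wp$ (which is a covering away from $\Z^2$, so a path component of $\wp^{-1}(a_\ga\cup c_\ga)$ meeting $\alpha$ does so transversely, and alternation is preserved along $\alpha$), the hypotheses of Lemma~\ref{lem:difference} are met, giving $w_0-z_0\in 2\Z^2$, and $w_0-z_0\ne 0$ because $\ga$ meets $a_\ga\cup c_\ga$.

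It remains to show $\xi\coloneq w_0-z_0$ is primitive in $2\Z^2$ and depends only on $[\ga]$ up to sign. For primitivity: if $\xi=m\eta$ with $\eta\in 2\Z^2$ and $m\ge 2$, then the loop $\alpha$ could be ``unwound''; more precisely, I would argue by contradiction that the existence of such an $m$ forces $\ga$ to be null-homotopic in $(\PP,V)$ --- the deck transformation $z\mapsto z+\eta$ of $\wp$ has order $m$ in the quotient and $\ga$ would be an $m$-fold iterate of a shorter loop, which after contraction in $\C$ to a path inside a convex set avoiding $\Z^2$ contradicts Lemma~\ref{lem:essnonnull} via Part~\ref{i:prim1}. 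I expect this step --- making ``unwinding'' rigorous, i.e.\ showing that non-primitivity of $\xi$ contradicts essentiality --- to be the main obstacle; the cleanest route is probably to observe that $\C/\langle \xi\rangle$ is a cylinder onto which $\alpha$ descends to a loop generating $\pi_1$ once, whereas if $\xi=m\eta$ the image of $\ga$ in $\C/\langle\eta\rangle$ winds $m$ times, and then to transfer this to a homotopy statement in $\PP\setminus V$ contradicting Lemma~\ref{lem:essnonnull}. Finally, independence of $[\ga]$: if $\ga'\sim\ga$ rel.\ $V$, an isotopy $H\:\Sp\times\I\to\Sp$ rel.\ $V$ lifts to a homotopy of $\alpha$ in $\C\setminus\Z^2$ whose endpoints vary continuously in the discrete set $\{(z,w): w-z\in 2\Z^2\}$ (continuity of the endpoint of the lifted path, since $H$ fixes $V$ pointwise and hence fixes a chosen basepoint of $\ga$ throughout) --- therefore $w_0-z_0$ is locally constant along the isotopy, hence constant; the sign ambiguity is exactly the choice of orientation/basepoint for the lift, so $\xi$ is well-defined up to sign on $[\ga]$.
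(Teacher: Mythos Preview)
Part~\ref{i:prim1} is fine and matches the paper's straight-line homotopy argument.

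Part~\ref{i:prim2} has a genuine error in the first step. You choose $a_\ga,c_\ga$ to be core arcs of $\ga$, i.e.\ arcs joining the two marked points in each component of $\PP\setminus\ga$. Such arcs are \emph{disjoint} from $\ga$ by construction, so your assertion that ``each meets $\ga$ exactly once, transversely'' is false. With $\ga\cap(a_\ga\cup c_\ga)=\emptyset$ you land in the exceptional case of Lemma~\ref{lem:alternate8} and get no alternation input at all; and even if you did, Lemma~\ref{lem:difference} is stated specifically for the horizontal edges $a,c$ of $\PP$ (its proof uses that $\wp^{-1}(a\cup c)$ is the family of horizontal integer lines), so it does not apply to your arbitrary $a_\ga,c_\ga$. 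The paper fixes this by doing the opposite: it keeps the pillow edges $a,c$ fixed and replaces $\ga$ by an isotopic curve $\ga_0$ in minimal position with $a\cup c$, then applies Lemmas~\ref{lem:alternate8} and~\ref{lem:difference} directly. That choice is essential, not cosmetic.

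Two further points. For primitivity, your ``unwinding'' sketch does not close: you need to derive a contradiction from simplicity of $\beta$, and the cylinder picture you outline does not obviously do this. The paper's clean device is to set $\sigma(u)=\exp\bigl(\tfrac{2\pi i}{y_0}\alpha_0(u)\bigr)$ when $w_0-z_0=ny_0$; then $\sigma$ has winding number $n\ge 2$ about $0$, so $\sigma$ is not simple, which forces $\alpha_0(u')-\alpha_0(u)\in 2\Z^2$ for some $u<u'<1$ and hence $\beta_0(u)=\beta_0(u')$, contradicting injectivity of $\beta_0$ on $[0,1)$. For the isotopy-invariance, your claim that the isotopy ``fixes a chosen basepoint of $\ga$ throughout'' is wrong: the basepoint $\beta(0)$ is not in $V$, so it moves. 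One must lift the full two-parameter homotopy $\overline H(u,t)=H(\beta_0(u),t)$ and compare the two boundary paths $\sigma(t)=\widetilde H(0,t)$ and $\tau(t)=\widetilde H(1,t)$; the paper shows $\tau=\sigma+v_0$ by uniqueness of lifts, whence $\alpha(1)-\alpha(0)=v_0$. It then separately treats change of lift (giving the sign), change of parametrization with the same basepoint, and change of basepoint.
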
   

Here  we call  $\al$  a {\em lift} of $\be$ under $\wp$ if  $\be=\wp\circ\al$. 
In this lemma and its proof, we will carefully distinguish between a 
path  and its  image set (unlike elsewhere in the paper). 

\begin{proof} Note that since $\be(\I)=\ga\sub \PP\setminus V$ and $\wp$ is a covering map over 
$ \PP\setminus V$, a lift $\al$ of $\be$  under $\wp$ 
exists. Moreover, for each choice of $z_0\in \wp^{-1}(\be(0))$, there exists a unique lift $\alpha$ of $\beta$ such that $\alpha(0)=z_0$
(for these  standard facts see  \cite[Section 1.3, Proposition 1.30]{Hatcher}).
We will use this uniqueness property of lifts repeatedly in the following.

\smallskip
 \ref{i:prim1} The idea for the first part is very simple. We use a ``straight-line homotopy" between $\alpha $ and the constant path $u\in \I\mapsto \alpha(0)$ and push it to $\PP\setminus V$ by applying $\wp$.

More precisely, we define 
$$H(u,t)\coloneq \wp( (1-t)\alpha(u)+t\alpha(0))$$ 
for $u,t\in \I$. Since the path $\alpha$  lies in a  convex set $K\sub \C \setminus \Z^2$,  we have 
$$\alpha_t(u)\coloneq (1-t)\alpha(u)+t\alpha(0)\in K$$ for all $u,t\in \I$, and so $H(\I\times \I)\sub \wp(\C\setminus \Z^2)=\PP\setminus V$. Hence $H$ is a homotopy in $\PP\setminus V$. Moreover, $H_t(u)=(\wp\circ \alpha_t)(u)$ for all $u\in \I$, and so $H_t=\wp\circ \alpha_t$ for all $t\in \I$. 
In particular, $H_0=\wp\circ \alpha_0=\wp\circ \alpha=\beta$. Moreover, $H_1(u)=\alpha(0)$ for $u\in \I$, and so $H_1$ is a constant path.  

For all $t\in \I$ we have 
 $$\alpha_t(1)-\alpha_t(0)=(1-t) (\alpha(1)-\alpha(0))=0.$$  
Hence  $\wp(\alpha_t(1))=\wp(\alpha_t(0))$ for $t\in \I$, and so 
 $H_t=\wp\circ \alpha_t$ is a loop in $\PP\setminus V$ for all $t\in \I$. 
By identifying  
the points $(0,t)$ and $(1,t)$ for each $t\in \I$, we obtain a homotopy 
$\overline H\: \partial \D\times \I\ra \PP\setminus V$ 
such that 
$$\overline H(e^{2\pi iu}, t)= H(u,t)$$ 
for all $u,t\in \I$. Since $\overline H_0(e^{2\pi iu})=H_0(u)=\beta(u)$
for $u\in \I$, we see that $\overline H_0$ is  a homeomorphism of $\partial \D$ onto $\beta(\I)=\ga$. On the other hand, $\overline H_1$ is a constant map. 
Hence $\ga$ is null-homotopic in $(\PP, V)$.

\smallskip 
 \ref{i:prim2} 
The proof is somewhat tedious as we have to worry about different choices of the curve in $[\ga]$, its different parametrizations as a simple loop, and the different lifts of these parametrizations under $\wp$.

To prove the statement, we first consider a special case, namely we choose a Jordan curve 
$\ga_0$ in $(\PP, V)$ that lies in the same  isotopy class 
rel.\ $V$ as $\ga$ with the additional property that $\ga_0$  is in minimal position with  the set $a\cup c$. Since $\ga_0$ cannot be a subset of $a\cup c$, we can parametrize $\ga_0$ as a simple loop $\beta_0\: \I\ra \PP$ such that $\beta_0(0)=\beta_0(1)\not \in a\cup c$. We now consider a lift $\alpha_0\: \I\ra \C$ of $\beta_0$ under $\wp$. Note that $\alpha_0$ is a simple loop or a homeomorphic parametrization on an arc in $\C$.

Since $\ga$ is essential, the Jordan curve  $\ga_0$ is also essential. Indeed, under an isotopy rel.\ $V$ that deforms $\ga$ into $\ga_0$,
the complementary components of $\ga$ in $\PP$ are deformed into the complementary components of $\ga_0$ while the points in $V$ stay fixed. Therefore, each of the two components of $\PP\setminus \ga_0$ contains precisely two points of $V$.

It  follows from  Lemma~\ref{lem:nobigons} that $\ga_0$ meets  $a$ and $c$ transversely. Moreover, by  Lemma~\ref{lem:alternate8} either   $\ga_0\cap (a\cup c)=\emptyset$, or the sets $a\cap \ga_0$ and $c\cap \ga_0$  are  non-empty and finite,   and the points in these sets alternate  on $\ga_0$. We now define $\overline \alpha_0\coloneq 
\al_0(\I)$. Then $\overline \alpha_0$ is a Jordan curve or an arc in $\C$. Moreover, we either have 
$\overline \al_0\cap \wp^{-1}(a\cup c)=\emptyset$,  or  $\overline \alpha_0$  meets each of the lines 
in $\wp^{-1}(a\cup c)$ transversely, 
$$0<\#(  \overline \alpha_0 \cap \wp^{-1} (a))=
\#(\overline \alpha_0 \cap \wp^{-1} (c))<\infty,$$
and the points in $\overline \alpha_0 \cap \wp^{-1} (a)$ and 
$\overline \alpha_0 \cap \wp^{-1} (c)$ alternate on 
$ \overline  \alpha_0$.  Let $z_0\coloneq \al_0(0)$ and 
$w_0\coloneq \al_0(1)$. Then 
$$ \wp (z_0)=\wp(\al_0(0))=\be_0(0)=\be_0(1)=\wp(\al_0(1))=\wp(w_0),$$
and by  the choice of $\beta_0$ we have $z_0, w_0\not \in 
\wp^{-1}(a\cup c)$. Therefore, we are exactly in the situation of 
Lemma~\ref{lem:difference}.

It follows that $ v_0\coloneq w_0-z_0\in 2\Z^2$. Here $v_0\ne 0$. Indeed, otherwise $z_0=w_0$. Then the second part of Lemma~\ref{lem:difference} implies that the arc  $\overline \al_0$ does not meet $\wp^{-1}(a\cup c)$ and so it  lies in a connected component   of 
$\C\setminus \wp^{-1}(a\cup c)$. This  component is an infinite 
strip, and  hence a convex set, contained in $\C\setminus \Z^2$. 
Now  \ref{i:prim1}  implies that $\ga_0$ is null-homotopic in $(\PP, V)$. By Lemma~\ref{lem:essnonnull} this contradicts the fact that $\ga_0$ is essential. We conclude that indeed
 $v_0=w_0-z_0\ne 0$.

This shows that $\al_0(1)-\al_0(0)=w_0-z_0$
is a non-zero element of the lattice $2\Z^2$. We claim that 
$w_0-z_0$ is actually a primitive element of $2\Z^2$. 
To see this, we argue by contradiction and assume that 
$w_0-z_0=ny_0$ with $y_0\in 2\Z^2\setminus\{0\}$ and $n\in \N$, $n\ge 2$.

We now consider the path $\sigma\: \I\ra \C\setminus \{0\}$ given as 
$\sigma(u)=\exp(\frac{2\pi i}{y_0}\al_0(u))$ for $u\in \I$. 
This is a loop with winding number 
$$\text{ind}_\sigma(0)=\frac{1}{y_0} (\al_0(1)-\al_0(0))=n$$ around $0$. 
Now a simple loop in  $\C\setminus \{0\}$ has winding number 
$0$ or $\pm1$ around $0$ (see  \cite[Theorem 4.42]{Burckel}), and so 
$\sigma$ cannot be simple. This implies that there are numbers 
$0\le u<u'<1$ such that $\sigma(u)=\sigma(u')$. This in turn means that 
 $\al_0(u')-\al_0(u)=ky_0$ for some  $k\in \Z$. 
 Since $y_0\in 2\Z^2$, it follows 
 that 
 $$\beta_0(u') =\wp(\al_0(u'))=\wp(\al_0(u))=\be_0(u). $$ 
 This is impossible, since $\beta_0$ is injective on $[0,1)$. 
 
 We have shown the first part of the statement for a particular Jordan curve $\ga_0$ in $[\ga]$ with a special parametrization $\beta_0$, and a choice of a lift $\al_0$ of $\be_0$ under $\wp$. We now have to show that the number  $v_0 =w_0-z_0=\al_0(1)-\al_0(0)$ obtained in this way only depends on $[\ga]$ up to sign. For this we pick an arbitrary Jordan curve in $[\ga]$ which we will simply call $\ga$.

Since $\ga_0\sim \ga$ rel.\ $V$, there exists an isotopy 
$H\: \PP \times \I\ra \PP$ rel.\ $V$ with $H_0=\id_{\PP}$ and 
$H_1(\ga_0)=\ga$. 
We now define a homotopy $\overline H\: \I\times \I\ra \PP\setminus V$ by setting 
$$ \overline H(u,t) \coloneqq H(\be_0(u),t)$$ 
for $u,t\in \I$. 
Note that $ \overline H$ maps into $ \PP \setminus V$
as follows from the facts that $\ga_0\sub  \PP \setminus V$ and $H$ is an isotopy rel.\ $V$. 

The time-$0$ map  $\overline H(\cdot, 0)=\be_0$ of the homotopy $\overline H$   is the parametrization of the loop $\ga_0$, while 
the time-$1$ map $\beta\coloneq \overline H(\cdot, 1)=H_1\circ \be_0 $ gives some parametrization of $\ga=H_1(\ga_0)$ as a simple loop. 

By the homotopy lifting theorem (see \cite[Proposition 1.30]{Hatcher}), there exists 
a homotopy  $\widetilde H\: \I\times \I\ra \C\setminus \Z^2$ such that 
$\overline H= \wp\circ \widetilde H$ and $\widetilde H_0= 
\widetilde H (\cdot, 0) =\al_0$. Then  $\alpha\coloneqq
\widetilde H (\cdot, 1)$ is a lift of $\beta=\overline H (\cdot, 1)$ under $\wp$. We want to show  that $\alpha(1)-\alpha(0)=v_0= w_0-z_0$.

To see this, we  consider the paths
 $\sigma,\tau\:\I\ra \C\setminus \Z^2$ defined as $\sigma(t)=\widetilde H(0,t)$ 
 and $\tau(t)=\widetilde H(1,t)$ for $t\in \I$.
 Then 
\begin{align}\label{eq:tausig} 
\overline \sigma(t)&\coloneqq  \wp(\sigma(t))=\wp(\widetilde H(0,t))=\overline H(0,t)\\
& =
 H_t(\beta(0))=H_t(\beta(1))=\overline H(1,t)=\wp(\widetilde H(1,t))
 = \wp(\tau(t))\notag 
 \end{align}
 for $t\in \I$. Note also that
 $$\tau(0)=\widetilde H(1,0)=\alpha_0(1)=\alpha_0(0)+v_0=\widetilde H(0,0)+v_0=\sigma(0)+v_0.$$  
 This implies that the paths $\tau$ and $t\in \I\mapsto\sigma(t)+v_0$ have the same initial points. Since $v_0\in 2\Z^2$, it follows from \eqref{eq:wpeq} and  \eqref{eq:tausig} that the map $\wp$ sends them both to  $\overline \sigma=\wp\circ\sigma=\wp\circ \tau$, which is a path in $\PP\setminus V$.  It follows from the uniqueness of lifts under $\wp$ that 
 $\tau(t)=\sigma(t)+v_0$ for all $t\in \I$.

This implies that 
$$ \alpha(1)-\alpha(0)=\widetilde  H(1,1)-\widetilde  H(0,1)=\tau(1)-\sigma(1)=v_0, $$
as desired. 

Note that for a given parametrization $\beta$ of $\ga$, the difference  $\alpha(1)-\alpha(0)$ is independent up to sign of the choice 
of the lift $\al$ of $\beta$. Indeed, suppose $\alpha'$ is another lift of 
$\beta$ under $\wp$.
Then 
$$ \wp(\alpha(0))=\beta(0)= \wp(\alpha'(0)),$$
and so by \eqref{eq:wpeq} we have
$$ \alpha'(0)=\pm \alpha(0)+m_0$$ 
for some (fixed) choice of the sign $\pm$ and $m_0\in 2\Z^2$. Then 
$t\in \I\mapsto  \pm \alpha(t)+m_0$ is a lift of $\beta$ with the same initial point as $\alpha'$ and so we see that 
$$ \alpha'(t)= \pm \alpha(t)+m_0$$ 
for all $t\in \I$. This implies that 
\begin{equation}\label{eq:displace} 
\alpha'(1)-\alpha'(0)=\pm(\alpha(1)-\alpha(0)),
\end{equation} 
as desired. 

It remains to show that up to sign   $\alpha(1)-\alpha(0)$ is independent of the choice 
of the parametrization  $\beta$ of $\ga$. For this we consider 
another parametrization $\beta'$ of $\ga$ as a simple loop. 
We first assume that $\beta'(0)=\beta(0)$. 
Then there exists a homeomorphism $h\: \I\ra \I$ such that 
$\beta'=\beta\circ h$. Here $h$ fixes the endpoints $0$ and $1$ of $\I$ or interchanges them depending on whether $\beta'$ parametrizes $\ga$ with the same or opposite orientation as $\beta$, respectively. 
In any case,  $\alpha'=\alpha\circ h$ is a lift of $\beta'$ under $\wp$.
It follows that 
$$ \alpha'(1)-\alpha'(0) =\alpha(h(1))-\alpha(h(0))= \pm(\alpha(1)-\alpha(0)), $$ 
as desired. As we now know, this relation is independent of the specific choice of the lift $\alpha'$ of $\beta'$. 

Finally, we have to consider the case where $\beta'$ has a possibly different 
initial  point than $\beta$, say $p_0\coloneq\beta'(0)\in \ga$. By what we have seen, in order to establish \eqref{eq:displace}, we can choose any 
parametrization $\beta'$ of $\ga$ as a simple loop with $\beta'(0)=p_0$ and any lift $\alpha'$ of $\beta'$. 

We can extend our parametrization  $\beta$ on $\I$  periodically to a continuous map $\beta\: \R \ra \ga$ such that 
$\beta(u+1)=\beta(u)$ for all $u\in \R$. Then $\beta$ lifts under 
$\wp$ to a continuous map $\alpha \: \R \ra \C$ which agrees 
with the original lift $\alpha$ on $\I$. 
Then $u\in \R \mapsto  \alpha (u+1)-v_0$ is also a lift  of  $\beta$ under 
$\wp$. The initial point of this lift corresponding to $u=0$ is equal to $\alpha(0)$. The uniqueness of lifts implies that this lift and the original lift  $\alpha$ are the same paths and so 
\begin{equation}\label{eq:funct}
\alpha (u+1)=\alpha(u)+v_0 
\end{equation}
for all $u\in \R$.

We can find $u_0\in [0,1)$ such that $\beta(u_0)=p_0$. Then 
$\beta'\:\I\ra \ga$ defined as $\beta'(u)=\beta(u_0+u)$ for $u\in \I$ is a parametrization of $\ga$ as a simple loop with the initial point $p_0$.  Under $\wp$ this path $\beta'$ has the lift $\alpha'\: \I\ra \C$ given by $\alpha'(u)=\alpha (u+u_0)$ for $u\in \I$. 
Then \eqref{eq:funct} implies that 
$$\alpha'(1)-\alpha'(0)=\alpha (u_0+1)-\alpha(u_0)=v_0,$$ 
 as desired. The proof is complete. 
 \end{proof} 
 
We are now almost ready to prove Lemma~\ref{lem:isoclassesP}.
Before we get to this, it is useful to discuss an alternative way to view our pillow $\PP$. 
  
We consider a slope $r/s\in \widehat \Q$. Then we  can choose $p, q\in \Z$ 
 such $pr+qs=1$   and define $\om\coloneq 
s+ir $ and $\widetilde \om \coloneq -p+iq$. The numbers  $\om$ and $\widetilde \om$ form a basis of $\C\cong \R^2$ over $\R$, and so every point $z\in \C$ can be uniquely written in the form 
$z=u \widetilde \om +v \om$ with $u,v\in \R$. Accordingly, the map 
$z=u \widetilde \om +v \om \mapsto R(z)\coloneq 
u \widetilde \om -v \om \mapsto$ for $u,v\in \R$ is a well-defined ``skew-reflection" $R$ on $\C$.  Note also that $\Z^2=\{n\widetilde \om+k\om: n,k\in \Z\}$.

We consider the parallelogram $Q\coloneq
 \{u \widetilde \om +v \om: u\in [0,1],\, v\in [-1,1]\}\sub \C$. 
 Then it follows from \eqref{eq:wpeq} that  
$\wp(Q)=\PP$. Moreover, for $z,w\in Q$, $z\ne w$, we have 
$\wp(z)=\wp(w)$ if and only if $z,w\in \partial Q$ and $w=R(z)$. Intuitively, this means that the pillow $\PP$ can be obtained from 
$Q$ by ``folding" $Q$ in its middle segment $[0, \widetilde \om]\sub Q$ and identifying the points on $\partial Q$ that correspond to each other under the skew-reflection $R$. The map  $\wp$ sends 
the set $\{0, \widetilde \om, \widetilde \om +\om, \om\}$ bijectively onto the set $\{A,B,C,D\}$ of vertices of $\PP$ (but not necessarily in that order). 

From this geometric picture it is clear that for each $t\in (0,1)$ 
the set 
$$\tau_{r/s}\coloneq \wp ([t\widetilde \om -\om, t\widetilde \om + \om])=\wp(\ell_{r/s}(t\widetilde \om))$$ 
is a  simple closed geodesic in $(\PP,V)$. Moreover, the sets 
$$\xi_{r/s}\coloneq 
\wp ([ -\om, + \om])=\wp(\ell_{r/s}(0))  \text{ and } \xi'_{r/s}\coloneq 
\wp ([\widetilde  \om -\om, \widetilde\om+\om ])=\wp(\ell_{r/s}(\widetilde \om))$$ 
are geodesic core arcs of  $\tau_{r/s}$  lying  in different components of $\PP\setminus \tau_{r/s}$. In particular, $\tau_{r/s}$ is an essential Jordan curve in $(\PP, V)$.

 It follows from \eqref{eq:wpeq} that 
 $\wp(\ell_{r/s}(t\widetilde \om))\cap \wp(\ell_{r/s}(t'\widetilde \om))  \ne \emptyset$ for $t,t'\in \R$  if and only if $t'-t \in 2\Z$ or $t'+t\in 2\Z$. In this case, we have $\wp(\ell_{r/s}(t\widetilde \om))=\wp(\ell_{r/s}(t'\widetilde \om))$. Moreover,   $\tau=\wp(\ell_{r/s}(t\widetilde \om))$ is a simple closed geodesic $\tau_{r/s}$ in $(\PP, V)$ if $t\in \R\setminus \Z$, it is 
equal to the geodesic arc $\xi_{r/s}$ if $t$ is an even integer, and is equal to the geodesic arc $\xi'_{r/s}$
if $t$ is an odd integer.  Note that $\ell_{r/s}(t\widetilde \om)$ for $t\in \R$ contains a point in $\Z^2=\{n\widetilde \om+k\om: n,k\in \Z\}$ if and only if $t\in \Z$. 

\begin{lemma}\label{lem:twogeod}  Let $\tau_{r/s}$ and $\tau'_{r/s}$ be two distinct simple closed geodesics in $(\PP, V)$  with  slope $r/s\in \widehat \Q$. Then
$\tau_{r/s}$ and $\tau'_{r/s}$ are isotopic rel.\ $V$. 
\end{lemma}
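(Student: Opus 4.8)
The plan is to use the geometric ``folded parallelogram'' picture of $\PP$ developed just before the statement, which exhibits $\PP$ as the image $\wp(Q)$ of the parallelogram $Q = \{u\widetilde\om + v\om : u\in[0,1],\, v\in[-1,1]\}$ with the skew-reflection identifications on $\partial Q$. Under this description, any simple closed geodesic with slope $r/s$ has the form $\tau_t \coloneqq \wp(\ell_{r/s}(t\widetilde\om))$ for some $t\in(0,1)$ (after reducing $t$ modulo the identifications $t\mapsto t+2k$, $t\mapsto -t+2k$, which is harmless since those give the same geodesic). So it suffices to show: for any $t_0, t_1 \in (0,1)$, the geodesics $\tau_{t_0}$ and $\tau_{t_1}$ are isotopic rel.\ $V$.

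\textbf{Key steps.} First I would set up the straight-line family $\ell_{r/s}((1-\lambda)t_0\widetilde\om + \lambda t_1\widetilde\om)$ for $\lambda\in[0,1]$; since $t_0, t_1\in(0,1)$ and $(0,1)$ is connected, the parameter $t(\lambda) = (1-\lambda)t_0 + \lambda t_1$ stays in $(0,1)$ throughout, so none of these lines passes through the lattice $\Z^2 = \{n\widetilde\om + k\om : n,k\in\Z\}$ (which, as noted in the excerpt, meets $\ell_{r/s}(t\widetilde\om)$ only when $t\in\Z$). Consequently each $\tau_{t(\lambda)} = \wp(\ell_{r/s}(t(\lambda)\widetilde\om))$ is a simple closed geodesic in $\PP\setminus V$, and the curves $\tau_{t(\lambda)}$ sweep out an open region of $\PP$ disjoint from $V$. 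Second, I would observe that for $t_0 \ne t_1$ the two geodesics $\tau_{t_0}$ and $\tau_{t_1}$ are disjoint (they are images of parallel, non-intersecting lines, using the fact that $\wp(\ell_{r/s}(t\widetilde\om)) \cap \wp(\ell_{r/s}(t'\widetilde\om)) = \emptyset$ when $t' - t \notin 2\Z$ and $t' + t \notin 2\Z$, which holds for $t,t'\in(0,1)$ distinct), and that the component of $\PP \setminus (\tau_{t_0} \cup \tau_{t_1})$ lying ``between'' them — namely $\wp$ of the strip bounded by the two lines — is an annulus $U$ contained in $\PP\setminus V$ with $\partial U = \tau_{t_0}\cup\tau_{t_1}$. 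Third, I would apply Lemma~\ref{lem:isocrit} directly: an annulus in $\PP\setminus V$ whose boundary is the union of two disjoint Jordan curves forces those two curves to be isotopic rel.\ $V$. This gives $\tau_{t_0} \sim \tau_{t_1}$ rel.\ $V$, hence $\tau_{r/s} \sim \tau'_{r/s}$ rel.\ $V$.

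\textbf{Alternative / main obstacle.} If one prefers not to verify the annulus claim directly, an alternative is to build the isotopy explicitly: the map $H\colon \PP\times\I \to \PP$ defined by pushing each point of $\tau_{t_0}$ along the $\wp$-images of the segments $s\mapsto \ell_{r/s}(((1-s)t_0 + s t_1)\widetilde\om)$ and extending by the identity outside a neighborhood of the swept region, which stays in $\PP\setminus V$ by the parameter argument above. The main technical point to be careful about is exactly this: confirming that the interpolating lines never hit $\Z^2$ and that their $\wp$-images remain embedded and pairwise disjoint, so that one genuinely obtains an embedded annulus (or an honest ambient isotopy fixing $V$). This is a routine consequence of $t(\lambda)\in(0,1)$ together with the characterization \eqref{eq:wpeq} of when $\wp$-values of two points coincide, and it is the only place where any computation is needed; everything else is an immediate appeal to Lemma~\ref{lem:isocrit}. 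I would therefore keep the write-up short, emphasizing the parallelogram picture and the annulus, and invoke Lemma~\ref{lem:isocrit} to conclude.
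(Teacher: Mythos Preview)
Your proposal is correct and follows essentially the same approach as the paper: represent both geodesics as $\wp(\ell_{r/s}(t\widetilde\om))$ and $\wp(\ell_{r/s}(t'\widetilde\om))$ with $t,t'\in(0,1)$, exhibit the $\wp$-image of the strip between the two lines as an annulus in $\PP\setminus V$ with boundary $\tau_{r/s}\cup\tau'_{r/s}$, and invoke Lemma~\ref{lem:isocrit}. The paper's write-up is simply more compressed, omitting the intermediate interpolation and the explicit disjointness check you spelled out.
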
 

\begin{proof} The previous considerations imply that we may assume that the geodesics are represented in the form 
$\tau_{r/s} =\wp(\ell_{r/s}(t\widetilde \om))$ and 
$\tau'_{r/s} =\wp(\ell_{r/s}(t'\widetilde \om))$ with
$t,t'\in (0,1)$, $t\ne t'$. We may assume $t<t'$.  Then 
$$U\coloneq \wp(\{ u\widetilde \om + v \om: u\in (t,t'), \, v\in [-1,1]\})$$ 
is an annulus contained in $\PP\setminus V$ with 
$\partial U=\tau_{r/s}\cup \tau'_{r/s}$. It follows from Lemma~\ref{lem:isocrit} that $\tau_{r/s}$ and
$\tau'_{r/s}$ are isotopic  rel.\ $V$. 
\end{proof}

\begin{proof}[Proof of Lemma~\ref{lem:isoclassesP}] 
 Let $\ga$ be an essential Jordan curve in $(\PP, V)$.
If we parametrize $\ga$ as a simple loop $\beta\: \I\ra \PP$ and 
lift $\beta$ to a path $\alpha\: \I\ra \C$ under $\wp$, then by Lemma~\ref{lem:prim} we know that $\alpha(1)-\alpha(0)$ is a primitive element of $2\Z^2$ uniquely determined by $[\ga]$ up to sign. Hence we can find relatively prime integers $r,s\in \Z$ such that 
\begin{equation}\label{eq:liftdisplacement} 
\alpha(1)-\alpha(0)= 2 (s+ir).
\end{equation}
 By switching signs here, which corresponds to parametrizing $\ga$ with opposite orientation, 
we may assume that 
 $r\in \Z$, $s\in \N_0$, and that  $r=1$ if $s=0$. Note that with  these restrictions on $r$ and $s$, the primitive element $2 (s+ir)$ of $2\Z^2$ corresponds to the unique slope $r/s\in \widehat \Q$, and every slope in $ \widehat \Q$ arises from a unique primitive element of $2\Z^2$ in this form. 
 
As before, define $\om\coloneq 
s+ir $ and $\widetilde \om \coloneq -p+iq$,  where   $p, q\in \Z$ 
are chosen so that  $pr+qs=1$.  
We know that  $\xi\coloneq\wp(\ell_{r/s}(0))$ and 
$\xi'\coloneq \wp(\ell_{r/s}(\widetilde \om))$  are disjoint 
 geodesic arcs in $(\PP,V)$. The sets $\wp^{-1}(\xi)$  and $\wp^{-1}  (\xi')$ consist of parallel lines with slope $r/s$, and these lines alternate in the following sense: each component of   $\C\setminus \wp^{-1}(\xi\cup\xi')$ is an infinite strip whose  boundary contains  one line in $\wp^{-1}(\xi)$ and one line in  $\wp^{-1}  (\xi')$.

We may assume that $\ga$ is in minimal 
 position with  $\xi\cup \xi'$. Moreover, we may assume that the parametrization $\beta$ of $\ga$ as a simple loop was chosen so that $\beta(0)=\beta(1) \not \in \xi \cup \xi'$. 
 
 Then by Lemmas~ \ref{lem:nobigons} and~\ref{lem:alternate8} we know that either $\xi\cap \ga=\emptyset=\xi'\cap \ga$, or the following conditions are true:  $\ga$ meets $\xi$ and $\xi'$ transversely,  the sets  $\xi \cap \ga$ and $\xi'\cap \ga$ are non-empty and finite,  and  the points in these sets alternate on $\ga$.  
 We claim that the latter is not possible. 
 
 Otherwise, we choose a lift  $\alpha $  of $\beta$ under $\wp$. Then $\alpha$ intersects the lines  in $\wp^{-1}(\xi\cup\xi')$ transversely, 
 we have   
 $$k\coloneq \#(\alpha \cap  \wp^{-1}(\xi))= \#(\alpha \cap 
  \wp^{-1}(\xi'))\in \N, $$ and  the points in $\alpha \cap  \wp^{-1}(\xi)\ne \emptyset$ and $\alpha \cap \wp^{-1}(\xi')\ne \emptyset$ alternate on $\alpha$. An argument very similar to the proof of Lemma~\ref{lem:difference}  then shows that $2k>0$ is the number of lines in  the set  $\wp^{-1}(\xi\cup\xi')$ that separate $\alpha (0)$ from 
  $\alpha(1)$. On the other hand,  we know that $\alpha (1)-\alpha (0)=
 2 (s+ir)$ and  so $\alpha (1)$ and $\alpha (0)$ lie on a line with  slope $r/s$ and are not separated by any line in $\wp^{-1}(\xi\cup\xi')$. This is a contradiction.
 
 This shows that $\xi\cap \ga=\emptyset=\xi'\cap \ga$. Now consider a simple closed geodesic $\tau_{r/s}=\wp(\ell_{r/s}(t\widetilde \om))$ with  slope $r/s$ and $0<t<1$. Since $\xi \cap \ga
 =\emptyset$,  we can choose 
  $t$ very  close 
 to $0$ so that $\tau_{r/s}\cap \ga=\emptyset$. 
 Now we can apply considerations very similar to the proof of Corollary~\ref{cor:pull}~\ref{pull2}. The complement of 
 $\tau_{r/s} \cup  \ga$ in $\PP$ is a 
  disjoint union $\PP \setminus (\tau_{r/s} \cup  \ga)=W\cup U\cup W' $,
 where $W,W'\sub \PP$ are open Jordan regions and $U\sub \PP$ is an annulus with $\partial U=\tau_{r/s} \cup  \ga$.
  Since  $\tau_{r/s}$ and $\ga$  are essential,  both $W$ and $W'$ must contain at least two points in $V$. Since $\#V=4$, we have 
   $U\cap V =\emptyset$.   Lemma~\ref{lem:isocrit}
now implies that 
 $\tau_{r/s}$ and $\ga$ are isotopic rel.\ $V$. By Lemma~\ref{lem:twogeod}
 the curve $\ga$ is actually isotopic to {\em each} closed geodesic $\tau_{r/s}$ with slope $r/s$.

The map $[\ga]\mapsto r/s$ that sends each isotopy class $[\ga]$  to a slope $r/s\in \widehat \Q$ obtained from a primitive element in $2\Z^2$ associated with $[\ga]$ according to Lemma~\ref{lem:prim} is well-defined. It is clear that it is surjective, because the isotopy class $[\tau_{r/s}]$ of a geodesic $\tau_{r/s}$ with slope $r/s\in \widehat \Q$  is sent to $r/s$.  To see that it is injective,  suppose two isotopy 
classes $[\ga]$ and $[\ga']$ are sent to the same slope $r/s\in \widehat \Q$ by this map.  If  $\tau_{r/s}$ is a closed geodesic with slope $r/s$, then by our previous discussion we have 
$ \ga \sim   \tau_{r/s}\sim \ga'$ rel.\ $V$. 
Hence $[\ga]=[\ga']$. It follows that  the map $[\ga]\mapsto r/s$ is indeed a bijection. 
\end{proof} 

\begin{rem}\label{rem:rslift} Suppose $\ga$ is an essential Jordan curve in $(\PP,V)$ and its isotopy class $[\ga]$ rel.\ $V$ 
corresponds to slope 
$r/s\in \widehat \Q$ according to Lemma~\ref{lem:isoclassesP}. Let $\beta\: \I \ra \ga$  
be  a parametrization of $\ga$ as a simple loop, and 
 $\alpha\: \I\ra \C$  be a lift of $\beta$ under $\wp$. Then  \eqref{eq:liftdisplacement} in the proof of  Lemma~\ref{lem:prim}  shows that we always have   $\alpha(1)-\alpha(0)=\pm2(s+ir)$. 
\end{rem} 

A  similar statement is true for arcs in $(\PP, V)$. 

\begin{cor}\label{cor:isoarcs}
Let $\xi$ be an arc in $(\PP, V)$. Then $\xi$ is isotopic rel.\ $V$ to a geodesic arc $\xi_{r/s}$ for some slope $r/s\in \widehat \Q$. Moreover, if $\beta\: \I\ra \xi$ is a homeomorphic parametrization of $\xi$ and $\alpha$ is any lift of 
$\beta$ under $\wp$, then $\alpha(1)-\alpha(0)=\pm(s+ir)$.
\end{cor}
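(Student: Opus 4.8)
The plan is to reduce Corollary~\ref{cor:isoarcs} to the already-established Lemma~\ref{lem:isoclassesP} (together with Lemma~\ref{lem:prim} and Remark~\ref{rem:rslift}) by the standard trick of passing from an arc $\xi$ in $(\PP,V)$ to a Jordan curve, namely the boundary of a small regular neighborhood of $\xi$. More precisely, first I would observe that $\xi$ joins two of the four vertices of $\PP$, say $X$ and $Y$, with its interior in $\PP\setminus V$. Choose a closed topological disk $N$ that is a regular neighborhood of $\xi$ in $\PP$, with $N\cap V=\{X,Y\}$, both $X$ and $Y$ lying on $\partial N$, and $\xi$ a crosscut of $N$ that is a ``spine'' of $N$ (so $N$ deformation retracts onto $\xi$ rel.\ $\{X,Y\}$). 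Then $\partial N$ decomposes into two arcs with endpoints $X$, $Y$; let $\gamma_\xi$ be the Jordan curve obtained from $\partial N$ by pushing its two endpoints slightly off $X$ and $Y$ into $\PP\setminus V$ and reconnecting — concretely, take $\gamma_\xi$ to be the boundary of a slightly smaller disk $N'\subset\inte(N)\cup\{X,Y\}^c$ that still separates $\{X,Y\}$ from $V\setminus\{X,Y\}$. Since $X$ and $Y$ lie in one complementary component of $\gamma_\xi$ and the other two vertices lie in the other component, $\gamma_\xi$ is an essential Jordan curve in $(\PP,V)$, so by Lemma~\ref{lem:isoclassesP} it is isotopic rel.\ $V$ to a geodesic $\tau_{r/s}$ for a unique slope $r/s\in\widehat\Q$. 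I claim $\xi\sim\xi_{r/s}$ rel.\ $V$ for a geodesic core arc $\xi_{r/s}$ of $\tau_{r/s}$ with this same slope.

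For the isotopy claim, the key point is that the correspondence $\xi\mapsto\gamma_\xi$ and the ``core arc'' correspondence are mutually inverse up to isotopy. Starting from the geodesic $\tau_{r/s}\sim\gamma_\xi$, its core arc $\xi_{r/s}$ lying in the component of $\PP\setminus\tau_{r/s}$ containing $X$ and $Y$ is, by construction of $\gamma_\xi$, isotopic rel.\ $V$ to a spine of the disk bounded off by $\gamma_\xi$ on the $\{X,Y\}$-side, hence isotopic to $\xi$ itself. I would make this precise by first isotoping $\tau_{r/s}$ to $\gamma_\xi$ rel.\ $V$ (possible by Lemma~\ref{lem:isoclassesP}), which drags $\xi_{r/s}$ to an arc $\xi''$ in the $\{X,Y\}$-side of $\gamma_\xi=\partial N'$ joining $X$ to $Y$; since $N'$ (or rather the disk it bounds on the $\{X,Y\}$-side, together with the points $X,Y$) is a disk containing exactly the two marked points $X,Y$ on its boundary and no marked points in its interior, any two arcs in it joining $X$ to $Y$ through the interior are isotopic rel.\ $\{X,Y\}$, and such an isotopy extends to an isotopy of $\PP$ rel.\ $V$. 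This shows $\xi\sim\xi''\sim\xi_{r/s}$ rel.\ $V$, establishing the first assertion.

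For the displacement formula, I would argue directly, mimicking the structure of the proof of Lemma~\ref{lem:prim}\,\ref{i:prim2}. Fix a homeomorphic parametrization $\beta\colon\I\to\xi$ and a lift $\alpha\colon\I\to\C$ under $\wp$; by the uniqueness of lifts, the argument at the very end of the proof of Lemma~\ref{lem:prim} (the part using \eqref{eq:displace} and the dependence on the choice of $\beta$ and of the lift) shows the quantity $\alpha(1)-\alpha(0)$ is well-defined up to sign independently of these choices. The endpoints $\alpha(0),\alpha(1)$ lie in $\wp^{-1}(X)$ and $\wp^{-1}(Y)$ respectively, which are cosets of $2\Z^2$ inside $\Z^2$; using \eqref{eq:wpeq} one sees $\alpha(1)-\alpha(0)\in\Z^2$, and it is primitive because $\beta$ is injective (same argument as in Lemma~\ref{lem:prim}: a non-primitive displacement forces $\sigma(u)=\sigma(u')$ for some $u<u'$, contradicting injectivity of $\beta$ on $[0,1]$ — here $\beta$ is injective on all of $[0,1]$, not just $[0,1)$, which only helps). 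Thus $\alpha(1)-\alpha(0)=\pm(s'+ir')$ for relatively prime $r',s'$ with slope $r'/s'\in\widehat\Q$. Finally I must identify $r'/s'$ with the slope $r/s$ produced above. For this I would compute $\alpha(1)-\alpha(0)$ for the geodesic model arc $\xi_{r/s}$ directly: from the geometric ``folded parallelogram'' description of $\PP$ recalled just before Lemma~\ref{lem:twogeod}, the geodesic arc $\xi_{r/s}=\wp([-\om,\om])$ with $\om=s+ir$ has a lift with displacement $\om-(-\om)=2\om$ between the two preimages of the same vertex; but the arc joins two \emph{distinct} vertices (the images of $-\om$ and $\om$ under $\wp$ are distinct corners of $\PP$), so a lift $\alpha$ parametrizing $\xi_{r/s}$ from vertex to vertex has $\alpha(1)-\alpha(0)=\om=s+ir$, giving slope $r/s$. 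Since the displacement is an isotopy invariant up to sign (by the same lifting-of-homotopies argument as in Lemma~\ref{lem:prim}), and $\xi\sim\xi_{r/s}$ rel.\ $V$, we get $r'/s'=r/s$ and $\alpha(1)-\alpha(0)=\pm(s+ir)$, as claimed. The main obstacle I anticipate is the careful bookkeeping in the regular-neighborhood construction — ensuring $\gamma_\xi$ is genuinely essential and that the resulting ``core arc of $\tau_{r/s}$'' is isotopic back to $\xi$ rel.\ $V$ — which is routine but must be done cleanly; the displacement computation itself is a short adaptation of existing arguments.
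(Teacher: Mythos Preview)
Your approach is essentially the paper's: surround $\xi$ by an essential Jordan curve so that $\xi$ becomes a core arc, apply Lemma~\ref{lem:isoclassesP} to obtain the slope $r/s$, use the fact that two arcs in a closed disk with the same endpoints and no marked points in the interior are isotopic rel.\ those endpoints to deduce $\xi\sim\xi_{r/s}$, and then transport the displacement computation from the geodesic model via isotopy invariance of the quantity $\alpha(1)-\alpha(0)$.

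Two technical points deserve care. First, your primitivity argument does not transfer verbatim from Lemma~\ref{lem:prim}: there the displacement lies in the period lattice $2\Z^2$, so $\alpha(u')-\alpha(u)=ky_0\in 2\Z^2$ forces $\wp(\alpha(u'))=\wp(\alpha(u))$ via \eqref{eq:wpeq}; here the displacement lies only in $\Z^2$, and $ky_0\in\Z^2\setminus 2\Z^2$ gives no such identification, so the winding-number step does not by itself contradict injectivity of $\beta$. Fortunately this step is redundant once you establish isotopy invariance of the displacement and compute it for the geodesic arc (where a homeomorphic lift is $u\mapsto u\,\omega$ on $[0,1]$, giving displacement $\omega=s+ir$; your $[-\omega,\omega]$ description is the $2$-to-$1$ folded picture, since $\wp(-\omega)=\wp(\omega)$). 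Second, the isotopy invariance is not literally ``the same lifting-of-homotopies argument'' as in Lemma~\ref{lem:prim}: the endpoints of $\xi$ lie in $V$, where $\wp$ is branched, so the homotopy $\overline H(u,t)=H_t(\beta(u))$ leaves $\PP\setminus V$ at $u=0,1$. The paper handles this by lifting $\overline H$ over $(0,1)\times\I$ via the covering $\wp\colon\C\setminus\Z^2\to\PP\setminus V$ and then extending continuously to $\I\times\I$, using that $\overline H(0,\cdot)$ and $\overline H(1,\cdot)$ are constant (the isotopy is rel.\ $V$); the lifted endpoints are then fixed throughout, which immediately yields $\alpha(1)-\alpha(0)=\pm(s+ir)$.
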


\begin{proof} The arc $\xi$ joins two of the points in $V$, while the other two points in $V$ do not lie on $\xi$. Hence we may ``surround" $\xi$ by an essential  Jordan curve $\ga$ in $(\PP, V)$ such that $\xi$ is a core arc of $\ga$.  By Lemma~\ref{lem:isoclassesP} we know that $\ga$ is isotopic rel.\ $V$ to a closed geodesic $\tau_{r/s}$ in $\PP$ with  some slope $r/s\in \widehat \Q$. Hence $\xi$ is isotopic rel.\ $V$ to a core arc $\xi'$ of $\tau_{r/s}$. 

Now any two arcs in the interior of a closed topological disk $D$ with the same endpoints are isotopic by an isotopy that fixes the endpoints of these arcs and the points in $\partial D$ (see \cite[Theorem A.6 (ii)]{BuserGeometry}). This implies that any two core arcs of an essential Jordan curve  in $(\PP, V)$ are isotopic rel.\ $V$  if  the core arcs  have the same endpoints. 
We know that the closed geodesic $\tau_{r/s}$ has precisely two 
geodesic  arcs with slope $r/s$ as core arcs in different components of $\PP\setminus \tau_{r/s}$. Therefore, $\xi'$, and hence also $\xi$,  is isotopic rel.\ $V$ to a geodesic arc $\xi_{r/s}$ with slope $r/s$.  This proves the first part of the statement.

Let $\beta\: \I \ra \xi$ be a homeomorphic parametrization 
of $\xi$ and $\alpha\: \I \ra \C$ be a lift of $\beta$ under $\wp$. 
By what we have seen, we can choose an isotopy $H\: \PP\times \I\ra \PP$ rel.~$V$ with $H_0=\id_{\PP}$ and $H_1(\xi)=\xi_{r/s}$. 
We use this to  define a homotopy $\overline H\: \I\times \I\ra \PP$ by setting 
$$ \overline H(u,t) \coloneqq H(\be(u),t)$$ 
for $u,t\in \I$. 
Note that $ \overline H_t$ for $t\in \I$ gives a homeomorphic parametrization
of an arc in $(\PP, V)$. These arcs have all the same endpoints. In particular $u\in \I\mapsto \beta'(u)\coloneqq \overline H_1(u)=H_1(\beta(u))$ gives a homeomorphic parametrization of $H_1(\xi)=\xi_{r/s}$. 

We can lift  $ \overline H_t$ under $\wp$ to  find a homotopy 
$\widetilde H\: \I\times  \I \ra \C$ such  that $\widetilde H_0=\alpha $ 
and  $\wp\circ \widetilde H_t=\overline H_t$
for all $t\in \I$. To see this,  one first applies the standard homotopy lifting theorem (see \cite[Proposition 1.30]{Hatcher}) to the homotopy $\overline H$ restricted $(0,1)\times \I$ and the covering map $\wp\: \C\setminus \Z^2\ra \PP\setminus V\supset \overline H((0,1)\times \I)$
to obtain a unique homotopy  $\widetilde H\: (0,1)\times  \I\ra \C$ with $\widetilde H_0=\alpha|(0,1)$. Now as $u_0\to 0^+$, the set 
$\overline H((0,u_0]\times \I)$ shrinks to the point $\beta(0)\in V$, and so  the connected set $\widetilde  H((0,u_0]\times \I)$
shrinks to a unique point in $\wp^{-1}(V)=\Z^2$. This point can only be $\alpha(0)$. Hence $\overline H(u,t)\to \alpha(0)$ uniformly for $t\in \I$ as $u\to 0^+$, and similarly  $\overline H(u,t)\to \alpha(1)$
uniformly for $t\in \I$ as $u\to 1^-$.
This implies that we can continuously extend 
$\widetilde H$ to a homotopy on $\I\times \I$ with the desired properties by setting $\widetilde H(0,t)=\alpha(0)$ and 
$\widetilde H(1,t)=\alpha(1)$ for $t\in \I$.

Then $\alpha'\coloneq\widetilde H_1$ is a lift of $\beta'$, because 
$\wp\circ \alpha'= \wp\circ \widetilde H_1 =\overline H_1=\beta'$.
Since $\beta'$ is a homeomorphic parametrization of the geodesic arc $\xi_{r/s}$, the path $\alpha'$  sends $\I$ homeomorphically onto a subsegment of 
 a line $\ell_{r/s}\sub \C$. Since  $\alpha'$  has its endpoints in $\Z^2$ and 
$\alpha'((0,1))$ is disjoint from $\Z^2$, this implies 
$\alpha'(1)-\alpha'(0)=\pm (s+ir)$.
Since $\alpha$ and $\alpha'$ have the same endpoints, the statement follows. \end{proof}

\begin{proof}[Proof of Lemma~\ref{lem:i-properties-curve}] In the proof all isotopies, isotopy classes, intersection numbers, etc.\ are for isotopies on $\PP$ rel. $V$. We will use the facts about the geodesics on $(\PP, V)$ discussed before Lemma~\ref{lem:twogeod} without further reference. 

\smallskip 
\ref{item:i1} Let $\alpha$ and $\beta$ be essential  Jordan curves in $(\PP,V)$ as in the statement. As before, we define $\om=s+ir$ and 
   $\widetilde \om=-p+iq$, where $p,q\in \Z$ and $pr+qs=1$.

First suppose that $r/s=r'/s'$. Then in the isotopy class $[\alpha]=[\beta]$ we can find simple closed geodesics with slope 
$r/s$ that are disjoint, for example the curves 
$\tau_{r/s} =\wp (\ell_{r/s}(\widetilde \om/3))$ and 
$\tau'_{r/s} =\wp (\ell_{r/s}(2\widetilde \om/3))$. It follows that
$\ins(\alpha, \beta)=0$ in this case.

We now assume that $r/s\ne r'/s'$. In order to determine $\ins(\alpha, \beta)$, we have to find  the minimum of all numbers 
$\#(\alpha\cap \beta)$, where $\alpha$ and $\beta$ range over the given isotopy classes.  By applying a suitable isotopy, we  can reduce to the case where $\alpha$ is a fixed curve in  its isotopy class and we only have to take variations over $\beta$.  So by Lemma~\ref{lem:isoclassesP} we may assume that 
$\alpha=\tau_{r/s}=\wp(\ell_{r/s})$ is a simple closed geodesic as in the statement.  Then $\tau_{r/s}=
\wp(\ell_{r/s}(t_0\widetilde \om ))$ for some $t_0\in (0,1)$. The preimage
 $\wp^{-1}(\tau_{r/s})$ of $\tau_{r/s}$ under $\wp$ consists of the two disjoint families
 \begin{equation}\label{eq:linefam} 
 \mathcal{F}_1\coloneq \{\ell_{r/s}((t_0+2j)\widetilde \om ):j \in \Z\}
 \text{ and } 
 \mathcal{F}_2\coloneq  \{\ell_{r/s}((-t_0+2j)\widetilde \om): j\in \Z\}
 \end{equation}  of distinct lines with  slope $r/s$. 
 
 Now let $\widetilde \beta\:\I\ra \C$ be a lift of $\beta$ under $\wp$, 
 where we think of $\beta$ as a simple closed loop in a parametrization with suitable orientation.  
Then if  $z_0\coloneq\widetilde \beta(0)$ and $w_0\coloneq \widetilde \beta(1)$, we have $w_0-z_0=2(s'+ir')$ as follows from 
Remark~\ref{rem:rslift}. By changing the basepoint of $\beta$ if neccesary, we may assume that   
$\widetilde \beta(0),\widetilde \beta(1)\not\in \wp^{-1}(\tau_{r/s})$. If $\om'\coloneq s'+ir'$, then we can write $\om'$ 
uniquely in the form 
\begin{equation}\label{eq:om'} 
 \omega'=k\omega+n\widetilde \omega, 
 \end{equation} 
where $k,n\in \Z$.  Note that then $|n|=N\coloneq  |rs'-sr'|>0$ (to see this, multiply \eqref{eq:om'}  by the complex conjugate of $\om$  and take imaginary parts).

Now each  family $\mathcal{F}_j$, $j=1, 2$,  consists of equally spaced parallel lines with  slope $r/s$ such  that consecutive lines in each family differ by a translation by $2\widetilde \om$. This implies that the points $z_0$ and $w_0=z_0+2\om'$ are separated  by precisely $N$ lines from each of the families $\mathcal{F}_j$, $j=1, 2$. 
So  $\widetilde \beta$ must have at least $2N$ points in common with 
$\wp^{-1}(\tau_{r/s})$. Since $\wp\circ \widetilde \beta $ maps $[0,1)$ injectively onto $\beta$,  we conclude  that $\beta=\wp(\widetilde \beta)$ has at least $2N$ points in common with $\tau_{r/s}$. If $\beta=\tau_{r'/s'}$, then $\widetilde \beta$ is a  parametrization of the line segment $[z_0, w_0]$, and so $\widetilde \beta$ meets 
$\wp^{-1}(\tau_{r/s})$ in precisely $2N$ points. This  means that $\tau_{r/s}$ and $\tau_{r'/s'}$ have exactly $2N$ points in common. 
  
It follows that for all $\beta$ we have 
$$ 
2N \le \#(\wp^{-1}(\tau_{r/s})\cap \widetilde \beta)=\#(\tau_{r/s} \cap \beta), $$ 
and so $ 2N\le \ins(\alpha,\beta)\le  \#(\tau_{r/s}\cap 
\tau_{r'/s'})=2N . $  
Thus we have equality here and the statement follows.

\smallskip 
 \ref{item:i2} This is a variant of the argument  in   \ref{item:i1} and we use the same notation. 
 
Since $\beta \sim \tau_{r'/s'}$, the core arc 
$\xi$ of $\beta$ is isotopic to a core arc of  $ \tau_{r'/s'}$.
Now two core arcs of a given essential Jordan curve in $(\PP, V)$ are isotopic rel.\ $V$ if they have the same endpoints (this was pointed out in the proof of Corollary~\ref{cor:isoarcs}).  It follows that $\xi\sim \xi'_{r'/s'}$, where $\xi'_{r'/s'}$ is one of the two 
geodesic core arcs  of $ \tau_{r'/s'}$. In particular, 
     $\xi'_{r'/s'}=\wp(\ell_{r'/s'})$ for a line  $\ell_{r'/s'}\sub \C$ that contains a point in $\Z^2$.  Note that $\xi'_{r'/s'}$ possibly differs from the geodesic arc $\xi_{r'/s'}$ as in the statement (if $\xi'_{r'/s'}$ and $\xi_{r'/s'}$ lie in different components of $\PP\setminus \tau_{r'/s'}$).

If $\widetilde \xi\: \I\ra \C$ is a lift of $\xi$ under $\wp$ in suitable orientation, and $z_1\coloneq \widetilde\xi(0)\in \Z^2$, $w_1\coloneq \widetilde \xi(1)\in \Z^2$, then it follows from Corollary~\ref{cor:isoarcs} that $w_1-z_1=\om'=s'+ir'$. 
Now  \eqref{eq:om'} implies that there are  exactly $N=|n|=  |rs'-sr'|$ lines 
in $ \mathcal{F}_1\cup  \mathcal{F}_2$ that separate $z_1$ and $w_1$
(essentially, this follows from the fact that the set $[0,n]\cap \{2j\pm t_0:j\in \Z\}$, where $t_0\in (0,1)$,  contains precisely $N=|n|$ points).  

Let  $\widetilde \om'\coloneq -p'+iq'$, where $p',q'\in \Z$ and $p'r'+q's'=1$.  Then,  by the discussion before Lemma~\ref{lem:twogeod}, the map $\wp $ sends one of the segments $[z_1, w_1]$ and  $[z_1+\widetilde\om', w_1+ \widetilde\om']$ 
homeomorphically onto $\xi_{r'/s'}$ depending on whether $\xi'_{r'/s'}= \xi_{r'/s'}$ or  $\xi'_{r'/s'}\ne  \xi_{r'/s'}$, respectively.  In either case,  each segment meets exactly $N$ lines 
in $ \mathcal{F}_1\cup  \mathcal{F}_2$. 

Arguing as before, we see that
$$ \#(\tau_{r/s}\cap \xi_{r'/s'})=N\le 
 \#(\wp^{-1}(\tau_{r/s})\cap\widetilde  \xi)= \#(\tau_{r/s}\cap \xi). $$
 This leads to 
 $ N\le \ins(\alpha,\xi)\le  \#(\tau_{r/s}\cap 
\xi_{r'/s'})=N . $  
So we have equality here and the statement follows.

 \smallskip 
 \ref{item:i3}--\ref{item:i5} These are special cases of 
  \ref{item:i1} and \ref{item:i2}.
  For example,  $a=\wp(\R\times\{0\})=\wp(\ell_0(0))$ is a core arc of $\alpha^h$ corresponding to slope $r'/s'=0/1=0$.  Hence by \ref{item:i2} we have $$\ins(\alpha,a)=|r\cdot 1-s\cdot 0|=|r|= 
   \#(\tau_{r/s}\cap \xi_0 )=
\#(\tau_{r/s}\cap a ).  $$
The other statements follow from similar considerations. 
\end{proof}


   \newcommand{\etalchar}[1]{$^{#1}$}


\end{document}